\documentclass[11pt, reqno]{amsart}
\usepackage{amsmath, amsthm, amscd, amsfonts, amssymb, graphicx, color, mathtools}
\usepackage[bookmarksnumbered, colorlinks, plainpages]{hyperref}
\usepackage{enumerate}
\usepackage{setspace}
\usepackage{multicol}
\usepackage[margin=2cm]{geometry}
\usepackage{comment}
\usepackage{booktabs}
\usepackage{caption}
\usepackage{subcaption}
\usepackage{pgfplots}

\theoremstyle{definition}
\newtheorem{theorem}{Theorem}[section]
\newtheorem{lemma}[theorem]{Lemma}
\newtheorem{proposition}[theorem]{Proposition}

\newtheorem{definition}[theorem]{Definition}

\newtheorem{remark}[theorem]{Remark}
\numberwithin{equation}{section}

\newcommand{\cal}{\mathcal}
\newcommand{\bff}{\boldsymbol}
\newcommand{\bb}{\mathbb}

\newcommand{\dt}{\mathrm{d}t}
\newcommand{\ddt}{\frac{\mathrm{d}}{\mathrm{d}t}}
\newcommand{\dx}{\mathrm{d}x}
\newcommand{\ds}{\mathrm{d}s}

\newcommand{\dtau}{\mathrm{d}\tau}
\newcommand{\norm}[2]{\left\|{#1}\right\|_{#2}}
\newcommand{\inpro}[2]{\left\langle#1,#2\right\rangle}
\newcommand{\abs}[1]{\left|{#1}\right|}
\newcommand{\umin}{\underline{\min} \ }

\allowdisplaybreaks

\begin{document}
\setcounter{page}{1}

\title[Stable $C^1$-conforming FEM for a class of nonlinear fourth-order evolution equations]
{Stable $C^1$-conforming finite element methods for a class of nonlinear fourth-order evolution equations}

\author[Agus L. Soenjaya]{Agus L. Soenjaya}
\address{School of Mathematics and Statistics, The University of New South Wales, Sydney 2052, Australia}
\email{\textcolor[rgb]{0.00,0.00,0.84}{a.soenjaya@unsw.edu.au}}

\author[Thanh Tran]{Thanh Tran}
\address{School of Mathematics and Statistics, The University of New South Wales, Sydney 2052, Australia}
\email{\textcolor[rgb]{0.00,0.00,0.84}{thanh.tran@unsw.edu.au}}

\date{\today}

\keywords{}
\subjclass{}

\begin{abstract}
We propose some finite element schemes to solve a class of fourth-order nonlinear PDEs, which include the vector-valued Landau--Lifshitz--Baryakhtar equation, the Swift--Hohenberg equation, and various Cahn--Hilliard-type equations with source and convection terms, among others. The proposed numerical methods include a spatially semi-discrete scheme and two linearised fully-discrete $C^1$-conforming schemes utilising a semi-implicit Euler method and a semi-implicit BDF method. We show that these numerical schemes are stable in $\bb{H}^2$. Error analysis is performed which shows optimal convergence rates in
each scheme. Numerical experiments corroborate our theoretical results.
\end{abstract}
\maketitle

\section{Introduction}
This paper aims to develop and analyse some fully-discrete $C^1$-conforming finite element methods to solve a class of nonlinear fourth-order evolution PDEs with diffusion, convection, as well as nonlinear precession and source terms. The proposed numerical methods would be applicable to some concrete nonlinear models arising in micromagnetics, materials science, and population dynamics. These include the vector-valued Landau--Lifshitz--Baryakhtar equation, the scalar-valued Swift--Hohenberg equation, and various convective Cahn--Hilliard-type equations with source term, among others.

We now describe the problem which is discussed in this paper. Let $\Omega\subset \bb{R}^d$, where $d\in\{1,2,3\}$, be a bounded open domain with boundary $\partial \Omega$, and let $t\in [0,T]$ be the time variable. Let $\bff{u}(t):\Omega \subset \bb{R}^d \to \bb{R}^m$, where $m=1$ or $3$, be the unknown (scalar-valued or vector-valued) function which satisfies
\begin{equation}\label{equ:llbar}
	\begin{cases}
		\displaystyle 
		\frac{\partial\bff{u}}{\partial t} 
		- \beta_1 \Delta \bff{u} 
		+ \beta_2 \Delta^2 \bff{u} 
		= 
		\beta_3 (1-|\bff{u}|^2) \bff{u} 
		- \beta_4 \bff{u}\times \Delta \bff{u} 
		\quad &\, 
		\\[1ex]
		\hspace{117pt}
		+
		\beta_5 \Delta(|\bff{u}|^2 \bff{u})
		+
		\beta_6 (\bff{j}\cdot\nabla)\bff{u}
		\quad &\text{for $(t,\bff{x})\in(0,T)\times\Omega$,}
		\\[1ex]
		\bff{u}(0,\bff{x})= \bff{u_0}(\bff{x}) 
		\quad & \text{for } \bff{x}\in \Omega,
		\\[1ex]
		\displaystyle{
			\frac{\partial \bff{u}}{\partial \bff{n}}= \bff{0}}, 
		\;\displaystyle{\frac{\partial (\Delta\bff{u})}{\partial \bff{n}}= \bff{0}} 
		\quad & \text{for } (t,\bff{x})\in (0,T) \times \partial \Omega.
	\end{cases}
\end{equation}
Here, $\beta_2$ and $\beta_3$ are positive constants of physical significance, while the constants $\beta_4,\beta_5, \beta_6$ are non-negative. The constant $\beta_1$ can be positive or negative, but without loss of generality it is taken as positive here. The symbol $\times$ denotes the cross product. We set $\beta_4=0$ when $\bff{u}$ is scalar-valued. The vector field $\bff{j}:\Omega\to \bb{R}^d$ is a given current density. For physical reason, we assume that the current density $\bff{j}$ is divergence-free and is tangential to the boundary, i.e. $\nabla\cdot \bff{j}=0$ on $\Omega$ and $\bff{j}\cdot \bff{n}=0$ on $\partial\Omega$. The Neumann boundary condition is set for the above problem, but similar argument will also work for the Dirichlet boundary condition.

In~\eqref{equ:llbar}, $-\Delta \bff{u}$ and
$\Delta^2 \bff{u}$ represent (homogeneous) linear diffusion, $\Delta(|\bff{u}|^2
\bff{u})$ corresponds to nonlinear diffusion, $\bff{u} \times \Delta \bff{u}$
represents a form of precession, while the term~$(1-|\bff{u}|^2) \bff{u}$ describes interaction between the source and cubic absorption terms. The term $(\bff{j}\cdot \nabla)\bff{u}$ is a convective term arising from a given current density $\bff{j}$. Some combinations of these features are widespread across various systems of interest in physics, chemistry, and biology, which we elaborate in the following paragraphs.

When $m=3$, i.e. $\bff{u}$ is vector-valued, the problem~\eqref{equ:llbar} includes the following:
\begin{enumerate}
	\renewcommand{\labelenumi}{\theenumi}
	\renewcommand{\theenumi}{{\rm (\roman{enumi})}}
	\item The case $\beta_6=0$ is the Landau--Lifshitz--Baryakhtar equation in the theory of micromagnetics (see~\cite{Bar84, BarDan13, SoeTra23, SoeTra23b, WanDvo15} for physical motivations). When $\beta_6\neq 0$, this is the model with spin-torque considered in~\cite{YasFasIvaMak22}.
	\item The case $\beta_5=\beta_6=0$ is a regularisation of the Landau--Lifshitz--Bloch equation, whose stochastic version is analysed in~\cite{GolJiaLe24}.
\end{enumerate}
When $m=1$ and $\beta_4=0$, i.e. $\bff{u}$ is scalar-valued, the present model includes the following:
\begin{enumerate}
	\renewcommand{\labelenumi}{\theenumi}
	\renewcommand{\theenumi}{{\rm (\roman{enumi})}}
	\item The case $\beta_1<0$ and $\beta_5=0$ gives the Swift--Hohenberg equation~\cite{SwiHoh77}, which describes the effects of thermal fluctuations on the convective instability~\cite{CroHoh93}. A system of such equations is also considered in~\cite{BecFreNie18}.
	\item The case $\beta_1\geq 0$ and $\beta_5=0$ gives a fourth-order reaction-diffusion-convection equation, which could be used to model anomalous bi-flux diffusion \cite{BevGalSimRio13, Chu18, JiaBevZhu20}.
	\item The case $\beta_3=0$ gives the Cahn--Hilliard equation with convection~\cite{CahHil58, EdeKal07, KayStySul09}, which is used to study phase separation in a mixture. A system of such equations is also considered in~\cite{ChoDlo94}. The case $\beta_3\neq 0$ gives the Cahn--Hilliard equation with mass source~\cite{Fak17, KhaSan08} or the Allen--Cahn/Cahn--Hilliard equation~\cite{KarNag14}.
	\item The case $\beta_6=0$ gives a generalised diffusion model for growth and dispersal in a multi-species population \cite{CohMur81, Och84}.
\end{enumerate}

The wide applicability of \eqref{equ:llbar} to model various physical phenomena motivates the numerical analysis of the system as done in the present paper.
To the best of our knowledge, numerical scheme to approximate
the solution of the problem~\eqref{equ:llbar} in its generality do not exist yet in the literature. While numerical methods to separately solve the scalar-valued Swift--Hohenberg equation~\cite{Qi24, QiHou21}, the fourth order bi-flux reaction diffusion equation~\cite{JiaBevZhu20}, the extended Fisher--Kolmogorov equation~\cite{IsmAnkOmr24}, and the Cahn--Hilliard equation~\cite{CheFenWanWis19, EllFre87, KayStySul09} are present in the literature, none of these is sufficiently general to include~\eqref{equ:llbar}. Related to this, a conforming method for a stochastic counterpart of~\eqref{equ:llbar} with $\beta_5=\beta_6=0$ is analysed in~\cite{GolJiaLe24}, where convergence in probability of the scheme is shown. For the general case considered in the present paper (where all the coefficients of the equation~\eqref{equ:llbar} are nonzero), more careful analysis needs to be done to obtain an optimal order of convergence.

Since~\eqref{equ:llbar} is a fourth-order equation, a conforming finite element method
requires $C^1$-continuous elements. In this paper, we propose two fully discrete
$C^1$-conforming finite element schemes to solve \eqref{equ:llbar} for $d=1,2,3$, namely, a semi-implicit Euler scheme and a semi-implicit BDF scheme. 
While the case $d=3$ is also analysed in this paper for completeness, we admit that it is very restrictive in practice since a $C^1$-conforming finite element on tetrahedron requires piecewise polynomials of degree at least 9, thus computationally infeasible. Therefore for $d=3$, non-conforming methods should be used instead. A mixed finite element method specifically developed to solve only the Landau--Lifshitz--Baryakhtar equation in mixed formulation is studied in a separate paper~\cite{Soe24}, where a certain nonlinear scheme is proposed.

It is noted that the schemes proposed in this paper are linear, in that one only needs to solve a system of linear equations at each time step. This is an advantage, for instance in micromagnetic simulations as argued by \cite{SunCheDuWan23}. These methods are unconditionally stable in $\bb{H}^2$ for $d=1$ and $2$ (in the sense that no restriction of the time-step size in terms of the spatial mesh size is needed, and without assuming quasi-uniformity of the triangulation), and conditionally stable for $d=3$. We then prove rigorously that the numerical schemes converge to the exact solution in $\bb{H}^\beta$ norm (for $\beta=0$, $1$, $2$) at the optimal order. This is done by developing an appropriate elliptic projection which is adapted to the problem and performing careful analysis to estimate the nonlinear terms. In particular, these linearised finite element schemes offer simple methods to solve various nonlinear vector-valued or scalar-valued problems mentioned previously.

While such conforming methods could be rather expensive computationally
due to the $C^1$-continuity requirement across element boundaries, this feature
leads to an enhanced accuracy and a higher order of convergence in capturing information about solutions and their gradients compared to the non-conforming methods~\cite{ZhaTon13}. In general, one also achieves a better stability property compared to the non-conforming approximations. With a tremendous growth in computational power during the last decade, it is now relatively easier to implement a $C^1$-conforming finite element to approximate the solution of a fourth-order nonlinear PDE in 1D or 2D, with an added advantage that these proposed schemes are linear. Without trying to be exhaustive, we mention several recent papers which discuss the analysis and implementation of $C^1$-conforming finite element schemes to approximate the solution to fourth-order nonlinear equations, including the KdV--Rosenau equation~\cite{AnkJiwKum23}, the Rosenau-RLW equation~\cite{AtoOmr13}, the extended Fisher--Kolmogorov equation~\cite{IsmAnkOmr24}, a fourth-order optimal control problem \cite{BreSun17}, quasi-geostrophic equation \cite{FosIliWan13, FosIliWel16}, and the von {K}\'{a}rm\'{a}n equation \cite{MalNat16}. However, the problems discussed in those papers are scalar-valued and have different nonlinearities from ours.

This paper is organised as follows. In Section \ref{sec:formula}, some notations
and preliminary results used in this paper are discussed. In Section
\ref{sec:semidiscrete}, error estimates for the semi-discrete approximation are
established. Section \ref{sec:semi euler} and \ref{sec:bdf2} discuss fully
discrete schemes based on semi-implicit Euler and BDF2 methods, respectively. Numerical results 
are presented in Section \ref{sec:numeric exp}. Finally, a lemma concerning a regularity result for an auxiliary equation is discussed in Section \ref{sec:lem aux H4}, while some commonly used results are presented in the appendix. For ease of presentation, we will assume that $\bff{u}$ is vector-valued in the analysis ($m=3$), noting that the same argument will also work when $\bff{u}$ is scalar-valued.

\section{Preliminaries}\label{sec:formula}

\subsection{Notations}

We begin by stating some function spaces used in this paper. The function space~$\bb{L}^p := \bb{L}^p(\Omega; \bb{R}^3)$ denotes the space of $p$-th integrable functions taking values in $\bb{R}^3$ and $\bb{W}^{m,r} := \bb{W}^{m,r}(\Omega; \bb{R}^3)$ denotes the Sobolev space of 
functions on $\Omega \subset \bb{R}^d$, for $d=1,2,3$, taking values in $\bb{R}^3$. As usual, we write $\bb{H}^m := \bb{W}^{m,2}$.
If $X$ is a Banach space, the spaces $L^p(0,T;X)$ and $W^{m,r}(0,T;X)$ denote respectively the Lebesgue and Sobolev spaces of functions on $(0,T)$ taking values in $X$. The space $C([0,T];X)$ denotes the space of continuous function on $[0,T]$ taking values in $X$. For brevity, we will write $L^p(\bb{W}^{m,r}) := L^p(0,T; \bb{W}^{m,r})$ and $L^p(\bb{L}^q) := L^p(0,T; \bb{L}^q)$.

Throughout this paper, we denote the scalar product in a Hilbert space $H$ by $\inpro{ \cdot}{ \cdot}_H$ and its corresponding norm by $\norm{\cdot}{H}$. The scalar product of $\bb{L}^2$ vector-valued functions taking values in $\bb{R}^3$ and the scalar product of $\bb{L}^2$ matrix-valued functions taking values in $\bb{R}^{3\times 3}$ will both be denoted by $\inpro{\cdot}{\cdot}_{\bb{L}^2}$.

We also define the function spaces
\begin{equation}\label{equ:X12}
	\bb{H}_{\bff{n}}^2 := \left\{ \bff{v} \in \bb{H}^2 \, : \, \displaystyle
	\frac{\partial\bff{v}}{\partial\bff{n}} = 0 \right\}
	\quad\text{and}\quad
	\bb{H}_{\bff{n}}^4 := \left\{ \bff{v} \in \bb{H}^4 \, : \, \displaystyle
	\frac{\partial\bff{v}}{\partial\bff{n}} = 
	\frac{\partial(\Delta\bff{v})}{\partial\bff{n}} = 0 \right\}.
\end{equation}
We assume sufficient conditions on the smoothness of the boundary of $\Omega$
such that the following hold:
\begin{alignat}{2}
	\label{equ:equivnorm-h2}
	\norm{\bff{v}}{\bb{H}^2}
	&\lesssim 
	\norm{\bff{v}}{\bb{L}^2} + \norm{\Delta \bff{v}}{\bb{L}^2},  
	&\qquad & \bff{v}\in\bb{H}_{\bff{n}}^2,
	\\
	\label{equ:equivnorm-h3}
	\norm{\bff{v}}{\bb{H}^3}
	&\lesssim 
	\norm{\bff{v}}{\bb{L}^2} 
	+ \norm{\Delta \bff{v}}{\bb{L}^2}
	+ \norm{\nabla \Delta \bff{v}}{\bb{L}^2},  
	&\qquad & \bff{v}\in\bb{H}_{\bff{n}}^2 \cap \bb{H}^3,
	\\
	\label{equ:equivnorm-h4}
	\norm{\bff{v}}{\bb{H}^4} 
	&\lesssim 
	\norm{\bff{v}}{\bb{L}^2} + \norm{\Delta \bff{v}}{\bb{L}^2} + \norm{\Delta^2 \bff{v}}{\bb{L}^2},
	&\qquad & \bff{v}\in\bb{H}_{\bff{n}}^4.
\end{alignat}
The above are true for $C^4$-domains; see e.g. \cite[Lemma~3.3]{SoeTra23}.
For domains with less smooth boundaries, it
is known, for instance, that the $\bb{H}^2$-regularity
result~\eqref{equ:equivnorm-h2} is guaranteed to
hold for any convex Lipschitz domains \cite[Section~3.2]{Gri11}. The
$\bb{H}^3$-regularity result~\eqref{equ:equivnorm-h3} holds for convex polygonal
domains, see \cite{BluRan80}, \cite[Section~5.1]{Gri11}. The
$\bb{H}^4$-regularity result~\eqref{equ:equivnorm-h4} can be guaranteed for convex polygonal domains satisfying certain interior angle conditions; see \cite{BluRan80, Dau92},
\cite[Section~7.3]{Gri11} and references therein for more details.

Finally, the constant $C$ denotes a
generic constant which can take different values at different occurrences. If
the dependence of $C$ on some variable, e.g.~$T$, is highlighted, we will write
$C(T)$. The notation $A \lesssim B$ means $A \le C B$, where the specific form of
the constant $C$ is not important to clarify.

\subsection{Elementary results}
In this subsection, we state a few elementary results which will be frequently
used.

\begin{lemma}\label{lem:der u2u}
	For any vector-valued function $\bff{v}:\Omega\to\bb{R}^3$, we have
	\begin{align}
		\label{equ:j nab u}
		(\bff{j}\cdot\nabla)\bff{v} 
		&= 
		\nabla\cdot (\bff{v}\otimes \bff{j}) - (\nabla \cdot\bff{j}) \bff{v},
		\\
		\nabla (|\bff{v}|^2 \bff{v}) 
		&= 
		2 \bff{v} \ (\bff{v}\cdot \nabla\bff{v}) 
		+ |\bff{v}|^2 \nabla \bff{v},
		\label{equ:nab un2}
		\\
		\frac{\partial\big(|\bff{v}|^2\bff{v}\big)}{\partial\bff{n}}
		&=
		2 \bff{v} \Big(\bff{v}\cdot \frac{\partial\bff{v}}{\partial\bff{n}}\Big)
		+
		|\bff{v}|^2 \frac{\partial\bff{v}}{\partial\bff{n}},
		\label{equ:nor der v2v}
	\end{align}
	provided that the partial derivatives are well defined.
\end{lemma}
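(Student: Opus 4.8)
The plan is to verify each of the three identities by a direct componentwise computation using the product and chain rules; all three hold pointwise wherever the indicated derivatives are defined, so no function-space machinery is needed, and the statement "provided that the partial derivatives are well defined" is exactly what makes the argument purely algebraic.

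For \eqref{equ:j nab u}, I would recall that $\bff{v}\otimes\bff{j}$ denotes the matrix with entries $v_k j_i$ and that its divergence is taken row-wise, so that the $k$-th component of $\nabla\cdot(\bff{v}\otimes\bff{j})$ is $\sum_{i=1}^d \partial_i (v_k j_i)$. Expanding by the product rule gives $\sum_{i=1}^d j_i\,\partial_i v_k + v_k \sum_{i=1}^d \partial_i j_i = \big((\bff{j}\cdot\nabla)\bff{v}\big)_k + (\nabla\cdot\bff{j})\,v_k$, and rearranging yields the claim. (Under the standing assumption $\nabla\cdot\bff{j}=0$ the last term in fact vanishes, but the identity is stated without that hypothesis and so is proved without it.)

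For \eqref{equ:nab un2}, I would compute the $(k,i)$ entry of $\nabla(|\bff{v}|^2\bff{v})$ as $\partial_i(|\bff{v}|^2 v_k) = (\partial_i |\bff{v}|^2)\,v_k + |\bff{v}|^2\,\partial_i v_k$, and use $\partial_i|\bff{v}|^2 = 2\sum_{\ell} v_\ell\,\partial_i v_\ell = 2\,\bff{v}\cdot\partial_i\bff{v}$. This gives $\partial_i(|\bff{v}|^2 v_k) = 2 v_k\,(\bff{v}\cdot\partial_i\bff{v}) + |\bff{v}|^2\,\partial_i v_k$, which is precisely the $(k,i)$ entry of $2\bff{v}\,(\bff{v}\cdot\nabla\bff{v}) + |\bff{v}|^2\nabla\bff{v}$. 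Then \eqref{equ:nor der v2v} follows immediately by contracting \eqref{equ:nab un2} against the outward unit normal $\bff{n}$ on $\partial\Omega$, i.e. applying $\partial/\partial\bff{n} = \bff{n}\cdot\nabla$ componentwise.

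There is essentially no obstacle of a mathematical nature; the only points requiring care are fixing the conventions for $\bff{v}\otimes\bff{j}$, for the Jacobian matrix $\nabla\bff{v}$, and for the divergence of a matrix field, and then checking that the contraction with $\bff{n}$ in the last identity is consistent with those conventions. I would state these conventions explicitly at the start of the proof so that the same reading is used when these identities are invoked later in the error analysis.
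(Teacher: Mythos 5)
Your componentwise computation is correct and complete; the paper itself does not spell out a proof but simply cites \cite[Lemma~3.1]{SoeTra23}, where the argument is exactly this kind of direct product-rule calculation. Your explicit attention to the conventions for $\bff{v}\otimes\bff{j}$, the Jacobian $\nabla\bff{v}$, and the row-wise divergence is a sensible addition, since those are the only points where a reader could go wrong.
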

\begin{proof}
See \cite[Lemma~3.1]{SoeTra23}.
\end{proof}

We introduce, for any~$a_1,a_2,\ldots,a_m\in\bb{R}^+$, the notation
\[
	\umin(a_1,a_2,\ldots,a_m) := \min\{a_1,a_2,\ldots,a_m\}+ \sum_{i=1}^m a_i.
\]
We denote by $\odot$ either the dot product or cross product, and by
$\circledast$ either the scalar-vector product, dot product, or cross
product, as long as the expression is meaningful. For example,
\begin{equation}\label{equ:odo cir}
	(\bff{u} \odot \bff{v}) \circledast \bff{w}
	=
	(\bff{u} \cdot \bff{v}) \bff{w}
	\quad\text{or}\quad
	(\bff{u} \times \bff{v}) \cdot \bff{w}
	\quad\text{or}\quad
	(\bff{u} \times \bff{v}) \times \bff{w}.
\end{equation}
For simplification of notation, we write $ \bff{u}\odot\bff{v}\in\bb{W}^{1,p} $
to indicate both $ \bff{u}\cdot\bff{v}\in W^{1,p} $
and $ \bff{u}\times\bff{v}\in\bb{W}^{1,p} $.

We have the following results on the estimates for various products of vector-valued functions which will be useful in subsequent analysis.

\begin{lemma}\label{lem:equivnorm}
	Let $\bff{u},\bff{v}, \bff{w}: \Omega \subset \bb{R}^d \to \bb{R}^3$ and
	$\varphi: \Omega \subset \bb{R}^d \to \bb{R}$, where $d=1,2,3$. We denote by $\odot$
	either the dot product or cross product.
	\begin{enumerate}
		\item For any $\bff{v} \in \bb{H}_{\bff{n}}^2$ and any $\epsilon>0$,
		\begin{align}
			\label{equ:equivnorm-nabL2}
			\norm{\nabla \bff{v}}{\bb{L}^2}^2 
			&\leq  
			\norm{\bff{v}}{\bb{L}^2} 
			\norm{\Delta \bff{v}}{\bb{L}^2}^, 
			\\
			\label{equ:equivnorm-nabL2 young}
			\norm{\nabla \bff{v}}{\bb{L}^2}^2 
			&\leq  
			\frac{1}{4\epsilon} \norm{\bff{v}}{\bb{L}^2}^2
			+ 
			\epsilon \norm{\Delta \bff{v}}{\bb{L}^2}^2,
		\end{align}
		\item
		For $ p, q_j, r_j \in[1,\infty] $ satisfying $1/q_j + 1/r_j =
		1/p$, $j=1,2$, if $\varphi \in L^{q_1} \cap W^{1,q_2}$,
		$\bff{v} \in \bb{W}^{1,r_1} \cap \bb{L}^{r_2}$,
		and $\bff{w} \in \bb{W}^{2,r_1} \cap \bb{W}^{1,r_2}$, then
		$ \varphi\bff{v}\in\bb{W}^{1,p} $, 
		$ \varphi\nabla\bff{w}\in\bb{W}^{1,p} $, and
		\begin{align}\label{equ:prod sobolev scal vec}
			\norm{\varphi \bff{v}}{\bb{W}^{1,p}} 
			&\leq
			\umin
			\big(
			\norm{\varphi}{L^{q_1}} \norm{\bff{v}}{\bb{W}^{1,r_1}},
			\norm{\varphi}{W^{1,q_2}} \norm{\bff{v}}{\bb{L}^{r_2}}
			\big),
			\\
			\label{equ:prod sobolev scal mat}
			\norm{\varphi \nabla \bff{w}}{\bb{W}^{1,p}} 
			&\leq
			\umin
			\big(
			\norm{\varphi}{L^{q_1}} \norm{\bff{w}}{\bb{W}^{2,r_1}},
			\norm{\varphi}{W^{1,q_2}} \norm{\bff{w}}{\bb{W}^{1,r_2}}
			\big).
		\end{align}
		\item 
		For $ p, q_j, r_j \in[1,\infty] $ satisfying $1/q_j + 1/r_j =
		1/p$, $j=1,2$, if $\bff{u} \in \bb{L}^{q_1} \cap \bb{W}^{1,q_2}$,
		$\bff{v} \in \bb{W}^{1,r_1} \cap \bb{L}^{r_2}$,
		and $\bff{w} \in \bb{W}^{2,r_1} \cap \bb{W}^{1,r_2}$, then
		$ \bff{u}\odot\bff{v}\in\bb{W}^{1,p} $,
		$ \bff{u}\odot\nabla\bff{w}\in\bb{W}^{1,p} $, and
		\begin{align}
			\label{equ:prod sobolev vec dot}
			\norm{\bff{u} \odot \bff{v}}{\bb{W}^{1,p}}
			&\leq
			\umin
			\big(
			\norm{\bff{u}}{\bb{L}^{q_1}}
			\norm{\bff{v}}{\bb{W}^{1,r_1}},
			\norm{\bff{u}}{\bb{W}^{1,q_2}} \norm{\bff{v}}{\bb{L}^{r_2}}
			\big),
			\\
			\label{equ:prod sobolev mat dot}
			\norm{\bff{u}\odot \nabla\bff{w}}{\bb{W}^{1,p}}
			&\leq
			\umin
			\big(
			\norm{\bff{u}}{\bb{L}^{q_1}}
			\norm{\bff{w}}{\bb{W}^{2,r_1}},
			\norm{\bff{u}}{\bb{W}^{1,q_2}} 
			\norm{\bff{w}}{\bb{W}^{1,r_2}}
			\big).
		\end{align}
		\item 
		For $p, q_j,r_j,s_j \in [1,\infty]$
		satisfying
		$1/q_j+1/r_j+1/s_j=1/p$, $j=1,2,3$,
		if $\bff{u} \in \bb{W}^{1,q_1} \cap \bb{L}^{q_2} \cap \bb{L}^{q_3}$, 
		$\bff{v} \in \bb{L}^{r_1} \cap \bb{W}^{1,r_2} \cap \bb{L}^{r_3}$, 
		and 
		$\bff{w} \in \bb{L}^{s_1} \cap \bb{L}^{s_2} \cap \bb{W}^{1,s_3}  $,
		then $ (\bff{u}\odot \bff{v}) \circledast \bff{w} \in \bb{W}^{1,p} $
		and
		\begin{align}\label{equ:prod 3}
		\nonumber
		\norm{(\bff{u}\odot \bff{v}) \circledast \bff{w}}{\bb{W}^{1,p}} 
		&\leq
		\umin
		\Big(
		\norm{\bff{u}}{\bb{W}^{1,q_1}}
		\norm{\bff{v}}{\bb{L}^{r_1}}
		\norm{\bff{w}}{\bb{L}^{s_1}},
		\\
		&\qquad \qquad
		\norm{\bff{u}}{\bb{L}^{q_2}}
		\norm{\bff{v}}{\bb{W}^{1,r_2}}
		\norm{\bff{w}}{\bb{L}^{s_2}},
		\norm{\bff{u}}{\bb{L}^{q_3}}
		\norm{\bff{v}}{\bb{L}^{r_3}}
		\norm{\bff{w}}{\bb{W}^{1,s_3}} \Big).
		\end{align}
		\item 
		For $p, q_j,r_j,s_j \in [1,\infty]$
		satisfying
		$1/q_j+1/r_j+1/s_j=1/p$, $j=1,2,3$,
		if 
		$\bff{u} \in \bb{W}^{1,q_1} \cap \bb{L}^{q_2} \cap \bb{L}^{q_3}$, 
		$\bff{v} \in \bb{L}^{r_1} \cap \bb{W}^{1,r_2} \cap \bb{L}^{r_3}$, 
		and 
		$\bff{w} \in \bb{W}^{1,s_1} \cap \bb{W}^{1,s_2} \cap \bb{W}^{2,s_3}$,
		then $ (\bff{u}\odot \bff{v}) \circledast \nabla \bff{w} \in \bb{W}^{1,p} $,
		$\bff{u} (\bff{v}\cdot \nabla \bff{w}) \in \bb{W}^{1,p}$,
		and
		\begin{align}
		\label{equ:prod 3 vec dot vec mat}
		\nonumber
		\norm{(\bff{u}\odot \bff{v}) \circledast \nabla \bff{w}}{\bb{W}^{1,p}} 
		&\leq
		\umin
		\Big(
		\norm{\bff{u}}{\bb{W}^{1,q_1}}
		\norm{\bff{v}}{\bb{L}^{r_1}}
		\norm{\bff{w}}{\bb{W}^{1,s_1}},
		\\
		&\qquad \qquad
		\norm{\bff{u}}{\bb{L}^{q_2}}
		\norm{\bff{v}}{\bb{W}^{1,r_2}}
		\norm{\bff{w}}{\bb{W}^{1,s_2}},
		\norm{\bff{u}}{\bb{L}^{q_3}}
		\norm{\bff{v}}{\bb{L}^{r_3}}
		\norm{\bff{w}}{\bb{W}^{2,s_3}} \Big).
		\\
		\label{equ:prod 3 vec dot mat}
		\nonumber
		\norm{\bff{u} (\bff{v}\cdot \nabla \bff{w})}{\bb{W}^{1,p}}
		&\leq
		\umin
		\Big(
		\norm{\bff{u}}{\bb{W}^{1,q_1}}
		\norm{\bff{v}}{\bb{L}^{r_1}}
		\norm{\bff{w}}{\bb{W}^{1,s_1}},
		\\
		&\qquad \qquad
		\norm{\bff{u}}{\bb{L}^{q_2}}
		\norm{\bff{v}}{\bb{W}^{1,r_2}}
		\norm{\bff{w}}{\bb{W}^{1,s_2}},
		\norm{\bff{u}}{\bb{L}^{q_3}}
		\norm{\bff{v}}{\bb{L}^{r_3}}
		\norm{\bff{w}}{\bb{W}^{2,s_3}} \Big).
		\end{align}
		\item For any $\varphi\in H^k$, $\bff{v},\bff{w} \in \bb{H}^k$, where $k>d/2$,
		\begin{align}\label{equ:prod Hs scal vec}
			\norm{\varphi \bff{w}}{\bb{H}^k}
			&\leq
			C \norm{\varphi}{H^k}
			\norm{\bff{w}}{\bb{H}^k},
			\\
			\label{equ:prod Hs mat dot}
			\norm{\bff{v} \odot \bff{w}}{\bb{H}^k}
			&\leq
			C \norm{\bff{v}}{\bb{H}^k}
			\norm{\bff{w}}{\bb{H}^k},
			\\
			\label{equ:prod Hs triple}
			\norm{(\bff{u} \times \bff{v}) \odot \bff{w}}{\bb{H}^k}
			&\leq
			C \norm{\bff{u}}{\bb{H}^k} \norm{\bff{v}}{\bb{H}^k} \norm{\bff{w}}{\bb{H}^k},
		\end{align}
		where $C=C(\Omega, k, d)$, but is independent of $\bff{u}, \bff{v}, \bff{w}$.
		\item For any $\varphi\in H^k$, $\bff{v}\in \bb{H}^k$, $\bff{w} \in \bb{H}^{k+1}$, where $k>d/2$,
		\begin{align}\label{equ:prod Hs scal mat}
			\norm{\varphi \nabla\bff{w}}{\bb{H}^k}
			&\leq
			C \norm{\varphi}{H^k}
			\norm{\bff{w}}{\bb{H}^{k+1}},
			\\
			\label{equ:prod Hs vec dot mat}
			\norm{\bff{v} \odot \nabla \bff{w}}{\bb{H}^k}
			&\leq
			C \norm{\bff{v}}{\bb{H}^k}
			\norm{\bff{w}}{\bb{H}^{k+1}},
			\\
			\label{equ:prod Hs triple mat}
			\norm{(\bff{u} \times \bff{v}) \odot \nabla \bff{w}}{\bb{H}^k}
			&\leq
			C\norm{\bff{u}}{\bb{H}^k} \norm{\bff{v}}{\bb{H}^k} \norm{\bff{w}}{\bb{H}^{k+1}},
		\end{align}
		where $C=C(\Omega, k,d)$, but is independent of $\bff{u},\bff{v},\bff{w}$.
	\end{enumerate}
\end{lemma}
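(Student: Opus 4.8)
The proof proceeds item by item. Parts (2)--(5) all follow a single template — the generalised Leibniz rule followed by a H\"older estimate with carefully matched exponents — while parts (6)--(7) reduce to the algebra property of $\bb{H}^k$. For part (1), I would integrate by parts: since $\bff{v}\in\bb{H}_{\bff{n}}^2$, the boundary term $\int_{\partial\Omega}\bff{v}\cdot\partial\bff{v}/\partial\bff{n}$ vanishes, so that $\norm{\nabla\bff{v}}{\bb{L}^2}^2 = -\inpro{\bff{v}}{\Delta\bff{v}}_{\bb{L}^2}$; the Cauchy--Schwarz inequality then gives \eqref{equ:equivnorm-nabL2}, and Young's inequality with parameter $\epsilon$ gives \eqref{equ:equivnorm-nabL2 young}.

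For parts (2)--(5), take \eqref{equ:prod sobolev scal vec} as the model case. Writing $\nabla(\varphi\bff{v}) = (\nabla\varphi)\,\bff{v} + \varphi\,\nabla\bff{v}$ gives
\[
\norm{\varphi\bff{v}}{\bb{W}^{1,p}} \le \norm{\varphi\bff{v}}{\bb{L}^p} + \norm{(\nabla\varphi)\bff{v}}{\bb{L}^p} + \norm{\varphi\nabla\bff{v}}{\bb{L}^p}.
\]
By H\"older's inequality with $1/q_2+1/r_2 = 1/p$, the second term is bounded by $\norm{\varphi}{W^{1,q_2}}\norm{\bff{v}}{\bb{L}^{r_2}} =: b$; with $1/q_1+1/r_1 = 1/p$, the third term is bounded by $\norm{\varphi}{L^{q_1}}\norm{\bff{v}}{\bb{W}^{1,r_1}} =: a$; and the first term may be bounded by either $a$ or $b$. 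Choosing whichever is smaller for the first term yields a bound of the form $\min(a,b)+a+b$, which is exactly $\umin(a,b)$. The remaining estimates of (2) and (3) are proved identically, using the pointwise bound $|\bff{u}\odot\bff{v}|\le|\bff{u}|\,|\bff{v}|$ valid for both the dot and the cross product, and noting that $\nabla\bff{w}$ carries one derivative more than $\bff{v}$, which is why $\bff{w}$ is taken one order smoother in \eqref{equ:prod sobolev scal vec}'s companions. For the triple products in (4) and (5), the product rule produces one undifferentiated term, admitting the three distributions of the $\bb{L}^p$-norm among the three factors that correspond to the three products appearing inside $\umin$, together with three differentiated terms, each of which is forced — by which factor carries the extra derivative, combined with the constraints $1/q_j+1/r_j+1/s_j=1/p$ — into exactly one of those three products; summing gives $\min(a,b,c)+a+b+c=\umin(a,b,c)$.

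For parts (6) and (7), since $k>d/2$ the Sobolev embedding $\bb{H}^k\hookrightarrow\bb{L}^\infty$ holds, and the Gagliardo--Nirenberg (Moser-type) estimate $\norm{fg}{\bb{H}^k}\le C\big(\norm{f}{\bb{L}^\infty}\norm{g}{\bb{H}^k}+\norm{f}{\bb{H}^k}\norm{g}{\bb{L}^\infty}\big)$ for componentwise products then yields $\norm{fg}{\bb{H}^k}\le C\norm{f}{\bb{H}^k}\norm{g}{\bb{H}^k}$. Applying this to $\varphi\bff{w}$ and to $\bff{v}\odot\bff{w}$ (again using $|\bff{v}\odot\bff{w}|\le|\bff{v}|\,|\bff{w}|$ componentwise) gives \eqref{equ:prod Hs scal vec} and \eqref{equ:prod Hs mat dot}, and one further iteration gives the triple-product bound \eqref{equ:prod Hs triple}. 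The gradient versions \eqref{equ:prod Hs scal mat}--\eqref{equ:prod Hs triple mat} follow at once on replacing $\bff{w}$ by $\nabla\bff{w}$ and using $\norm{\nabla\bff{w}}{\bb{H}^k}\le\norm{\bff{w}}{\bb{H}^{k+1}}$.

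I expect the only genuinely delicate point to be the exponent bookkeeping in parts (2)--(5): one must check that the hypotheses $1/q_j+1/r_j=1/p$ (resp.\ $1/q_j+1/r_j+1/s_j=1/p$) are precisely what make each differentiated term admissible for exactly one of the H\"older products listed inside $\umin$, after which the $\umin$ notation is tailored to absorb the remaining freedom in bounding the undifferentiated term. Beyond this careful accounting there is no analytic obstacle.
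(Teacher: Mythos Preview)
Your proposal is correct and follows essentially the same route as the paper: integration by parts plus Cauchy--Schwarz/Young for part~(1), Leibniz rule plus H\"older with matched exponents for parts~(2)--(5), and the algebra property of $\bb{H}^k$ for $k>d/2$ applied componentwise for parts~(6)--(7). The only cosmetic difference is in how the $\umin$ bound is reached: the paper bounds the undifferentiated $\bb{L}^p$ term using one fixed H\"older splitting to get (say) $2a+b$, then invokes ``by symmetry'' to obtain the other orderings and takes the minimum, whereas you bound that term directly by $\min(a,b)$; the two arguments are equivalent since $\min(2a+b,\,a+2b)=\min(a,b)+a+b$.
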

\begin{proof}
	The first inequality \eqref{equ:equivnorm-nabL2} is a direct consequence
	of integration by parts and H\"older's inequality. The inequality
	\eqref{equ:equivnorm-nabL2 young} is a result of Young's inequality.
	
	To prove~\eqref{equ:prod sobolev scal vec}, we first deduce from
	H\"older's inequality
	\begin{align*}
		\norm{\varphi \bff{v}}{\bb{W}^{1,p}}
		&\le
		\norm{\varphi \bff{v}}{\bb{L}^p}
		+
		\norm{\nabla(\varphi \bff{v})}{\bb{L}^p}
		\leq 
		\norm{\varphi}{L^{q_1}} \norm{\bff{v}}{\bb{L}^{r_1}}
		+
		\norm{\varphi}{L^{q_1}} \norm{\bff{v}}{\bb{W}^{1,r_1}}
		+
		\norm{\varphi}{W^{1,q_2}} \norm{\bff{v}}{\bb{L}^{r_2}}
		\\
		&\le
		2\norm{\varphi}{L^{q_1}} \norm{\bff{v}}{\bb{W}^{1,r_1}}
		+
		\norm{\varphi}{W^{1,q_2}} \norm{\bff{v}}{\bb{L}^{r_2}}.
	\end{align*}
	By symmetry, we obtain \eqref{equ:prod sobolev scal vec}.
	Inequalities~\eqref{equ:prod sobolev scal mat}, 
	\eqref{equ:prod sobolev vec dot}, and~\eqref{equ:prod sobolev mat dot}
	can be proved in exactly the same manner.

	
	Next, we prove \eqref{equ:prod 3}. By H\"older's inequality,
	\begin{align*}
		&\norm{(\bff{u}\odot \bff{v}) \circledast \bff{w}}{\bb{W}^{1,p}} 
		\\
		&\leq
		\norm{(\bff{u}\odot \bff{v}) \circledast \bff{w}}{\bb{L}^p}
		+
		\norm{\nabla\big((\bff{u}\odot \bff{v}) \circledast \bff{w}\big)}{\bb{L}^p}
		\\
		&\leq
		\norm{\bff{u}}{\bb{L}^{q_1}} \norm{\bff{v}}{\bb{L}^{r_1}} \norm{\bff{w}}{\bb{L}^{s_1}}
		+
		\norm{\bff{u}}{\bb{W}^{1,q_1}} \norm{\bff{v}}{\bb{L}^{r_1}} \norm{\bff{w}}{\bb{L}^{s_1}}
		+
		\norm{\bff{u}}{\bb{L}^{q_2}} \norm{\bff{v}}{\bb{W}^{1,r_2}} \norm{\bff{w}}{\bb{L}^{s_2}}
		+
		\norm{\bff{u}}{\bb{L}^{q_3}} \norm{\bff{v}}{\bb{L}^{r_3}} \norm{\bff{w}}{\bb{W}^{1,s_3}}
		\\
		&\leq 
		2 \norm{\bff{u}}{\bb{W}^{1,q_1}} \norm{\bff{v}}{\bb{L}^{r_1}} \norm{\bff{w}}{\bb{L}^{s_1}}
		+
		\norm{\bff{u}}{\bb{L}^{q_2}} \norm{\bff{v}}{\bb{W}^{1,r_2}} \norm{\bff{w}}{\bb{L}^{s_2}}
		+
		\norm{\bff{u}}{\bb{L}^{q_3}} \norm{\bff{v}}{\bb{L}^{r_3}} \norm{\bff{w}}{\bb{W}^{1,s_3}}.
	\end{align*}
	Inequality \eqref{equ:prod 3} then follows by symmetry.
	The proof of \eqref{equ:prod 3 vec dot vec mat} can be done in
	exactly the same manner.
	
	We will prove \eqref{equ:prod 3 vec dot mat} next. By H\"older's inequality,
	\begin{align*}
		\norm{\bff{u}(\bff{v}\cdot \nabla \bff{w})}{\bb{W}^{1,p}}
		&\leq
		\sum_{i=1}^d \Big( \norm{(\bff{v}\cdot \partial_i \bff{w}) \bff{u}}{\bb{L}^p} 
		+
		\norm{\nabla \big( (\bff{v}\cdot \partial_i \bff{w}) \bff{u} \big)}{\bb{L}^p} \Big)
		\\
		&\leq
		\sum_{i=1}^d \Big( \norm{\bff{u}}{\bb{L}^{q_1}} \norm{\bff{v}}{\bb{L}^{r_1}} \norm{\partial_i \bff{w}}{\bb{L}^{s_1}}
		+
		\norm{\nabla \bff{v}}{\bb{L}^{r_2}} \norm{\partial_i \bff{w}}{\bb{L}^{s_2}} \norm{\bff{u}}{\bb{L}^{q_2}}
		\\
		&\qquad \qquad
		+
		\norm{\bff{v}}{\bb{L}^{r_3}} \norm{\nabla \partial_i \bff{w}}{\bb{L}^{s_3}} \norm{\bff{u}}{\bb{L}^{q_3}}
		+
		\norm{\bff{v}}{\bb{L}^{r_1}} \norm{\partial_i \bff{w}}{\bb{L}^{s_1}} \norm{\nabla \bff{u}}{\bb{L}^{q_1}} \Big)
		\\
		&\leq
		2\norm{\bff{u}}{\bb{W}^{1,q_1}} \norm{\bff{v}}{\bb{L}^{r_1}}
		\norm{\bff{w}}{\bb{W}^{1,s_1}}
		+
		\norm{\bff{u}}{\bb{L}^{q_2}} \norm{\bff{v}}{\bb{W}^{1,r_2}} \norm{\bff{w}}{\bb{W}^{1,s_2}} 
		+
		\norm{\bff{u}}{\bb{L}^{q_3}} \norm{\bff{v}}{\bb{L}^{r_3}} \norm{\bff{w}}{\bb{W}^{2,s_3}}.
	\end{align*}
	Therefore, \eqref{equ:prod 3 vec dot mat} follows by symmetry.
	Next, let $\bff{v}=(v_1,v_2,v_3)$ and $\bff{w}=(w_1,w_2,w_3)$. Then
	\begin{align*}
		\norm{\varphi \bff{w}}{\bb{H}^k} 
		\leq
		\sum_{i=1}^3 \norm{\varphi w_i}{H^k}
		\leq
		\sum_{i=1}^3 C\norm{\varphi}{H^k} \norm{w_i}{H^k}
		\leq
		C \norm{\varphi}{H^k} \norm{\bff{w}}{\bb{H}^k},
	\end{align*}
	where we used the fact that
	\begin{align}\label{equ:prod H2 scalar}
		\norm{\varphi \psi}{H^k} \leq C \norm{\varphi}{H^k} \norm{\psi}{H^k}
	\end{align}
	for scalar-valued functions $\varphi$ and $\psi$, if $k>d/2$~\cite{AdaFou03}. Moreover, using \eqref{equ:prod H2 scalar} again,
	\begin{align*}
		\norm{\bff{v} \times \bff{w}}{\bb{H}^k}
		\leq
		\sum_{i,j=1}^3 \norm{v_i w_j}{H^k}
		\lesssim
		\sum_{i,j=1}^3 \norm{v_i}{H^k} \norm{w_j}{H^k}
		\leq
		\left(\sum_{i=1}^3 \norm{v_i}{H^k} \right) \left( \sum_{j=1}^3 \norm{w_j}{H^k} \right) 
		\leq
		\norm{\bff{v}}{\bb{H}^k} \norm{\bff{w}}{\bb{H}^k},
	\end{align*}
	and similarly we also have $\norm{\bff{v}\cdot \bff{w}}{\bb{H}^k} \leq C \norm{\bff{v}}{\bb{H}^k} \norm{\bff{w}}{\bb{H}^k}$, thus proving \eqref{equ:prod Hs mat dot}. Furthermore, \eqref{equ:prod Hs triple} follows by repeatedly applying \eqref{equ:prod Hs mat dot}.
	
	Next, by \eqref{equ:prod Hs scal vec} we have
	\begin{align*}
		\norm{\varphi \nabla \bff{w}}{\bb{H}^k}
		\leq
		\sum_{i=1}^d \norm{\varphi \partial_i \bff{w}}{\bb{H}^k}
		\leq
		\sum_{i=1}^d C \norm{\varphi}{H^k} \norm{\partial_i \bff{w}}{\bb{H}^k}
		\leq
		C \norm{\varphi}{H^k} \norm{\bff{w}}{\bb{H}^{k+1}},
	\end{align*}
	proving \eqref{equ:prod Hs scal mat}.
	Inequality \eqref{equ:prod Hs vec dot mat} can be shown in the same manner.

	Finally, \eqref{equ:prod Hs triple mat} follows from \eqref{equ:prod Hs mat dot} and \eqref{equ:prod Hs vec dot mat}. This completes the proof of the lemma.
\end{proof}

To facilitate the proof of our error analysis, we need several multilinear forms defined in the following lemmas.

\begin{lemma}
Given $\bff{\phi}, \bff{\eta} \in \bb{H}_{\bff{n}}^2$, let $\cal{B}(\bff{\phi}, \bff{\eta};\,
\cdot \, , \, \cdot) : \bb{H}^1\times \bb{H}^1 \to \bb{R}$ and $\cal{C}(\bff{\eta}; \,\cdot\, ,\,\cdot ): \bb{H}^1\times \bb{H}^1 \to \bb{R}$ be respectively defined by
\begin{align}\label{equ:bilinear B}
	\nonumber
	\cal{B}(\bff{\phi}, \bff{\eta}; \bff{v}, \bff{w}) 
	&:=
	\beta_3 \inpro{ (\bff{\phi} \cdot \bff{\eta}) \bff{v}}{ \bff{w}}_{\bb{L}^2}
	+
	\beta_5 \inpro{ (\bff{\phi} \cdot \bff{\eta}) \nabla \bff{v}}{ \nabla \bff{w}}_{\bb{L}^2}
	\\
	&\quad\;
	+ 
	\beta_5 \inpro{ \bff{\eta} (\bff{\phi}\cdot \nabla \bff{v}) }{ \nabla \bff{w}}_{\bb{L}^2}
	+ 
	\beta_5 \inpro{ \bff{\phi}(\bff{\eta} \cdot \nabla \bff{v}) }{
		\nabla \bff{w} }_{\bb{L}^2}
\end{align}
and
\begin{align}\label{equ:bilinear C}
	\cal{C}(\bff{\eta};\bff{v},\bff{w})
	:=
	-\beta_4 \inpro{\bff{\eta} \times \nabla \bff{v}}{ \nabla \bff{w} }_{\bb{L}^2}.
\end{align}
The following statements hold true:
\begin{enumerate}[(i)]
	\item The map $\cal{B}$ is symmetric with respect to the third and fourth variable.
	\item For all $\bff{v},\bff{w}\in \bb{H}^1$,
\begin{align}
	\label{equ:B bounded H1}
	|\cal{B}(\bff{\phi},\bff{\eta}; \bff{v}, \bff{w})| 
	&\leq
	(\beta_3+3\beta_5) \norm{\bff{\phi}}{\bb{L}^\infty}
	\norm{\bff{\eta}}{\bb{L}^\infty}
	\norm{\bff{v}}{\bb{H}^1} \norm{\bff{w}}{\bb{H}^1},
	\\
	\label{equ:C bounded H1}
	|\cal{C}(\bff{\eta}; \bff{v}, \bff{w})| 
	&\leq
	\beta_4 \norm{\bff{\eta}}{\bb{L}^\infty}
	\norm{\bff{v}}{\bb{H}^1} \norm{\bff{w}}{\bb{H}^1}.
\end{align}
	\item Moreover, if $\bff{v} \in \bb{H}_{\bff{n}}^2$ and $\bff{w} \in \bb{H}^1$, then 
\begin{align}
	\label{equ:B bounded W1,4}
	\nonumber
	|\cal{B}(\bff{\phi}, \bff{\eta}; \bff{v}, \bff{w})| 
	&\lesssim 
	\Big( \norm{\bff{\phi}}{\bb{W}^{1,4}} \norm{\bff{\eta}}{\bb{L}^\infty}
	+
	\norm{\bff{\phi}}{\bb{L}^\infty} \norm{\bff{\eta}}{\bb{W}^{1,4}}
	\Big) 
	\norm{\bff{v}}{\bb{W}^{1,4}} 
	\norm{\bff{w}}{\bb{L}^{2}} 
	\\
	&\quad
	+
	\norm{\bff{\phi}}{\bb{L}^\infty} \norm{\bff{\eta}}{\bb{L}^\infty}
	\norm{\bff{v}}{\bb{H}^2}
	\norm{\bff{w}}{\bb{L}^2}, 
	\\
	\label{equ:B bounded H2}
	|\cal{B}(\bff{\phi}, \bff{\eta}; \bff{v}, \bff{w})| 
	&\lesssim 
	\norm{\bff{\phi}}{\bb{H}^2}
	\norm{\bff{\eta}}{\bb{H}^2}
	\norm{\bff{v}}{\bb{H}^2}
	\norm{\bff{w}}{\bb{L}^2},
\end{align}
and
\begin{align}
	\label{equ:C bounded W1,4}
	|\cal{C}(\bff{\eta}; \bff{v}, \bff{w})| 
	&\lesssim
	\norm{\bff{\eta}}{\bb{W}^{1,4}} \norm{\bff{v}}{\bb{W}^{1,4}} 
	\norm{\bff{w}}{\bb{L}^{2}} 
	+
	 \norm{\bff{\eta}}{\bb{L}^\infty}
	\norm{\bff{v}}{\bb{H}^2}
	\norm{\bff{w}}{\bb{L}^2},
	\\
	\label{equ:C bounded H2}
	|\cal{C}(\bff{\eta}; \bff{v}, \bff{w})| 
	&\lesssim
	\norm{\bff{\eta}}{\bb{H}^2}
	\norm{\bff{v}}{\bb{H}^2}
	\norm{\bff{w}}{\bb{L}^2},
\end{align}
\end{enumerate}
where the constants depend only on $\beta_3,\beta_4,\beta_5$, and $\Omega$.
\end{lemma}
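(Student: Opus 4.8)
The plan is to dispatch the three parts in turn, each resting on a short computation. For part~(i), the first two summands of $\cal{B}$ are already symmetric under $\bff{v}\leftrightarrow\bff{w}$, so only the last two need attention. Writing them in components, with $(\bff{\phi}\cdot\nabla\bff{v})_j=\sum_k\phi_k\,\partial_j v_k$ as in Lemma~\ref{lem:der u2u}, and relabelling the summation indices $i\leftrightarrow k$ in the fourth term, their sum becomes $\beta_5\sum_{i,j,k}\eta_i\phi_k\big((\partial_j v_k)(\partial_j w_i)+(\partial_j v_i)(\partial_j w_k)\big)$, which is visibly unchanged when $\bff{v}$ and $\bff{w}$ are exchanged. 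For part~(ii), I would estimate each summand of $\cal{B}$ by H\"older's inequality after bounding the pointwise products $|\bff{\phi}\cdot\bff{\eta}|$, $|\bff{\eta}(\bff{\phi}\cdot\nabla\bff{v})|$, and $|\bff{\phi}(\bff{\eta}\cdot\nabla\bff{v})|$ by $|\bff{\phi}|\,|\bff{\eta}|$ times $|\bff{v}|$ or $|\nabla\bff{v}|$; factoring out $\norm{\bff{\phi}}{\bb{L}^\infty}\norm{\bff{\eta}}{\bb{L}^\infty}$ and bounding each of the two remaining $\bb{L}^2$ factors by the corresponding $\bb{H}^1$ norm, the four contributions sum to give the stated constant $\beta_3+3\beta_5$. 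The bound~\eqref{equ:C bounded H1} for $\cal{C}$ follows at once from $|\bff{\eta}\times\nabla\bff{v}|\le|\bff{\eta}|\,|\nabla\bff{v}|$.

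For part~(iii), the idea is that since $\bff{v}\in\bb{H}_{\bff{n}}^2$ we may integrate by parts in every term of $\cal{B}$ and $\cal{C}$ that carries a $\nabla\bff{w}$, moving the derivative onto the remaining factors; each resulting boundary integral contains a factor $\partial\bff{v}/\partial\bff{n}$ and therefore vanishes. After differentiating the products---for instance $\nabla\!\cdot\!\big((\bff{\phi}\cdot\bff{\eta})\nabla\bff{v}\big)=\big(\nabla(\bff{\phi}\cdot\bff{\eta})\cdot\nabla\big)\bff{v}+(\bff{\phi}\cdot\bff{\eta})\Delta\bff{v}$, and analogously for the $\bff{\eta}(\bff{\phi}\cdot\nabla\bff{v})$, $\bff{\phi}(\bff{\eta}\cdot\nabla\bff{v})$, and $\bff{\eta}\times\nabla\bff{v}$ terms---I would apply H\"older with the splitting $\bb{L}^4\cdot\bb{L}^4\cdot\bb{L}^2$ on the first-order terms (using $\norm{\nabla(\bff{\phi}\cdot\bff{\eta})}{\bb{L}^4}\lesssim\norm{\bff{\phi}}{\bb{W}^{1,4}}\norm{\bff{\eta}}{\bb{L}^\infty}+\norm{\bff{\phi}}{\bb{L}^\infty}\norm{\bff{\eta}}{\bb{W}^{1,4}}$) and the splitting $\bb{L}^\infty\cdot\bb{L}^\infty\cdot\bb{L}^2$ on the terms containing $\Delta\bff{v}$. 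This yields~\eqref{equ:B bounded W1,4} and~\eqref{equ:C bounded W1,4}; the $\bb{H}^2$-versions~\eqref{equ:B bounded H2} and~\eqref{equ:C bounded H2} then follow by the Sobolev embeddings $\bb{H}^2\hookrightarrow\bb{W}^{1,4}$ and $\bb{H}^2\hookrightarrow\bb{L}^\infty$, which hold for $d\le 3$.

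I expect the main obstacle to be the bookkeeping in part~(iii): one must verify that each integration by parts is legitimate---the products being differentiated lie in $\bb{W}^{1,p}$ for a suitable $p$ by Lemma~\ref{lem:equivnorm}, and $\bff{w}\in\bb{H}^1$ is an admissible test function---and, above all, that every one of the several boundary integrals vanishes because of the homogeneous Neumann condition imposed on $\bff{v}$ (and not on $\bff{w}$, which ranges over all of $\bb{H}^1$). Once the integrations by parts are organised correctly, the remaining work is a routine combination of H\"older's inequality and Sobolev embedding.
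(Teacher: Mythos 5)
Your proposal is correct and follows essentially the same route as the paper: symmetry of the sum of the last two terms of $\cal{B}$ via index relabelling, the $\bb{H}^1$ bounds by pointwise estimates and H\"older, and the remaining bounds by integrating by parts (with the boundary terms killed by the Neumann condition on $\bff{v}$), applying the product rule with $\bb{L}^4\cdot\bb{L}^4\cdot\bb{L}^2$ and $\bb{L}^\infty\cdot\bb{L}^\infty\cdot\bb{L}^2$ splittings, and concluding with the embeddings $\bb{H}^2\subset\bb{L}^\infty\cap\bb{W}^{1,4}$. The only cosmetic difference is that you carry out the product-rule estimates directly where the paper cites its Lemma~\ref{lem:equivnorm}, which encodes exactly those computations.
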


\begin{proof}
We can write
\begin{align*}
	\nonumber
	\cal{B}(\bff{\phi}, \bff{\eta}; \bff{v}, \bff{w}) 
	&=
	\beta_3 \inpro{ (\bff{\phi} \cdot \bff{\eta}) \bff{w}}{ \bff{v}}_{\bb{L}^2}
	+
	\beta_5 \inpro{ (\bff{\phi} \cdot \bff{\eta}) \nabla \bff{w}}{ \nabla \bff{v}}_{\bb{L}^2}
	\\
	&\quad
	+ 
	\beta_5 \inpro{ \bff{\eta} (\bff{\phi}\cdot \nabla \bff{w}) }{ \nabla \bff{v}}_{\bb{L}^2}
	+ 
	\beta_5 \inpro{ \bff{\phi}(\bff{\eta} \cdot \nabla \bff{w}) }{
		\nabla \bff{v} }_{\bb{L}^2}
	\\
	&=
	\cal{B}(\bff{\phi}, \bff{\eta}; \bff{w}, \bff{v}).
\end{align*}
Thus, $\cal{B}$ is symmetric in terms of $\bff{v}$ and $\bff{w}$.
By H\"{o}lder's inequality and the Sobolev embedding $\bb{H}^2 \subset \bb{L}^\infty$, we have
\begin{align*}
	\big| \cal{B}(\bff{\phi}, \bff{\eta}; \bff{v}, \bff{w}) \big|
	&\lesssim
	\norm{\bff{\phi}}{\bb{L}^\infty} \norm{\bff{\eta}}{\bb{L}^\infty}
	\norm{\bff{v}}{\bb{H}^1} \norm{\bff{w}}{\bb{H}^1},
	\\
	\big| \cal{C}(\bff{\phi}, \bff{\eta}; \bff{v}, \bff{w}) \big|
	&\lesssim
	\norm{\bff{\eta}}{\bb{L}^\infty}
	\norm{\bff{v}}{\bb{H}^1} \norm{\bff{w}}{\bb{H}^1},
\end{align*}
showing \eqref{equ:B bounded H1} and \eqref{equ:C bounded H1}, respectively. Next, performing integration by parts (and noting the boundary terms vanish since $\bff{\phi}, \bff{\eta},\bff{v}\in \bb{H}_{\bff{n}}^2$), and applying H\"{o}lder's inequality, we obtain
\begin{align*}
	\cal{B}(\bff{\phi}, \bff{\eta}; \bff{v}, \bff{w}) 
	&=
	\beta_3 \inpro{ (\bff{\phi} \cdot \bff{\eta}) \bff{v}}{ \bff{w}}_{\bb{L}^2}
	-
	\beta_5 \inpro{ \nabla \cdot \big( (\bff{\phi} \cdot \bff{\eta}) \nabla \bff{v} \big)}{ \bff{w}}_{\bb{L}^2}
	\\
	&\quad
	-
	\beta_5 \inpro{\nabla \cdot \big(\bff{\eta} (\bff{\phi}\cdot \nabla \bff{v}) \big)}{ \bff{w}}_{\bb{L}^2}
	- 
	\beta_5 \inpro{\nabla \cdot \big(\bff{\phi}(\bff{\eta} \cdot \nabla \bff{v}) \big)}{
	\bff{w} }_{\bb{L}^2}, \text{ and}
	\\
	\cal{C}(\bff{\eta}; \bff{v}, \bff{w})
	&=
	\beta_4 \inpro{\nabla \cdot(\bff{\eta} \times \nabla \bff{v})}{ \bff{w} }_{\bb{L}^2}.
\end{align*}
We can estimate each term on the right-hand side as follows. For the first term in $\cal{B}$, by H\"{o}lder's inequality and Sobolev embedding,
\begin{align*}
	\big|\beta_3  \inpro{ (\bff{\phi} \cdot \bff{\eta}) \bff{v}}{ \bff{w}}_{\bb{L}^2} \big|
	\lesssim 
	\norm{\bff{\phi}}{\bb{L}^\infty}
	\norm{\bff{\eta}}{\bb{L}^\infty}
	\norm{\bff{v}}{\bb{L}^2}
	\norm{\bff{w}}{\bb{L}^2}.
\end{align*}
For the second term, by \eqref{equ:prod 3 vec dot vec mat}, similarly we have
\begin{align*}
	\big| \beta_5 \inpro{ \nabla \cdot \big( (\bff{\phi} \cdot \bff{\eta}) \nabla \bff{v} \big)}{ \bff{w}}_{\bb{L}^2} \big|
	&\leq
	\norm{(\bff{\phi} \cdot \bff{\eta}) \nabla \bff{v}}{\bb{H}^1} \norm{\bff{w}}{\bb{L}^2}
	\\
	&\lesssim
	\norm{\bff{\phi}}{\bb{L}^\infty} \norm{\bff{\eta}}{\bb{L}^\infty} \norm{\bff{v}}{\bb{H}^2} \norm{\bff{w}}{\bb{L}^2}
	+
	\norm{\bff{\phi}}{\bb{L}^\infty} \norm{\bff{\eta}}{\bb{W}^{1,4}} \norm{\bff{v}}{\bb{W}^{1,4}}
	\norm{\bff{w}}{\bb{L}^2}
	\\
	&\quad
	+ 
	\norm{\bff{\phi}}{\bb{W}^{1,4}} \norm{\bff{\eta}}{\bb{L}^\infty} \norm{\bff{v}}{\bb{W}^{1,4}} 
	\norm{\bff{w}}{\bb{L}^2}.
\end{align*}
The third and the fourth terms in $\cal{B}$ can be estimated in a similar way by using \eqref{equ:prod 3 vec dot mat}.
For the functional $\mathcal{C}$, by H\"{o}lder's inequality and \eqref{equ:prod sobolev mat dot}, we have
\begin{align}\label{equ:beta4 cross}
	\nonumber
	\big| \beta_4 \inpro{\nabla \cdot(\bff{\eta} \times \nabla \bff{v})}{ \bff{w} }_{\bb{L}^2} \big|
	&\leq
	\norm{\bff{\eta} \times \nabla \bff{v}}{\bb{H}^1} \norm{\bff{w}}{\bb{L}^2}
	\\
	&\lesssim
	\big( \norm{\bff{\eta}}{\bb{L}^\infty} \norm{\nabla \bff{v}}{\bb{H}^1}
	+
	\norm{\nabla \bff{v}}{\bb{L}^4} \norm{\bff{\eta}}{\bb{W}^{1,4}} \big)
	\norm{\bff{w}}{\bb{L}^2}.
\end{align}
This concludes the proof of \eqref{equ:B bounded W1,4} and \eqref{equ:C bounded W1,4}. The proof of \eqref{equ:B bounded H2} and~\eqref{equ:C bounded H2} then follows by the Sobolev embedding $\bb{H}^2 \subset \bb{L}^\infty \cap
\bb{W}^{1,4}$, completing the proof of the lemma.
\end{proof}

\begin{lemma}\label{lem:A bou coe}
	Given $\alpha>0$ and $\bff{\phi} \in \bb{H}_{\bff{n}}^2$, let $\cal{A}(\bff{\phi};\,
	\cdot \, , \, \cdot) : \bb{H}_{\bff{n}}^2\times \bb{H}_{\bff{n}}^2 \to \bb{R}$ be defined by
	\begin{align}\label{equ:bilinear}
		\cal{A}(\bff{\phi}; \bff{v}, \bff{w})
		&:= 
		\cal{A}_0(\bff{v},\bff{w})
		+ \cal{B}(\bff{\phi},\bff{\phi};\bff{v},\bff{w})
		+ \cal{C}(\bff{\phi}; \bff{v}, \bff{w}),
	\end{align}
	where~$\cal{B}$ and~$\cal{C}$ are defined by~\eqref{equ:bilinear B}
	and~\eqref{equ:bilinear C}, respectively, and~$\cal{A}_0$ is defined by
	\begin{align*}
		\cal{A}_0(\bff{v},\bff{w})
		&:=
		\alpha \inpro{\bff{v}}{\bff{w}}_{\bb{L}^2}
		+ \beta_1 \inpro{\nabla \bff{v}}{\nabla \bff{w}}_{\bb{L}^2} 
		+ \beta_2 \inpro{\Delta \bff{v}}{\Delta \bff{w}}_{\bb{L}^2}.
	\end{align*}
	Then~$\cal{A}(\bff{\phi};\cdot,\cdot)$ is bounded, i.e., there exists $\kappa>0$ depending only on $\alpha, \beta_1, \ldots, \beta_5$, and
	$\norm{\bff{\phi}}{\bb{H}^2}$, such that
	\begin{equation}\label{equ:A bounded}
		|\cal{A}(\bff{\phi}; \bff{v}, \bff{w})| 
		\leq 
		\kappa \norm{\bff{v}}{\bb{H}^2} \norm{\bff{w}}{\bb{H}^2},
		\quad \forall \bff{v}, \bff{w} \in \bb{H}_{\bff{n}}^2.
	\end{equation}
	Moreover, $\cal{A}(\bff{\phi};\cdot,\cdot)$ is coercive, i.e., 
	\begin{equation}\label{equ:coercive}
		\cal{A}(\bff{\phi}; \bff{v}, \bff{v})
		\geq
		\mu \norm{\bff{v}}{\bb{H}^2}^2,
		\quad \forall \bff{v} \in \bb{H}_{\bff{n}}^2,
	\end{equation}
	for all $\alpha > 0$ if $\beta_1 \ge 0$, and for all $\alpha > \beta_1^2/\beta_2$
	if $\beta_1 < 0$. The constant $\mu$ depends
	on $\alpha$, $\beta_1$, $\beta_2$, and $\Omega$, but is independent
	of~$\bff{\phi}$. 
\end{lemma}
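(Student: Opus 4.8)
The plan is to prove boundedness and coercivity separately, with coercivity being the substantive part. For boundedness \eqref{equ:A bounded}, I would simply combine the bound on $\cal{A}_0$ — which follows from Cauchy--Schwarz, giving $|\cal{A}_0(\bff{v},\bff{w})| \le \max\{\alpha,\beta_1,\beta_2\}\,\norm{\bff{v}}{\bb{H}^2}\norm{\bff{w}}{\bb{H}^2}$ — with the already-established estimates \eqref{equ:B bounded H1} and \eqref{equ:C bounded H1} applied with $\bff{\phi}=\bff{\eta}$, together with the Sobolev embedding $\bb{H}^2 \subset \bb{L}^\infty$, so that $\norm{\bff{\phi}}{\bb{L}^\infty} \lesssim \norm{\bff{\phi}}{\bb{H}^2}$. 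Collecting terms yields $\kappa = C(\alpha,\beta_1,\ldots,\beta_5,\Omega)(1+\norm{\bff{\phi}}{\bb{H}^2}^2)$.

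For coercivity, the key observation is that the nonlinear forms $\cal{B}$ and $\cal{C}$ evaluated on the diagonal $(\bff{v},\bff{v})$ are, in fact, nonnegative or at worst sign-indefinite but controllable. Precisely, $\cal{C}(\bff{\phi};\bff{v},\bff{v}) = -\beta_4 \inpro{\bff{\phi}\times\nabla\bff{v}}{\nabla\bff{v}}_{\bb{L}^2} = 0$ pointwise since $\bff{\phi}\times\nabla v_k$ is orthogonal to $\nabla v_k$ for each component (or rather, summing over components, $\sum_k (\bff{\phi}\times\nabla v_k)\cdot\nabla v_k = 0$). Similarly, on the diagonal, $\cal{B}(\bff{\phi},\bff{\phi};\bff{v},\bff{v}) = \beta_3\norm{(\bff{\phi}\cdot\bff{v})}{\bb{L}^2}^2 \cdot(\text{sic})$ — more carefully, $\beta_3\inpro{(\bff{\phi}\cdot\bff{\phi})\bff{v}}{\bff{v}}_{\bb{L}^2} = \beta_3\int_\Omega |\bff{\phi}|^2|\bff{v}|^2 \ge 0$, the term $\beta_5\inpro{|\bff{\phi}|^2\nabla\bff{v}}{\nabla\bff{v}}_{\bb{L}^2} = \beta_5\int_\Omega|\bff{\phi}|^2|\nabla\bff{v}|^2 \ge 0$, and the remaining two terms combine as $2\beta_5\inpro{\bff{\phi}(\bff{\phi}\cdot\nabla\bff{v})}{\nabla\bff{v}}_{\bb{L}^2} = 2\beta_5\int_\Omega |\bff{\phi}\cdot\nabla\bff{v}|^2 \ge 0$ (interpreting $\bff{\phi}\cdot\nabla\bff{v}$ as the vector with components $\bff{\phi}\cdot\nabla v_k$, and noting $\bff{\eta}(\bff{\phi}\cdot\nabla\bff{v})\cdot\nabla\bff{v} = (\bff{\phi}\cdot\nabla\bff{v})\cdot(\bff{\eta}\cdot\nabla\bff{v})$ when dotted appropriately). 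Hence $\cal{B}(\bff{\phi},\bff{\phi};\bff{v},\bff{v}) \ge 0$ and $\cal{C}(\bff{\phi};\bff{v},\bff{v}) = 0$, so $\cal{A}(\bff{\phi};\bff{v},\bff{v}) \ge \cal{A}_0(\bff{v},\bff{v})$.

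It then remains to show $\cal{A}_0(\bff{v},\bff{v}) = \alpha\norm{\bff{v}}{\bb{L}^2}^2 + \beta_1\norm{\nabla\bff{v}}{\bb{L}^2}^2 + \beta_2\norm{\Delta\bff{v}}{\bb{L}^2}^2 \gtrsim \norm{\bff{v}}{\bb{H}^2}^2$ under the stated hypotheses. If $\beta_1 \ge 0$, all three terms are nonnegative, and since $\alpha>0$ and $\beta_2>0$ we get $\cal{A}_0(\bff{v},\bff{v}) \ge \min\{\alpha,\beta_2\}\big(\norm{\bff{v}}{\bb{L}^2}^2 + \norm{\Delta\bff{v}}{\bb{L}^2}^2\big) \gtrsim \norm{\bff{v}}{\bb{H}^2}^2$ by the equivalence of norms \eqref{equ:equivnorm-h2}. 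If $\beta_1 < 0$, the gradient term has the wrong sign, so I would control it by interpolation: from \eqref{equ:equivnorm-nabL2} and Young's inequality, $|\beta_1|\norm{\nabla\bff{v}}{\bb{L}^2}^2 \le |\beta_1|\norm{\bff{v}}{\bb{L}^2}\norm{\Delta\bff{v}}{\bb{L}^2} \le \frac{\beta_1^2}{4\delta}\norm{\bff{v}}{\bb{L}^2}^2 + \delta\norm{\Delta\bff{v}}{\bb{L}^2}^2$. Choosing $\delta$ so that $\delta < \beta_2$ and $\frac{\beta_1^2}{4\delta} < \alpha$ — which is possible precisely when $\alpha\beta_2 > \beta_1^2/4$, and in fact under the stated condition $\alpha > \beta_1^2/\beta_2$ one can take $\delta = \beta_2/2$ — yields $\cal{A}_0(\bff{v},\bff{v}) \ge (\alpha - \frac{\beta_1^2}{4\delta})\norm{\bff{v}}{\bb{L}^2}^2 + (\beta_2-\delta)\norm{\Delta\bff{v}}{\bb{L}^2}^2 \gtrsim \norm{\bff{v}}{\bb{H}^2}^2$, again via \eqref{equ:equivnorm-h2}. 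I would set $\mu = c\min\{\alpha - \beta_1^2/(4\delta), \beta_2 - \delta\}$ with the implicit constant from \eqref{equ:equivnorm-h2}; it is manifestly independent of $\bff{\phi}$.

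The only genuine obstacle is the sign analysis of $\cal{B}(\bff{\phi},\bff{\phi};\bff{v},\bff{v})$: one must verify that the two ``cross'' terms $\beta_5\inpro{\bff{\eta}(\bff{\phi}\cdot\nabla\bff{v})}{\nabla\bff{v}}_{\bb{L}^2}$ and $\beta_5\inpro{\bff{\phi}(\bff{\eta}\cdot\nabla\bff{v})}{\nabla\bff{v}}_{\bb{L}^2}$, when $\bff{\eta}=\bff{\phi}$, sum to something nonnegative. This hinges on keeping careful track of which product is a dot product and which is a scalar-times-vector product, and on the componentwise identity $\sum_k \big(\bff{\phi}(\bff{\phi}\cdot\nabla v_k)\big)\cdot\nabla v_k = \sum_k |\bff{\phi}\cdot\nabla v_k|^2$. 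Once the bookkeeping is settled, the rest is routine. Note that $\beta_3,\beta_5 \ge 0$ is exactly what makes these diagonal terms nonnegative — this is where the sign hypotheses on the coefficients enter.
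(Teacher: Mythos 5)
Your proposal is correct and follows essentially the same route as the paper: boundedness via the already-established bounds \eqref{equ:B bounded H1}, \eqref{equ:C bounded H1} and the embedding $\bb{H}^2\subset\bb{L}^\infty$, and coercivity by observing that $\cal{C}$ vanishes and $\cal{B}$ is nonnegative on the diagonal (the identity \eqref{equ:Avv}) and then absorbing the negative $\beta_1$ term via \eqref{equ:equivnorm-nabL2}. The only cosmetic difference is that the paper completes the square explicitly where you invoke Young's inequality with a free parameter $\delta$; both yield the stated threshold $\alpha>\beta_1^2/\beta_2$.
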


\begin{proof}
	For any $\bff{v},\bff{w} \in \bb{H}_{\bff{n}}^2$, by H\"{o}lder's inequality, \eqref{equ:B bounded H1}, \eqref{equ:C bounded H1}, and Sobolev embedding, we have
	\begin{align*}
		|\cal{A}(\bff{\phi}; \bff{v}, \bff{w})| 
		&\leq
		\abs{\cal{A}_0(\bff{v},\bff{w})}
		+ \abs{\cal{B}(\bff{\phi},\bff{\phi};\bff{v},\bff{w})}
		+ \abs{\cal{C}(\bff{\phi}; \bff{v}, \bff{w})}
		\\
		&\leq
		\big( \alpha + |\beta_1| + \beta_2 + \beta_3 \norm{\bff{\phi}}{\bb{H}^2}^2 + \beta_4 \norm{\bff{\phi}}{\bb{H}^2} + 3 \beta_5 \norm{\bff{\phi}}{\bb{H}^2}^2 \big)
		\norm{\bff{v}}{\bb{H}^2} \norm{\bff{w}}{\bb{H}^2}
		\\
		&=: \kappa \norm{\bff{v}}{\bb{H}^2} \norm{\bff{w}}{\bb{H}^2},
	\end{align*}
	for any $\alpha>0$, thus showing~\eqref{equ:A bounded}. 
	
	On the other hand, for any~$\bff{v} \in \bb{H}_{\bff{n}}^2$, noting
	\eqref{equ:equivnorm-nabL2} we deduce
	\begin{align}\label{equ:Avv}
		\cal{A}(\bff{\phi}; \bff{v}, \bff{v})
		&= 
		\alpha \norm{\bff{v}}{\bb{L}^2}^2
		+ \beta_1 \norm{\nabla \bff{v}}{\bb{L}^2}^2 
		+ \beta_2 \norm{\Delta \bff{v}}{\bb{L}^2}^2 
		+ \beta_3 \norm{|\bff{\phi}||\bff{v}|}{\bb{L}^2}^2 
		\nonumber\\
		&\quad
		+ \beta_5 \norm{|\bff{\phi}| |\nabla \bff{v}|}{\bb{L}^2}^2
		+ 2\beta_5 \norm{\bff{\phi} \cdot \nabla \bff{v}}{\bb{L}^2}^2,
	\end{align}
	so that
	\begin{align}\label{equ:A coe}
		\cal{A}(\bff{\phi}; \bff{v}, \bff{v})
		\geq
		\alpha \norm{\bff{v}}{\bb{L}^2}^2
		+ \beta_1 \norm{\nabla \bff{v}}{\bb{L}^2}^2 
		+ \beta_2 \norm{\Delta \bff{v}}{\bb{L}^2}^2.
	\end{align}
	If $\beta_1$ is non-negative, then for any $\alpha>0$ we have~\eqref{equ:coercive}
	with $\mu = \min\{\alpha,\beta_1,\beta_2\} $.
	If $\beta_1$ is negative, we derive from~\eqref{equ:A coe}
	and~\eqref{equ:equivnorm-nabL2} that
	\begin{align*} 
		\cal{A}(\bff{\phi}; \bff{v}, \bff{v})
		&\ge
		\alpha \norm{\bff{v}}{\bb{L}^2}^2
		-|\beta_1| \norm{\bff{v}}{\bb{L}^2} \norm{\Delta \bff{v}}{\bb{L}^2}
		+ \beta_2 \norm{\Delta \bff{v}}{\bb{L}^2}^2 
		\\
		&=
		\frac{3\alpha}{4} \norm{\bff{v}}{\bb{L}^2}^2
		+
		\Big(
		\frac{\sqrt{\alpha}}{2} \norm{\bff{v}}{\bb{L}^2}
		-
		\frac{|\beta_1|}{\sqrt{\alpha}} \norm{\Delta\bff{v}}{\bb{L}^2}
		\Big)^2
		+
		\Big(
		\beta_2 - \frac{\beta_1^2}{\alpha} 
		\Big)
		\norm{\Delta\bff{v}}{\bb{L}^2}^2
		\\
		&\geq
		\mu^\ast
		\Big(
		\norm{\bff{v}}{\bb{L}^2}^2
		+
		\norm{\Delta\bff{v}}{\bb{L}^2}^2
		\Big)
		\ge
		\mu^\ast C \norm{\bff{v}}{\bb{H}^2}^2,
	\end{align*}
	where $\mu^\ast = \min\{3\alpha/4,\beta_2-\beta_1^2/\alpha\}$ is positive if
	$\alpha>0$ is sufficiently large, and the constant~$C$ is given
	by~\eqref{equ:equivnorm-h2}. Inequality~\eqref{equ:coercive} then follows with
	$\mu=C\mu^\ast$.
\end{proof}

In the following, we state some assumptions on the exact solution which will be used when deriving various convergence estimates.

\subsection{Assumptions}
Let $\bff{j}\in \bb{L}^\infty$ be given such that $\nabla \cdot \bff{j}=0$ on $\Omega$ and $\bff{j}\cdot \bff{n}=0$ on $\partial\Omega$. Given $T>0$ and $\bff{u}_0\in\bb{H}_{\bff{n}}^2$, a weak
solution to \eqref{equ:llbar} is a function $\bff{u}:[0,T]\to \bb{H}_{\bff{n}}^2$ satisfying
for all $t\in [0,T]$ and $\bff{\phi}\in \bb{H}_{\bff{n}}^2$,
\begin{align}\label{equ:weakform}
	&\inpro{\partial_t \bff{u}(t)}{\bff{\phi}}_{\bb{L}^2} 
	+ 
	\cal{A}(\bff{u}(t); \bff{u}(t), \bff{\phi})
	-
	(\alpha+\beta_3) \inpro{\bff{u}(t)}{\bff{\phi}}_{\bb{L}^2}
	-
	\beta_6 \inpro{(\bff{j}\cdot \nabla)\bff{u}(t)}{\bff{\phi}}_{\bb{L}^2}
	= 0,
\end{align}
with $\bff{u}(0)= \bff{u}_0$. For simplicity, we will take $\norm{\bff{j}}{\bb{L}^\infty}=1$.

From this point onwards, we let
\begin{equation}\label{equ:r}
	r =
	\begin{cases}
		3, \quad & \text{if } d\in \{1,2\},
		\\
		9, \quad & \text{if } d=3,
	\end{cases}
\end{equation}
be the degree of piecewise polynomials in the conforming finite element space. Note that $r=9$ is required for $d=3$ to maintain $C^1$-conformity~\cite{Zen73}. We reiterate that it is computationally prohibitive in practice to apply the $C^1$-conforming numerical scheme for $d=3$, but we include the theoretical analysis in this case for the sake of completeness.

We assume that problem~\eqref{equ:llbar} possesses a
solution~$\bff{u}$ which satisfies
\begin{equation}\label{equ:ass 1}
	\norm{\bff{u}}{L^\infty(\bb{H}^{r+1})}
	+ \norm{\partial_t \bff{u}}{L^\infty(\bb{H}^{r+1})}
	+ \norm{\partial_t^2 \bff{u}}{L^\infty(\bb{L}^2)}
	\le K,
\end{equation}
and~$K>0$ depends on~$\bff{u}_0$. 
Furthermore, in Section \ref{sec:bdf2}, we will assume that the solution $\bff{u}$ satisfies
\begin{equation}\label{equ:ass 2}
	\norm{\bff{u}}{L^\infty(\bb{H}^{r+1})}
	+ \norm{\partial_t \bff{u}}{L^\infty(\bb{H}^{r+1})}
	+ \norm{\partial_t^2 \bff{u}}{L^\infty(\bb{H}^1)}
	+ \norm{\partial_t^3 \bff{u}}{L^\infty(\bb{L}^2)}
	\le K.
\end{equation}
These assumptions are needed so that, among other things, the optimal order approximation properties~\eqref{equ:fin approx} and~\eqref{equ:interp approx} of our finite element spaces are satisfied.

\section{Semi-discrete Galerkin Approximation}\label{sec:semidiscrete}

Let $\mathcal{T}_h$ be a shape-regular triangulation of $\Omega\subset \bb{R}^d$
into intervals (in 1D), triangles (in 2D), or tetrahedra (in 3D) with maximal mesh-size $h$. We
introduce a $C^1$-conforming finite element space $\bb{V}_h \subset \bb{H}_{\bff{n}}^2 \cap C^1(\overline{\Omega})$ which consists of
$C^1$-$P_r$ elements ($C^1$-piecewise polynomials of degree~$r$) defined on $\mathcal{T}_h$. For example, in 1D
we take $\bb{V}_h$ to be the space of $C^1$-$P_3$ Hermite finite elements~\cite{Cia78}, whereas we use the Hsieh--Clough--Tocher $C^1$-$P_3$ macroelements in 2D~\cite{Cia74, Man78}, and the \v{Z}en\'{i}\v{s}ek $C^1$-$P_9$ elements in 3D~\cite{Zen73, Zha09}.

Some essential approximation properties for our finite element spaces~\cite{Cia74, Cia78, DouDupPerSco79, Zha09} are described as follows. Due to shape regularity of the triangulation, the
following optimal order $\bb{H}^\beta$-approximation property for this $C^1$-conforming finite element holds: There exists a constant $C$ independent of $h$ and $\bff{v}$ such that
for any $\bff{v} \in \bb{H}^{r+1}$, with $0\le \beta \le 2$,
\begin{align}\label{equ:fin approx}
	\inf_{\bff{\chi} \in {\bb{V}}_h}  \norm{\bff{v} - \bff{\chi}}{\bb{H}^\beta} 
	\leq 
	C h^{r+1-\beta} \norm{\bff{v}}{\bb{H}^{r+1}}.
\end{align}
Furthermore, there exists a nodal interpolation operator~$\mathcal{I}_h: \bb{H}^{r+1} \to \bb{V}_h$ that satisfies
\begin{align}\label{equ:interp approx}
 	\norm{\bff{v} - \mathcal{I}_h(\bff{v})}{\bb{H}^\beta} 
 	\leq 
 	C h^{r+1-\beta} \|\bff{v}\|_{\bb{H}^{r+1}},
\end{align}
such that (see~\cite[Section 4.4]{BreSco08}) for $p\in [1,\infty)$ and $m\in [0,2]$,
\begin{align}\label{equ:interp bounded}
	\norm{\cal{I}_h(\bff{v})}{\bb{W}^{m,p}} \leq C\norm{\bff{v}}{\bb{W}^{m,p}},
	\quad \forall \bff{v}\in \bb{W}^{m,p}.
\end{align}
At some parts of the paper (which will be clearly indicated), we will also assume that the triangulation is quasi-uniform so that the inverse estimates hold, see~\cite[Theorem~4.5.11]{BreSco08}.

A semi-discrete Galerkin approximation to problem \eqref{equ:weakform} is defined as
follows: Find $\bff{u}_h: [0,T] \to \bb{V}_h$ such that for all
$t\in [0,T]$ and $\bff{\chi}\in \bb{V}_h$,
\begin{align}\label{equ:weaksemidisc}
	\inpro{\partial_t \bff{u}_h(t)}{\bff{\chi}}_{\bb{L}^2} 
	+
	\cal{A}(\bff{u}_h(t);\bff{u}_h(t), \bff{\chi})
	-
	(\alpha+\beta_3) \inpro{\bff{u}_h(t)}{\bff{\chi}}_{\bb{L}^2}
	-
	\beta_6 \inpro{(\bff{j}\cdot \nabla) \bff{u}_h(t)}{\bff{\chi}}_{\bb{L}^2}
	= 0,
\end{align}
and $\bff{u}_h(0)= \bff{u}_{0,h}$, where $\bff{u}_{0,h}$ is an approximation of
$\bff{u}_0$ in $\bb{V}_h$, say $\bff{u}_{0,h}= \mathcal{I}_h (\bff{u}_0)$, so
that
\begin{equation}\label{equ:u0 u0h}
	\norm{\bff{u}_{0,h}-\bff{u}_0}{\bb{H}^\beta} 
	\leq
	Ch^{r+1-\beta}.
\end{equation}
This system of ordinary differential equations~\eqref{equ:weaksemidisc} has a unique solution $\bff{u}_h$
defined on the interval $[0, t_h]\subseteq [0,T]$ by the Cauchy--Lipschitz
theorem; see~\cite{SoeTra23}. We will prove several stability results, which will be used to ensure
that the semi-discrete solution $\bff{u}_h$ can be continued globally to $[0,T]$
for any $T>0$ when $d=1$ or $d=2$ with any $\bff{u}_{0,h}\in \bb{V}_h$, or when $d=3$ with sufficiently small $\bff{u}_{0,h}$.

\begin{proposition}\label{pro:semidisc-est1}
Let $h>0$ and $T>0$ be given. For all $t\in [0,T]$ and initial data $\bff{u}_{0,h} \in \bb{V}_h$,
\begin{align}\label{equ:semidisc-est1}
	\norm{\bff{u}_h(t)}{\bb{L}^2}^2 
	+  
	\int_0^t \norm{\Delta \bff{u}_h(s)}{\bb{L}^2}^2 \ds  
	+ 
	\int_0^t \norm{\bff{u}_h(s)}{\bb{L}^4}^{4} \ds 
	\lesssim 
	\norm{\bff{u}_h(0)}{\bb{L}^2}^2,
\end{align}
where the constant depends on $T$, but is independent of $h$.
\end{proposition}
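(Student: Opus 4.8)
The plan is to derive the a priori estimate by testing the semi-discrete equation~\eqref{equ:weaksemidisc} with $\bff{\chi} = \bff{u}_h(t)$ and using the structure of $\cal{A}$ uncovered in~\eqref{equ:Avv}. Taking $\bff{\chi}=\bff{u}_h$ gives
\begin{align*}
	\frac{1}{2}\ddt \norm{\bff{u}_h}{\bb{L}^2}^2
	+ \cal{A}(\bff{u}_h; \bff{u}_h, \bff{u}_h)
	= (\alpha+\beta_3)\norm{\bff{u}_h}{\bb{L}^2}^2
	+ \beta_6 \inpro{(\bff{j}\cdot\nabla)\bff{u}_h}{\bff{u}_h}_{\bb{L}^2}.
\end{align*}
The first key point is that the convective term on the right-hand side vanishes: since $\nabla\cdot\bff{j}=0$ on $\Omega$ and $\bff{j}\cdot\bff{n}=0$ on $\partial\Omega$, integration by parts yields $\inpro{(\bff{j}\cdot\nabla)\bff{u}_h}{\bff{u}_h}_{\bb{L}^2} = \tfrac12\int_\Omega \bff{j}\cdot\nabla|\bff{u}_h|^2 = 0$ (using~\eqref{equ:j nab u} and $\bff{u}_h\in\bb{V}_h\subset\bb{H}^2_{\bff{n}}$). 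The second key point is the identity~\eqref{equ:Avv}: with $\bff{\phi}=\bff{v}=\bff{u}_h$ it shows $\cal{A}(\bff{u}_h;\bff{u}_h,\bff{u}_h)$ equals $\alpha\norm{\bff{u}_h}{\bb{L}^2}^2 + \beta_1\norm{\nabla\bff{u}_h}{\bb{L}^2}^2 + \beta_2\norm{\Delta\bff{u}_h}{\bb{L}^2}^2 + \beta_3\norm{|\bff{u}_h|^2}{\bb{L}^2}^2 + (\text{nonnegative }\beta_5\text{ terms})$; the cross-product term $\cal{C}(\bff{u}_h;\bff{u}_h,\bff{u}_h) = -\beta_4\inpro{\bff{u}_h\times\nabla\bff{u}_h}{\nabla\bff{u}_h}_{\bb{L}^2}=0$ by antisymmetry of the cross product. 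Crucially $\beta_3\norm{|\bff{u}_h|^2}{\bb{L}^2}^2 = \beta_3\norm{\bff{u}_h}{\bb{L}^4}^4$, which produces the third term on the left of~\eqref{equ:semidisc-est1}.

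Collecting these observations, and discarding the nonnegative $\beta_5$-contributions, I obtain
\begin{align*}
	\frac{1}{2}\ddt \norm{\bff{u}_h}{\bb{L}^2}^2
	+ \alpha\norm{\bff{u}_h}{\bb{L}^2}^2
	+ \beta_1\norm{\nabla\bff{u}_h}{\bb{L}^2}^2
	+ \beta_2\norm{\Delta\bff{u}_h}{\bb{L}^2}^2
	+ \beta_3\norm{\bff{u}_h}{\bb{L}^4}^4
	\le (\alpha+\beta_3)\norm{\bff{u}_h}{\bb{L}^2}^2.
\end{align*}
Since $\beta_1\ge 0$ we may drop the $\nabla\bff{u}_h$ term (if one prefers to allow $\beta_1<0$ one absorbs it using~\eqref{equ:equivnorm-nabL2 young} and Young's inequality into the $\Delta\bff{u}_h$ term and a multiple of $\norm{\bff{u}_h}{\bb{L}^2}^2$). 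After cancelling $\alpha\norm{\bff{u}_h}{\bb{L}^2}^2$ from both sides, this reads
\begin{align*}
	\ddt \norm{\bff{u}_h}{\bb{L}^2}^2
	+ 2\beta_2\norm{\Delta\bff{u}_h}{\bb{L}^2}^2
	+ 2\beta_3\norm{\bff{u}_h}{\bb{L}^4}^4
	\le 2\beta_3 \norm{\bff{u}_h}{\bb{L}^2}^2.
\end{align*}
Gr\"onwall's inequality then gives $\norm{\bff{u}_h(t)}{\bb{L}^2}^2 \le e^{2\beta_3 t}\norm{\bff{u}_h(0)}{\bb{L}^2}^2$ for all $t\in[0,T]$, and integrating the differential inequality over $[0,t]$ while using this bound on the right-hand side controls $\int_0^t \norm{\Delta\bff{u}_h}{\bb{L}^2}^2\,\ds$ and $\int_0^t \norm{\bff{u}_h}{\bb{L}^4}^4\,\ds$ by $C(T)\norm{\bff{u}_h(0)}{\bb{L}^2}^2$, which is exactly~\eqref{equ:semidisc-est1}.

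I do not anticipate a genuine obstacle here: this is the basic energy estimate, and every ingredient is already in place. The only mild subtlety is making sure the right algebraic cancellations ($\cal{C}$-term, convective term, the $\alpha$-term) are invoked cleanly and that the sign conventions ($\beta_2,\beta_3>0$, $\beta_4,\beta_5\ge0$) are used exactly as stated, together with the identity~\eqref{equ:Avv} rather than re-deriving it. One should also note the estimate is only asserted on the maximal existence interval $[0,t_h]$ a priori, but since the bound does not blow up the solution extends to all of $[0,T]$, which is the point of stating it for arbitrary $T$.
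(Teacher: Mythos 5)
Your proof is correct and is exactly the standard energy argument the paper has in mind (the paper itself omits the proof, deferring to \cite[Proposition~3.6]{SoeTra23}, which proceeds by the same testing with $\bff{u}_h$, the identity~\eqref{equ:Avv}, the vanishing of the $\cal{C}$- and convection terms, and Gr\"onwall). All the cancellations and sign arguments you invoke are valid, so nothing further is needed.
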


\begin{proof}
The proof is similar to that of \cite[Proposition~3.6]{SoeTra23} and is omitted.
\end{proof}

\begin{proposition}\label{pro:semidisc est2}
Let $h>0$ and $T>0$ be given. Then the following statements hold true.
\begin{enumerate}
	\renewcommand{\labelenumi}{\theenumi}
	\renewcommand{\theenumi}{{\rm (\roman{enumi})}}
	\item 
	If $d=1$ or $d=2$, for all $t\in [0,T]$ and initial data $\bff{u}_{0,h} \in \bb{V}_h$,
	\begin{align}\label{equ:semidisc-est2}
		\norm{\Delta \bff{u}_h(t)}{\bb{L}^2}^2 
		+ 
		\norm{\nabla \bff{u}_h(t)}{\bb{L}^2}^2 
		+ 
		\norm{\bff{u}_h(t)}{\bb{L}^4}^4 
		+ 
		\int_0^t \norm{\frac{\partial \bff{u}_h(s)}{\partial s}}{\bb{L}^2}^2 \ds 
		\lesssim 
		\norm{\bff{u}_h(0)}{\bb{H}^2}^2,
	\end{align}
	where the constant depends on $T$, but is independent of $h$.
	\item
	If $d=3$, then \eqref{equ:semidisc-est2} holds for all $t\in [0,T]$ when $\norm{\bff{u}_h(0)}{\bb{H}^2} \lesssim T^{-2/3}$.
\end{enumerate}
\end{proposition}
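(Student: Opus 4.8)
\textbf{Proof proposal for Proposition~\ref{pro:semidisc est2}.}
The plan is to test the semi-discrete equation~\eqref{equ:weaksemidisc} with a suitable choice of $\bff{\chi}\in\bb{V}_h$ that exposes control of $\norm{\Delta\bff{u}_h}{\bb{L}^2}$, and then use Gronwall. The natural candidates are $\bff{\chi}=\partial_t\bff{u}_h(t)$, which (after using the coercivity structure in~\eqref{equ:Avv}) produces $\tfrac12\ddt$ of the quantity $\beta_2\norm{\Delta\bff{u}_h}{\bb{L}^2}^2+\beta_1\norm{\nabla\bff{u}_h}{\bb{L}^2}^2+\text{(nonlinear energy terms)}$ on one side and $\norm{\partial_t\bff{u}_h}{\bb{L}^2}^2$ on the other; alternatively one can also test with $\Delta^2\bff{u}_h$-like quantities, but since $\bb{V}_h\subset\bb{H}_{\bff{n}}^2$ only, testing with $\partial_t\bff{u}_h$ is the clean choice. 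First I would differentiate the nonlinear energy $\mathcal{E}(\bff{u}_h):=\tfrac{\beta_3}{4}\norm{|\bff{u}_h|^2}{\bb{L}^2}^2+\tfrac{\beta_5}{4}\norm{\nabla|\bff{u}_h|^2}{\bb{L}^2}^2+\dots$ (the precise primitive compatible with $\cal{B}(\bff{u}_h,\bff{u}_h;\bff{u}_h,\partial_t\bff{u}_h)$ and the $(\alpha+\beta_3)$ term), noting that $\cal{C}(\bff{u}_h;\bff{u}_h,\partial_t\bff{u}_h)$ contributes zero or a sign-definite/controllable term because of the cross-product structure. The convection term $\beta_6\inpro{(\bff{j}\cdot\nabla)\bff{u}_h}{\partial_t\bff{u}_h}_{\bb{L}^2}$ and any leftover lower-order terms are absorbed by Young's inequality: $\le\tfrac14\norm{\partial_t\bff{u}_h}{\bb{L}^2}^2 + C\norm{\nabla\bff{u}_h}{\bb{L}^2}^2$.

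After this energy identity one arrives at an inequality of the form
\begin{align*}
	\ddt\Big(\beta_2\norm{\Delta\bff{u}_h}{\bb{L}^2}^2 + \beta_1\norm{\nabla\bff{u}_h}{\bb{L}^2}^2 + 2\mathcal{E}(\bff{u}_h)\Big)
	+ \norm{\partial_t\bff{u}_h}{\bb{L}^2}^2
	\le C\,\Phi(\bff{u}_h),
\end{align*}
where $\Phi$ collects the terms that cannot be made to disappear — these will be cubic/quartic in $\bff{u}_h$ coming from $\cal{B}$, e.g. $\norm{\bff{u}_h}{\bb{L}^\infty}^2\norm{\nabla\bff{u}_h}{\bb{L}^2}\norm{\Delta\bff{u}_h}{\bb{L}^2}$-type contributions once one differentiates the $\beta_5$-energy. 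The key is then to close this using the a priori bounds already in hand from Proposition~\ref{pro:semidisc-est1} (namely $\bff{u}_h\in L^\infty(\bb{L}^2)\cap L^4(\bb{L}^4)$ and $\Delta\bff{u}_h\in L^2(\bb{L}^2)$) together with the interpolation inequalities: in $d=1,2$ one has $\norm{\bff{u}_h}{\bb{L}^\infty}\lesssim\norm{\bff{u}_h}{\bb{L}^2}^{\theta}\norm{\Delta\bff{u}_h+\bff{u}_h}{\bb{L}^2}^{1-\theta}$ (with $\theta$ depending on $d$ via the Gagliardo--Nirenberg/Agmon inequality) and similarly for $\norm{\bff{u}_h}{\bb{L}^4}$ and $\norm{\nabla\bff{u}_h}{\bb{L}^4}$. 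Substituting these and using Young's inequality to move all $\norm{\Delta\bff{u}_h}{\bb{L}^2}$-factors to the left (this is where $d\le2$ matters: the exponents on $\norm{\Delta\bff{u}_h}{\bb{L}^2}$ stay strictly below $2$, so absorption works unconditionally), one gets
\begin{align*}
	\ddt y(t) \le C\big(1 + \norm{\bff{u}_h(t)}{\bb{L}^4}^4\big) y(t) + C\norm{\Delta\bff{u}_h(t)}{\bb{L}^2}^2,
\end{align*}
with $y(t)$ equivalent to $\norm{\bff{u}_h(t)}{\bb{H}^2}^2$ (using~\eqref{equ:equivnorm-h2} and the nonnegativity of $\mathcal{E}$ and, when $\beta_1<0$, the same completion-of-square trick as in Lemma~\ref{lem:A bou coe}). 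Since $\int_0^t(1+\norm{\bff{u}_h}{\bb{L}^4}^4)\,\ds\lesssim 1+\norm{\bff{u}_h(0)}{\bb{L}^2}^2\le C(T)$ and $\int_0^t\norm{\Delta\bff{u}_h}{\bb{L}^2}^2\,\ds\lesssim\norm{\bff{u}_h(0)}{\bb{L}^2}^2$ by Proposition~\ref{pro:semidisc-est1}, Gronwall's inequality yields~\eqref{equ:semidisc-est2}, and integrating the $\norm{\partial_t\bff{u}_h}{\bb{L}^2}^2$ term over $[0,t]$ gives that contribution too.

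For part~(ii), $d=3$, the Agmon inequality degrades to $\norm{\bff{u}_h}{\bb{L}^\infty}\lesssim\norm{\bff{u}_h}{\bb{H}^1}^{1/2}\norm{\bff{u}_h}{\bb{H}^2}^{1/2}$ and $\norm{\nabla\bff{u}_h}{\bb{L}^4}\lesssim\norm{\bff{u}_h}{\bb{H}^2}$, so the worst nonlinear term scales like $\norm{\bff{u}_h}{\bb{H}^2}^{3}$ and can no longer be absorbed by $\varepsilon\norm{\Delta\bff{u}_h}{\bb{L}^2}^2$ for free; one instead obtains a differential inequality of Riccati type $\ddt y\lesssim y^{2}+\text{(integrable)}$ (or $y^{3/2}$), whose solution stays finite on $[0,T]$ provided the initial value $y(0)\sim\norm{\bff{u}_h(0)}{\bb{H}^2}^2$ is small relative to $T$ — quantitatively $\norm{\bff{u}_h(0)}{\bb{H}^2}\lesssim T^{-2/3}$, matching the stated threshold. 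I expect the main obstacle to be bookkeeping the differentiated $\beta_5$-nonlinear energy: one must verify that $\ddt\mathcal{E}(\bff{u}_h)$ really equals $\cal{B}(\bff{u}_h,\bff{u}_h;\bff{u}_h,\partial_t\bff{u}_h)+(\alpha+\beta_3)\inpro{\bff{u}_h}{\partial_t\bff{u}_h}$ up to the explicitly controllable remainder (the chain rule on $\nabla(|\bff{u}_h|^2\bff{u}_h)$ via Lemma~\ref{lem:der u2u} produces several terms that must be matched), and then track exponents carefully through the Gagliardo--Nirenberg substitutions so that the dichotomy between $d\le2$ (unconditional) and $d=3$ (conditional) emerges cleanly.
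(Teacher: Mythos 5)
Your proposal follows essentially the same route as the paper's proof: test \eqref{equ:weaksemidisc} with $\bff{\chi}=\partial_t\bff{u}_h$, recognise the $\cal{B}$-part as an exact time derivative of a nonlinear energy, absorb the precession and convection contributions into $\norm{\partial_t\bff{u}_h}{\bb{L}^2}^2$ by Young's inequality, interpolate the resulting $\norm{\bff{u}_h}{\bb{L}^\infty}^2\norm{\Delta\bff{u}_h}{\bb{L}^2}^2$ and $\norm{\bff{u}_h}{\bb{L}^\infty}^2\norm{\nabla\bff{u}_h}{\bb{L}^4}^4$ terms by Gagliardo--Nirenberg, and close with Gronwall (Bihari--Gronwall for $d=3$) using the bounds of Proposition~\ref{pro:semidisc-est1}. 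Two small corrections to your mechanism: the precession term $\beta_4\inpro{\bff{u}_h\times\Delta\bff{u}_h}{\partial_t\bff{u}_h}_{\bb{L}^2}$ is neither zero nor sign-definite --- it is precisely the source of the $\norm{\bff{u}_h}{\bb{L}^\infty}^2\norm{\Delta\bff{u}_h}{\bb{L}^2}^2$ term --- and the surviving powers of $\norm{\Delta\bff{u}_h}{\bb{L}^2}$ are not ``absorbed to the left'' (the left-hand side carries only $\ddt\norm{\Delta\bff{u}_h}{\bb{L}^2}^2$ and $\norm{\partial_t\bff{u}_h}{\bb{L}^2}^2$) but are handled in the Gronwall step, where the $d\le2$ versus $d=3$ dichotomy is that for $d\le2$ the worst power of $\norm{\Delta\bff{u}_h}{\bb{L}^2}$ under the time integral is $4$, which factors as (Gronwall variable) times (coefficient made integrable by Proposition~\ref{pro:semidisc-est1}), whereas for $d=3$ it is $5$, giving a genuinely superlinear Bihari inequality and hence the smallness condition on $\norm{\bff{u}_h(0)}{\bb{H}^2}$.
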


\begin{proof}
Taking $\bff{\chi}= \partial_t \bff{u}_h$ in \eqref{equ:weaksemidisc} gives
\begin{align*}
		\norm{\partial_t \bff{u}_h}{\bb{L}^2}^2 
		&+ \frac{\beta_1}{2} \ddt \norm{\nabla \bff{u}_h}{\bb{L}^2}^2 
		+ \frac{\beta_2}{2} \ddt \norm{\Delta \bff{u}_h}{\bb{L}^2}^2 
		+ \frac{\beta_3}{4} \ddt \norm{\bff{u}_h}{\bb{L}^4}^4 
		+ \beta_4 \inpro{\bff{u}_h \times
			\Delta\bff{u}_h}{\partial_t\bff{u}_h}_{\bb{L}^2}
		\\
		&+ \beta_5 \inpro{\nabla(|\bff{u}_h|^2 \bff{u}_h)}
		{\partial_t\nabla \bff{u}_h}_{\bb{L}^2} 
		= 
		\frac{\beta_3}{2} \ddt \norm{\bff{u}_h}{\bb{L}^2}^2
		+
		\beta_6 \inpro{(\bff{j}\cdot\nabla)\bff{u}_h}{\partial_t \bff{u}_h}_{\bb{L}^2}
\end{align*}
Similar argument as in \cite[Proposition~3.6]{SoeTra23} then yields
\begin{align}\label{equ:dtu} 
		\norm{\partial_t \bff{u}_h}{\bb{L}^2}^2 
		&+ \ddt \norm{\nabla \bff{u}_h}{\bb{L}^2}^2 
		+ \ddt \norm{\Delta \bff{u}_h}{\bb{L}^2}^2 
		+ \ddt \norm{\bff{u}_h}{\bb{L}^4}^4 
		+ \ddt \norm{\bff{u}_h\cdot\nabla\bff{u}_h}{\bb{L}^2}^2
		+ \ddt \norm{|\bff{u}_h| |\nabla\bff{u}_h|}{L^2}^2
		\nonumber\\
		&\lesssim
		\ddt \norm{\bff{u}_h}{\bb{L}^2}^2
		+
		\norm{\nabla \bff{u}_h}{\bb{L}^2}^2
		+
		\norm{\bff{u}_h}{\bb{L}^\infty}^2 \norm{\Delta\bff{u}_h}{\bb{L}^2}^2
		+
		\norm{\bff{u}_h}{\bb{L}^\infty}^2 \norm{\nabla\bff{u}_h}{\bb{L}^4}^4.
\end{align}
It remains to bound the last two terms on the right-hand side of \eqref{equ:dtu}.

\medskip
\noindent
\underline{Case 1: $d=1$}. The Gagliardo--Nirenberg inequality gives
\begin{align*}
	\norm{\bff{u}_h}{\bb{L}^\infty}^2
	&\lesssim
	\norm{\bff{u}_h}{\bb{L}^2}
	\norm{\bff{u}_h}{\bb{H}^1},
	\\
	\norm{\nabla \bff{u}_h}{\bb{L}^4}^4
	&\lesssim
	\norm{\nabla \bff{u}_h}{\bb{L}^2}^3
	\norm{\nabla \bff{u}_h}{\bb{H}^1}.
\end{align*}
Inserting these into~\eqref{equ:dtu}, using the Sobolev embedding,
and integrating over~$(0,t)$ yield
\begin{align}\label{equ:dsu} 
	\int_{0}^{t} \norm{\partial_s\bff{u}_h(s)}{\bb{L}^2}^2 \ds
	&+
	\norm{\Delta\bff{u}_h(t)}{\bb{L}^2}^2
	+
	\norm{\nabla \bff{u}_h(t)}{\bb{L}^2}^2
	+
	\norm{\bff{u}_h(t)}{\bb{L}^4}^4
	+ 
	\norm{\bff{u}_h(t)\cdot\nabla\bff{u}_h(t)}{\bb{L}^2}^2
	\nonumber\\
	&\lesssim
	1 + \int_{0}^{t} \Big(
	\norm{\bff{u}_h(s)}{\bb{H}^1}^2
	\norm{\Delta\bff{u}_h(s)}{\bb{L}^2}^2
	+
	\norm{\bff{u}_h(s)}{\bb{H}^1}^4
	\norm{\bff{u}_h(s)}{\bb{H}^2}
	\Big) \ds
	\nonumber\\
	&\lesssim
	1 + \int_{0}^{t} \Big(
	\norm{\Delta\bff{u}_h(s)}{\bb{L}^2}^2
	+
	\norm{\Delta\bff{u}_h(s)}{\bb{L}^2}^3
	+
	\norm{\Delta\bff{u}_h(s)}{\bb{L}^2}^4
	\Big) \ds,
\end{align}
where in the last step we used \eqref{equ:equivnorm-h2} and \eqref{equ:equivnorm-nabL2}. Noting that $\norm{\Delta \bff{u}_h}{L^2(\bb{L}^2)} \lesssim \norm{\bff{u}_h(0)}{\bb{L}^2}$ by
Proposition \ref{pro:semidisc-est1}, we have the required result by invoking the Gronwall inequality.

\medskip
\noindent
\underline{Case 2: $d=2$}. The Gagliardo--Nirenberg inequality gives
\begin{align*}
	\norm{\bff{u}_h}{\bb{L}^\infty}^2
	&\lesssim
	\norm{\bff{u}_h}{\bb{L}^2}
	\norm{\bff{u}_h}{\bb{H}^2},
	\\
	\norm{\nabla \bff{u}_h}{\bb{L}^4}^4
	&\lesssim
	\norm{\nabla \bff{u}_h}{\bb{L}^2}^2
	\norm{\nabla \bff{u}_h}{\bb{H}^1}^2.
\end{align*}
Inserting these into~\eqref{equ:dtu}, using the Sobolev embedding,
and integrating over~$(0,t)$ yield
\begin{align}\label{equ:dsu-2} 
	\int_{0}^{t} \norm{\partial_s\bff{u}_h(s)}{\bb{L}^2}^2 \ds
	&+
	\norm{\Delta\bff{u}_h(t)}{\bb{L}^2}^2
	+
	\norm{\nabla \bff{u}_h(t)}{\bb{L}^2}^2
	+
	\norm{\bff{u}_h(t)}{\bb{L}^4}^4
	+ 
	\norm{\bff{u}_h(t)\cdot\nabla\bff{u}_h(t)}{\bb{L}^2}^2
	\nonumber\\
	&\lesssim
	1 + \int_{0}^{t} \Big(
	\norm{\bff{u}_h(s)}{\bb{H}^2}^3
	+
	\norm{\nabla \bff{u}_h(s)}{\bb{L}^2}^2
	\norm{\bff{u}_h(s)}{\bb{H}^2}^3
	\Big) \ds
	\nonumber\\
	&\lesssim
	1 + \int_{0}^{t} \Big(
	\norm{\Delta\bff{u}_h(s)}{\bb{L}^2}^3
	+
	\norm{\Delta\bff{u}_h(s)}{\bb{L}^2}^4
	\Big) \ds,
\end{align}
where in the last step we used \eqref{equ:equivnorm-h2} and \eqref{equ:equivnorm-nabL2}. Invoking the Gronwall inequality (and noting Proposition \ref{pro:semidisc-est1}) yields the required result.

\medskip
\noindent
\underline{Case 3: $d=3$}. The Gagliardo--Nirenberg inequality gives
\begin{align*}
	\norm{\bff{u}_h}{\bb{L}^\infty}^2
	&\lesssim
	\norm{\bff{u}_h}{\bb{L}^2}^{1/2}
	\norm{\bff{u}_h}{\bb{H}^2}^{3/2},
	\\
	\norm{\nabla \bff{u}_h}{\bb{L}^4}^4
	&\lesssim
	\norm{\bff{u}_h}{\bb{L}^2}^{1/2}
	\norm{\bff{u}_h}{\bb{H}^2}^{7/2}.
\end{align*}
Inserting these into~\eqref{equ:dtu}, using the Sobolev embedding,
and integrating over~$(0,t)$ yield
\begin{align}\label{equ:dsu-3} 
	\int_{0}^{t} \norm{\partial_s\bff{u}_h(s)}{\bb{L}^2}^2 \ds
	&+
	\norm{\Delta\bff{u}_h(t)}{\bb{L}^2}^2
	+
	\norm{\nabla \bff{u}_h(t)}{\bb{L}^2}^2
	+
	\norm{\bff{u}_h(t)}{\bb{L}^4}^4
	+ 
	\norm{\bff{u}_h(t)\cdot\nabla\bff{u}_h(t)}{\bb{L}^2}^2
	\nonumber\\
	&\lesssim
	1 + \int_{0}^{t} 
	\norm{\bff{u}_h(s)}{\bb{H}^2}^5 \ds
	\lesssim
	1 + \int_{0}^{t}
	\norm{\Delta\bff{u}_h(s)}{\bb{L}^2}^5 \ds,
\end{align}
where in the last step we used \eqref{equ:equivnorm-h2}. Hence, by the Bihari--Gronwall's inequality (Theorem \ref{the:bihari}), the result follows for $t\in [0,T]$ if $\norm{\bff{u}_h(0)}{\bb{H}^2} \lesssim T^{-2/3}$.
This completes the proof of the proposition.
\end{proof}
%

To estimate the error in the semi-discrete approximation, we write the error at any time $t\in [0,T]$ as a sum of two terms:
\begin{equation}\label{equ:uh u xi rho}
	\bff{u}_h(t) - \bff{u}(t) 
	= \bff{\theta}(t) + \bff{\rho}(t)
\end{equation}
where
\begin{align}\label{equ:xi rho}
	\bff{\rho}(t) :=
	\widetilde{\bff{u}}_h(t) - \bff{u}(t)
	\quad\text{and}\quad
	\bff{\theta}(t) :=
	\bff{u}_h(t)- \widetilde{\bff{u}}_h(t),
\end{align}
with $\widetilde{\bff{u}}_h(t)$ being the elliptic projection of the exact
solution $\bff{u}(t)$ defined by
\begin{align}\label{equ:auxiliary}
\cal{A}(\bff{u}(t); \widetilde{\bff{u}}_h(t)- \bff{u}(t), \bff{\chi})=0 \quad \text{for all } \bff{\chi}\in \bb{V}_h.
\end{align}
Note that the elliptic projection of $\bff{u}(t)$ exists by the Lax--Milgram
theorem. We can now prove some estimates for this projection.



\begin{proposition}\label{pro:auxestimate}
Let $\widetilde{\bff{u}}_h$ be defined by \eqref{equ:auxiliary}, $\bff{u}$ be the solution of \eqref{equ:weakform} which
satisfies~\eqref{equ:ass 1}, and
$\bff{\rho}(t) = \widetilde{\bff{u}}_h(t) - \bff{u}(t)$ as defined in
\eqref{equ:xi rho}. Then for any $t\in [0,T]$,
\begin{equation}\label{equ:rho}
	\norm{\bff{\rho}(t)}{\bb{L}^2} 
	+ h \norm{\nabla \bff{\rho}(t)}{\bb{L}^2} 
	+ h^2 \norm{\Delta \bff{\rho}(t)}{\bb{L}^2} 
	\leq 
	C h^{r+1} \norm{\bff{u}}{L^\infty(\bb{H}^{r+1})},
\end{equation}
and
\begin{equation}\label{equ:partial rho}
	\norm{\partial_t\bff{\rho}(t)}{\bb{L}^2} 
	+ h \norm{\nabla \partial_t \bff{\rho}(t)}{\bb{L}^2} 
	+ h^{2} \norm{\Delta \partial_t \bff{\rho}(t)}{\bb{L}^2} 
	\leq 
	C  h^{r+1} \norm{\partial_t \bff{u}}{L^\infty(\bb{H}^{r+1})}.
\end{equation}
The constant $C$ depends on $\kappa,\mu$, and $T$, where $\kappa$ and $\mu$ were defined in \eqref{equ:A bounded} and \eqref{equ:coercive}, respectively.
\end{proposition}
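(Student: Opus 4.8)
The plan is to exploit that $\widetilde{\bff{u}}_h(t)$ is the $\cal{A}(\bff{u}(t);\cdot,\cdot)$-elliptic projection, so by \eqref{equ:auxiliary} we have the Galerkin orthogonality $\cal{A}(\bff{u}(t);\bff{\rho}(t),\bff{\chi})=0$ for all $\bff{\chi}\in\bb{V}_h$. Since $\norm{\bff{u}}{L^\infty(\bb{H}^{r+1})}\le K$ by \eqref{equ:ass 1}, Lemma~\ref{lem:A bou coe} provides the bound \eqref{equ:A bounded} and coercivity \eqref{equ:coercive} for $\cal{A}(\bff{u}(t);\cdot,\cdot)$ on $\bb{H}_{\bff{n}}^2$, uniformly in $t$. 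A C\'ea-type argument then gives, for any $\bff{\chi}\in\bb{V}_h$,
\begin{align*}
\mu\norm{\bff{\rho}(t)}{\bb{H}^2}^2
\le \cal{A}(\bff{u}(t);\bff{\rho}(t),\bff{\rho}(t))
= \cal{A}(\bff{u}(t);\bff{\rho}(t),\bff{\chi}-\bff{u}(t))
\le \kappa\norm{\bff{\rho}(t)}{\bb{H}^2}\,\norm{\bff{u}(t)-\bff{\chi}}{\bb{H}^2},
\end{align*}
so that $\norm{\bff{\rho}(t)}{\bb{H}^2}\lesssim\inf_{\bff{\chi}\in\bb{V}_h}\norm{\bff{u}(t)-\bff{\chi}}{\bb{H}^2}\lesssim h^{r-1}\norm{\bff{u}}{L^\infty(\bb{H}^{r+1})}$ by \eqref{equ:fin approx}. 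This yields the $h^2\norm{\Delta\bff{\rho}}{\bb{L}^2}$ contribution to \eqref{equ:rho}.

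To gain two powers of $h$ in $\bb{L}^2$ I would run an Aubin--Nitsche duality argument. Fix $t$ and let $\bff{z}\in\bb{H}_{\bff{n}}^2$ solve the adjoint problem $\cal{A}(\bff{u}(t);\bff{v},\bff{z})=\inpro{\bff{v}}{\bff{\rho}(t)}_{\bb{L}^2}$ for all $\bff{v}\in\bb{H}_{\bff{n}}^2$, which is well posed by Lax--Milgram and coercivity. Its strong form is a fourth-order elliptic equation of the type $\beta_2\Delta^2\bff{z}+(\text{lower-order terms with coefficients built from }\bff{u}(t))=\bff{\rho}(t)$ subject to the boundary conditions defining $\bb{H}_{\bff{n}}^4$, so the regularity lemma of Section~\ref{sec:lem aux H4} gives $\bff{z}\in\bb{H}_{\bff{n}}^4$ with $\norm{\bff{z}}{\bb{H}^4}\lesssim\norm{\bff{\rho}(t)}{\bb{L}^2}$, the constant depending on $K$. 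Taking $\bff{v}=\bff{\rho}(t)$, subtracting any $\bff{z}_h\in\bb{V}_h$ by Galerkin orthogonality, and using \eqref{equ:A bounded} together with the $\bb{H}^2$-approximation \eqref{equ:fin approx} applied to $\bff{z}$ (legitimate since $\bff{z}\in\bb{H}^4$, and for $d=3$ the finite element space also has the intermediate-order approximation property at regularity $\bb{H}^4$),
\begin{align*}
\norm{\bff{\rho}(t)}{\bb{L}^2}^2
= \cal{A}(\bff{u}(t);\bff{\rho}(t),\bff{z}-\bff{z}_h)
\lesssim \norm{\bff{\rho}(t)}{\bb{H}^2}\,h^{2}\norm{\bff{z}}{\bb{H}^4}
\lesssim h^{2}\norm{\bff{\rho}(t)}{\bb{H}^2}\,\norm{\bff{\rho}(t)}{\bb{L}^2},
\end{align*}
so $\norm{\bff{\rho}(t)}{\bb{L}^2}\lesssim h^{r+1}\norm{\bff{u}}{L^\infty(\bb{H}^{r+1})}$, and the $\bb{H}^1$-bound follows by interpolation $\norm{\nabla\bff{\rho}}{\bb{L}^2}\lesssim\norm{\bff{\rho}}{\bb{L}^2}^{1/2}\norm{\bff{\rho}}{\bb{H}^2}^{1/2}$. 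This establishes \eqref{equ:rho}.

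For \eqref{equ:partial rho} I would differentiate \eqref{equ:auxiliary} in $t$. Since $\cal{A}_0$ does not depend on the coefficient, $\cal{B}(\bff{\phi},\bff{\phi};\cdot,\cdot)$ is bilinear in $\bff{\phi}$, and $\cal{C}(\bff{\phi};\cdot,\cdot)$ is linear in $\bff{\phi}$, this yields, for all $\bff{\chi}\in\bb{V}_h$,
\begin{align*}
\cal{A}(\bff{u}(t);\partial_t\bff{\rho}(t),\bff{\chi})
= -\cal{B}(\partial_t\bff{u},\bff{u};\bff{\rho},\bff{\chi})
-\cal{B}(\bff{u},\partial_t\bff{u};\bff{\rho},\bff{\chi})
-\cal{C}(\partial_t\bff{u};\bff{\rho},\bff{\chi})
=: \cal{R}_t(\bff{\rho}(t),\bff{\chi}).
\end{align*}
I would split $\partial_t\bff{\rho}(t)=\bff{\zeta}_h(t)+\bff{\pi}(t)$, where $\bff{\pi}(t)$ is the $\cal{A}(\bff{u}(t);\cdot,\cdot)$-elliptic projection error of $\partial_t\bff{u}(t)$, controlled exactly as in the first two steps (with $\partial_t\bff{u}$ in place of $\bff{u}$), giving $\norm{\bff{\pi}}{\bb{L}^2}+h\norm{\nabla\bff{\pi}}{\bb{L}^2}+h^2\norm{\Delta\bff{\pi}}{\bb{L}^2}\lesssim h^{r+1}\norm{\partial_t\bff{u}}{L^\infty(\bb{H}^{r+1})}$; then $\bff{\zeta}_h(t)\in\bb{V}_h$ satisfies $\cal{A}(\bff{u}(t);\bff{\zeta}_h,\bff{\chi})=\cal{R}_t(\bff{\rho},\bff{\chi})$ for $\bff{\chi}\in\bb{V}_h$, since $\cal{A}(\bff{u}(t);\bff{\pi},\bff{\chi})=0$. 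Testing with $\bff{\chi}=\bff{\zeta}_h$, coercivity together with \eqref{equ:B bounded H2}, \eqref{equ:C bounded H2} (applied with $\bff{\rho}$ in the $\bb{H}^2$-slot and $\bff{\zeta}_h$ in the $\bb{L}^2$-slot) and \eqref{equ:ass 1} gives $\norm{\bff{\zeta}_h}{\bb{H}^2}\lesssim\norm{\bff{\rho}}{\bb{H}^2}\lesssim h^{r-1}$. To recover the optimal $\bb{L}^2$-order for $\bff{\zeta}_h$ I would apply duality once more: with $\bff{z}\in\bb{H}_{\bff{n}}^4$ solving the adjoint problem with data $\bff{\zeta}_h$ and $\bff{z}_h=\cal{I}_h\bff{z}$,
\begin{align*}
\norm{\bff{\zeta}_h}{\bb{L}^2}^2
= \cal{A}(\bff{u}(t);\bff{\zeta}_h,\bff{z}-\bff{z}_h) + \cal{R}_t(\bff{\rho},\bff{z}_h),
\end{align*}
where the first term is $\lesssim\norm{\bff{\zeta}_h}{\bb{H}^2}\,h^2\norm{\bff{z}}{\bb{H}^4}\lesssim h^{r+1}\norm{\bff{\zeta}_h}{\bb{L}^2}$; for the second I would write $\cal{R}_t(\bff{\rho},\bff{z}_h)=\cal{R}_t(\bff{\rho},\bff{z})-\cal{R}_t(\bff{\rho},\bff{z}-\bff{z}_h)$, use the symmetry of $\cal{B}$ and the skew-symmetry of $\cal{C}$ in their last two arguments to place $\bff{z}$ in the $\bb{H}^2$-slot of \eqref{equ:B bounded H2}, \eqref{equ:C bounded H2} and $\bff{\rho}$ in the $\bb{L}^2$-slot, so that $|\cal{R}_t(\bff{\rho},\bff{z})|\lesssim\norm{\bff{z}}{\bb{H}^2}\norm{\bff{\rho}}{\bb{L}^2}\lesssim h^{r+1}\norm{\bff{\zeta}_h}{\bb{L}^2}$ by the $\bb{L}^2$-bound on $\bff{\rho}$ already proved, while $|\cal{R}_t(\bff{\rho},\bff{z}-\bff{z}_h)|\lesssim\norm{\bff{\rho}}{\bb{H}^1}\norm{\bff{z}-\bff{z}_h}{\bb{H}^1}\lesssim h^{r}h^{3}\norm{\bff{\zeta}_h}{\bb{L}^2}$ by \eqref{equ:B bounded H1}, \eqref{equ:C bounded H1} and \eqref{equ:interp approx}. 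Hence $\norm{\bff{\zeta}_h}{\bb{L}^2}\lesssim h^{r+1}$, and \eqref{equ:partial rho} follows by adding $\bff{\pi}$ and interpolating for the $\bb{H}^1$-term.

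The routine part is the first step; the work lies in the two duality arguments. The main obstacle is recovering the \emph{full} order $h^{r+1}$ (and $h^r$) for $\partial_t\bff{\rho}$: the energy estimate controls $\partial_t\bff{\rho}$ in $\bb{H}^2$ only at order $h^{r-1}$, since the perturbation $\cal{R}_t$ pairs against $\bff{\rho}$ in its $\bb{H}^2$-norm, so a second Aubin--Nitsche duality is needed, and it closes only because $\cal{B}$ is symmetric and $\cal{C}$ skew-symmetric in their last two arguments, which lets the factor $\bff{\rho}$ — small only in $\bb{L}^2$ and $\bb{H}^1$ — be paired with the smooth dual solution $\bff{z}$ rather than with the test function. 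The second technical ingredient is the $\bb{H}^4$-regularity of the variable-coefficient, non-self-adjoint fourth-order adjoint problem, which is precisely the content of the lemma in Section~\ref{sec:lem aux H4}.
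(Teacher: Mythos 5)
Your proposal is correct and follows essentially the same route as the paper's proof: a C\'ea estimate plus Aubin--Nitsche duality (via the $\bb{H}^4$-regularity lemma of Section~\ref{sec:lem aux H4}) for $\bff{\rho}$, and for $\partial_t\bff{\rho}$ the differentiated projection identity, an $\bb{H}^2$ energy estimate of order $h^{r-1}$, and a second duality in which the symmetry of $\cal{B}$ and the skew-symmetry of $\cal{C}$ in their last two arguments allow $\bff{\rho}$ to be paired in $\bb{L}^2$ with the smooth dual solution. The only cosmetic difference is that you route $\partial_t\bff{\rho}$ through the elliptic projection of $\partial_t\bff{u}$ and a discrete remainder $\bff{\zeta}_h$, whereas the paper works with $\partial_t\bff{\rho}$ directly using the nodal interpolant via $\cal{I}_h(\partial_t\bff{\rho})=\partial_t\widetilde{\bff{u}}_h-\cal{I}_h(\partial_t\bff{u})$; both variants close in the same way.
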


\begin{proof}

For all $\bff{\chi} \in \bb{V}_h$, by the coercivity and the boundedness of $\cal{A}$, and \eqref{equ:auxiliary},
\begin{align*}
	\mu \norm{\widetilde{\bff{u}}_h(t) - \bff{u}(t)}{\bb{H}^2}^2 
	&\leq 
	\cal{A} (\bff{u}(t); \widetilde{\bff{u}}_h(t) - \bff{u}(t), \widetilde{\bff{u}}_h(t) - \bff{u}(t)) 
	\\
	&=
	\cal{A} (\bff{u}(t); \widetilde{\bff{u}}_h(t) - \bff{u}(t), \widetilde{\bff{u}}_h(t) - \bff{\chi} ) 
	-
	\cal{A} (\bff{u}(t); \widetilde{\bff{u}}_h(t) - \bff{u}(t), \bff{u}(t) - \bff{\chi}) 
	\\
	&\leq 
	\kappa \norm{\widetilde{\bff{u}}_h(t) - \bff{u}(t)}{\bb{H}^2} \norm{\bff{u}(t)-\bff{\chi}}{\bb{H}^2}.
\end{align*}
Therefore, by \eqref{equ:fin approx},
\begin{align}\label{equ:rho h2}
	\norm{\bff{\rho}(t)}{\bb{H}^2} 
	= 
	\norm{\widetilde{\bff{u}}_h(t) - \bff{u}(t)}{\bb{H}^2} 
	\leq 
	 \inf_{\bff{\chi} \in \bb{V}_h} \norm{\bff{u}(t)- \bff{\chi}}{\bb{H}^2} 
	\leq
	Ch^{r-1} \norm{\bff{u}(t)}{\bb{H}^{r+1}}.
\end{align}

To prove the $\bb{L}^2$-estimate, we use the Aubin--Nitsche duality argument.
For each $\bff{u}(t) \in \bb{H}^{4} \cap \bb{H}_{\bff{n}}^2$, let $\bff{\psi}(t)\in \bb{H}^{4} \cap \bb{H}_{\bff{n}}^2$ satisfy
\begin{equation}\label{equ:Au psi xi}
	\cal{A}(\bff{u}(t); \bff{\xi}, \bff{\psi}(t))
	= 
	\inpro{ \bff{\rho}(t) }{\bff{\xi}}_{\bb{L}^2} 
	\quad \text{for all } \bff{\xi}\in \bb{H}^2.
\end{equation}
For any $t\in [0,T]$, such $\bff{\psi}(t)$ exists by Lemma \ref{lem:aux H4} under the assumed conditions on $\Omega$. Moreover, by \eqref{equ:aux H4 est part 2},
\begin{align*}
	\norm{\bff{\psi}(t)}{\bb{H}^{4}} 
	\lesssim 
	\norm{\bff{\rho}(t)}{\bb{L}^2}.
\end{align*}
Therefore, taking $\bff{\xi}=\bff{\rho}(t)$ in~\eqref{equ:Au psi xi} and noting \eqref{equ:auxiliary},
we have for all $\bff{\chi}\in\bb{V}_h$,
\begin{align*}
	\norm{\bff{\rho}(t)}{\bb{L}^2}^2
	&= 
	\cal{A}(\bff{u}(t); \bff{\rho}(t), \bff{\psi}(t)) 
	=  
	\cal{A}(\bff{u}(t); \bff{\rho}(t), \bff{\psi}(t)-\bff{\chi})
	\\
	&\lesssim 
	 \norm{\bff{\rho}(t)}{\bb{H}^2} 
	\inf_{\bff{\chi}\in \bb{V}_h} \norm{\bff{\psi}(t)-\bff{\chi}}{\bb{H}^2} 
	\leq Ch^{r+1} \norm{\bff{u}(t)}{\bb{H}^{r+1}} \norm{\bff{\rho}(t)}{\bb{L}^2},
\end{align*}
where in the last step we used \eqref{equ:rho h2} and \eqref{equ:fin approx}. This implies
\[
	\norm{\bff{\rho}(t)}{\bb{L}^2} 
	\leq 
	C h^{r+1}
	\norm{\bff{u}(t)}{\bb{H}^{r+1}}
	\leq
	Ch^{r+1} \norm{\bff{u}}{L^\infty(\bb{H}^{r+1})}.
\]
The estimate \eqref{equ:rho} then follows by using \eqref{equ:equivnorm-nabL2}.

We now prove \eqref{equ:partial rho}. For ease of presentation, the dependence
of the functions on $t$ will sometimes be omitted. 
Note that by the coercivity of $\cal{A}$ and the definition of $\bff{\rho}$,
\begin{align}\label{equ:rho A}
	\mu \norm{\partial_t \bff{\rho}}{\bb{H}^2}^2 
	\leq 
	\cal{A} \big( \bff{u}; \partial_t \bff{\rho}, \partial_t \bff{\rho} \big)
	=
	\cal{A} \big( \bff{u}; \partial_t \bff{\rho}, \cal{I}_h (\partial_t \bff{\rho}) \big)
	+ 
	\cal{A} \big(\bff{u}; \partial_t \bff{\rho}, \cal{I}_h (\partial_t
	\bff{u}) - \partial_t \bff{u} \big),
\end{align}
where we used $\cal{I}_h(\partial_t \bff{\rho})= \partial_t \widetilde{\bff{u}}_h- \cal{I}_h(\partial_t \bff{u})$. 
We now estimate each term on the last line.
To this end, noting the time derivative of~\eqref{equ:auxiliary} and~\eqref{equ:bilinear}, we
differentiate equation \eqref{equ:auxiliary} with respect to $t$ to have,
for all $\bff{\chi}\in \bb{V}_h$,
\begin{equation}\label{equ:A2BC}
	\cal{A}(\bff{u}; \partial_t \bff{\rho}, \bff{\chi}) 
	+
	2\cal{B}(\bff{u}, \partial_t \bff{u}; \bff{\rho}, \bff{\chi})  
	+
	\cal{C}(\partial_t \bff{u}; \bff{\rho},\bff{\chi})  
	= 0.
\end{equation}
Thus, for the first term on the right-hand side of~\eqref{equ:rho A}, by the boundedness of $\cal{B}$, $\cal{C}$, and $\mathcal{I}_h$ we have
\begin{align}\label{equ:Buu}
	\big| \cal{A} \big( \bff{u}; \partial_t \bff{\rho}, \cal{I}_h (\partial_t \bff{\rho}) \big) \big|
	&\lesssim
	\big| \cal{B} \big( \bff{u}; \partial_t \bff{u}; \bff{\rho}, \cal{I}_h (\partial_t \bff{\rho}) \big) \big|
	+
	\big| \cal{C} \big(\partial_t \bff{u}; \bff{\rho}, \cal{I}_h (\partial_t \bff{\rho}) \big) \big|
	\nonumber\\
	&\lesssim
	\norm{\bff{\rho}}{\bb{H}^2} \norm{ \cal{I}_h (\partial_t \bff{\rho})}{\bb{H}^2}
	\lesssim
	h^{r-1} \norm{\partial_t \bff{\rho}}{\bb{H}^{r+1}},
\end{align}
where in the last step we used~\eqref{equ:rho}. For the second
term on the right-hand side of~\eqref{equ:rho A}, by the boundedness
of~$\cal{A}$ (see~\eqref{equ:A bounded}), the assumption on the exact
solution~$\bff{u}$ (see~\eqref{equ:ass 1}), and \eqref{equ:interp approx}, we
have
\begin{align*}
	\big| \cal{A} \big(\bff{u}; \partial_t \bff{\rho}, \cal{I}_h (\partial_t \bff{u})- \partial_t \bff{u} \big) \big|
	&\lesssim 
	\norm{\partial_t \bff{\rho}}{\bb{H}^2} \norm{\cal{I}_h (\partial_t \bff{u})- \partial_t \bff{u}}{\bb{H}^2}
	\lesssim 
	h^{r-1} \norm{\partial_t \bff{\rho}}{\bb{H}^2} \norm{\partial_t \bff{u}}{\bb{H}^4}.
\end{align*}
This, together with \eqref{equ:rho A} and \eqref{equ:Buu}, gives
\begin{align}\label{equ:partial t rho h2}
	\norm{\partial_t \bff{\rho}(t)}{\bb{H}^2} \leq 
	C h^{r-1} \norm{\partial_t \bff{u}}{L^\infty(\bb{H}^{r+1})},
\end{align}
where $C=C (\kappa, \mu, T )$.

For the estimate of~$\norm{\partial_t\bff{\rho}}{\bb{L}^2}$, we use duality
argument as before. For each $\bff{u}(t) \in \bb{H}^4 \cap \bb{H}_{\bff{n}}^2$, let $\bff{\psi}(t)\in
 \bb{H}^4 \cap \bb{H}_{\bff{n}}^2$ satisfy
\begin{align}\label{equ:partial t rho 2}
	\cal{A}(\bff{u}(t); \bff{\xi}, \bff{\psi}(t)) = \inpro{ \partial_t \bff{\rho}(t)}{ \bff{\xi}}_{\bb{L}^2} \quad \text{for all } \bff{\xi}\in \bb{H}^2.
\end{align}
The existence of $\bff{\psi}(t)$ is conferred by Lemma \ref{lem:aux H4}. Moreover,
by \eqref{equ:aux H4 est part 2},
\begin{align}\label{equ:psi H4 dt rho}
	\norm{\bff{\psi}(t)}{\bb{H}^4} 
	\lesssim \norm{\partial_t \bff{\rho}(t)}{\bb{L}^2},
\end{align}
where the constant is independent of~$t$ but depends on $T$.
Taking $\bff{\xi}= \partial_t \bff{\rho}$ in \eqref{equ:partial t rho 2}, we have
\begin{align}\label{equ:partial t rho 3}
	\cal{A}(\bff{u}(t); \partial_t \bff{\rho}, \bff{\psi}(t) )
	= \norm{\partial_t \bff{\rho}}{\bb{L}^2}^2.
\end{align}
Consequently, equations~\eqref{equ:partial t rho 3} and \eqref{equ:A2BC} yield
\begin{align*}
	\norm{\partial_t \bff{\rho}}{\bb{L}^2}^2
	&=
	\cal{A} (\bff{u}; \partial_t \bff{\rho}, \bff{\psi}-\bff{\chi}) 
	-
	2\cal{B} (\bff{u}, \partial_t \bff{u}; \bff{\rho}, \bff{\chi})
	-
	\cal{C}(\partial_t \bff{u};\bff{\rho}, \bff{\chi})
	\\
	&=
	\cal{A} (\bff{u}; \partial_t \bff{\rho}, \bff{\psi}-\bff{\chi}) 
	+ 
	2\cal{B} (\bff{u}, \partial_t \bff{u}; \bff{\rho}, \bff{\psi}-\bff{\chi})
	- 
	2\cal{B} (\bff{u},\partial_t \bff{u}; \bff{\rho}, \bff{\psi})
	+
	\cal{C}(\partial_t \bff{u}; \bff{\rho}, \bff{\psi}-\bff{\chi})
	-
	\cal{C}(\partial_t \bff{u}; \bff{\rho},\bff{\psi}).
\end{align*}
We estimate the first two terms on the right-hand side
above by using the boundedness of~$\cal{A}$, see~\eqref{equ:A bounded}, and of~$\cal{B}$, see~\eqref{equ:B bounded H1}. The third term is estimated by using~\eqref{equ:B bounded H2}, while the fourth term is bounded using H\"older's inequality. For the last term, we use integration by parts and~\eqref{equ:beta4 cross}. Thus we obtain
\begin{align*} 
	\norm{\partial_t \bff{\rho}}{\bb{L}^2}^2 
	&\lesssim
	\norm{\partial_t \bff{\rho}}{\bb{H}^2} \norm{\bff{\psi}- \bff{\chi}}{\bb{H}^2}
	+
	\norm{\bff{\rho}}{\bb{H}^1} \norm{\bff{\psi}- \bff{\chi}}{\bb{H}^1}
	+
	\norm{\bff{\rho}}{\bb{L}^2} \norm{\bff{\psi}}{\bb{H}^2}.
\end{align*}
By choosing $\bff{\chi} = \mathcal{I}_h (\bff{\psi})$ and
using successively~\eqref{equ:partial rho} and \eqref{equ:interp approx}, we obtain
\begin{align*}
	\norm{\partial_t \bff{\rho}}{\bb{L}^2}^2 
	&\lesssim
	h^{r-1} 
	(h^2 \norm{\bff{\psi}}{\bb{H}^4} )
	+ 
	h^r 
	(h^3 \norm{\bff{\psi}}{\bb{H}^4} ) 
	+ 
	h^{r+1} \norm{\bff{\psi}}{\bb{H}^2} 
	\\
	&\lesssim 
	h^{r+1} \norm{\partial_t \bff{\rho}}{\bb{L}^2} 
	+ 
	h^{r+3} \norm{\partial_t \bff{\rho}}{\bb{L}^2} 
	+ 
	h^{r+1} \norm{\partial_t \bff{\rho}}{\bb{L}^2},
\end{align*}
where in the last step we used \eqref{equ:psi H4 dt rho}. This implies
\[
	\norm{\partial_t \bff{\rho}(t)}{\bb{L}^2} \leq C h^{r+1} \norm{\partial_t \bff{u}}{L^\infty(\bb{H}^4)},
\]
The corresponding estimate for $\norm{\nabla \partial_t \bff{\rho}}{\bb{L}^2}$ then follows by interpolation \eqref{equ:equivnorm-nabL2}. This completes the proof of the proposition.
\end{proof}

The following results will be needed in the proof of the error estimate for the semi-discrete and fully discrete schemes. First, we show the stability of the elliptic projection.

\begin{lemma}\label{lem:auxbound}
	Let $\widetilde{\bff{u}}_h$ be defined as in \eqref{equ:auxiliary} with
	$\bff{u}\in\bb{H}^2$. Then
	\begin{align}\label{equ:aux H2}
		\norm{\widetilde{\bff{u}}_h(t)}{\bb{H}^2} 
		\leq C \norm{\bff{u}_0}{\bb{H}^2} \quad \text{for all } t\in [0,T].
	\end{align}
	Furthermore, if $\bff{u}\in \bb{H}^3$, and the triangulation
	$\cal{T}_h$ is \emph{quasi-uniform}, then for $p\leq 6$,
	\begin{align}\label{equ:aux H3}
		\norm{\widetilde{\bff{u}}_h(t)}{\bb{W}^{2,p}} 
		\leq C \norm{\bff{u}_0}{\bb{H}^3} \quad \text{for all } t\in [0,T].
	\end{align}
	Here, the constant $C$ depends on $T$, but is independent of $h$ and $\bff{u}$.
\end{lemma}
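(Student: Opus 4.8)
The plan is to exploit the coercivity and boundedness of $\cal{A}(\bff{u}(t);\cdot,\cdot)$ from Lemma \ref{lem:A bou coe}, together with the triangle inequality $\norm{\widetilde{\bff{u}}_h}{\bb{H}^2} \le \norm{\bff{\rho}(t)}{\bb{H}^2} + \norm{\bff{u}(t)}{\bb{H}^2}$, to reduce the first estimate to the already-established bound \eqref{equ:rho h2} on $\norm{\bff{\rho}(t)}{\bb{H}^2}$. Indeed, from \eqref{equ:rho h2} we have $\norm{\bff{\rho}(t)}{\bb{H}^2} \le Ch^{r-1}\norm{\bff{u}(t)}{\bb{H}^{r+1}} \lesssim \norm{\bff{u}}{L^\infty(\bb{H}^{r+1})}$, and since $r \ge 3$ the factor $h^{r-1}$ is harmless; combined with $\norm{\bff{u}(t)}{\bb{H}^2} \le \norm{\bff{u}}{L^\infty(\bb{H}^2)}$ and the regularity assumption \eqref{equ:ass 1} this yields \eqref{equ:aux H2}. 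Strictly speaking the bound should be phrased in terms of $\norm{\bff{u}_0}{\bb{H}^2}$: using the stability estimate for the exact solution (the continuous analogue of Proposition \ref{pro:semidisc-est1}--\ref{pro:semidisc est2}, which controls $\norm{\bff{u}(t)}{\bb{H}^2}$ by $\norm{\bff{u}_0}{\bb{H}^2}$ with a $T$-dependent constant), one converts the $L^\infty(\bb{H}^{r+1})$ norm appearing above into a constant times $\norm{\bff{u}_0}{\bb{H}^2}$, absorbing all solution-regularity constants into $C(T)$.

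For the second estimate \eqref{equ:aux H3}, the idea is to use the inverse inequality available under quasi-uniformity together with the $\bb{H}^2$- and $\bb{H}^3$-approximation properties. First write $\widetilde{\bff{u}}_h = (\widetilde{\bff{u}}_h - \cal{I}_h \bff{u}) + \cal{I}_h \bff{u}$. The interpolant term is controlled directly by \eqref{equ:interp bounded}: $\norm{\cal{I}_h \bff{u}}{\bb{W}^{2,p}} \lesssim \norm{\bff{u}}{\bb{W}^{2,p}} \lesssim \norm{\bff{u}}{\bb{H}^3}$ by Sobolev embedding $\bb{H}^1 \hookrightarrow \bb{L}^p$ for $p \le 6$ (applied to second derivatives, so one needs $\bb{H}^3 \hookrightarrow \bb{W}^{2,p}$, valid for $p\le 6$ when $d\le 3$). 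For the discrete difference $\bff{\chi}_h := \widetilde{\bff{u}}_h - \cal{I}_h \bff{u} \in \bb{V}_h$, apply the inverse estimate $\norm{\bff{\chi}_h}{\bb{W}^{2,p}} \lesssim h^{-(d/2 - d/p)}\norm{\bff{\chi}_h}{\bb{H}^2}$, then bound $\norm{\bff{\chi}_h}{\bb{H}^2} \le \norm{\widetilde{\bff{u}}_h - \bff{u}}{\bb{H}^2} + \norm{\bff{u} - \cal{I}_h \bff{u}}{\bb{H}^2} \lesssim h^{r-1}\norm{\bff{u}}{\bb{H}^{r+1}} + h^{r-1}\norm{\bff{u}}{\bb{H}^{r+1}}$ using \eqref{equ:rho h2} and \eqref{equ:interp approx}. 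Since $r-1 \ge 2 > d/2$, the negative power $h^{-(d/2-d/p)}$ from the inverse estimate is more than compensated, so $\norm{\bff{\chi}_h}{\bb{W}^{2,p}} \lesssim h^{r-1-(d/2-d/p)}\norm{\bff{u}}{\bb{H}^{r+1}}$ stays bounded; converting $\norm{\bff{u}}{\bb{H}^{r+1}}$ to $\norm{\bff{u}_0}{\bb{H}^3}$ via the solution regularity estimate finishes the argument.

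The main obstacle I anticipate is purely bookkeeping rather than conceptual: one must check that the exponent $r - 1 - (d/2 - d/p)$ is nonnegative for all admissible $d \in \{1,2,3\}$ and $p \le 6$, which it is since $r - 1 \ge 2$ and $d/2 - d/p < d/2 \le 3/2$; and one must be careful about how $\norm{\bff{u}}{L^\infty(\bb{H}^{r+1})}$ is traded for $\norm{\bff{u}_0}{\bb{H}^2}$ or $\norm{\bff{u}_0}{\bb{H}^3}$ — this presumably uses a stability bound on the continuous solution analogous to Propositions \ref{pro:semidisc-est1} and \ref{pro:semidisc est2}, with the constant $C(T)$ absorbing the interplay between the regularity norm $K$ in \eqref{equ:ass 1} and the initial data. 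If such a continuous-level stability estimate is not explicitly available one may simply state the bounds with $\norm{\bff{u}}{L^\infty(\bb{H}^{r+1})}$ on the right and note it is $\le K$ by assumption, which suffices for all subsequent uses in the error analysis.
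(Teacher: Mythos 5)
Your proposal is correct and follows essentially the same route as the paper: triangle inequality plus the quasi-optimality of the elliptic projection for \eqref{equ:aux H2}, and the splitting $\widetilde{\bff{u}}_h=(\widetilde{\bff{u}}_h-\cal{I}_h\bff{u})+\cal{I}_h\bff{u}$ combined with the inverse estimate $\norm{\cdot}{\bb{W}^{2,p}}\lesssim h^{-d(1/2-1/p)}\norm{\cdot}{\bb{H}^2}$ for \eqref{equ:aux H3}. The one point worth flagging is that you invoke the full-order bounds $\norm{\bff{\rho}}{\bb{H}^2}\lesssim h^{r-1}\norm{\bff{u}}{\bb{H}^{r+1}}$, which presuppose $\bff{u}\in\bb{H}^{r+1}$, whereas the lemma only assumes $\bff{u}\in\bb{H}^2$ (resp.\ $\bb{H}^3$). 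The paper instead uses C\'ea's lemma with the regularity actually hypothesised: for \eqref{equ:aux H2} it suffices that $\norm{\bff{\rho}}{\bb{H}^2}\lesssim\inf_{\bff{\chi}}\norm{\bff{u}-\bff{\chi}}{\bb{H}^2}\lesssim\norm{\bff{u}}{\bb{H}^2}$, and for \eqref{equ:aux H3} it uses $\norm{\bff{\rho}}{\bb{H}^2}+\norm{\bff{u}-\cal{I}_h\bff{u}}{\bb{H}^2}\lesssim h\,\norm{\bff{u}}{\bb{H}^3}$, which supplies exactly the single power of $h$ needed to absorb $h^{-d(1/2-1/p)}$ in the worst case $d=3$, $p=6$ (where the exponent equals $1$). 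Your exponent check $r-1-(d/2-d/p)\ge 0$ is fine, but it buys boundedness only under the stronger standing assumption \eqref{equ:ass 1} and leaves $\norm{\bff{u}}{L^\infty(\bb{H}^{r+1})}$ on the right, which cannot in general be traded for $\norm{\bff{u}_0}{\bb{H}^3}$; the paper's low-order version keeps the hypotheses and the conclusion of the lemma matched.
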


\begin{proof}
	Firstly, by the triangle inequality and Proposition \ref{pro:auxestimate},
	\begin{align*}
		\norm{\widetilde{\bff{u}}_h(t)}{\bb{H}^2}
		\leq
		\norm{\bff{\rho}(t)}{\bb{H}^2} 
		+
		\norm{\bff{u}(t)}{\bb{H}^2}
		\leq
		C \norm{\bff{u}_0}{\bb{H}^2}
		+ C \norm{\bff{u}_0}{\bb{H}^2},
	\end{align*}
	proving \eqref{equ:aux H2}.
	Next, if the triangulation $\cal{T}_h$ is quasi-uniform, then inverse estimates in $\Omega \subset
	\bb{R}^d$ (see~\cite[Theorem~4.5.11]{BreSco08}), give
	\begin{align*}
		\norm{\bff{\chi}}{\bb{W}^{2,p}} \leq Ch^{-d \left( \frac{1}{2}-\frac{1}{p} \right)} \norm{\bff{\chi}}{\bb{H}^2}, \quad \forall \bff{\chi}\in \bb{V}_h.
	\end{align*}
	By the triangle inequality, the above inverse estimate, \eqref{equ:rho}, and \eqref{equ:interp bounded}, we have
	\begin{align*}
		\norm{\widetilde{\bff{u}}_h}{\bb{W}^{2,p}}
		&\leq
		\norm{\widetilde{\bff{u}}_h - \mathcal{I}_h(\bff{u})}{\bb{W}^{2,p}} 
		+
		\norm{\mathcal{I}_h(\bff{u})}{\bb{W}^{2,p}}
		\\
		&\leq 
		Ch^{-d \left( \frac{1}{2}-\frac{1}{p} \right)} \norm{\widetilde{\bff{u}}_h - \mathcal{I}_h(\bff{u})}{\bb{H}^2}
		+
		\norm{\mathcal{I}_h(\bff{u})}{\bb{W}^{2,p}}
		\\
		&\leq Ch^{-d \left( \frac{1}{2}-\frac{1}{p} \right)} 
		\big(\norm{\bff{\rho}}{\bb{H}^2} + \norm{ \mathcal{I}_h(\bff{u})-\bff{u} }{\bb{H}^2} \big)
		+
		C \norm{\bff{u}}{\bb{W}^{2,p}}
		\\
		&\leq Ch^{-d \left( \frac{1}{2}-\frac{1}{p} \right)} \big(C h \norm{\bff{u}_0}{\bb{H}^3} + Ch \norm{\bff{u}}{\bb{H}^3} \big)
		+
		C \norm{\bff{u}}{\bb{H}^3}
		\leq C \norm{\bff{u}_0}{\bb{H}^3},
	\end{align*}
	since $d \left(\frac{1}{2}-\frac{1}{p}\right) \leq 1$ for $d\leq 3$ and $p\leq 6$. This shows \eqref{equ:aux H3}, completing the proof of the lemma.
\end{proof}

\begin{lemma}
Let $\mathcal{A}$ be as defined in \eqref{equ:bilinear} and $\bff{u}$ be the solution of \eqref{equ:weakform} which satisfies~\eqref{equ:ass 1}. Then
	\begin{align}
		\label{equ:A uh A tilde uh}
		\big| \cal{A}(\bff{u}; \widetilde{\bff{u}}_h, \bff{\chi})
		- 
		\cal{A}(\bff{u}_h; \widetilde{\bff{u}}_h, \bff{\chi}) \big|
		&\lesssim
		h^{2(r+1)}
		+
		\big(1+ \norm{\bff{u}_h}{\bb{H}^2}^2 \big)
		\big(\norm{\bff{\theta}}{\bb{L}^2}^2 + \norm{\bff{\chi}}{\bb{L}^2}^2 \big)
		+
		\epsilon \norm{\bff{\chi}}{\bb{H}^2}^2,
		\\
		\label{equ:A uh A tilde uh H2}
		\big| \cal{A}(\bff{u}; \widetilde{\bff{u}}_h, \bff{\chi})
		- 
		\cal{A}(\bff{u}_h; \widetilde{\bff{u}}_h, \bff{\chi}) \big|
		&\lesssim
		\big( 1+ \norm{\bff{u}_h}{\bb{H}^2}^2 \big) 
		\big( h^{2(r-2)} + \norm{\bff{\theta}}{\bb{H}^2}^2 \big)
		+ 
		\epsilon \norm{\bff{\chi}}{\bb{L}^2}^2.
	\end{align}
Furthermore, if the triangulation $\cal{T}_h$ is quasi-uniform, then
\begin{align}
	\label{equ:A uh A tilde uh s3}
	\big| \cal{A}(\bff{u}; \widetilde{\bff{u}}_h, \bff{\chi})
	- 
	\cal{A}(\bff{u}_h; \widetilde{\bff{u}}_h, \bff{\chi}) \big|
	&\lesssim
	\big( 1+ \norm{\bff{u}_h}{\bb{H}^2}^2 \big) 
	\big( h^{2r} + \norm{\bff{\theta}}{\bb{H}^1}^2 \big)
	+ 
	\epsilon \norm{\bff{\chi}}{\bb{L}^2}^2,
\end{align}
for any $\epsilon>0$, where the constants are independent of $h$.
\end{lemma}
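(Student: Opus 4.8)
The plan is to expand the difference $\cal{A}(\bff{u}; \widetilde{\bff{u}}_h, \bff{\chi}) - \cal{A}(\bff{u}_h; \widetilde{\bff{u}}_h, \bff{\chi})$ using the definition \eqref{equ:bilinear}. Since $\cal{A}_0$ does not depend on the first argument, it cancels, leaving
\[
	\cal{B}(\bff{u},\bff{u};\widetilde{\bff{u}}_h,\bff{\chi}) - \cal{B}(\bff{u}_h,\bff{u}_h;\widetilde{\bff{u}}_h,\bff{\chi}) + \cal{C}(\bff{u};\widetilde{\bff{u}}_h,\bff{\chi}) - \cal{C}(\bff{u}_h;\widetilde{\bff{u}}_h,\bff{\chi}).
\]
For the $\cal{C}$-part, bilinearity in the parameter gives $\cal{C}(\bff{u}-\bff{u}_h;\widetilde{\bff{u}}_h,\bff{\chi}) = -\cal{C}(\bff{\theta}+\bff{\rho};\widetilde{\bff{u}}_h,\bff{\chi})$. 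For the $\cal{B}$-part, I would use the telescoping identity $\cal{B}(\bff{a},\bff{a};\cdot,\cdot) - \cal{B}(\bff{b},\bff{b};\cdot,\cdot) = \cal{B}(\bff{a}-\bff{b},\bff{a};\cdot,\cdot) + \cal{B}(\bff{b},\bff{a}-\bff{b};\cdot,\cdot)$ with $\bff{a}=\bff{u}$, $\bff{b}=\bff{u}_h$, so that again everything is expressed through $\bff{u}-\bff{u}_h = -(\bff{\theta}+\bff{\rho})$ times bounded factors involving $\bff{u}$, $\bff{u}_h$, and $\widetilde{\bff{u}}_h$. By Lemma~\ref{lem:auxbound}, $\norm{\widetilde{\bff{u}}_h}{\bb{H}^2} \lesssim \norm{\bff{u}_0}{\bb{H}^2}$, and by \eqref{equ:ass 1}, $\norm{\bff{u}}{\bb{H}^2}$ is bounded; the $\norm{\bff{u}_h}{\bb{H}^2}$-factors are kept explicit since they appear on the right-hand sides of the claimed bounds.

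For \eqref{equ:A uh A tilde uh}, I would put the derivative (i.e.\ the $\bb{H}^2$ or $\bb{W}^{1,4}$ norm) onto $\widetilde{\bff{u}}_h$ and onto $\bff{\chi}$ only in a controlled way: use \eqref{equ:B bounded W1,4} and \eqref{equ:C bounded W1,4} so that $\bff{\chi}$ enters only through $\norm{\bff{\chi}}{\bb{L}^2}$ and a benign $\norm{\widetilde{\bff{u}}_h}{\bb{H}^2}$-type factor. This leaves a bound of the form (bounded constant involving $1+\norm{\bff{u}_h}{\bb{H}^2}$) times $\norm{\bff{\theta}+\bff{\rho}}{\bb{W}^{1,4}}\,\norm{\bff{\chi}}{\bb{L}^2}$, plus terms where the second derivative lands on $\bff{\theta}+\bff{\rho}$, contributing $\norm{\bff{\theta}+\bff{\rho}}{\bb{H}^2}\,\norm{\bff{\chi}}{\bb{L}^2}$. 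Here is the subtle point: $\norm{\bff{\theta}}{\bb{H}^2}$ is \emph{not} small for \eqref{equ:A uh A tilde uh}, so I must avoid it; the resolution is to move derivatives onto $\widetilde{\bff{u}}_h$ instead (using symmetry of $\cal{B}$ in the last two arguments, integration by parts on the $\cal{C}$-term against $\bff{\chi}$, exactly as in \eqref{equ:beta4 cross}), paying with $\norm{\widetilde{\bff{u}}_h}{\bb{H}^2}$, which is bounded. Then the $\bff{\rho}$-contributions are estimated by \eqref{equ:rho} as $h^{r+1}$, which after Young's inequality gives $h^{2(r+1)}$; the $\bff{\theta}$-contributions are split by Young's inequality as $C\norm{\bff{\theta}}{\bb{L}^2}^2 + \epsilon\norm{\bff{\theta}}{\bb{H}^2}^2$ --- and since $\bff{\theta}\in\bb{V}_h$ one can either absorb $\epsilon\norm{\bff{\theta}}{\bb{H}^2}^2$ into the coercivity later or, more directly, use $\norm{\bff{\theta}}{\bb{H}^2}\lesssim \norm{\bff{\chi}}{\bb{H}^2}$ via the test function; writing $\bff{\theta}$ in terms of $\bff{\chi}$ appropriately yields the $\epsilon\norm{\bff{\chi}}{\bb{H}^2}^2$ term on the right. (In the application $\bff{\chi}=\bff{\theta}$, so $\norm{\bff{\theta}}{\bb{L}^2}=\norm{\bff{\chi}}{\bb{L}^2}$ and the two $\bb{L}^2$-terms coincide, explaining the symmetric appearance of $\norm{\bff{\theta}}{\bb{L}^2}^2+\norm{\bff{\chi}}{\bb{L}^2}^2$ in the statement.)

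For \eqref{equ:A uh A tilde uh H2}, the roles of the two regularity estimates are swapped: now I would use \eqref{equ:B bounded H2} and \eqref{equ:C bounded H2}, which put the full $\bb{H}^2$ norm on the ``middle'' argument and only $\bb{L}^2$ on the last argument $\bff{\chi}$. Applied to $\cal{B}(\bff{\theta}+\bff{\rho},\bff{u};\widetilde{\bff{u}}_h,\bff{\chi})$ and its companions (using symmetry to place $\bff{\theta}+\bff{\rho}$ or $\widetilde{\bff{u}}_h$ in the slot bounded in $\bb{H}^2$, and the other bounded-$\bb{H}^2$ factors being $\bff{u}$, $\widetilde{\bff{u}}_h$), this gives $(1+\norm{\bff{u}_h}{\bb{H}^2})\,\norm{\bff{\theta}+\bff{\rho}}{\bb{H}^2}\,\norm{\bff{\chi}}{\bb{L}^2}$, and then Young's inequality produces $(1+\norm{\bff{u}_h}{\bb{H}^2}^2)(\norm{\bff{\theta}}{\bb{H}^2}^2 + \norm{\bff{\rho}}{\bb{H}^2}^2) + \epsilon\norm{\bff{\chi}}{\bb{L}^2}^2$; invoking \eqref{equ:rho h2} (which gives $\norm{\bff{\rho}}{\bb{H}^2}\lesssim h^{r-1}$) converts $\norm{\bff{\rho}}{\bb{H}^2}^2$ into $h^{2(r-2)}$, yielding \eqref{equ:A uh A tilde uh H2}. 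For the quasi-uniform estimate \eqref{equ:A uh A tilde uh s3}, the gain comes from using \eqref{equ:aux H3} in Lemma~\ref{lem:auxbound}, i.e.\ $\norm{\widetilde{\bff{u}}_h}{\bb{W}^{2,p}}\lesssim \norm{\bff{u}_0}{\bb{H}^3}$ for $p\le 6$: this lets me apply the intermediate-exponent estimates (e.g.\ \eqref{equ:prod 3 vec dot vec mat}, \eqref{equ:prod 3 vec dot mat} with exponents other than $2,\infty,2$) so that $\widetilde{\bff{u}}_h$ absorbs a derivative via its $\bb{W}^{2,6}$ bound while $\bff{\theta}+\bff{\rho}$ only needs the $\bb{H}^1$ (or $\bb{W}^{1,3}$) norm, improving the $\bff{\theta}$-dependence to $\norm{\bff{\theta}}{\bb{H}^1}^2$ and the $\bff{\rho}$-dependence to $h^{2r}$ via the $\bb{H}^1$-part of \eqref{equ:rho}. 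The main obstacle throughout is the bookkeeping: in each of the three inequalities one must decide, term by term in the four-term expansion of $\cal{B}-\cal{B}+\cal{C}-\cal{C}$, which factor carries the derivative, so that $\bff{\chi}$ stays in low norm on the right-hand side while the error factor $\bff{\theta}+\bff{\rho}$ lands in exactly the norm ($\bb{H}^2$, $\bb{H}^1$, or --- only for $\bff{\rho}$ --- something giving an $h$-power) claimed; getting this allocation right while keeping the $\norm{\bff{u}_h}{\bb{H}^2}$ factors at the correct power is the crux.
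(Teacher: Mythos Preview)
Your overall decomposition and your approach to \eqref{equ:A uh A tilde uh H2} and \eqref{equ:A uh A tilde uh s3} are essentially correct and match the paper's strategy: for \eqref{equ:A uh A tilde uh H2} integrate by parts so that $\bff{\chi}$ appears only in $\bb{L}^2$, then apply the product estimates (Lemma~\ref{lem:equivnorm}) to bound $\bff{\theta}+\bff{\rho}$ in $\bb{H}^2$; for \eqref{equ:A uh A tilde uh s3} use the $\bb{W}^{2,p}$-stability \eqref{equ:aux H3} so only an $\bb{H}^1$-norm of $\bff{\theta}+\bff{\rho}$ is needed. (Minor slip: $\norm{\bff{\rho}}{\bb{H}^2}^2\lesssim h^{2(r-1)}$, not $h^{2(r-2)}$; the exponent in the stated \eqref{equ:A uh A tilde uh H2} appears to be a typo.)

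Your argument for \eqref{equ:A uh A tilde uh}, however, has a genuine gap. You propose to use \eqref{equ:B bounded W1,4} and \eqref{equ:C bounded W1,4}, which are obtained after integration by parts and place $\bff{\chi}$ in $\bb{L}^2$; but those estimates force the \emph{parameter} slots $\bff{\phi},\bff{\eta}$ into $\bb{W}^{1,4}$ or $\bb{L}^\infty$. Since the telescoping puts $\bff{u}-\bff{u}_h=-(\bff{\theta}+\bff{\rho})$ into a parameter slot, you are stuck with $\norm{\bff{\theta}+\bff{\rho}}{\bb{W}^{1,4}}$ or $\norm{\bff{\theta}+\bff{\rho}}{\bb{L}^\infty}$. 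Your proposed resolution---``move derivatives onto $\widetilde{\bff{u}}_h$'' via the symmetry of $\cal{B}$ in its last two arguments---does not help: $\widetilde{\bff{u}}_h$ is already the third argument absorbing derivatives, and swapping the third and fourth arguments only transfers norms between $\widetilde{\bff{u}}_h$ and $\bff{\chi}$, not off the parameter $\bff{\theta}+\bff{\rho}$. The further claim ``$\norm{\bff{\theta}}{\bb{H}^2}\lesssim\norm{\bff{\chi}}{\bb{H}^2}$'' is simply invalid for a lemma stated for arbitrary $\bff{\chi}\in\bb{V}_h$.

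The fix (and the paper's approach) is to go in the opposite direction for \eqref{equ:A uh A tilde uh}: do \emph{not} integrate by parts. Write out the five terms explicitly (as in \eqref{equ:A u uh tilde}) and apply H\"older directly, e.g.\
\[
\big|\inpro{(\bff{u}_h-\bff{u})\times\nabla\widetilde{\bff{u}}_h}{\nabla\bff{\chi}}_{\bb{L}^2}\big|
\le \norm{\bff{\theta}+\bff{\rho}}{\bb{L}^2}\,\norm{\nabla\widetilde{\bff{u}}_h}{\bb{L}^4}\,\norm{\nabla\bff{\chi}}{\bb{L}^4},
\]
and similarly for the other terms (putting $\bb{L}^\infty$ on factors like $\bff{u}+\bff{u}_h$ or $\bff{u}$). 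Then $\norm{\nabla\widetilde{\bff{u}}_h}{\bb{L}^4}\lesssim\norm{\widetilde{\bff{u}}_h}{\bb{H}^2}$ is bounded by Lemma~\ref{lem:auxbound}, $\norm{\nabla\bff{\chi}}{\bb{L}^4}\lesssim\norm{\bff{\chi}}{\bb{H}^2}$ produces the $\epsilon\norm{\bff{\chi}}{\bb{H}^2}^2$ after Young, and $\bff{\theta}+\bff{\rho}$ stays in $\bb{L}^2$ throughout, giving $h^{2(r+1)}$ from $\norm{\bff{\rho}}{\bb{L}^2}^2$ and the desired $(1+\norm{\bff{u}_h}{\bb{H}^2}^2)\norm{\bff{\theta}}{\bb{L}^2}^2$. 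The point is that the three inequalities call for three different allocations of derivatives, and for \eqref{equ:A uh A tilde uh} the right allocation keeps $\bff{\chi}$ in high norm and the error in $\bb{L}^2$---the reverse of what the abstract estimates \eqref{equ:B bounded W1,4}--\eqref{equ:C bounded H2} provide.
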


\begin{proof}
Denoting
\[
R := \cal{A}(\bff{u}; \widetilde{\bff{u}}_h, \bff{\chi})
- 
\cal{A}(\bff{u}_h; \widetilde{\bff{u}}_h, \bff{\chi}),
\]
we have
	\begin{align}\label{equ:A u uh tilde}
		\nonumber
		R
		&=
		\cal{B}(\bff{u},\bff{u}; \widetilde{\bff{u}}_h, \bff{\chi}) - \cal{B}(\bff{u}_h, \bff{u}_h; \widetilde{\bff{u}}_h,\bff{\chi}) 
		+
		\cal{C}(\bff{u}; \widetilde{\bff{u}}_h, \bff{\chi}) - \cal{C}(\bff{u}_h;\widetilde{\bff{u}}_h, \bff{\chi}) 
		\\
		\nonumber
		&=
		\beta_3 \inpro{ (|\bff{u}_h|^2- |\bff{u}|^2) \widetilde{\bff{u}}_h}{ \bff{\chi} }_{\bb{L}^2}
		+
		\beta_4 \inpro{ (\bff{u}_h-\bff{u}) \times \nabla \widetilde{\bff{u}}_h}{\nabla \bff{\chi} }_{\bb{L}^2} 
		+
		\beta_5 \inpro{ (|\bff{u}|^2- |\bff{u}_h|^2) \nabla \widetilde{\bff{u}}_h}{ \nabla \bff{\chi} }_{\bb{L}^2}
		\\
		&\quad
		+ 
		2\beta_5 \inpro{ \bff{u} ( (\bff{u}-\bff{u}_h) \cdot \nabla \widetilde{\bff{u}}_h) }{ \nabla \bff{\chi}}_{\bb{L}^2}  
		+ 
		2 \beta_5 \inpro{(\bff{u}-\bff{u}_h) (\bff{u}_h \cdot \nabla \widetilde{\bff{u}}_h) }{ \nabla \bff{\chi} }_{\bb{L}^2}.
	\end{align}
Therefore, by using \eqref{equ:uh u xi rho}, Proposition \ref{pro:auxestimate},
Lemma~\ref{lem:auxbound}, and H\"{o}lder's inequality, we have
	\begin{align*}
		|R|
		&\leq
		\beta_3 \norm{\bff{u}+\bff{u}_h}{\bb{L}^\infty} \norm{\bff{\theta} + \bff{\rho}}{\bb{L}^2} 
		\norm{\widetilde{\bff{u}}_h}{\bb{L}^\infty}
		\norm{\bff{\chi}}{\bb{L}^2} 
		+ 
		\beta_4 \norm{\bff{\theta} + \bff{\rho}}{\bb{L}^2}
		\norm{\nabla \widetilde{\bff{u}}_h}{\bb{L}^4}
		\norm{\nabla \bff{\chi}}{\bb{L}^4} 
		\\
		&\quad
		+ 
		\beta_5 \norm{\bff{u}+\bff{u}_h}{\bb{L}^\infty}
		\norm{\bff{\theta} + \bff{\rho}}{\bb{L}^2} 
		\norm{\nabla \widetilde{\bff{u}}_h}{\bb{L}^4}
		\norm{\nabla \bff{\chi}}{\bb{L}^4}
		+
		2\beta_5 \norm{\bff{u}}{\bb{L}^\infty} 
		\norm{\bff{\theta} + \bff{\rho}}{\bb{L}^2}
		\norm{\nabla \widetilde{\bff{u}}_h}{\bb{L}^4}
		\norm{\nabla \bff{\chi}}{\bb{L}^4}
		\\
		&\quad
		+ 
		2\beta_5 \norm{\bff{\theta} + \bff{\rho}}{\bb{L}^2}
		\norm{\bff{u}_h}{\bb{L}^\infty}
		\norm{\nabla \widetilde{\bff{u}}_h}{\bb{L}^4}
		\norm{\nabla \bff{\chi}}{\bb{L}^4}
		\\
		&\lesssim
		\norm{\bff{\rho}}{\bb{L}^2} \norm{\bff{\chi}}{\bb{L}^2} 
		+ 
		\norm{\bff{\theta}}{\bb{L}^2} \norm{\bff{\chi}}{\bb{L}^2} 
		+
		\norm{\bff{u}_h}{\bb{L}^\infty} \norm{\bff{\rho}}{\bb{L}^2} \norm{\bff{\chi}}{\bb{L}^2}
		+
		\norm{\bff{u}_h}{\bb{L}^\infty} \norm{\bff{\theta}}{\bb{L}^2} \norm{\bff{\chi}}{\bb{L}^2}
		\\
		&\quad
		+
		\norm{\bff{\rho}}{\bb{L}^2} 
		\norm{\nabla \bff{\chi}}{\bb{L}^4}
		+ 
		\norm{\bff{\theta}}{\bb{L}^2} 
		\norm{\nabla \bff{\chi}}{\bb{L}^4}
		+
		\norm{\bff{u}_h}{\bb{L}^\infty} \norm{\bff{\rho}}{\bb{L}^2} \norm{\nabla \bff{\chi}}{\bb{L}^4}
		+
		\norm{\bff{u}_h}{\bb{L}^\infty} \norm{\bff{\theta}}{\bb{L}^2} \norm{\nabla \bff{\chi}}{\bb{L}^4}
		\\
		&\lesssim
		h^{2(r+1)}
		+ 
		\big(1+ \norm{\bff{u}_h}{\bb{H}^2}^2 \big) 
		\big( \norm{\bff{\theta}}{\bb{L}^2}^2 + \norm{\bff{\chi}}{\bb{L}^2}^2 \big)
		+ 
		\epsilon \norm{\bff{\chi}}{\bb{H}^2}^2,
	\end{align*}
where in the last step we used \eqref{equ:rho}, Young's inequality, and Sobolev embedding. This proves \eqref{equ:A uh A tilde uh}.

Next, we will show \eqref{equ:A uh A tilde uh H2}. Using integration by parts, we can write \eqref{equ:A u uh tilde} as
\begin{align*}
	R &=
	\beta_3 \inpro{ \big( (\bff{u}_h+ \bff{u}) \cdot (\bff{\theta}+\bff{\rho}) \big) \widetilde{\bff{u}}_h}{ \bff{\chi} }_{\bb{L}^2}
	-
	\beta_4 \inpro{ \nabla \cdot \big( (\bff{\theta}+\bff{\rho}) \times \nabla \widetilde{\bff{u}}_h \big)}{\bff{\chi} }_{\bb{L}^2} 
	\\
	&\quad
	-
	\beta_5 \inpro{ \nabla \cdot \big( (\bff{u}+ \bff{u}_h ) \cdot (\bff{\theta}+\bff{\rho}) \nabla \widetilde{\bff{u}}_h \big) }{\bff{\chi} }_{\bb{L}^2}
	-
	2\beta_5 \inpro{\nabla\cdot \big( \bff{u} ( (\bff{\theta}+\bff{\rho}) \cdot \nabla \widetilde{\bff{u}}_h) \big) }{ \bff{\chi}}_{\bb{L}^2} 
	\\
	&\quad 
	- 
	2 \beta_5 \inpro{\nabla \cdot \big( (\bff{\theta}+\bff{\rho}) (\bff{u}_h \cdot \nabla \widetilde{\bff{u}}_h) \big) }{ \bff{\chi} }_{\bb{L}^2}.
\end{align*}
H\"{o}lder's inequality yields
\begin{align}\label{equ:TTT} 
|R| 
&\le
\beta_3 \norm{\bff{u}+\bff{u}_h}{\bb{L}^\infty} \norm{\bff{\theta} + \bff{\rho}}{\bb{L}^2} 
	\norm{\widetilde{\bff{u}}_h}{\bb{L}^\infty}
	\norm{\bff{\chi}}{\bb{L}^2} 
	+ 
	\beta_4 \norm{(\bff{\theta} + \bff{\rho}) \times \nabla \widetilde{\bff{u}}_h}{\bb{H}^1}
	\norm{\bff{\chi}}{\bb{L}^2} 
	\nonumber\\
	&\quad
	+ 
	\beta_5 \norm{(\bff{u}+\bff{u}_h) \cdot (\bff{\theta} + \bff{\rho}) \nabla \widetilde{\bff{u}}_h}{\bb{H}^1}
	\norm{\bff{\chi}}{\bb{L}^2}
	+
	2\beta_5 \norm{\bff{u} \big((\bff{\theta} + \bff{\rho}) \cdot \nabla \widetilde{\bff{u}}_h \big)}{\bb{H}^1} 
	\norm{\bff{\chi}}{\bb{L}^2}
	\nonumber\\
	&\quad
	+ 
	2\beta_5 \norm{(\bff{\theta} + \bff{\rho}) (\bff{u}_h \cdot \nabla \widetilde{\bff{u}}_h)}{\bb{H}^1} 
	\norm{\bff{\chi}}{\bb{L}^2}
	\nonumber\\
	&= R_1 + \cdots + R_5.
\end{align}
It is easy to see that
\begin{align*} 
	R_1
	&\lesssim
	\big( 1 + \norm{\bff{u}_h}{\bb{L}^\infty} \big)
	\big( \norm{\bff{\rho}}{\bb{L}^2} + \norm{\bff{\theta}}{\bb{L}^2} \big)
	\norm{\bff{\chi}}{\bb{L}^2}
	\\
	&\lesssim
	\big( 1 + \norm{\bff{u}_h}{\bb{L}^\infty}^2 \big)
	\big( \norm{\bff{\rho}}{\bb{L}^2}^2 + \norm{\bff{\theta}}{\bb{L}^2}^2 \big)
	+
	\epsilon 
	\norm{\bff{\chi}}{\bb{L}^2}^2
	\\
	&\lesssim
	\big( 1 + \norm{\bff{u}_h}{\bb{H}^2}^2 \big)
	\big( h^{2(r+1)} + \norm{\bff{\theta}}{\bb{L}^2}^2 \big)
	+
	\epsilon 
	\norm{\bff{\chi}}{\bb{L}^2}^2.
\end{align*}
For the term $R_2$, we use~\eqref{equ:prod sobolev mat dot} (with $q_1=\infty$, $r_1=2$,
$q_2=r_2=4$), Lemma~\ref{lem:auxbound}, and the embedding
$\bb{H}^2\subset \bb{W}^{1,4}$ to have
\begin{align*} 
	R_2
	&\lesssim
	\big( \norm{\bff{\theta}+\bff{\rho}}{\bb{L}^\infty} \norm{\widetilde{\bff{u}}_h}{\bb{H}^2}
	+
	\norm{\bff{\theta}+\bff{\rho}}{\bb{W}^{1,4}} \norm{\widetilde{\bff{u}}_h}{\bb{W}^{1,4}} \big)
	\norm{\bff{\chi}}{\bb{L}^2}
	\\
	&\lesssim
	\big(
	\norm{\bff{\rho}}{\bb{H}^2}
	+
	\norm{\bff{\theta}}{\bb{H}^2}
	\big)
	\norm{\bff{\chi}}{\bb{L}^2}
	\lesssim
	h^{2(r-1)} + \norm{\bff{\theta}}{\bb{H}^2}^2 + \epsilon
	\norm{\bff{\chi}}{\bb{L}^2}^2.
\end{align*}
For the term $R_3$, by using \eqref{equ:prod 3 vec dot vec mat} (with $q_1=r_1=\infty$, $\alpha=2$, $q_2=q_3=r_2=r_3=\beta=6$) and the embedding~$\bb{H}^2
\subset \bb{W}^{1,6}$, we obtain
\begin{align*} 
	R_3
	&\lesssim
	\norm{\bff{u}+\bff{u_h}}{\bb{L}^\infty} 
	\norm{\bff{\theta}+\bff{\rho}}{\bb{L}^\infty} 
	\norm{\nabla \widetilde{\bff{u}}_h}{\bb{H}^1} 
	\norm{\bff{\chi}}{\bb{L}^2}
	+
	\norm{\bff{u}+\bff{u_h}}{\bb{L}^6} 
	\norm{\bff{\theta}+\bff{\rho}}{\bb{W}^{1,6}} 
	\norm{\nabla \widetilde{\bff{u}}_h}{\bb{L}^6} 
	\norm{\bff{\chi}}{\bb{L}^2}
	\\
	&\quad
	+
	\norm{\bff{u}+\bff{u_h}}{\bb{W}^{1,6}} 
	\norm{\bff{\theta}+\bff{\rho}}{\bb{L}^{6}} 
	\norm{\nabla \widetilde{\bff{u}}_h}{\bb{L}^6} 
	\norm{\bff{\chi}}{\bb{L}^2}
	\\
	&\lesssim
	\big(1 + \norm{\bff{u}_h}{\bb{H}^2}^2 \big)
	\big( h^{2(r-1)} + \norm{\bff{\theta}}{\bb{H}^2}^2 \big)
	+ \epsilon \norm{\bff{\chi}}{\bb{L}^2}^2.
\end{align*}
By using \eqref{equ:prod 3 vec dot mat}, we infer the same estimate for $R_4$ and $R_5$ as for $R_3$. Altogether, we deduce~\eqref{equ:A uh A tilde uh H2} from~\eqref{equ:TTT}.

Finally, we will show \eqref{equ:A uh A tilde uh s3}. If the triangulation $\cal{T}_h$ is quasi-uniform, then we can estimate \eqref{equ:TTT} as follows. Firstly, the term $R_1$ is estimated as before. For the term $R_2$, similar argument (noting \eqref{equ:aux H3} and the embedding $\bb{W}^{1,4}\subset \bb{L}^\infty$) yields
\begin{align*} 
	R_2
	&\lesssim
	\big( \norm{\bff{\theta}+\bff{\rho}}{\bb{L}^4} \norm{ \nabla \widetilde{\bff{u}}_h}{\bb{W}^{1,4}}
	+
	\norm{\bff{\theta}+\bff{\rho}}{\bb{H}^{1}} \norm{ \nabla \widetilde{\bff{u}}_h}{\bb{L}^{\infty}} \big)
	\norm{\bff{\chi}}{\bb{L}^2}
	\\
	&\lesssim
	\big(
	\norm{\bff{\rho}}{\bb{H}^1}
	+
	\norm{\bff{\theta}}{\bb{H}^1}
	\big)
	\norm{\widetilde{\bff{u}}_h}{\bb{W}^{2,4}}
	\norm{\bff{\chi}}{\bb{L}^2}
	\lesssim
	h^{2r} + \norm{\bff{\theta}}{\bb{H}^1}^2 
	+ 
	\epsilon \norm{\bff{\chi}}{\bb{L}^2}^2.
\end{align*}
For the term $R_3$, by using \eqref{equ:prod 3 vec dot vec mat} and \eqref{equ:aux H3}, again noting the embedding~$\bb{W}^{1,4} \subset \bb{L}^\infty$ and $\bb{H}^2 \subset \bb{W}^{1,6}$, we obtain
\begin{align*} 
	R_3
	&\lesssim
	\norm{\bff{u}+\bff{u_h}}{\bb{L}^6} 
	\norm{\bff{\theta}+\bff{\rho}}{\bb{L}^6} 
	\norm{\nabla \widetilde{\bff{u}}_h}{\bb{W}^{1,6}} 
	\norm{\bff{\chi}}{\bb{L}^2}
	+
	\norm{\bff{u}+\bff{u_h}}{\bb{L}^\infty} 
	\norm{\bff{\theta}+\bff{\rho}}{\bb{H}^{1}} 
	\norm{\nabla \widetilde{\bff{u}}_h}{\bb{L}^\infty} 
	\norm{\bff{\chi}}{\bb{L}^2}
	\\
	&\quad
	+
	\norm{\bff{u}+\bff{u_h}}{\bb{W}^{1,6}} 
	\norm{\bff{\theta}+\bff{\rho}}{\bb{L}^{6}} 
	\norm{\nabla \widetilde{\bff{u}}_h}{\bb{L}^6} 
	\norm{\bff{\chi}}{\bb{L}^2}
	\\
	&\lesssim
	\big(1 + \norm{\bff{u}_h}{\bb{H}^2}^2 \big)
	\big( h^{2r} + \norm{\bff{\theta}}{\bb{H}^1}^2 \big)
	+ \epsilon \norm{\bff{\chi}}{\bb{L}^2}^2.
\end{align*}
The same estimate as for $R_3$ holds for $R_4$ and $R_5$. Altogether, \eqref{equ:A uh A tilde uh s3} then follows from \eqref{equ:TTT}. This completes the proof of the lemma.
\end{proof}

We can now prove the error estimate for the semi-discrete scheme.

\begin{proposition}\label{pro:est theta}
Let $\bff{\theta}(t)= \bff{u}_h(t)- \widetilde{\bff{u}}_h(t)$ be defined
in \eqref{equ:xi rho} with $\bff{u}\in\bb{H}^4$. Then 
	\begin{equation}\label{equ:theta L2}
		\norm{\bff{\theta}}{L^\infty(\bb{L}^2)} 
		\leq
		Ch^{r+1}.
	\end{equation}
	Moreover, we have the following convergence.
	\begin{enumerate}
		\renewcommand{\labelenumi}{\theenumi}
		\renewcommand{\theenumi}{{\rm (\roman{enumi})}}
		\item For $d=1$ or $d=2$, we have
		\begin{equation}\label{equ:theta H1 H2}
			\norm{\nabla \bff{\theta}}{L^\infty(\bb{L}^2)} 
			+ 
			h \norm{\Delta\bff{\theta}}{L^\infty(\bb{L}^2)} 
			\leq Ch^r.
		\end{equation}
		Furthermore, if the triangulation $\cal{T}_h$ is quasi-uniform, then we have
		\begin{equation}\label{equ:theta quasi}
			\norm{\nabla \bff{\theta}}{L^\infty(\bb{L}^2)} 
			+ 
			h^{1/2} \norm{\Delta\bff{\theta}}{L^\infty(\bb{L}^2)} 
			\leq 
			Ch^{r+1/2},
		\end{equation}
		where the constants depend on $ \kappa $, $\mu$, $T$, and $K$.
		\item For $d=3$, estimate~\eqref{equ:theta H1 H2} holds if the triangulation
		$\cal{T}_h$ is quasi-uniform.
		The estimate~\eqref{equ:theta quasi} also hold if additionally
		the initial data satisfies $\norm{\bff{u}_0}{\bb{H}^2}
		\lesssim T^{-2/3}$.
	\end{enumerate}
\end{proposition}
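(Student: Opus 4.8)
The plan is to bound $\bff{\theta}=\bff{u}_h-\widetilde{\bff{u}}_h$ by energy methods applied to an error equation, obtaining first the $\bb{L}^2$-estimate, then the estimate on $\Delta\bff{\theta}$ (via a test with $\partial_t\bff{\theta}$), and finally recovering $\nabla\bff{\theta}$ at the optimal order by interpolating \eqref{equ:equivnorm-nabL2} between the two. First I would derive the error equation: testing \eqref{equ:weakform} with an arbitrary $\bff{\chi}\in\bb{V}_h\subset\bb{H}_{\bff{n}}^2$ and subtracting from \eqref{equ:weaksemidisc}, then writing $\bff{u}_h-\bff{u}=\bff{\theta}+\bff{\rho}$ and using the elliptic-projection identity \eqref{equ:auxiliary} (so that $\cal{A}(\bff{u};\bff{u},\bff{\chi})=\cal{A}(\bff{u};\widetilde{\bff{u}}_h,\bff{\chi})$) together with the bilinearity of $\cal{A}(\bff{\phi};\cdot,\cdot)$ in its last two arguments, one arrives, for every $\bff{\chi}\in\bb{V}_h$, at
\begin{align*}
	\inpro{\partial_t\bff{\theta}}{\bff{\chi}}_{\bb{L}^2}
	+\cal{A}(\bff{u}_h;\bff{\theta},\bff{\chi})
	&=
	-\inpro{\partial_t\bff{\rho}}{\bff{\chi}}_{\bb{L}^2}
	+\big(\cal{A}(\bff{u};\widetilde{\bff{u}}_h,\bff{\chi})-\cal{A}(\bff{u}_h;\widetilde{\bff{u}}_h,\bff{\chi})\big)
	\\
	&\quad
	+(\alpha+\beta_3)\inpro{\bff{\theta}+\bff{\rho}}{\bff{\chi}}_{\bb{L}^2}
	+\beta_6\inpro{(\bff{j}\cdot\nabla)(\bff{\theta}+\bff{\rho})}{\bff{\chi}}_{\bb{L}^2}.
\end{align*}

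To get \eqref{equ:theta L2} I would take $\bff{\chi}=\bff{\theta}$. The left side is then $\geq\tfrac12\ddt\norm{\bff{\theta}}{\bb{L}^2}^2+\mu\norm{\bff{\theta}}{\bb{H}^2}^2$ by the coercivity \eqref{equ:coercive} (legitimate since $\bff{u}_h(t)\in\bb{H}_{\bff{n}}^2$; when $\beta_1<0$ one fixes $\alpha>\beta_1^2/\beta_2$). On the right, $\inpro{(\bff{j}\cdot\nabla)\bff{\theta}}{\bff{\theta}}_{\bb{L}^2}=0$ because $\nabla\cdot\bff{j}=0$ and $\bff{j}\cdot\bff{n}=0$; the $\bff{\rho}$-part of the convection term is handled by integrating by parts and, together with the $\partial_t\bff{\rho}$- and $(\alpha+\beta_3)$-terms, absorbed using $\norm{\bff{\rho}}{\bb{L}^2}+\norm{\partial_t\bff{\rho}}{\bb{L}^2}\lesssim h^{r+1}$ from Proposition~\ref{pro:auxestimate} and Young's inequality (a small multiple of $\norm{\bff{\theta}}{\bb{H}^2}^2$ being absorbed into the coercive term). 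The difference $\cal{A}(\bff{u};\widetilde{\bff{u}}_h,\bff{\theta})-\cal{A}(\bff{u}_h;\widetilde{\bff{u}}_h,\bff{\theta})$ is controlled by \eqref{equ:A uh A tilde uh}. Since $\norm{\bff{u}_h}{L^\infty(\bb{H}^2)}$ is bounded (Proposition~\ref{pro:semidisc est2}; for $d=3$ under the stated smallness of the initial data), this yields $\ddt\norm{\bff{\theta}}{\bb{L}^2}^2+\mu\norm{\bff{\theta}}{\bb{H}^2}^2\lesssim h^{2(r+1)}+\norm{\bff{\theta}}{\bb{L}^2}^2$, and \eqref{equ:theta L2} follows from Gronwall's inequality together with $\norm{\bff{\theta}(0)}{\bb{L}^2}\lesssim h^{r+1}$; integrating in time also gives $\norm{\bff{\theta}}{L^2(\bb{H}^2)}\lesssim h^{r+1}$.

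Next, for the bound on $\Delta\bff{\theta}$ I would split $\cal{A}(\bff{u}_h;\bff{\theta},\bff{\chi})=\cal{A}_0(\bff{\theta},\bff{\chi})+\cal{B}(\bff{u}_h,\bff{u}_h;\bff{\theta},\bff{\chi})+\cal{C}(\bff{u}_h;\bff{\theta},\bff{\chi})$, keep only $\cal{A}_0$ on the left, and test with $\bff{\chi}=\partial_t\bff{\theta}\in\bb{V}_h$. Since $\cal{A}_0$ is symmetric, $\cal{A}_0(\bff{\theta},\partial_t\bff{\theta})=\tfrac12\ddt\cal{A}_0(\bff{\theta},\bff{\theta})$, and $\cal{A}_0(\bff{\theta},\bff{\theta})\gtrsim\norm{\bff{\theta}}{\bb{H}^2}^2$ (this is precisely the lower bound established inside the proof of Lemma~\ref{lem:A bou coe}, as the $\cal{B}$-part is nonnegative and the $\cal{C}$-part vanishes on the diagonal). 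The point of moving $\cal{B}(\bff{u}_h,\bff{u}_h;\bff{\theta},\cdot)$ and $\cal{C}(\bff{u}_h;\bff{\theta},\cdot)$ to the right is that they can now be bounded \emph{directly} by \eqref{equ:B bounded H2} and \eqref{equ:C bounded H2}, as $\lesssim\norm{\bff{u}_h}{\bb{H}^2}^2\norm{\bff{\theta}}{\bb{H}^2}\norm{\partial_t\bff{\theta}}{\bb{L}^2}\le\epsilon\norm{\partial_t\bff{\theta}}{\bb{L}^2}^2+C\norm{\bff{\theta}}{\bb{H}^2}^2$ (using $\norm{\bff{u}_h}{L^\infty(\bb{H}^2)}\lesssim1$), with no time derivative of $\bff{u}_h$ appearing. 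The difference $\cal{A}(\bff{u};\widetilde{\bff{u}}_h,\partial_t\bff{\theta})-\cal{A}(\bff{u}_h;\widetilde{\bff{u}}_h,\partial_t\bff{\theta})$ is bounded by \eqref{equ:A uh A tilde uh H2}, the relevant term being $\lesssim h^{2(r-1)}+\norm{\bff{\theta}}{\bb{H}^2}^2+\epsilon\norm{\partial_t\bff{\theta}}{\bb{L}^2}^2$, and the remaining terms are handled by Young's inequality, Proposition~\ref{pro:auxestimate} and \eqref{equ:equivnorm-nabL2}. After absorbing $\epsilon\norm{\partial_t\bff{\theta}}{\bb{L}^2}^2$ and using $\norm{\bff{\theta}}{\bb{H}^2}^2\lesssim\cal{A}_0(\bff{\theta},\bff{\theta})$, Gronwall's inequality with $\cal{A}_0(\bff{\theta}(0),\bff{\theta}(0))\lesssim\norm{\bff{\theta}(0)}{\bb{H}^2}^2\lesssim h^{2(r-1)}$ gives $\norm{\Delta\bff{\theta}}{L^\infty(\bb{L}^2)}\lesssim h^{r-1}$; then \eqref{equ:equivnorm-nabL2} and \eqref{equ:theta L2} give $\norm{\nabla\bff{\theta}}{\bb{L}^2}^2\le\norm{\bff{\theta}}{\bb{L}^2}\norm{\Delta\bff{\theta}}{\bb{L}^2}\lesssim h^{r+1}h^{r-1}=h^{2r}$, which is \eqref{equ:theta H1 H2}.

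For the quasi-uniform refinement \eqref{equ:theta quasi} I would rerun the $\partial_t\bff{\theta}$-argument but use the sharper estimate \eqref{equ:A uh A tilde uh s3} (available under quasi-uniformity, together with the $\bb{W}^{2,p}$-stability \eqref{equ:aux H3} of the elliptic projection) in place of \eqref{equ:A uh A tilde uh H2}; its right side is $\lesssim h^{2r}+\norm{\bff{\theta}}{\bb{H}^1}^2+\epsilon\norm{\partial_t\bff{\theta}}{\bb{L}^2}^2$, and since \eqref{equ:theta H1 H2} already provides $\norm{\bff{\theta}}{L^\infty(\bb{H}^1)}\lesssim h^r$ this is $\lesssim h^{2r}$. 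All source terms being $\lesssim h^{2r}$, Gronwall's inequality yields $\cal{A}_0(\bff{\theta}(t),\bff{\theta}(t))\lesssim\cal{A}_0(\bff{\theta}(0),\bff{\theta}(0))+h^{2r}$, so choosing the discrete initial datum as the elliptic projection ($\bff{u}_{0,h}=\widetilde{\bff{u}}_h(0)$, which still satisfies \eqref{equ:u0 u0h}), so that $\bff{\theta}(0)=0$, we get $\norm{\Delta\bff{\theta}}{L^\infty(\bb{L}^2)}\lesssim h^r$, and \eqref{equ:equivnorm-nabL2} gives $\norm{\nabla\bff{\theta}}{\bb{L}^2}^2\le\norm{\bff{\theta}}{\bb{L}^2}\norm{\Delta\bff{\theta}}{\bb{L}^2}\lesssim h^{2r+1}$, i.e.\ \eqref{equ:theta quasi}. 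When $d=3$ the same chain of estimates applies once $\cal{T}_h$ is quasi-uniform (so that the inverse estimates underlying \eqref{equ:aux H3} and \eqref{equ:A uh A tilde uh s3} hold), giving \eqref{equ:theta H1 H2}; \eqref{equ:theta quasi} additionally needs $\norm{\bff{u}_0}{\bb{H}^2}\lesssim T^{-2/3}$, precisely because this is the condition under which Proposition~\ref{pro:semidisc est2}(ii) supplies the a priori bound $\norm{\bff{u}_h}{L^\infty(\bb{H}^2)}\lesssim1$ used throughout. The main obstacle is exactly this a priori control combined with the treatment of the nonlinearities: the forms $\cal{B},\cal{C}$ are evaluated at $\bff{u}_h$, which is only known to be bounded in $\bb{H}^2$, so closing the $\partial_t\bff{\theta}$-energy estimate hinges on keeping only $\cal{A}_0$ on the left, on the sharp product bounds \eqref{equ:B bounded H2}--\eqref{equ:C bounded H2}, and on the optimal-order estimates \eqref{equ:A uh A tilde uh}--\eqref{equ:A uh A tilde uh s3} for the perturbation of the trilinear form; a secondary subtlety is that the energy method yields $\nabla\bff{\theta}$ only at the order of $\Delta\bff{\theta}$, so the optimal rate for $\norm{\nabla\bff{\theta}}{\bb{L}^2}$ is recovered not directly but by interpolation via \eqref{equ:equivnorm-nabL2}.
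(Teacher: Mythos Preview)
Your proposal is essentially correct and follows the same route as the paper: derive the error equation, test with $\bff{\theta}$ for~\eqref{equ:theta L2}, test with $\partial_t\bff{\theta}$ (keeping only $\cal{A}_0$ on the left) for~\eqref{equ:theta H1 H2}, upgrade via~\eqref{equ:A uh A tilde uh s3} for~\eqref{equ:theta quasi}, and recover $\nabla\bff{\theta}$ by interpolation~\eqref{equ:equivnorm-nabL2}.

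There is one point where you diverge slightly from the paper and it costs you something. In the $\bb{L}^2$ step you absorb the factor $(1+\norm{\bff{u}_h}{\bb{H}^2}^2)$ coming from~\eqref{equ:A uh A tilde uh} by invoking $\norm{\bff{u}_h}{L^\infty(\bb{H}^2)}\lesssim 1$ from Proposition~\ref{pro:semidisc est2}. For $d=3$ that proposition requires the smallness condition, so your argument would impose it already on~\eqref{equ:theta L2}, whereas the statement asserts~\eqref{equ:theta L2} unconditionally. The paper avoids this: it keeps the coefficient $(1+\norm{\bff{u}_h(\tau)}{\bb{H}^2}^2)$ inside the Gronwall integral and bounds $\int_0^t\norm{\bff{u}_h(\tau)}{\bb{H}^2}^2\,\dtau$ using Proposition~\ref{pro:semidisc-est1}, which holds for all $d$ with no restriction. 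With this tweak your $\bb{L}^2$ argument matches the paper exactly. Your observation that one should take $\bff{u}_{0,h}=\widetilde{\bff{u}}_h(0)$ so that $\bff{\theta}(0)=0$ (needed to get the initial term of order $h^{2r}$ in the quasi-uniform case) is a valid refinement that the paper leaves implicit.
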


\begin{proof}
By \eqref{equ:weaksemidisc} and \eqref{equ:weakform}, it follows that for any $\bff{\chi}\in \bb{V}_h$,
\begin{align}\label{equ:subtract weakform}
	\nonumber
	&\inpro{\partial_t \bff{\theta}}{ \bff{\chi} }_{\bb{L}^2} 
	+ 
	\cal{A}(\bff{u}_h; \bff{\theta}, \bff{\chi})
	\\ 
	\nonumber
	&= 
	\inpro{ \partial_t \bff{u}_h }{ \bff{\chi}}_{\bb{L}^2} 
	- 
	\inpro{ \partial_t \widetilde{\bff{u}}_h}{\bff{\chi}}_{\bb{L}^2}
	+ 
	\cal{A}(\bff{u}_h; \bff{u}_h, \bff{\chi}) 
	- 
	\cal{A}(\bff{u}_h; \widetilde{\bff{u}}_h, \bff{\chi}) 
	\\
	\nonumber
	&= 
	(\alpha+ \beta_3) \inpro{ \bff{u}_h}{ \bff{\chi}}_{\bb{L}^2} 
	+
	\beta_6 \inpro{(\bff{j}\cdot\nabla)\bff{u}_h}{\bff{\chi}}_{\bb{L}^2}
	- 
	\inpro{ \partial_t \bff{\rho}}{\bff{\chi}}_{\bb{L}^2}
	- 
	\inpro{ \partial_t \bff{u}}{ \bff{\chi}}_{\bb{L}^2}
	- 
	\cal{A}(\bff{u}_h; \widetilde{\bff{u}}_h, \bff{\chi}) 
	\\
	\nonumber
	&= 
	(\alpha+ \beta_3) \inpro{ \bff{u}_h}{\bff{\chi}}_{\bb{L}^2} 
	+
	\beta_6 \inpro{(\bff{j}\cdot\nabla)\bff{u}_h}{\bff{\chi}}_{\bb{L}^2}
	- 
	\inpro{\partial_t \bff{\rho}}{ \bff{\chi}}_{\bb{L}^2}
	+ 
	\cal{A}(\bff{u}; \bff{u}, \bff{\chi}) 
	- 
	(\alpha+ \beta_3) \inpro{ \bff{u}}{\bff{\chi}}_{\bb{L}^2} 
	- 
	\cal{A}(\bff{u}_h; \widetilde{\bff{u}}_h, \bff{\chi}) 
	\\
	&= 
	(\alpha+ \beta_3) \inpro{ \bff{u}_h-\bff{u}}{ \bff{\chi} }_{\bb{L}^2} 
	-
	\beta_6 \inpro{(\bff{u}_h-\bff{u}) \otimes \bff{\nu}}{\nabla\bff{\chi}}_{\bb{L}^2}
	- 
	\inpro{ \partial_t \bff{\rho}}{ \bff{\chi}}_{\bb{L}^2}
	+
	\cal{A}(\bff{u}; \widetilde{\bff{u}}_h, \bff{\chi})
	- 
	\cal{A}(\bff{u}_h; \widetilde{\bff{u}}_h, \bff{\chi}),
\end{align}
where in the last step we used $\bff{u}=\widetilde{\bff{u}}_h-\bff{\rho}$
and~\eqref{equ:auxiliary}, as well as~\eqref{equ:j nab u} and the assumptions on $\bff{j}$.

We will now derive a bound for $\norm{\bff{\theta}}{\bb{L}^2}$. To this end, take $\bff{\chi}= \bff{\theta}$ in the above equality.
By using the coercivity of~$\cal{A}$, H\"{o}lder's inequality, and
\eqref{equ:A uh A tilde uh}, we obtain
\begin{align*}
	&\frac{1}{2} \ddt \norm{\bff{\theta}}{\bb{L}^2}^2 
	+ 
	\mu \norm{\bff{\theta}}{\bb{H}^2}^2  
	\\
	&\leq 
	\big|(\alpha+\beta_3) \inpro{ \bff{u}_h -\bff{u}}{ \bff{\theta} }_{\bb{L}^2} \big|
	+
	\big| \beta_6 \inpro{(\bff{u}_h-\bff{u}) \otimes \bff{\nu}}{\nabla\bff{\theta}}_{\bb{L}^2} \big| 
	+
	\big| \inpro{\partial_t \bff{\rho}}{ \bff{\theta}}_{\bb{L}^2} \big|
	+ 
	\big| \cal{A}(\bff{u}; \widetilde{\bff{u}}_h, \bff{\chi})
	- 
	\cal{A}(\bff{u}_h; \widetilde{\bff{u}}_h, \bff{\chi}) 
	\big|
	\\
	&\lesssim
	\norm{\bff{\rho}}{\bb{L}^2} \norm{\bff{\theta}}{\bb{H}^1}
	+
	\norm{\bff{\theta}}{\bb{L}^2}^2
	+
	\norm{\partial_t \bff{\rho}}{\bb{L}^2} \norm{\bff{\theta}}{\bb{L}^2}
	+
	h^{2(r+1)}
	+ 
	\big(1+ \norm{\bff{u}_h}{\bb{L}^\infty}^2 \big) \norm{\bff{\theta}}{\bb{L}^2}^2 
	+ 
	\epsilon \norm{\bff{\theta}}{\bb{H}^2}^2
	\\
	&\lesssim
	h^{2(r+1)}
	+ 
	\big(1+ \norm{\bff{u}_h}{\bb{L}^\infty}^2 \big) \norm{\bff{\theta}}{\bb{L}^2}^2 
	+ 
	\epsilon \norm{\bff{\theta}}{\bb{H}^2}^2,
\end{align*}
where in the last step we also used Young's inequality, \eqref{equ:rho},
and \eqref{equ:partial rho}. Integrating over $(0,t)$, choosing $ \epsilon $
sufficiently small, and rearranging the equation, we obtain
\begin{align*}
	\norm{\bff{\theta}(t)}{\bb{L}^2}^2 
	+ 
	\int_0^t \norm{ \bff{\theta}(\tau)}{\bb{H}^2}^2 \dtau
	&\leq 
	Ch^{2(r+1)}
	+ 
	\norm{\bff{\theta}(0)}{\bb{L}^2}^2
	+
	\int_0^t \big(1+ \norm{\bff{u}_h(\tau)}{\bb{L}^\infty}^2 \big) \norm{\bff{\theta}(\tau)}{\bb{L}^2}^2 \dtau.
\end{align*}
Gronwall's inequality and Sobolev embedding then yield 
\begin{align*} 
	\norm{\bff{\theta}(t)}{\bb{L}^2}^2 
	+ 
	\int_0^t \norm{ \bff{\theta}(\tau)}{\bb{H}^2}^2 \dtau
	&\leq
	Ch^{2(r+1)}
	\exp\Big(
	\int_0^t \big(1+ \norm{\bff{u}_h(\tau)}{\bb{H}^2}^2 \big) \dtau
	\Big)
	\lesssim h^{2(r+1)},
\end{align*}
where in the last step we used Proposition \ref{pro:semidisc-est1}.
This proves the inequality for the first term in~\eqref{equ:theta L2}.

Next we want to derive a bound for
$\norm{\Delta \bff{\theta}}{\bb{L}^2}$ by taking $\bff{\chi}= \partial_t
\bff{\theta}$ in \eqref{equ:subtract weakform}. To this end, first note that
using \eqref{equ:bilinear}, we can write
\begin{align*}
	\cal{A}(\bff{u}_h; \bff{\theta}, \partial_t \bff{\theta})
	&=
	\frac{\alpha}{2} \ddt \norm{\bff{\theta}}{\bb{L}^2}^2
	+
	\frac{\beta_1}{2} \ddt \norm{\nabla \bff{\theta}}{\bb{L}^2}^2
	+
	\frac{\beta_2}{2} \ddt \norm{\Delta \bff{\theta}}{\bb{L}^2}^2
	+
	\cal{B}(\bff{u}_h, \bff{u}_h; \bff{\theta}, \partial_t \bff{\theta})
	+
	\cal{C}(\bff{u}_h; \bff{\theta}, \partial_t \bff{\theta}).
\end{align*}
Thus, \eqref{equ:subtract weakform} with $\bff{\chi}= \partial_t \bff{\theta}$ and
the above equation yield, after rearranging the equation,
\begin{align}\label{equ:dt theta H2 eqn}
	\nonumber
	&\norm{\partial_t \bff{\theta}}{\bb{L}^2}^2
	+
	\frac{\alpha}{2} \ddt \norm{\bff{\theta}}{\bb{L}^2}^2
	+
	\frac{\beta_1}{2} \ddt \norm{\nabla \bff{\theta}}{\bb{L}^2}^2
	+
	\frac{\beta_2}{2} \ddt \norm{\Delta \bff{\theta}}{\bb{L}^2}^2
	\\
	\nonumber
	&=
	(\alpha+\beta_3) \inpro{\bff{\theta}+\bff{\rho}}{\partial_t \bff{\theta}}_{\bb{L}^2}
	+
	\beta_6 \inpro{(\bff{j}\cdot\nabla)(\bff{\theta}+\bff{\rho})}{\partial_t \bff{\theta}}_{\bb{L}^2}
	-
	\inpro{\partial_t \bff{\rho}}{\partial_t \bff{\theta}}_{\bb{L}^2}
	-
	\cal{B}(\bff{u}_h, \bff{u}_h; \bff{\theta}, \partial_t \bff{\theta})
	\\
	\nonumber
	&\quad
	-
	\cal{C}(\bff{u}_h; \bff{\theta}, \partial_t \bff{\theta})
	+
	\cal{A}(\bff{u}; \widetilde{\bff{u}}_h, \partial_t \bff{\theta})
	-
	\cal{A}(\bff{u}_h; \widetilde{\bff{u}}_h, \partial_t \bff{\theta})
	\\
	\nonumber
	&\lesssim
	\norm{\bff{\theta} + \bff{\rho}}{\bb{H}^1} \norm{\partial_t \bff{\theta}}{\bb{L}^2}
	+
	\norm{\partial_t \bff{\rho}}{\bb{L}^2} \norm{\partial_t \bff{\theta}}{\bb{L}^2}
	+
	\big| \cal{B}(\bff{u}_h, \bff{u}_h; \bff{\theta}, \partial_t \bff{\theta}) \big|
	+
	\big| \cal{C}(\bff{u}_h; \bff{\theta}, \partial_t \bff{\theta}) \big|
	\\
	&\quad
	+
	\big| \cal{A}(\bff{u}; \widetilde{\bff{u}}_h, \partial_t \bff{\theta})
	-
	\cal{A}(\bff{u}_h; \widetilde{\bff{u}}_h, \partial_t \bff{\theta})
	\big|.
\end{align}
By using the corresponding estimates for~$\bff{\rho}$, $\partial_t\bff{\rho}$,
$\bff{\theta}$, and $\partial_t\bff{\theta}$, together with~\eqref{equ:B bounded
H2}, \eqref{equ:C bounded H2}, \eqref{equ:A uh A tilde uh H2}, and Young's inequality, we obtain
\begin{align}\label{equ:dt the} 
	\norm{\partial_t \bff{\theta}}{\bb{L}^2}^2
	&+
	\frac{\alpha}{2} \ddt \norm{\bff{\theta}}{\bb{L}^2}^2
	+
	\frac{\beta_1}{2} \ddt \norm{\nabla \bff{\theta}}{\bb{L}^2}^2
	+
	\frac{\beta_2}{2} \ddt \norm{\Delta \bff{\theta}}{\bb{L}^2}^2
	\lesssim
	h^{2(r-1)}
	+ 
	\norm{\bff{u}_h}{\bb{H}^2}^4
	\norm{\bff{\theta}}{\bb{H}^2}^2 
	+ 
	\epsilon \norm{\partial_t \bff{\theta}}{\bb{L}^2}^2.
\end{align}
The term~$\norm{\bff{u}_h}{\bb{H}^2}^4$ is bounded due to
Proposition~\ref{pro:semidisc est2}. Thus we deduce, after integrating, choosing sufficiently
small $ \epsilon>0 $, and using~\eqref{equ:u0 u0h},
\begin{align*}
	\norm{\bff{\theta}(t)}{\bb{H}^2}^2 
	+ 
	\int_0^t \norm{\partial_\tau \bff{\theta}(\tau)}{\bb{L}^2}^2 \dtau
	&\lesssim
	h^{2(r-1)}
	+
	\norm{\bff{\theta}(0)}{\bb{L}^2}^2
	+
	\int_{0}^{t} 
	\norm{\bff{\theta}(\tau)}{\bb{H}^2}^2 \dtau
	\lesssim
	h^{2(r-1)} 
	+
	\int_{0}^{t} 
	\norm{\bff{\theta}(\tau)}{\bb{H}^2}^2 \dtau.
\end{align*}
Gronwall's inequality yields the required estimate
$\norm{\bff{\theta}(t)}{\bb{H}^2} \leq Ch^{r-1}$. Interpolation then
yields~\eqref{equ:theta L2}.

Finally, we prove \eqref{equ:theta quasi}. In this case, continuing from
\eqref{equ:dt theta H2 eqn}, and proceeding in the same manner as before (but using
\eqref{equ:A uh A tilde uh s3} instead of \eqref{equ:A uh A tilde uh H2}), we
obtain, similarly to~\eqref{equ:dt the},
\begin{align*} 
	\norm{\partial_t \bff{\theta}}{\bb{L}^2}^2
	+
	\frac{\alpha}{2} \ddt \norm{\bff{\theta}}{\bb{L}^2}^2
	+
	\frac{\beta_1}{2} \ddt \norm{\nabla \bff{\theta}}{\bb{L}^2}^2
	+
	\frac{\beta_2}{2} \ddt \norm{\Delta \bff{\theta}}{\bb{L}^2}^2
	&\lesssim
	h^{2r} 
	+ 
	\norm{\bff{u}_h}{\bb{H}^2}^4
	\norm{\bff{\theta}}{\bb{H}^2}^2 
	+ 
	\epsilon \norm{\partial_t \bff{\theta}}{\bb{L}^2}^2.
\end{align*}
Similar argument then yields the estimate~\eqref{equ:theta quasi}, completing the proof of the proposition.
\end{proof}

\begin{remark}
Inequality~\eqref{equ:theta quasi} shows an estimate of $\nabla \bff{\theta}$ and
$\Delta \bff{\theta}$ to a superconvergent order (compared to that of $\nabla
\bff{\rho}$ and $\Delta \bff{\rho}$) if the triangulation $\cal{T}_h$ is
quasi-uniform.
\\
As a consequence, for $d=2$, by using the Sobolev embedding and the discrete Sobolev inequality \cite[Lemma~4.9.1]{BreSco08}, we also have the maximum norm estimates
\begin{align*}
	\norm{\bff{\theta}}{L^\infty(\bb{L}^{\infty})}
	&\lesssim
	h^r,
	\\
	\norm{\nabla \bff{\theta}}{L^\infty(\bb{L}^\infty)}
	&\lesssim
	h^r \abs{\log h}^{1/2}.
\end{align*}
\end{remark}

We are now ready to state and prove the main result of this section.
\begin{theorem}\label{the:semidisc error}
	Let $\bff{u}$ be the solution of \eqref{equ:weakform} which
	satisfies~\eqref{equ:ass 1}, and let~$\bff{u}_h$ be the solution of
	\eqref{equ:weaksemidisc}. Then 
	\begin{equation}\label{equ:uhu L2}
		\norm{\bff{u}_h- \bff{u}}{L^\infty(\bb{L}^2)} 
		\leq
		Ch^{r+1},
	\end{equation}	
Furthermore, for $d=1$ or $d=2$, we have
	\begin{equation}\label{equ:uhu}
		\norm{\nabla \bff{u}_h - \nabla\bff{u}}{L^\infty(\bb{L}^2)} 
		+ 
		h \norm{\Delta\bff{u}_h - \Delta\bff{u}}{L^\infty(\bb{L}^2)} 
		\leq 
		Ch^r.
	\end{equation}
	For $d=3$, estimate~\eqref{equ:uhu} also holds provided that one of the
	following assumptions holds:
	\begin{enumerate}
	\renewcommand{\labelenumi}{\theenumi}
	\renewcommand{\theenumi}{{\rm (\roman{enumi})}}
		\item The initial data satisfies
			\begin{equation}\label{equ:d3 ass}
				\norm{\bff{u}_0}{\bb{H}^2}
				\lesssim
				T^{-2/3}.
			\end{equation}
		\item The triangulation $\cal{T}_h$ is quasi-uniform.
	\end{enumerate}
	In \eqref{equ:uhu L2} and \eqref{equ:uhu}, the constants depend on $ \kappa $, $\mu$, $T$, and $K$.
\end{theorem}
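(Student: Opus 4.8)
The proof is almost immediate once the pieces from Propositions \ref{pro:auxestimate} and \ref{pro:est theta} are assembled, so the plan is simply to invoke the error splitting $\bff{u}_h - \bff{u} = \bff{\theta} + \bff{\rho}$ of \eqref{equ:uh u xi rho} and to estimate each summand separately by the triangle inequality.

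For \eqref{equ:uhu L2}, write $\norm{\bff{u}_h - \bff{u}}{L^\infty(\bb{L}^2)} \le \norm{\bff{\theta}}{L^\infty(\bb{L}^2)} + \norm{\bff{\rho}}{L^\infty(\bb{L}^2)}$; the first term is $\le C h^{r+1}$ by \eqref{equ:theta L2}, while the second is $\le C h^{r+1}\norm{\bff{u}}{L^\infty(\bb{H}^{r+1})} \le CKh^{r+1}$ by \eqref{equ:rho} together with the regularity hypothesis \eqref{equ:ass 1}. Since the proof of \eqref{equ:theta L2} rests only on Proposition \ref{pro:semidisc-est1}, which holds for every $d\in\{1,2,3\}$ with no restriction, this bound is unconditional.

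For the gradient and Laplacian estimates in \eqref{equ:uhu} I again split by the triangle inequality. From \eqref{equ:rho} one reads off $\norm{\nabla\bff{\rho}}{L^\infty(\bb{L}^2)} \le C h^{r}$ and $h\norm{\Delta\bff{\rho}}{L^\infty(\bb{L}^2)} \le C h^{r}$ (both obtained by dividing the corresponding term of \eqref{equ:rho} by $h$), while \eqref{equ:theta H1 H2} supplies $\norm{\nabla\bff{\theta}}{L^\infty(\bb{L}^2)} + h\norm{\Delta\bff{\theta}}{L^\infty(\bb{L}^2)} \le C h^{r}$; adding the two yields \eqref{equ:uhu}. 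For $d\in\{1,2\}$ this needs no further hypothesis. For $d=3$ the estimate \eqref{equ:theta H1 H2} for $\bff{\theta}$ is exactly the content of Proposition \ref{pro:est theta}(ii): under assumption (ii) it holds by quasi-uniformity, and under assumption (i) it holds because the smallness condition \eqref{equ:d3 ass} is precisely what Proposition \ref{pro:semidisc est2}(ii) requires in order to control $\norm{\bff{u}_h}{L^\infty(\bb{H}^2)}$, which is the only dimension-dependent ingredient entering that argument. Hence \eqref{equ:uhu} follows in either case.

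I do not expect a genuine obstacle: the substance of the error analysis has already been carried out in the preceding propositions, and this theorem is a bookkeeping step. The one point to keep track of is the asymmetry in hypotheses — the $\bb{L}^2$ rate $h^{r+1}$ is unconditional in all dimensions, whereas the $h^{r}$ rate in $\bb{H}^1$ and (with the $h$ weight) in $\bb{H}^2$ requires, when $d=3$, either \eqref{equ:d3 ass} or quasi-uniformity of $\cal{T}_h$, which is why the statement is phrased with those alternatives. The constants inherit the dependence on $\kappa$, $\mu$, $T$, and $K$ already present in Propositions \ref{pro:auxestimate} and \ref{pro:est theta}.
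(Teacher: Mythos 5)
Your proposal is correct and is essentially identical to the paper's own proof: both decompose $\bff{u}_h-\bff{u}=\bff{\theta}+\bff{\rho}$ as in \eqref{equ:uh u xi rho} and combine Proposition \ref{pro:auxestimate} (for $\bff{\rho}$) with Proposition \ref{pro:est theta} (for $\bff{\theta}$) via the triangle inequality, with the $d=3$ alternatives inherited from the hypotheses of those propositions. Your extra remarks on which stability result each piece relies on are accurate and merely make explicit what the paper leaves implicit.
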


\begin{proof}
	Recall that $\bff{u}_h(t) - \bff{u}(t)= \bff{\theta}(t) +
	\bff{\rho}(t)$ as defined in \eqref{equ:xi rho}. 
	The estimates for $\bff{\rho}$ is proved in Proposition
	\ref{pro:auxestimate}, while that for $\bff{\theta}$ is in Proposition \ref{pro:est theta}. 
	The theorem then follows by the triangle inequality.
\end{proof}

\section{Time Discretisation by the Linearised Euler Method}\label{sec:semi euler}

We shall now consider a discretisation in the time variable. In the sequel,
let~$k$ be the time step and $\bff{U}^n$ be the approximation in $\bb{V}_h$ of
$\bff{u}(t)$  at time $t=t_n:=nk$, $n=0,1,2,\ldots$. We denote $ \bff{u}^n :=
\bff{u}(t_n) $ and define, for any discrete vector-valued function $\bff{v}^n $,
\[
	\delta \bff{v}^n
	:=
	\frac{\bff{v}^{n}-\bff{v}^{n-1}}{k}, \quad n=1,2,3,\ldots.
\]
Note that under the
assumption~\eqref{equ:ass 1}, we have
\begin{equation}\label{equ:del un}
	\norm{ \delta\bff{u}^n}{\bb{L}^p}
	=
	\norm{ \frac{1}{k} \int_{t_{n-1}}^{t_n} \partial_t \bff{u}(t) \,
	\dt}{\bb{L}^p}
	\le C, \quad 1 \le p \le \infty, \ n=1,2,3,\ldots.
\end{equation}

We will describe a time discretisation scheme using a semi-implicit (linearised) Euler method as follows. We start with~$\bff{U}^0 =
\widetilde{\bff{u}}_h(0) \in \bb{V}_h$ where $\widetilde{\bff{u}}_h(0)$ is
defined by~\eqref{equ:auxiliary}.  For $t_n\in [0,T]$, given $\bff{U}^{n-1} \in
\bb{V}_h$, define $\bff{U}^n$ by (recalling the definition of~$\cal{A(\cdot\,; \cdot\, ,\cdot)}$ and $ \alpha$
in~\eqref{equ:bilinear})
\begin{align}\label{equ:backwardeuler}
	&\inpro{ \delta \bff{U}^n}{ \bff{\chi} }_{\bb{L}^2}
	+ 
	\cal{A}(\bff{U}^{n-1}; \bff{U}^n, \bff{\chi})
	-
	(\alpha+\beta_3) \inpro{\bff{U}^n}{\bff{\chi}}_{\bb{L}^2}
	-
	\beta_6 \inpro{(\bff{j}\cdot \nabla)\bff{U}^n}{\bff{\chi}}_{\bb{L}^2}
	=
	0, \quad
	\bff{\chi} \in \bb{V}_h.
\end{align}
Note that this fully discrete scheme is linear at each time step.

Before analysing the above scheme, we first clarify the notion of stability that is used subsequently.
Since equation \eqref{equ:llbar} is not a gradient flow, we will adopt the definition of stability for a fully discrete scheme given in \cite{SchBer11}.

\begin{definition}\label{def:stable}
A discrete time-stepping scheme is \emph{unconditionally stable} in the $\bb{H}^\beta$ norm if for any $k\leq \nu$, where $\nu$ depends only on the coefficients of the equation,
\begin{equation}\label{equ:stable bdd}
\norm{\bff{U}^n}{\bb{H}^\beta} \leq C, \quad \forall n\in \{1,2,\ldots,\lfloor T/k \rfloor\}.
\end{equation}
The constant $C$ is independent of $n$ and $k$.

The method is \emph{conditionally stable} if \eqref{equ:stable bdd} holds under a restriction on $k$ in terms of $h$.
\end{definition}

First, we show that the above scheme is well-defined. For the stability estimate, we need to
introduce the following constant which is defined from the coefficients in~\eqref{equ:llbar}:
	\begin{equation}\label{equ:k con}
	\lambda :=
	\begin{cases}
		2\beta_1 / \left( \beta_6^2 + 2\beta_1 \beta_3 \right) &\quad \text{if $\beta_1\ge0$},
		\\
		4\beta_2 / \left( \beta_1^2 + \beta_6^2+ 4\beta_2 \beta_3 \right) &\quad \text{if
		$\beta_1<0$}.
	\end{cases}
	\end{equation}

\begin{proposition}\label{pro:backeuler exist}
	For $n\in \bb{N}$, given $\bff{U}^{n-1} \in \bb{V}_h$ and 
	$k<\lambda$,
	there exists a unique $\bff{U}^n \in \bb{V}_h$ that solves the fully
	discrete scheme \eqref{equ:backwardeuler}.
\end{proposition}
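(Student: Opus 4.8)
The plan is to recast \eqref{equ:backwardeuler} as a single variational problem on the finite--dimensional space $\bb{V}_h$ and apply the Lax--Milgram lemma. Writing $\delta\bff{U}^n = (\bff{U}^n-\bff{U}^{n-1})/k$ and moving the known term to the right, the scheme is equivalent to: find $\bff{U}^n\in\bb{V}_h$ such that $a(\bff{U}^n,\bff{\chi}) = \tfrac1k\inpro{\bff{U}^{n-1}}{\bff{\chi}}_{\bb{L}^2}$ for all $\bff{\chi}\in\bb{V}_h$, where
\[
a(\bff{v},\bff{\chi}) := \tfrac1k\inpro{\bff{v}}{\bff{\chi}}_{\bb{L}^2} + \cal{A}(\bff{U}^{n-1};\bff{v},\bff{\chi}) - (\alpha+\beta_3)\inpro{\bff{v}}{\bff{\chi}}_{\bb{L}^2} - \beta_6\inpro{(\bff{j}\cdot\nabla)\bff{v}}{\bff{\chi}}_{\bb{L}^2}.
\]
Boundedness of $a$ on $\bb{V}_h\times\bb{V}_h$ follows from \eqref{equ:A bounded} (with $\bff{\phi}=\bff{U}^{n-1}\in\bb{H}_{\bff{n}}^2$), the Cauchy--Schwarz inequality, and the estimate $|\inpro{(\bff{j}\cdot\nabla)\bff{v}}{\bff{\chi}}_{\bb{L}^2}|\le\norm{\bff{j}}{\bb{L}^\infty}\norm{\nabla\bff{v}}{\bb{L}^2}\norm{\bff{\chi}}{\bb{L}^2}$; the right-hand side is clearly a bounded linear functional on $\bb{V}_h$. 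Since $\bb{V}_h$ is finite-dimensional, unique solvability is then equivalent to coercivity of $a$ on $\bb{V}_h$.

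Thus the core of the proof is to show that $k<\lambda$ forces coercivity. Taking $\bff{\chi}=\bff{v}$ and invoking the identity \eqref{equ:Avv} for $\cal{A}(\bff{U}^{n-1};\bff{v},\bff{v})$: the $\beta_3$- and $\beta_5$-contributions coming from $\cal{B}$ are all non-negative and are discarded, the $\cal{C}$-contribution vanishes (already reflected in \eqref{equ:Avv}, as $\bff{U}^{n-1}\times\nabla\bff{v}$ is pointwise orthogonal to $\nabla\bff{v}$), and the $\alpha$-terms cancel against $(\alpha+\beta_3)\norm{\bff{v}}{\bb{L}^2}^2$. Using $\norm{\bff{j}}{\bb{L}^\infty}=1$ one is left with
\[
a(\bff{v},\bff{v}) \ \ge\ \Big(\tfrac1k-\beta_3\Big)\norm{\bff{v}}{\bb{L}^2}^2 + \beta_1\norm{\nabla\bff{v}}{\bb{L}^2}^2 + \beta_2\norm{\Delta\bff{v}}{\bb{L}^2}^2 - \beta_6\norm{\nabla\bff{v}}{\bb{L}^2}\norm{\bff{v}}{\bb{L}^2}.
\]
The remaining task is to absorb the indefinite last term, and, when $\beta_1<0$, also the term $\beta_1\norm{\nabla\bff{v}}{\bb{L}^2}^2$, so that $a(\bff{v},\bff{v})\gtrsim\norm{\bff{v}}{\bb{L}^2}^2+\norm{\Delta\bff{v}}{\bb{L}^2}^2\gtrsim\norm{\bff{v}}{\bb{H}^2}^2$ by \eqref{equ:equivnorm-h2}.

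When $\beta_1\ge0$, Young's inequality gives $\beta_6\norm{\nabla\bff{v}}{\bb{L}^2}\norm{\bff{v}}{\bb{L}^2}\le\tfrac{\beta_1}{2}\norm{\nabla\bff{v}}{\bb{L}^2}^2+\tfrac{\beta_6^2}{2\beta_1}\norm{\bff{v}}{\bb{L}^2}^2$, leaving $a(\bff{v},\bff{v})\ge\big(\tfrac1k-\beta_3-\tfrac{\beta_6^2}{2\beta_1}\big)\norm{\bff{v}}{\bb{L}^2}^2+\tfrac{\beta_1}{2}\norm{\nabla\bff{v}}{\bb{L}^2}^2+\beta_2\norm{\Delta\bff{v}}{\bb{L}^2}^2$, whose leading coefficient is positive exactly when $k<\lambda$. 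When $\beta_1<0$, one instead invokes the interpolation inequalities \eqref{equ:equivnorm-nabL2}--\eqref{equ:equivnorm-nabL2 young} together with Young's inequality to bound $|\beta_1|\norm{\nabla\bff{v}}{\bb{L}^2}^2+\beta_6\norm{\nabla\bff{v}}{\bb{L}^2}\norm{\bff{v}}{\bb{L}^2}$ by $\tfrac{\beta_1^2+\beta_6^2}{4\beta_2}\norm{\bff{v}}{\bb{L}^2}^2+(\beta_2-\delta)\norm{\Delta\bff{v}}{\bb{L}^2}^2$ for an arbitrarily small $\delta>0$, so that $a(\bff{v},\bff{v})\ge\big(\tfrac1k-\beta_3-\tfrac{\beta_1^2+\beta_6^2}{4\beta_2}\big)\norm{\bff{v}}{\bb{L}^2}^2+\delta\norm{\Delta\bff{v}}{\bb{L}^2}^2$, again coercive in $\bb{H}^2$ precisely for $k<\lambda$. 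With coercivity and boundedness established, Lax--Milgram yields a unique $\bff{U}^n\in\bb{V}_h$. I expect the only delicate point to be the constant bookkeeping in the $\beta_1<0$ case: the dissipation $\beta_2\norm{\Delta\bff{v}}{\bb{L}^2}^2$ must be apportioned carefully between controlling the indefinite $\beta_1$-term and the convective term so that the resulting threshold on $k$ matches exactly $\lambda$ in \eqref{equ:k con}; the case $\beta_1\ge0$ and everything else is routine.
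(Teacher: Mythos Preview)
Your overall strategy---recast \eqref{equ:backwardeuler} as a variational problem and apply Lax--Milgram---is exactly the paper's (their form $\cal{S}$ is $k\cdot a$). The case $\beta_1>0$ is handled correctly. However, the $\beta_1<0$ coercivity step has a genuine gap.

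The bound you assert,
\[
|\beta_1|\,\norm{\nabla\bff{v}}{\bb{L}^2}^2+\beta_6\,\norm{\nabla\bff{v}}{\bb{L}^2}\norm{\bff{v}}{\bb{L}^2}
\ \le\
\frac{\beta_1^2+\beta_6^2}{4\beta_2}\,\norm{\bff{v}}{\bb{L}^2}^2+(\beta_2-\delta)\,\norm{\Delta\bff{v}}{\bb{L}^2}^2,
\]
cannot be derived from \eqref{equ:equivnorm-nabL2}--\eqref{equ:equivnorm-nabL2 young} and Young's inequality: writing $x=\norm{\bff{v}}{\bb{L}^2}$, $y=\norm{\nabla\bff{v}}{\bb{L}^2}$, $z=\norm{\Delta\bff{v}}{\bb{L}^2}$, take $|\beta_1|=\beta_2=\beta_6=1$ and $x=y=z=1$ (consistent with $y^2\le xz$); the left side equals $2$ while the right is at most $\tfrac32$. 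Worse, the lower bound you start from, $a(\bff{v},\bff{v})\ge(\tfrac1k-\beta_3)x^2-|\beta_1|y^2+\beta_2 z^2-\beta_6 xy$, is itself not coercive on the cone $y^2\le xz$ for all $k<\lambda$: with the same parameters and $\tfrac1k-\beta_3=0.6>\tfrac12=\tfrac{\beta_1^2+\beta_6^2}{4\beta_2}$, at $x=y=z=1$ it gives $-0.4$. So no amount of constant bookkeeping will recover the threshold $\lambda$ from this lower bound.

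The missing observation is that the convective term is skew-symmetric: since $\nabla\cdot\bff{j}=0$ in $\Omega$ and $\bff{j}\cdot\bff{n}=0$ on $\partial\Omega$, an integration by parts gives $\inpro{(\bff{j}\cdot\nabla)\bff{v}}{\bff{v}}_{\bb{L}^2}=0$. Hence the $-\beta_6 xy$ term simply does not appear in $a(\bff{v},\bff{v})$, and for $\beta_1<0$ one is left with $(\tfrac1k-\beta_3)x^2-|\beta_1|y^2+\beta_2 z^2\ge(\tfrac1k-\beta_3)x^2-|\beta_1|xz+\beta_2 z^2$, a positive-definite quadratic in $(x,z)$ whenever $\tfrac1k-\beta_3>\beta_1^2/(4\beta_2)$---a condition strictly weaker than $k<\lambda$. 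Coercivity then follows via \eqref{equ:equivnorm-h2}.
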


\begin{proof}
For each $\bff{\phi}\in \bb{V}_h$, define a bilinear form $\cal{S}(\bff{\phi};\, \cdot \, , \, \cdot): \bb{V}_h \times \bb{V}_h \to \bb{R}$ by
\begin{align*}
	\nonumber
	\cal{S}(\bff{\phi}; \bff{v}, \bff{w})
	:=
	(1-k(\alpha+\beta_3)) \inpro{\bff{v}}{\bff{w}}_{\bb{L}^2}
	-
	k\beta_6\inpro{(\bff{j}\cdot\nabla)\bff{v}}{\bff{w}}_{\bb{L}^2}
	+
	k \cal{A}(\bff{\phi}; \bff{v},\bff{w}).
\end{align*}	
Note that this bilinear form is bounded. Moreover, it follows from
Lemma~\ref{lem:A bou coe} that there exists $ \alpha>0 $
such that $\cal{A}$ is coercive and $k<1/(\alpha+\beta_3+\beta_6)$, if $k<\lambda$.
Hence, $\cal{S}$ is $\bb{V}_h$-coercive.
Now, equation \eqref{equ:backwardeuler} is equivalent to
\[
	\cal{S}(\bff{U}^{n-1}; \bff{U}^n, \bff{\chi}) 
	= \inpro{\bff{U}^{n-1}}{\bff{\chi}}_{\bb{L}^2}
	\quad \text{for all } \bff{\chi}\in \bb{V}_h.
\]
Therefore, given $\bff{U}^{n-1}\in \bb{V}_h$, the proposition follows from the
Lax--Milgram lemma.
\end{proof}

Next, we show that the scheme \eqref{equ:backwardeuler} is unconditionally
stable in $\bb{L}^2$ for~$k$ satisfying
\begin{equation}\label{equ:k lam}
k<\lambda/2.
\end{equation}

\begin{proposition}\label{pro:euler stab}
Let $T>0$ be given and let $\bff{U}^n$ be defined by \eqref{equ:backwardeuler} with initial data $\bff{U}^0 \in \bb{V}_h$.
Then for $k$ satisfying~\eqref{equ:k lam} and $n\in \{1,2,\ldots, \lfloor T/k \rfloor\}$,
\begin{align}\label{equ:euler stab L2}
	\norm{\bff{U}^n}{\bb{L}^2}^2 
	+
	\sum_{m=1}^n \norm{\bff{U}^m - \bff{U}^{m-1}}{\bb{L}^2}^2
	+
	k \sum_{m=1}^n \norm{\Delta \bff{U}^m}{\bb{L}^2}^2
	\lesssim 
	\norm{\bff{U}^0}{\bb{L}^2}^2,
\end{align}
where the constant depends on $T$, but is independent of $n$ and $k$.
\end{proposition}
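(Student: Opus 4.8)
The plan is a discrete energy estimate closed by a discrete Gronwall inequality. I would take $\bff{\chi}=\bff{U}^n$ in \eqref{equ:backwardeuler} and use the elementary identity $\inpro{\bff{v}-\bff{w}}{\bff{v}}_{\bb{L}^2}=\tfrac12\big(\norm{\bff{v}}{\bb{L}^2}^2-\norm{\bff{w}}{\bb{L}^2}^2+\norm{\bff{v}-\bff{w}}{\bb{L}^2}^2\big)$, which gives
\[
\inpro{\delta\bff{U}^n}{\bff{U}^n}_{\bb{L}^2}
=\frac{1}{2k}\Big(\norm{\bff{U}^n}{\bb{L}^2}^2-\norm{\bff{U}^{n-1}}{\bb{L}^2}^2+\norm{\bff{U}^n-\bff{U}^{n-1}}{\bb{L}^2}^2\Big).
\]
For the diffusion term I would use the explicit formula \eqref{equ:Avv} for $\cal{A}(\bff{U}^{n-1};\bff{U}^n,\bff{U}^n)$: the contributions carrying $\beta_3$ and $\beta_5$ are nonnegative and may be discarded, while the $\alpha\norm{\bff{U}^n}{\bb{L}^2}^2$ term from $\cal{A}_0$ cancels exactly against $-\alpha\norm{\bff{U}^n}{\bb{L}^2}^2$ coming from $(\alpha+\beta_3)$, leaving
\[
\cal{A}(\bff{U}^{n-1};\bff{U}^n,\bff{U}^n)-(\alpha+\beta_3)\norm{\bff{U}^n}{\bb{L}^2}^2
\ge \beta_1\norm{\nabla\bff{U}^n}{\bb{L}^2}^2+\beta_2\norm{\Delta\bff{U}^n}{\bb{L}^2}^2-\beta_3\norm{\bff{U}^n}{\bb{L}^2}^2.
\]
Moreover the convection term drops out: by \eqref{equ:j nab u}, the assumptions $\nabla\cdot\bff{j}=0$ in $\Omega$ and $\bff{j}\cdot\bff{n}=0$ on $\partial\Omega$, and integration by parts, $\inpro{(\bff{j}\cdot\nabla)\bff{U}^n}{\bff{U}^n}_{\bb{L}^2}=\tfrac12\int_\Omega\bff{j}\cdot\nabla|\bff{U}^n|^2\dx=0$.

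When $\beta_1\ge 0$ the gradient term is already favourable; when $\beta_1<0$ I would absorb $\beta_1\norm{\nabla\bff{U}^n}{\bb{L}^2}^2$ into $\beta_2\norm{\Delta\bff{U}^n}{\bb{L}^2}^2$ via \eqref{equ:equivnorm-nabL2 young}, retaining a fixed positive fraction of the $\Delta$-term and picking up only a controlled multiple of $\norm{\bff{U}^n}{\bb{L}^2}^2$. Either way this produces a one-step inequality $\tfrac{1}{2k}\big(\norm{\bff{U}^n}{\bb{L}^2}^2-\norm{\bff{U}^{n-1}}{\bb{L}^2}^2+\norm{\bff{U}^n-\bff{U}^{n-1}}{\bb{L}^2}^2\big)+c_1\norm{\Delta\bff{U}^n}{\bb{L}^2}^2\le c_2\norm{\bff{U}^n}{\bb{L}^2}^2$ with $c_1,c_2>0$ depending only on the coefficients of \eqref{equ:llbar}; the number $\lambda$ in \eqref{equ:k con} is designed (up to the contribution of $\beta_6$, retained here only for consistency with the forthcoming $\bb{H}^2$-stability estimate) precisely so that the resulting factor in front of $\norm{\bff{U}^n}{\bb{L}^2}^2$ is admissible for the Gronwall step. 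Multiplying by $2k$, summing over $m=1,\dots,n$, and telescoping $\sum_m\big(\norm{\bff{U}^m}{\bb{L}^2}^2-\norm{\bff{U}^{m-1}}{\bb{L}^2}^2\big)=\norm{\bff{U}^n}{\bb{L}^2}^2-\norm{\bff{U}^0}{\bb{L}^2}^2$ yields
\[
\norm{\bff{U}^n}{\bb{L}^2}^2+\sum_{m=1}^n\norm{\bff{U}^m-\bff{U}^{m-1}}{\bb{L}^2}^2+2kc_1\sum_{m=1}^n\norm{\Delta\bff{U}^m}{\bb{L}^2}^2\le \norm{\bff{U}^0}{\bb{L}^2}^2+2kc_2\sum_{m=1}^n\norm{\bff{U}^m}{\bb{L}^2}^2.
\]
Under \eqref{equ:k lam} one has $2kc_2<1$, so moving the $m=n$ term of the last sum to the left and applying a standard discrete Gronwall inequality with $a_m:=\norm{\bff{U}^m}{\bb{L}^2}^2$ and $n\le\lfloor T/k\rfloor$ gives $\norm{\bff{U}^n}{\bb{L}^2}^2\le C(T)\norm{\bff{U}^0}{\bb{L}^2}^2$; feeding this bound back into the displayed inequality controls the two remaining sums by $C(T)\norm{\bff{U}^0}{\bb{L}^2}^2$ as well, which is \eqref{equ:euler stab L2}. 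Well-posedness of each $\bff{U}^n$ is already guaranteed by Proposition~\ref{pro:backeuler exist} because $\lambda/2<\lambda$.

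The only genuinely delicate point is the constant bookkeeping in the case $\beta_1<0$: the parameter $\epsilon$ in the use of \eqref{equ:equivnorm-nabL2 young} must be chosen so that the leftover constant $c_2$ is small enough for \eqref{equ:k lam} to force $2kc_2<1$, while still leaving a strictly positive coefficient $c_1$ in front of $\norm{\Delta\bff{U}^n}{\bb{L}^2}^2$ to deliver the third term of \eqref{equ:euler stab L2}; matching these constants against the definition of $\lambda$ in \eqref{equ:k con} is where the care lies. Everything else — the polarisation identity, the cancellation of the $\alpha$-terms, the vanishing of the convection contribution, the telescoping, and the Gronwall step — is routine.
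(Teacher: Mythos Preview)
Your approach is essentially the same as the paper's: test \eqref{equ:backwardeuler} with $\bff{U}^n$, use the polarisation identity, drop the nonnegative $\beta_3,\beta_5$ contributions from $\cal{A}$, handle $\beta_1<0$ via \eqref{equ:equivnorm-nabL2 young}, sum, and apply the discrete Gronwall inequality under $k<\lambda/2$. The one difference is that you observe $\inpro{(\bff{j}\cdot\nabla)\bff{U}^n}{\bff{U}^n}_{\bb{L}^2}=0$ exactly (via the divergence-free and tangential assumptions on $\bff{j}$), whereas the paper bounds this term crudely by $\beta_6\norm{\nabla\bff{U}^n}{\bb{L}^2}\norm{\bff{U}^n}{\bb{L}^2}$ and then absorbs it with Young's inequality; your observation is cleaner and explains why, as you note, the $\beta_6$-dependence in $\lambda$ is not actually needed for this $\bb{L}^2$ estimate.
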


\begin{proof}
Taking $\bff{\chi}= \bff{U}^n$ in \eqref{equ:backwardeuler}, and using the
identity 
\begin{equation}\label{equ:ab ide}
2\bff{a}\cdot (\bff{a}-\bff{b})= |\bff{a}|^2 -
|\bff{b}|^2 +  |\bff{a}-\bff{b}|^2,
\quad\forall\bff{a}, \bff{b}\in\bb{R}^3, 
\end{equation}
we have (after multiplying by $k$)
\begin{align*}
	&\norm{\bff{U}^n}{\bb{L}^2}^2 
	- 
	\norm{\bff{U}^{n-1}}{\bb{L}^2}^2
	+ 
	\norm{\bff{U}^n - \bff{U}^{n-1}}{\bb{L}^2}^2 
	+ 
	2 \beta_2 k \norm{\Delta \bff{U}^n}{\bb{L}^2}^2 
	+
	2 \beta_3 k\norm{|\bff{U}^{n-1}| |\bff{U}^n|}{\bb{L}^2}^2
	\\
	&\qquad
	+ 
	4\beta_5 k \norm{\bff{U}^{n-1} \cdot \nabla \bff{U}^n}{\bb{L}^2}^2  
	+
	2\beta_5 k \norm{|\bff{U}^{n-1}| |\nabla \bff{U}^n|}{\bb{L}^2}^2
	\\
	&=
	- 
	2\beta_1 k \norm{\nabla \bff{U}^n}{\bb{L}^2}^2 
	+ 
	2\beta_3 k \norm{\bff{U}^n}{\bb{L}^2}^2 
	+
	2 \beta_6 k \norm{\nabla \bff{U}^n}{\bb{L}^2} \norm{\bff{U}^n}{\bb{L}^2}.
\end{align*}
If $\beta_1 \geq 0$, then we have
\begin{align*}
	&\norm{\bff{U}^n}{\bb{L}^2}^2 
	- 
	\norm{\bff{U}^{n-1}}{\bb{L}^2}^2
	+ 
	\norm{\bff{U}^n - \bff{U}^{n-1}}{\bb{L}^2}^2 
	+
	\beta_1 k \norm{\nabla \bff{U}^n}{\bb{L}^2}^2
	+ 
	2\beta_2 k \norm{\Delta \bff{U}^n}{\bb{L}^2}^2
	\le
	\left(\frac{\beta_6^2}{\beta_1} + 2\beta_3\right) k \norm{\bff{U}^n}{\bb{L}^2}^2.
\end{align*}
Otherwise, if $\beta_1<0$, then using \eqref{equ:equivnorm-nabL2} we have
\begin{align*}
	&\norm{\bff{U}^n}{\bb{L}^2}^2 
	- 
	\norm{\bff{U}^{n-1}}{\bb{L}^2}^2
	+ 
	\norm{\bff{U}^n - \bff{U}^{n-1}}{\bb{L}^2}^2 
	+ 
	2\beta_2 k \norm{\Delta \bff{U}^n}{\bb{L}^2}^2
	\\
	&\leq
	k (\beta_1^2/ 2\beta_2) \norm{\bff{U}^n}{\bb{L}^2}^2
	+
	\frac12 \beta_2 k \norm{\Delta \bff{U}^n}{\bb{L}^2}^2
	+
	2\beta_3 k \norm{\bff{U}^n}{\bb{L}^2}^2
	+
	k (\beta_6^2/2\beta_2) \norm{\bff{U}^n}{\bb{L}^2}^2
	+
	\frac12 \beta_2 k\norm{\nabla \bff{U}^n}{\bb{L}^2}^2
	\\
	&=
	\left(\frac{\beta_1^2+\beta_6^2}{2\beta_2}+ 2\beta_3\right) k \norm{\bff{U}^n}{\bb{L}^2}^2
	+
	\beta_2 k \norm{\Delta \bff{U}^n}{\bb{L}^2}^2.
\end{align*}
In any case, this implies (after changing the index $n$ to $m$ and rearranging the equation)
\[
	\norm{\bff{U}^m}{\bb{L}^2}^2 
	- 
	\norm{\bff{U}^{m-1}}{\bb{L}^2}^2
	+ 
	\norm{\bff{U}^m - \bff{U}^{m-1}}{\bb{L}^2}^2 
	+ 
	\beta_2 k \norm{\Delta \bff{U}^m}{\bb{L}^2}^2
	\le
	\frac{2k}{\lambda} \norm{\bff{U}^m}{\bb{L}^2}^2,
\]
where $\lambda$ was defined in \eqref{equ:k con}. Summing this over $m\in \{1,2,\ldots,n\}$, we have
\begin{align*}
	\left(1-\frac{2k}{\lambda}\right) \norm{\bff{U}^n}{\bb{L}^2}^2
	+
	\sum_{m=1}^n \norm{\bff{U}^m - \bff{U}^{m-1}}{\bb{L}^2}^2 
	+
	\beta_2 k\sum_{m=1}^n \norm{\Delta \bff{U}^m}{\bb{L}^2}^2
	\leq 
	\norm{\bff{U}^0}{\bb{L}^2}^2
	+
	\frac{2k}{\lambda} \sum_{m=1}^{n-1} \norm{\bff{U}^m}{\bb{L}^2}^2.
\end{align*}
With $k<\lambda/2$, we obtain the required
result after invoking the discrete Gronwall inequality.
\end{proof}

We next show that the method is also stable in the $\bb{H}^2$ norm (in the sense of Definition \ref{def:stable}). Before
doing so, we need the following definitions which are analogous
to~\eqref{equ:xi rho} and~\eqref{equ:auxiliary}.

Let~$\bff{u}$ and $\bff{U}^n$ be solutions of \eqref{equ:weakform} and
\eqref{equ:backwardeuler}, respectively. We write
\begin{equation}\label{equ:theta n plus rho n disc}
\bff{U}^n - \bff{u}^n = 
(\bff{U}^n - \widetilde{\bff{u}}_h^n) + (\widetilde{\bff{u}}_h^n - \bff{u}^n) =: \bff{\theta}^n + \bff{\rho}^n,
\end{equation}
where $\widetilde{\bff{u}}_h^n := \widetilde{\bff{u}}_h(t_n)$ is the elliptic projection of $\bff{u}^n$. More precisely, $\widetilde{\bff{u}}_h^n$ satisfies
\begin{align}\label{equ:eq elliptic Un}
	\cal{A}(\bff{u}^n; \widetilde{\bff{u}}_h^n- \bff{u}^n, \bff{\chi}) = 0
	\quad \text{ for all } \bff{\chi} \in \bb{V}_h.
\end{align}
Note that (cf. \eqref{equ:del un})
\begin{align}\label{equ:d rho n}
	\norm{\delta \bff{\rho}^n}{\bb{L}^2} 
	= 
	\norm{\frac{1}{k} \int_{t_{n-1}}^{t_n} \partial_t \bff{\rho}(t) \,\dt}{\bb{L}^2} 
	\leq 
	C h^{r+1},
\end{align}
and that by Taylor's theorem
\begin{align}\label{equ:diff un dt un}
	\norm{\delta \bff{u}^n - \partial_t \bff{u}^n}{\bb{L}^2} 
	= 
	\norm{\frac{1}{2k} \int_{t_{n-1}}^{t_n} (t- t_{n-1}) \, \partial_{t}^2 \bff{u}(t)\,\dt }{\bb{L}^2} 
	\leq 
	Ck,
\end{align}
where $C$ depends on $\kappa,\mu, T,\norm{\bff{u}}{\bb{H}^{r+1}}, \norm{\partial_t \bff{u}}{\bb{H}^{r+1}}, \norm{\partial_{t}^2 \bff{u}}{\bb{L}^2}$. 

For the analysis, we need the following lemma.

\begin{lemma}
	\label{lem:A bil for}
If $\bff{u}$ is the solution of \eqref{equ:weakform} which
satisfies~\eqref{equ:ass 1}, then
\begin{align}
	\label{equ:A Un-1 utilde n}
	\nonumber
	\big| \cal{A}(\bff{U}^{n-1}; \widetilde{\bff{u}}_h^n, \bff{\chi}) 
	- 
	\cal{A}(\bff{u}^n; \widetilde{\bff{u}}_h^n, \bff{\chi}) \big|
	&\lesssim
	\big(1+ \norm{\bff{U}^{n-1}}{\bb{H}^2}^2 \big) h^{2(r+1)} 
	+ 
	k^2
	+
	\norm{\bff{\theta}^{n-1}}{\bb{L}^2}^2
	\\
	&\quad
	+
	\epsilon \norm{\bff{\theta}^{n-1}}{\bb{H}^2}^2
	+
	\epsilon \norm{\bff{\chi}}{\bb{H}^2}^2,
	\\
	\label{equ:A Un-1 utilde n L2}
	\big| \cal{A}(\bff{U}^{n-1}; \widetilde{\bff{u}}_h^n, \bff{\chi}) 
	- 
	\cal{A}(\bff{u}^n; \widetilde{\bff{u}}_h^n, \bff{\chi}) \big|
	&\lesssim
	\big(1+ \norm{\bff{U}^{n-1}}{\bb{H}^2}^2 \big)
	\big(\norm{\bff{\theta}^{n-1}}{\bb{H}^2}^2 + h^{2(r-1)} + k^2 \big) 
	+ 
	\epsilon \norm{\bff{\chi}}{\bb{L}^2}^2
\end{align}
for any $\epsilon>0$.
\end{lemma}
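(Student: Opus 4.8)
The plan is to adapt the argument used for the semi-discrete counterpart, inequalities \eqref{equ:A uh A tilde uh} and \eqref{equ:A uh A tilde uh H2}, but now the "coefficient shift" in $\cal{A}$ happens at two levels simultaneously: the spatial projection argument $\bff{U}^{n-1}\mapsto\widetilde{\bff{u}}_h^{n-1}$ and the time-lag $t_{n-1}\mapsto t_n$. First I would write
\[
	\cal{A}(\bff{U}^{n-1};\widetilde{\bff{u}}_h^n,\bff{\chi})-\cal{A}(\bff{u}^n;\widetilde{\bff{u}}_h^n,\bff{\chi})
	=\big[\cal{A}(\bff{U}^{n-1};\widetilde{\bff{u}}_h^n,\bff{\chi})-\cal{A}(\bff{u}^{n-1};\widetilde{\bff{u}}_h^n,\bff{\chi})\big]
	+\big[\cal{A}(\bff{u}^{n-1};\widetilde{\bff{u}}_h^n,\bff{\chi})-\cal{A}(\bff{u}^n;\widetilde{\bff{u}}_h^n,\bff{\chi})\big].
\]
Since $\cal{A}(\bff{\phi};\cdot,\cdot)-\cal{A}(\bff{\psi};\cdot,\cdot)=\cal{B}(\bff{\phi},\bff{\phi};\cdot,\cdot)-\cal{B}(\bff{\psi},\bff{\psi};\cdot,\cdot)+\cal{C}(\bff{\phi};\cdot,\cdot)-\cal{C}(\bff{\psi};\cdot,\cdot)$ depends on the first slot only through $\cal{B}$ and $\cal{C}$, the first bracket is exactly of the form treated in the previous lemma with $\bff{u}$ replaced by $\bff{u}^{n-1}$ and $\bff{u}_h$ by $\bff{U}^{n-1}$; note that the difference $\bff{U}^{n-1}-\bff{u}^{n-1}=\bff{\theta}^{n-1}+\bff{\rho}^{n-1}$, so the estimates \eqref{equ:A uh A tilde uh}–\eqref{equ:A uh A tilde uh H2} apply verbatim (using $\norm{\bff{\rho}^{n-1}}{\bb{L}^2}\lesssim h^{r+1}$ from Proposition \ref{pro:auxestimate} and the stability of $\widetilde{\bff{u}}_h^n$ from Lemma \ref{lem:auxbound}), yielding the terms $h^{2(r+1)}$, $(1+\norm{\bff{U}^{n-1}}{\bb{H}^2}^2)(\norm{\bff{\theta}^{n-1}}{\bb{L}^2}^2+\norm{\bff{\chi}}{\bb{L}^2}^2)+\epsilon\norm{\bff{\chi}}{\bb{H}^2}^2$ in \eqref{equ:A Un-1 utilde n}, and the corresponding $\bb{H}^2$-version for \eqref{equ:A Un-1 utilde n L2}.

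For the second bracket — the purely temporal difference — I would expand it as $\big(\cal{B}(\bff{u}^{n-1},\bff{u}^{n-1};\widetilde{\bff{u}}_h^n,\bff{\chi})-\cal{B}(\bff{u}^n,\bff{u}^n;\widetilde{\bff{u}}_h^n,\bff{\chi})\big)+\big(\cal{C}(\bff{u}^{n-1};\widetilde{\bff{u}}_h^n,\bff{\chi})-\cal{C}(\bff{u}^n;\widetilde{\bff{u}}_h^n,\bff{\chi})\big)$ and use bilinearity in the coefficient slots to factor out $\bff{u}^{n-1}-\bff{u}^n$. By Taylor's theorem (analogous to \eqref{equ:diff un dt un}), $\norm{\bff{u}^n-\bff{u}^{n-1}}{\bb{H}^2}\lesssim k\norm{\partial_t\bff{u}}{L^\infty(\bb{H}^2)}\lesssim k$ under \eqref{equ:ass 1}. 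Then, for \eqref{equ:A Un-1 utilde n}, I would keep the highest derivative on $\widetilde{\bff{u}}_h^n$ and use \eqref{equ:B bounded H1}–\eqref{equ:C bounded H1} together with the $\bb{H}^2$-stability $\norm{\widetilde{\bff{u}}_h^n}{\bb{H}^2}\lesssim\norm{\bff{u}_0}{\bb{H}^2}$, bounding the contribution by $k\norm{\bff{\chi}}{\bb{H}^1}\lesssim k^2+\epsilon\norm{\bff{\chi}}{\bb{H}^2}^2$ via Young's inequality. For \eqref{equ:A Un-1 utilde n L2}, I would instead integrate by parts to move a derivative off $\bff{\chi}$ (as in the proof of \eqref{equ:B bounded H2}, \eqref{equ:C bounded H2}), invoking \eqref{equ:B bounded H2} and \eqref{equ:C bounded H2} to get a bound $\lesssim\norm{\bff{u}^n-\bff{u}^{n-1}}{\bb{H}^2}\norm{\widetilde{\bff{u}}_h^n}{\bb{H}^2}\norm{\bff{\chi}}{\bb{L}^2}\lesssim k\norm{\bff{\chi}}{\bb{L}^2}\lesssim k^2+\epsilon\norm{\bff{\chi}}{\bb{L}^2}^2$; note $\norm{\widetilde{\bff{u}}_h^n}{\bb{H}^2}\lesssim\norm{\bff{u}_0}{\bb{H}^2}$ is an $\epsilon$-independent constant so it can be absorbed into the implicit constant. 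Combining the two brackets gives exactly \eqref{equ:A Un-1 utilde n} and \eqref{equ:A Un-1 utilde n L2}.

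The main obstacle I anticipate is bookkeeping rather than any genuine difficulty: one must be careful that in \eqref{equ:A Un-1 utilde n} the $\bb{H}^2$-norm of $\bff{\theta}^{n-1}$ appears only with an $\epsilon$ prefactor (so that, when this lemma is later fed into a discrete Gronwall argument for the fully discrete stability, the $\bb{H}^2$-norm of $\bff{\theta}^{n-1}$ does not appear uncontrolled), which forces the choice of applying \eqref{equ:A uh A tilde uh} — the $\bb{L}^2$-type estimate with $\norm{\bff{\chi}}{\bb{H}^2}^2$ on the right — to the spatial bracket, and then using Young's inequality on the $h^{r-1}\norm{\bff{\theta}^{n-1}}{\bb{H}^2}$-type cross terms to split off the $\epsilon\norm{\bff{\theta}^{n-1}}{\bb{H}^2}^2$. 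Correspondingly, in \eqref{equ:A Un-1 utilde n L2} one wants $\norm{\bff{\theta}^{n-1}}{\bb{H}^2}^2$ with an $O(1)$ (not $\epsilon$) coefficient but $\norm{\bff{\chi}}{\bb{L}^2}^2$ with the $\epsilon$, matching the structure of \eqref{equ:A uh A tilde uh H2}. Tracking which slot carries the derivative and which norm carries the small parameter in each of the two inequalities is the only place care is needed; everything else is a routine application of Hölder, Young, Sobolev embeddings $\bb{H}^2\subset\bb{L}^\infty\cap\bb{W}^{1,4}$, and the already-established projection and stability bounds.
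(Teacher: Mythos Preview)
Your two-bracket decomposition via the intermediate point $\bff{u}^{n-1}$ is valid and gives a workable estimate, but it does \emph{not} produce \eqref{equ:A Un-1 utilde n} as stated. Applying \eqref{equ:A uh A tilde uh} verbatim to the spatial bracket yields
\[
h^{2(r+1)}+\big(1+\norm{\bff{U}^{n-1}}{\bb{H}^2}^2\big)\big(\norm{\bff{\theta}^{n-1}}{\bb{L}^2}^2+\norm{\bff{\chi}}{\bb{L}^2}^2\big)+\epsilon\norm{\bff{\chi}}{\bb{H}^2}^2,
\]
which you yourself list; together with the $k^2$ contribution from the temporal bracket this is \emph{not} the right-hand side of \eqref{equ:A Un-1 utilde n}. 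The prefactor $(1+\norm{\bff{U}^{n-1}}{\bb{H}^2}^2)$ sits in front of $\norm{\bff{\theta}^{n-1}}{\bb{L}^2}^2$ and there is an extra $\norm{\bff{\chi}}{\bb{L}^2}^2$ term that cannot be absorbed into $\epsilon\norm{\bff{\chi}}{\bb{H}^2}^2$. Your remark about ``$h^{r-1}\norm{\bff{\theta}^{n-1}}{\bb{H}^2}$-type cross terms'' is a red herring: no such terms appear in your route, and they are not the source of the $\epsilon\norm{\bff{\theta}^{n-1}}{\bb{H}^2}^2$ in \eqref{equ:A Un-1 utilde n}. So your claim that the two brackets ``combine to give exactly \eqref{equ:A Un-1 utilde n}'' is incorrect.

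The paper avoids this by \emph{not} passing through $\bff{u}^{n-1}$. It writes $\bff{U}^{n-1}-\bff{u}^n=\bff{\theta}^{n-1}+\bff{\rho}^{n-1}-k\,\delta\bff{u}^n$ directly and, for the cubic terms, uses a different H\"older split than in \eqref{equ:A uh A tilde uh}: for the $\bff{\theta}^{n-1}$-contribution it places $\bff{U}^{n-1}+\bff{u}^n$ in $\bb{L}^2$ (bounded via Proposition~\ref{pro:euler stab} with no $\bb{H}^2$-factor) and $\bff{\theta}^{n-1}$ in $\bb{L}^6\subset\bb{H}^1$, then applies the interpolation $\norm{\bff{\theta}^{n-1}}{\bb{H}^1}\lesssim\norm{\bff{\theta}^{n-1}}{\bb{L}^2}^{1/2}\norm{\bff{\theta}^{n-1}}{\bb{H}^2}^{1/2}$ followed by Young to produce $\norm{\bff{\theta}^{n-1}}{\bb{L}^2}^2+\epsilon\norm{\bff{\theta}^{n-1}}{\bb{H}^2}^2+\epsilon\norm{\bff{\chi}}{\bb{H}^2}^2$. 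Only the $\bff{\rho}^{n-1}$-contribution is handled with $\bff{U}^{n-1}+\bff{u}^n$ in $\bb{L}^\infty$, which is why $(1+\norm{\bff{U}^{n-1}}{\bb{H}^2}^2)$ multiplies $h^{2(r+1)}$ alone. That said, your variant would serve equally well in the downstream Gronwall argument of Proposition~\ref{pro:the n L2} (since $k\sum_m(1+\norm{\bff{U}^{m-1}}{\bb{H}^2}^2)$ is bounded by Proposition~\ref{pro:euler stab}), and for \eqref{equ:A Un-1 utilde n L2} your approach even gives a slightly sharper constant in front of $k^2$; so the discrepancy is in the stated form, not in substance.
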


\begin{proof}
Firstly, note that
\begin{equation}\label{equ:Un un}
	\bff{U}^{n-1} - \bff{u}^n 
	= 
	\bff{\theta}^{n-1} + \bff{\rho}^{n-1} - k\cdot \delta\bff{u}^n
\end{equation}
and that $\bff{\rho}^{n-1}=\bff{\rho}(t_{n-1}) $, thus we deduce from~\eqref{equ:rho} 
and \eqref{equ:del un}
\begin{equation}\label{equ:Un un L2}
	\norm{\bff{U}^{n-1} - \bff{u}^n}{\bb{L}^2}
	\lesssim 
	\norm{\bff{\theta}^{n-1}}{\bb{L}^2} 
	+ h^{r+1} + k.
\end{equation}
Inequality~\eqref{equ:Un un} also implies
\begin{equation}\label{equ:Un2 un2}
	\Big||\bff{U}^{n-1}|^2 - |\bff{u}^n|^2 \Big|
	= 
	| \bff{U}^{n-1} + \bff{u}^n | \
	| \bff{\theta}^{n-1} + \bff{\rho}^{n-1} - k\cdot \delta\bff{u}^n |,
\end{equation}
so that Proposition~\ref{pro:euler stab}, assumption~\eqref{equ:ass 1},
and~\eqref{equ:Un un L2} give
\begin{equation}\label{equ:Un2 un2 L1}
	\norm{|\bff{U}^{n-1}|^2 - |\bff{u}^n|^2}{\bb{L}^1}
	\lesssim 
	\norm{\bff{\theta}^{n-1}}{\bb{L}^2} 
	+ h^{r+1} + k.
\end{equation}
Similarly, by using Sobolev embedding $\bb{H}^2\subset \bb{L}^\infty \cap \bb{W}^{1,4}$ and \eqref{equ:prod Hs mat dot}, we deduce
\begin{equation}\label{equ:Un2 un2 Linf}
	\norm{|\bff{U}^{n-1}|^2 - |\bff{u}^n|^2}{\bb{L}^\infty}
	\lesssim 
	\big(1 + \norm{\bff{U}^{n-1}}{\bb{H}^2}\big)
	\big(\norm{\bff{\theta}^{n-1}}{\bb{H}^2} + h^{r-1} + k \big)
\end{equation}
and
\begin{equation}\label{equ:Un2 un2 W14}
	\norm{|\bff{U}^{n-1}|^2 - |\bff{u}^n|^2}{\bb{W}^{1,4}}
	\lesssim 
	\big(1 + \norm{\bff{U}^{n-1}}{\bb{H}^2}\big)
	\big(\norm{\bff{\theta}^{n-1}}{\bb{H}^2} + h^{r-1} + k \big).
\end{equation}
Furthermore, interpolation and Young's inequalities yield, for any $\epsilon>0$,
\begin{align}\label{equ:A Un-1 gal nir}
	\norm{\bff{\theta}^{n-1}}{\bb{H}^1} \norm{\bff{\chi}}{\bb{H}^2}
	\lesssim
	\norm{\bff{\theta}^{n-1}}{\bb{L}^2}^{1/2} \norm{\bff{\theta}^{n-1}}{\bb{H}^2}^{1/2}
	\norm{\bff{\chi}}{\bb{H}^2}
	\lesssim
	\norm{\bff{\theta}^{n-1}}{\bb{L}^2}^2 
	+
	\epsilon \norm{\bff{\theta}^{n-1}}{\bb{H}^2}^2
	+
	\epsilon \norm{\bff{\chi}}{\bb{H}^2}^2.
\end{align}

Now, we have
\begin{align}\label{equ:A Un-1 A un}
	\nonumber
	&\big| \cal{A}(\bff{U}^{n-1}; \widetilde{\bff{u}}_h^n, \bff{\chi}) 
	- 
	\cal{A}(\bff{u}^n; \widetilde{\bff{u}}_h^n, \bff{\chi}) \big| 
	\\
	\nonumber
	&\leq
	\big| \cal{B}(\bff{U}^{n-1},\bff{U}^{n-1}; \widetilde{\bff{u}}_h^n, \bff{\chi}) - \cal{B}(\bff{u}^n, \bff{u}^n; \widetilde{\bff{u}}_h^n,\bff{\chi}) \big| 
	+
	\big| \cal{C}(\bff{U}^{n-1}; \widetilde{\bff{u}}_h^n, \bff{\chi}) - \cal{C}(\bff{u}^n;\widetilde{\bff{u}}_h^n, \bff{\chi}) \big| 
	\\
	\nonumber
	&\leq 
	\beta_3 \big| \inpro{(|\bff{U}^{n-1}|^2 - |\bff{u}^n|^2) \widetilde{\bff{u}}_h^n}{\bff{\chi}}_{\bb{L}^2} \big| 
	+ 
	\beta_4 \big| \inpro{(\bff{U}^{n-1}- \bff{u}^n) \times \nabla \widetilde{\bff{u}}_h^n}{\nabla \bff{\chi}}_{\bb{L}^2} \big|
	\\
	\nonumber
	&\quad 
	+ 
	\beta_5 \big| \inpro{(|\bff{U}^{n-1}|^2 - |\bff{u}^n|^2) \nabla \widetilde{\bff{u}}_h^n}{\nabla \bff{\chi}}_{\bb{L}^2} \big|
	+ 
	2\beta_5 \big| \inpro{\bff{U}^{n-1} \left( (\bff{U}^{n-1}-\bff{u}^n)\cdot \nabla \widetilde{\bff{u}}_h^n \right)}{\nabla \bff{\chi}}_{\bb{L}^2} \big|
	\\
	\nonumber
	&\quad 
	+ 
	2\beta_5 \big| \inpro{(\bff{U}^{n-1} -\bff{u}^n)(\bff{u}^n \cdot \nabla \widetilde{\bff{u}}_h^n)}{\nabla \bff{\chi}}_{\bb{L}^2} \big|
	\\
	&=:
	T_1 +\cdots + T_5.
\end{align}
It follows successively from~\eqref{equ:Un2 un2 L1}, ~\eqref{equ:aux H2}, and Young's
inequality that
\begin{align*}
	T_1 
	&\leq
	\beta_3 
	\norm{|\bff{U}^{n-1}|^2 - |\bff{u}^n|^2}{\bb{L}^1} 
	\norm{\widetilde{\bff{u}}_h^n}{\bb{L}^\infty} 
	\norm{\bff{\chi}}{\bb{L}^\infty}  
	\lesssim
	\big(
	\norm{\bff{\theta}^{n-1}}{\bb{L}^2} 
	+
	h^{r+1}
	+ k
	\big)
	\norm{\widetilde{\bff{u}}_h^n}{\bb{H}^2} 
	\norm{\bff{\chi}}{\bb{H}^2}  
	\\
	&\lesssim
	\big(
	\norm{\bff{\theta}^{n-1}}{\bb{L}^2} 
	+
	h^{r+1}
	+ k
	\big)
	\norm{\bff{\chi}}{\bb{H}^2}
	\lesssim
	\norm{\bff{\theta}^{n-1}}{\bb{L}^2}^2 + h^{2(r+1)}+ k^2 
	+ 
	\epsilon \norm{\bff{\chi}}{\bb{H}^2}^2.
\end{align*}
For the term $T_2$, H\"older's inequality, the Sobolev embedding $ \bb{H}^1
\subset \bb{L}^4$, \eqref{equ:Un un L2},
\eqref{equ:aux H2}, and Young's inequality give
\begin{align*}
	T_2
	&\leq 
	\beta_4 
	\norm{\bff{U}^{n-1}-\bff{u}^n}{\bb{L}^2}
	\norm{\nabla\widetilde{\bff{u}}_h^n}{\bb{L}^4}
	\norm{\nabla \bff{\chi}}{\bb{L}^4}
	\lesssim
	\big(
	\norm{\bff{\theta}^{n-1}}{\bb{L}^2} 
	+
	h^{r+1}
	+ k
	\big)
	\norm{\bff{\widetilde{\bff{u}}}_h^n}{\bb{H}^2}
	\norm{\bff{\chi}}{\bb{H}^2}
	\\
	&\lesssim
	\norm{\bff{\theta}^{n-1}}{\bb{L}^2}^2 + h^{2(r+1)}+ k^2 
	+ 
	\epsilon \norm{\bff{\chi}}{\bb{H}^2}^2.
\end{align*}
For the term $T_3$, we use successively~\eqref{equ:Un2 un2}, H\"older's
inequality, the Sobolev embedding $ \bb{H}^1 \subset \bb{L}^6 $,
\eqref{equ:del un}, \eqref{equ:aux H2}, \eqref{equ:ass 1}, 
\eqref{equ:rho}, \eqref{equ:A Un-1 gal nir}, and Young's inequality to obtain
\begin{align*} 
	T_3
	&\le
	\beta_5
	\big|
	\inpro{\big(\bff{\theta}^{n-1}-k\cdot \delta \bff{u}^n\big)
	\cdot
	\big(\bff{U}^{n-1}+\bff{u}^n\big) \nabla\widetilde{\bff{u}}_h^n}
	{\nabla\bff{\chi}}
	\big|
	+
	\beta_5
	\big|
	\inpro{\bff{\rho}^{n-1} \cdot \big(\bff{U}^{n-1}+\bff{u}^n\big)
	\nabla\widetilde{\bff{u}}_h^n}
	{\nabla\bff{\chi}}
	\big|
	\\
	&\lesssim
	\norm{\bff{\theta}^{n-1} + k \cdot \delta \bff{u}^n}{\bb{L}^6}
	\norm{\bff{U}^{n-1}+ \bff{u}^n}{\bb{L}^2}
	\norm{\nabla \widetilde{\bff{u}}_h^n}{\bb{L}^6}
	\norm{\nabla \bff{\chi}}{\bb{L}^6}
	\\
	&\quad
	+
	\norm{\bff{\rho}^{n-1}}{\bb{L}^2}
	\norm{\bff{U}^{n-1} + \bff{u}^n}{\bb{L}^\infty}
	\norm{\nabla \widetilde{\bff{u}}_h^n}{\bb{L}^4}
	\norm{\nabla \bff{\chi}}{\bb{L}^4}
	\\
	&\lesssim
	\big( \norm{\bff{\theta}^{n-1}}{\bb{H}^1} + k \big)
	\norm{\widetilde{\bff{u}}_h^n}{\bb{H}^2}
	\norm{\bff{\chi}}{\bb{H}^2}
	+
	\norm{\bff{\rho}^{n-1}}{\bb{L}^2}
	\big(
	\norm{\bff{U}^{n-1}}{\bb{H}^2} + 1
	\big)
	\norm{\widetilde{\bff{u}}_h^n}{\bb{H}^2}
	\norm{\bff{\chi}}{\bb{H}^2}
	\\
	&\lesssim
	\big( \norm{\bff{\theta}^{n-1}}{\bb{H}^1} + k \big)
	\norm{\bff{\chi}}{\bb{H}^2}
	+
	h^{r+1}
	\big(
	\norm{\bff{U}^{n-1}}{\bb{H}^2} + 1
	\big)
	\norm{\bff{\chi}}{\bb{H}^2}
	\\
	&\lesssim
	\norm{\bff{\theta}^{n-1}}{\bb{L}^2}^2
	+
	\epsilon \norm{\bff{\theta}^{n-1}}{\bb{H}^2}^2
	+
	\epsilon \norm{\bff{\chi}}{\bb{H}^2}^2
	+ k^2
	+
	h^{2(r+1)} \big(1+ \norm{\bff{U}^{n-1}}{\bb{H}^2}^2 \big).
\end{align*}
Similarly, the terms $T_4$ and $T_5$ are estimated as follows:
\begin{align*} 
	T_4
	&\le
	2 \beta_5
	\Big|
	\inpro{ \bff{U}^{n-1} 
		\big(
			(\bff{\theta}^{n-1} - k\cdot\delta\bff{u}^n)
			\cdot\nabla\widetilde{\bff{u}}_h^n	
		\big)
	}{\nabla\bff{\chi}}_{\bb{L}^2}
	\Big|
	+
	2 \beta_5
	\Big|
	\inpro{ \bff{U}^{n-1} 
		\big(
			\bff{\rho}^{n-1}\cdot\nabla\widetilde{\bff{u}}_h^n	
		\big)
	}{\nabla\bff{\chi}}_{\bb{L}^2}
	\Big|
	\\
	&\lesssim
	\norm{\bff{U}^{n-1}}{\bb{L}^2}
	\norm{\bff{\theta}^{n-1} - k\cdot\delta\bff{u}^n}{\bb{L}^6}
	\norm{\nabla\widetilde{\bff{u}}_h^n}{\bb{L}^6}
	\norm{\nabla\chi}{\bb{L}^6}
	+
	\norm{\bff{U}^{n-1}}{\bb{L}^\infty}
	\norm{\bff{\rho}^{n-1}}{\bb{L}^2}
	\norm{\nabla\widetilde{\bff{u}}_h^n}{\bb{L}^4}
	\norm{\nabla\chi}{\bb{L}^4}
	\\
	&\lesssim
	\big(
	\norm{\bff{\theta}^{n-1}}{\bb{H}^1} + k
	\big)
	\norm{\bff{\chi}}{\bb{H}^2}
	+
	h^{r+1}
	\norm{\bff{U}^{n-1}}{\bb{H}^2}
	\norm{\bff{\chi}}{\bb{H}^2}
	\\
	&\lesssim
	\norm{\bff{\theta}^{n-1}}{\bb{L}^2}^2
	+
	\epsilon \norm{\bff{\theta}^{n-1}}{\bb{H}^2}^2
	+
	\epsilon \norm{\bff{\chi}}{\bb{H}^2}^2
	+
	h^{2(r+1)} \norm{\bff{U}^{n-1}}{\bb{H}^2}^2,
\end{align*}
and
\begin{align*} 
	T_5
	&\lesssim
	\norm{\bff{U}^{n-1}-\bff{u}^n}{\bb{L}^2}
	\norm{\bff{u}^n}{\bb{L}^\infty}
	\norm{\nabla\widetilde{\bff{u}}_h^n}{\bb{L}^4}
	\norm{\nabla\bff{\chi}}{\bb{L}^4}
	\lesssim
	\big(
	\norm{\bff{\theta}^{n-1}}{\bb{L}^2} + h^{r+1} + k
	\big)
	\norm{\bff{\chi}}{\bb{H}^2}
	\\
	&\lesssim
	\norm{\bff{\theta}^{n-1}}{\bb{L}^2}^2 + h^{2(r+1)} + k^2
	+ \epsilon \norm{\bff{\chi}}{\bb{H}^2}^2.
\end{align*}
Altogether, \eqref{equ:A Un-1 utilde n} follows from \eqref{equ:A Un-1 A un}.

Next, we prove \eqref{equ:A Un-1 utilde n L2}. Proceeding similarly as in
\eqref{equ:A Un-1 A un} and performing integration by parts on the terms with
coefficients $\beta_4$ and $\beta_5$, we obtain
\begin{align}\label{equ:A Un-1 A un L2}
	\nonumber
	&\big| \cal{A}(\bff{U}^{n-1}; \widetilde{\bff{u}}_h^n, \bff{\chi}) 
	- 
	\cal{A}(\bff{u}^n; \widetilde{\bff{u}}_h^n, \bff{\chi}) \big| 
	\\
	\nonumber
	&\leq 
	\beta_3 \big| \inpro{(|\bff{U}^{n-1}|^2 - |\bff{u}^n|^2) \widetilde{\bff{u}}_h^n}{\bff{\chi}}_{\bb{L}^2} \big| 
	+ 
	\beta_4 \big| \inpro{\nabla \cdot \big((\bff{U}^{n-1}- \bff{u}^n) \times \nabla \widetilde{\bff{u}}_h^n \big)}{\bff{\chi}}_{\bb{L}^2} \big|
	\\
	\nonumber
	&\quad 
	+ 
	\beta_5 \big| \inpro{\nabla \cdot \big( (|\bff{U}^{n-1}|^2 - |\bff{u}^n|^2) \nabla \widetilde{\bff{u}}_h^n \big)}{\bff{\chi}}_{\bb{L}^2} \big|
	+ 
	2\beta_5 \big| \inpro{\nabla \cdot \big( \bff{U}^{n-1} ( (\bff{U}^{n-1}-\bff{u}^n)\cdot \nabla \widetilde{\bff{u}}_h^n) \big)}{\bff{\chi}}_{\bb{L}^2} \big|
	\\
	\nonumber
	&\quad 
	+ 
	2\beta_5 \big| \inpro{\nabla\cdot \big( (\bff{U}^{n-1} -\bff{u}^n)(\bff{u}^n \cdot \nabla \widetilde{\bff{u}}_h^n) \big)}{\bff{\chi}}_{\bb{L}^2} \big|
	\\
	&=
	R_1 +\cdots + R_5.
\end{align}
By using \eqref{equ:Un2 un2}, H\"{o}lder's inequality, Sobolev embedding, 
\eqref{equ:rho}, \eqref{equ:del un}, \eqref{equ:aux H2},
Proposition~\ref{pro:euler stab}, and Young's inequality, we deduce for any
$ \epsilon>0 $ 
\begin{align*}
	R_1
	&\leq
	\beta_3 \big(\norm{\bff{\theta}^{n-1}}{\bb{L}^\infty}+ \norm{\bff{\rho}^{n-1}}{\bb{L}^\infty} 
	+ k \norm{\delta \bff{u}^n}{\bb{L}^\infty} \big)
	\norm{\bff{U}^{n-1}+ \bff{u}^n}{\bb{L}^2} 
	\norm{\widetilde{\bff{u}}_h^n}{\bb{L}^\infty} 
	\norm{\bff{\chi}}{\bb{L}^2}  
	\\
	&\lesssim
	\big(
	\norm{\bff{\theta}^{n-1}}{\bb{H}^2}
	+
	\norm{\bff{\rho}^{n-1}}{\bb{H}^2}
	+
	k
	\big)
	\norm{\widetilde{\bff{u}}_h^n}{\bb{H}^2} 
	\norm{\bff{\chi}}{\bb{L}^2}
	\\
	&\lesssim
	\norm{\bff{\theta}^{n-1}}{\bb{H}^2}^2
	+
	h^{2(r-1)}+ k^2 + \epsilon \norm{\bff{\chi}}{\bb{L}^2}^2.
\end{align*}
Before estimating the remaining terms $R_2$, \ldots, $R_5$, we note that
similarly to~\eqref{equ:del un} we can show with the help of the Sobolev
embedding $\bb{H}^2 \subset \bb{W}^{1,4}$ and the assumption \eqref{equ:ass 1}, that
\[
	\norm{\delta\bff{u}^n}{\bb{W}^{1,4}} \le C, \quad n=1,2,3,\ldots.
\]
Thus, the term~$R_2$ can be estimated by using H\"{o}lder's inequality,
\eqref{equ:prod sobolev mat dot} and \eqref{equ:aux H2} as follows
\begin{align*}
	R_2
	&\leq
	\beta_4 \norm{(\bff{U}^{n-1}-\bff{u}^n) \times \nabla \widetilde{\bff{u}}_h^n}{\bb{H}^1}
	\norm{\bff{\chi}}{\bb{L}^2}
	\\
	&\lesssim
	\big(\norm{\bff{\theta}^{n-1}}{\bb{L}^\infty} + \norm{\bff{\rho}^{n-1}}{\bb{L}^\infty} + k \norm{\delta \bff{u}^n}{\bb{L}^\infty} \big)  
	\norm{\widetilde{\bff{u}}_h^n}{\bb{H}^2} 
	\norm{\bff{\chi}}{\bb{L}^2} 
	\\ 
	&\quad
	+ 
	\big(\norm{\bff{\theta}^{n-1}}{\bb{W}^{1,4}} + \norm{\bff{\rho}^{n-1}}{\bb{W}^{1,4}} + k \norm{\delta \bff{u}^n}{\bb{W}^{1,4}} \big)  
	\norm{\widetilde{\bff{u}}_h^n}{\bb{W}^{1,4}} 
	\norm{\bff{\chi}}{\bb{L}^2}
	\\
	&\lesssim
	\norm{\bff{\theta}^{n-1}}{\bb{H}^2}^2
	+
	h^{2(r-1)}+ k^2 + \epsilon \norm{\bff{\chi}}{\bb{L}^2}^2.
\end{align*}
For the term $R_3$, by H\"older's inequality, \eqref{equ:prod sobolev scal mat}, \eqref{equ:Un2 un2 Linf}, \eqref{equ:Un2 un2 W14} and \eqref{equ:aux H2} we have
\begin{align*}
	R_3
	&\leq
	\beta_5
	\norm{(|\bff{U}^{n-1}|^2-|\bff{u}^n|^2)\nabla\widetilde{\bff{u}}_h^n}
	{\bb{H}^1}
	\norm{\bff{\chi}}{\bb{L}^2}
	\\
	&\lesssim
	\norm{(|\bff{U}^{n-1}|^2-|\bff{u}^n|^2)}{\bb{L}^\infty}
	\norm{\nabla\widetilde{\bff{u}}_h^n}{\bb{H}^{1}}
	\norm{\bff{\chi}}{\bb{L}^2}
	+
	\norm{(|\bff{U}^{n-1}|^2-|\bff{u}^n|^2)}{\bb{W}^{1,4}}
	\norm{\nabla\widetilde{\bff{u}}_h^n}{\bb{L}^{4}}
	\norm{\bff{\chi}}{\bb{L}^2}
	\\
	&\lesssim
	\big(1 + \norm{\bff{U}^{n-1}}{\bb{H}^2}\big)
	\big(\norm{\bff{\theta}^{n-1}}{\bb{H}^2} + h^{r-1} + k \big)
	\norm{\widetilde{\bff{u}}_h^n}{\bb{H}^2}
	\norm{\bff{\chi}}{\bb{L}^2}
	\\
	&\lesssim
	\big(1+ \norm{\bff{U}^{n-1}}{\bb{H}^2}^2 \big)
	\big(\norm{\bff{\theta}^{n-1}}{\bb{H}^2}^2 + h^{2(r-1)} + k^2 \big) 
	+ 
	\epsilon \norm{\bff{\chi}}{\bb{L}^2}^2,
\end{align*}
where in the last step we used Young's inequality.
The terms $R_4$ and $R_5$ can be estimated similarly as in $R_3$. This completes the proof of the lemma.
\end{proof}
By using Lemma~\ref{lem:A bil for}, we prove the following result, which is
analogous to~\eqref{equ:theta L2}.

\begin{proposition}\label{pro:the n L2}
Assume that $\bff{u}$
satisfies~\eqref{equ:ass 1}. Then for $h,k>0$ with~$k$ satisfying~\eqref{equ:k
lam}, and for any $n\in \{1,2,\ldots, \lfloor T/k \rfloor\}$,
\begin{equation}\label{equ:theta n euler L2}
	\norm{\bff{\theta}^n}{\bb{L}^2} \leq C(h^{r+1} +k).
\end{equation}
\end{proposition}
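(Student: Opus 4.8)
The plan is to follow the strategy of the semi-discrete estimate~\eqref{equ:theta L2}, adapting it to the fully discrete setting and keeping careful track of two things: the $O(k)$ consistency error in time, and the fact that at this stage only the $\bb{L}^2$-type stability of Proposition~\ref{pro:euler stab} is available (no pointwise $\bb{H}^2$-bound on $\bff{U}^n$ has been proved yet). First I would derive the error equation for $\bff{\theta}^n$. Subtracting~\eqref{equ:backwardeuler} from~\eqref{equ:weakform} evaluated at $t=t_n$, writing $\bff{U}^n=\bff{\theta}^n+\widetilde{\bff{u}}_h^n$ and $\widetilde{\bff{u}}_h^n=\bff{\rho}^n+\bff{u}^n$, invoking the elliptic-projection identity~\eqref{equ:eq elliptic Un} (so $\cal{A}(\bff{u}^n;\bff{u}^n,\bff{\chi})=\cal{A}(\bff{u}^n;\widetilde{\bff{u}}_h^n,\bff{\chi})$), and splitting $\delta\bff{U}^n=\delta\bff{\theta}^n+\delta\bff{\rho}^n+\delta\bff{u}^n$ with $\delta\bff{u}^n=\partial_t\bff{u}^n+(\delta\bff{u}^n-\partial_t\bff{u}^n)$, one gets for all $\bff{\chi}\in\bb{V}_h$
\begin{align*}
	\inpro{\delta\bff{\theta}^n}{\bff{\chi}}_{\bb{L}^2}
	+\cal{A}(\bff{U}^{n-1};\bff{\theta}^n,\bff{\chi})
	&=
	-\inpro{\delta\bff{\rho}^n}{\bff{\chi}}_{\bb{L}^2}
	-\inpro{\delta\bff{u}^n-\partial_t\bff{u}^n}{\bff{\chi}}_{\bb{L}^2}
	+(\alpha+\beta_3)\inpro{\bff{\theta}^n+\bff{\rho}^n}{\bff{\chi}}_{\bb{L}^2}
	\\
	&\quad
	+\beta_6\inpro{(\bff{j}\cdot\nabla)(\bff{\theta}^n+\bff{\rho}^n)}{\bff{\chi}}_{\bb{L}^2}
	-\big(\cal{A}(\bff{U}^{n-1};\widetilde{\bff{u}}_h^n,\bff{\chi})-\cal{A}(\bff{u}^n;\widetilde{\bff{u}}_h^n,\bff{\chi})\big),
\end{align*}
and one notes at the outset that $\bff{\theta}^0=\bff{U}^0-\widetilde{\bff{u}}_h(0)=0$.

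Next I would test with $\bff{\chi}=\bff{\theta}^n$ and multiply by $2k$. The first term becomes $\norm{\bff{\theta}^n}{\bb{L}^2}^2-\norm{\bff{\theta}^{n-1}}{\bb{L}^2}^2+\norm{\bff{\theta}^n-\bff{\theta}^{n-1}}{\bb{L}^2}^2$ by~\eqref{equ:ab ide}, and $2k\cal{A}(\bff{U}^{n-1};\bff{\theta}^n,\bff{\theta}^n)\ge 2k\mu\norm{\bff{\theta}^n}{\bb{H}^2}^2$ by the coercivity in Lemma~\ref{lem:A bou coe} (the constant $\mu$ being independent of $\bff{U}^{n-1}$, for $\alpha$ chosen large enough). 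On the right-hand side: the $\delta\bff{\rho}^n$-term and the consistency term are estimated, using~\eqref{equ:d rho n} and~\eqref{equ:diff un dt un} respectively followed by Young's inequality, by $Ck\big(h^{2(r+1)}+k^2+\norm{\bff{\theta}^n}{\bb{L}^2}^2\big)$; the $(\alpha+\beta_3)$-term by $Ck\big(\norm{\bff{\theta}^n}{\bb{L}^2}^2+h^{2(r+1)}\big)$ via~\eqref{equ:rho}; the convective term is handled by rewriting $(\bff{j}\cdot\nabla)\bff{v}=\nabla\cdot(\bff{v}\otimes\bff{j})$ through~\eqref{equ:j nab u}, integrating by parts, and noting that the $\bff{\theta}^n$-self-interaction vanishes because $\nabla\cdot\bff{j}=0$ and $\bff{j}\cdot\bff{n}=0$, so only $\norm{\bff{\rho}^n}{\bb{L}^2}\norm{\nabla\bff{\theta}^n}{\bb{L}^2}\lesssim h^{r+1}\norm{\bff{\theta}^n}{\bb{H}^2}$ remains, bounded by $Ck(h^{2(r+1)}+\epsilon\norm{\bff{\theta}^n}{\bb{H}^2}^2)$. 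The decisive term $\cal{A}(\bff{U}^{n-1};\widetilde{\bff{u}}_h^n,\bff{\theta}^n)-\cal{A}(\bff{u}^n;\widetilde{\bff{u}}_h^n,\bff{\theta}^n)$ is controlled by the $\bb{H}^2$-type bound~\eqref{equ:A Un-1 utilde n} (with $\bff{\chi}=\bff{\theta}^n$), giving $Ck\big[(1+\norm{\bff{U}^{n-1}}{\bb{H}^2}^2)h^{2(r+1)}+k^2+\norm{\bff{\theta}^{n-1}}{\bb{L}^2}^2+\epsilon\norm{\bff{\theta}^{n-1}}{\bb{H}^2}^2+\epsilon\norm{\bff{\theta}^n}{\bb{H}^2}^2\big]$.

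Then I would absorb the $Ck\epsilon\norm{\bff{\theta}^n}{\bb{H}^2}^2$ contributions into the coercive term (choosing $\epsilon<\mu$), sum over $m=1,\dots,n$, and use $\bff{\theta}^0=0$: the telescoping left-hand side gives $\norm{\bff{\theta}^n}{\bb{L}^2}^2+k\mu\sum_{m=1}^n\norm{\bff{\theta}^m}{\bb{H}^2}^2$, from which $k\epsilon\sum_{m=1}^n\norm{\bff{\theta}^{m-1}}{\bb{H}^2}^2=k\epsilon\sum_{m=0}^{n-1}\norm{\bff{\theta}^m}{\bb{H}^2}^2$ is absorbed. For $k$ small the terms $Ck\norm{\bff{\theta}^n}{\bb{L}^2}^2$ are likewise absorbed after summation. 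One is left with
\[
	\norm{\bff{\theta}^n}{\bb{L}^2}^2
	\le
	C(h^{2(r+1)}+k^2)\,k\!\sum_{m=1}^n\!\big(1+\norm{\bff{U}^{m-1}}{\bb{H}^2}^2\big)
	+Ck\!\sum_{m=1}^{n-1}\!\big(1+\norm{\bff{U}^{m-1}}{\bb{H}^2}^2\big)\norm{\bff{\theta}^{m-1}}{\bb{L}^2}^2.
\]
By Proposition~\ref{pro:euler stab} together with~\eqref{equ:equivnorm-h2}, $k\sum_{m=1}^n(1+\norm{\bff{U}^{m-1}}{\bb{H}^2}^2)\le C(T)$, so the discrete Gronwall inequality yields $\norm{\bff{\theta}^n}{\bb{L}^2}^2\le C(h^{2(r+1)}+k^2)$, i.e.~\eqref{equ:theta n euler L2}.

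The main obstacle is exactly the absence, at this point, of a pointwise-in-$n$ $\bb{H}^2$-bound on $\bff{U}^n$: only the aggregate bound $k\sum_m\norm{\bff{U}^m}{\bb{H}^2}^2\le C$ from Proposition~\ref{pro:euler stab} is at hand. Hence every factor $\norm{\bff{U}^{n-1}}{\bb{H}^2}^2$ must remain tied to the factor $k$ produced by multiplying the scheme through by $k$, and must then be either summed against the small consistency level $h^{2(r+1)}+k^2$ or used as a summable Gronwall weight against $\norm{\bff{\theta}^{m-1}}{\bb{L}^2}^2$. This is precisely why one must use the $\bb{H}^2$-type estimate~\eqref{equ:A Un-1 utilde n} (which trades $\norm{\bff{\theta}^{n-1}}{\bb{H}^2}^2$ for an $\epsilon$-small amount that the coercive term can absorb) rather than the $\bb{L}^2$-type estimate~\eqref{equ:A Un-1 utilde n L2}, and why the identity $\bff{\theta}^0=0$ is needed to dispose of the spurious $\epsilon\norm{\bff{\theta}^{n-1}}{\bb{H}^2}^2$ term after summation.
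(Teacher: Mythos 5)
Your proposal is correct and follows essentially the same route as the paper: test the error equation with $\bff{\chi}=\bff{\theta}^n$, invoke coercivity of $\cal{A}(\bff{U}^{n-1};\cdot,\cdot)$, control the consistency terms via~\eqref{equ:d rho n} and~\eqref{equ:diff un dt un}, use the $\bb{H}^2$-type bound~\eqref{equ:A Un-1 utilde n} for the nonlinear perturbation, absorb the $\epsilon\norm{\bff{\theta}^{n-1}}{\bb{H}^2}^2$ terms after summation using $\bff{\theta}^0=\bff{0}$, and close with the aggregate stability bound of Proposition~\ref{pro:euler stab} and the discrete Gronwall inequality. The only cosmetic difference is your treatment of the convective term (letting the $\bff{\theta}^n$ self-interaction vanish by the divergence-free and tangency conditions on $\bff{j}$, versus the paper's direct Hölder--interpolation bound), which is equally valid.
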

\begin{proof}
By using successively the definition $ \bff{\theta}^n = \bff{U}^n -
\widetilde{\bff{u}}_h^n $, equations~\eqref{equ:backwardeuler},
\eqref{equ:weakform}, and \eqref{equ:eq elliptic Un}, we have, for all
$\bff{\chi}\in \bb{V}_h$,
\begin{align}\label{equ:eq dtheta n}
	\nonumber
	&\inpro{\delta \bff{\theta}^n}{\bff{\chi}}_{\bb{L}^2} 
	+ 
	\cal{A}(\bff{U}^{n-1}; \bff{\theta}^n, \bff{\chi}) 
	\\
	\nonumber
	&= 
	\inpro{\delta \bff{U}^n}{\bff{\chi}}_{\bb{L}^2} 
	+ 
	\cal{A}(\bff{U}^{n-1}; \bff{U}^n, \bff{\chi}) 
	- 
	\inpro{\delta \widetilde{\bff{u}}_h^n}{\bff{\chi}}_{\bb{L}^2} 
	- 
	\cal{A}(\bff{U}^{n-1}; \widetilde{\bff{u}}_h^n, \bff{\chi}) 
	\\
	\nonumber
	&= 
	(\alpha+\beta_3) \inpro{\bff{U}^n}{\bff{\chi}}_{\bb{L}^2}
	- 
	\beta_6 \inpro{(\bff{j}\cdot\nabla) \bff{U}^n}{\bff{\chi}}_{\bb{L}^2} 
	- 
	\inpro{\delta \widetilde{\bff{u}}_h^n - \partial_t \bff{u}^n}{\bff{\chi}}_{\bb{L}^2} 
	- 
	\inpro{\partial_t \bff{u}^n}{\bff{\chi}}_{\bb{L}^2} 
	\\
	\nonumber
	&\quad 
	- 
	\Big( \cal{A}(\bff{U}^{n-1}; \widetilde{\bff{u}}_h^n,
	\bff{\chi}) - \cal{A}(\bff{u}^n;
	\widetilde{\bff{u}}_h^n, \bff{\chi}) \Big) 
	- 
	\cal{A}(\bff{u}^n; \widetilde{\bff{u}}_h^n, \bff{\chi}) 
	\\
	\nonumber
	&= 
	(\alpha+\beta_3) \inpro{\bff{U}^n}{\bff{\chi}}_{\bb{L}^2} 
	- 
	\beta_6 \inpro{(\bff{j}\cdot\nabla) \bff{U}^n}{\bff{\chi}}_{\bb{L}^2}
	-
	\inpro{\delta \widetilde{\bff{u}}_h^n - \partial_t \bff{u}^n}{\bff{\chi}}_{\bb{L}^2} 
	\\
	\nonumber
	&\quad
	- 
	(\alpha+\beta_3) \inpro{\bff{u}^n}{\bff{\chi}}_{\bb{L}^2} 
	- 
	\beta_6 \inpro{(\bff{j}\cdot\nabla) \bff{u}^n}{\bff{\chi}}_{\bb{L}^2}
	\\
	\nonumber
	&\quad 
	+ 
	\cal{A}(\bff{u}^n; \bff{u}^n, \bff{\chi}) 
	- 
	\Big( \cal{A}(\bff{U}^{n-1}; \widetilde{\bff{u}}_h^n,
	\bff{\chi}) - \cal{A}(\bff{u}^n;
	\widetilde{\bff{u}}_h^n, \bff{\chi}) \Big) 
	- 
	\cal{A}(\bff{u}^n; \widetilde{\bff{u}}_h^n, \bff{\chi}) 
	\\
	&= 
	(\alpha+\beta_3) \inpro{\bff{U}^n-\bff{u}^n}{\bff{\chi}}_{\bb{L}^2}
	+
	\beta_6 \inpro{(\bff{U}^n-\bff{u}^n)\otimes \bff{j}}{\nabla \bff{\chi}}_{\bb{L}^2}
	- 
	\inpro{\delta \bff{\rho}^n}{\bff{\chi}}_{\bb{L}^2}
	- 
	\inpro{\delta \bff{u}^n- \partial_t \bff{u}^n}{\bff{\chi}}_{\bb{L}^2}
	\nonumber\\
	&\quad
	- 
	\Big( \cal{A}(\bff{U}^{n-1}; \widetilde{\bff{u}}_h^n, \bff{\chi}) 
	- 
	\cal{A}(\bff{u}^n; \widetilde{\bff{u}}_h^n, \bff{\chi}) \Big).
\end{align}
Now, taking $\bff{\chi}= \bff{\theta}^n$, then using H\"{o}lder's and Young's
inequality, Lemma \ref{lem:auxbound} and \eqref{equ:A Un-1 utilde n}, we have
\begin{align*}
	&\frac{1}{2k} \left( \norm{\bff{\theta}^n}{\bb{L}^2}^2 - \norm{\bff{\theta}^{n-1}}{\bb{L}^2}^2 \right)
	+
	\frac{1}{2k} \norm{\bff{\theta}^n - \bff{\theta}^{n-1}}{\bb{L}^2}^2 
	+ 
	\mu \norm{ \bff{\theta}^n}{\bb{H}^2}^2  
	\\
	&\leq 
	\big| (\alpha+\beta_3) \inpro{\bff{\theta}^n + \bff{\rho}^n}{\bff{\theta}^n}_{\bb{L}^2} \big| 
	+
	\big| \beta_6 \inpro{(\bff{\theta}^n + \bff{\rho}^n)\otimes \bff{j}}{\nabla \bff{\theta}^n}_{\bb{L}^2} \big| 
	+ 
	\big| \inpro{\delta \bff{\rho}^n}{\bff{\theta}^n}_{\bb{L}^2} \big|
	\\
	&\quad
	+ 
	\big| \inpro{\delta \bff{u}^n- \partial_t \bff{u}^n}{\bff{\theta}^n}_{\bb{L}^2} \big|
	+ 
	\big| \cal{A}(\bff{U}^{n-1}; \widetilde{\bff{u}}_h^n, \bff{\theta}^n) 
	- 
	\cal{A}(\bff{u}^n; \widetilde{\bff{u}}_h^n, \bff{\theta}^n) \big|
	\\
	&\lesssim 
	\norm{\bff{\theta}^n + \bff{\rho}^n}{\bb{L}^2} \norm{\bff{\theta}^n}{\bb{H}^1} 
	+ 
	\norm{\delta \bff{\rho}^n}{\bb{L}^2} \norm{\bff{\theta}^n}{\bb{L}^2} 
	+ 
	\norm{\delta \bff{u}^n - \partial_t \bff{u}^n}{\bb{L}^2} \norm{\bff{\theta}^n}{\bb{L}^2} 
	\\
	&\quad 
	+ 
	\big(1+ \norm{\bff{U}^{n-1}}{\bb{H}^2}^2 \big) h^{2(r+1)}
	+ 
	k^2
	+
	\norm{\bff{\theta}^{n-1}}{\bb{L}^2}^2
	+
	\epsilon \norm{\bff{\theta}^{n-1}}{\bb{H}^2}^2
	+
	\epsilon \norm{\bff{\theta}^n}{\bb{H}^2}^2
	\\
	&\lesssim
	h^{2(r+1)}
	+
	\norm{\bff{\theta}^n - \bff{\theta}^{n-1}}{\bb{L}^2}^2
	+
	\norm{\bff{\theta}^{n-1}}{\bb{L}^2}^2
	+
	\norm{\delta \bff{\rho}^n}{\bb{L}^2}^2
	+
	\norm{\delta \bff{u}^n - \partial_t \bff{u}^n}{\bb{L}^2}^2
	\\
	&\quad 
	+ 
	\big(1+ \norm{\bff{U}^{n-1}}{\bb{H}^2}^2 \big) h^{2(r+1)} 
	+ 
	k^2
	+
	\norm{\bff{\theta}^{n-1}}{\bb{L}^2}^2
	+
	\epsilon \norm{\bff{\theta}^{n-1}}{\bb{H}^2}^2
	+
	\epsilon \norm{\bff{\theta}^n}{\bb{H}^2}^2
	\\
	&\leq
	C\big(1+ \norm{\bff{U}^{n-1}}{\bb{H}^2}^2 \big) h^{2(r+1)}
	+
	Ck^2
	+
	C\norm{\bff{\theta}^n-\bff{\theta}^{n-1}}{\bb{L}^2}^2
	+
	C\norm{\bff{\theta}^{n-1}}{\bb{L}^2}^2
	+
	C\epsilon \norm{\bff{\theta}^{n-1}}{\bb{H}^2}^2
	+
	C\epsilon \norm{\bff{\theta}^{n}}{\bb{H}^2}^2
\end{align*}
for any $\epsilon>0$, where in the last step we used \eqref{equ:d rho n} and \eqref{equ:diff un dt un}. Choosing $\epsilon=\mu/2C$, then multiplying by $k$ and summing the inequalities over $m\in \{1,2,\ldots,n\}$, we obtain
\begin{align*}
	\norm{\bff{\theta}^n}{\bb{L}^2}^2
	\leq
	\norm{\bff{\theta}^0}{\bb{L}^2}^2
	+ 
	Ch^{2(r+1)} \left( 1+ k \sum_{m=1}^n \norm{\bff{U}^{m-1}}{\bb{H}^2}^2 \right)
	+
	C k^2 
	+
	\sum_{m=1}^n C k \norm{\bff{\theta}^{m-1}}{\bb{L}^2}^2.
\end{align*}
Since $\bff{U}^0= \widetilde{\bff{u}}_h(0)$, by the discrete Gronwall inequality (noting the stability of $\bff{U}^n$ in Proposition~\ref{pro:euler stab}) we have
\begin{align}\label{equ:theta n L2}
	\norm{\bff{\theta}^n}{\bb{L}^2}^2 
	\leq 
	C (h^{2(r+1)}  + k^2 ),
\end{align}
where $C=C\big(\kappa,\mu, T,\norm{\bff{u}}{\bb{H}^{r+1}}, \norm{\partial_t \bff{u}}{\bb{H}^{r+1}}, \norm{\partial_{t}^2 \bff{u}}{\bb{L}^2}\big)$, from which \eqref{equ:theta n euler L2} then follows.
\end{proof}

We are now ready to prove the stability of the semi-implicit Euler method
in the $\bb{H}^2$ norm (with a time-step restriction) in the case where the triangulation $\cal{T}_h$ is quasi-uniform.

\begin{proposition}\label{pro:Un stable H2}
Let $T>0$ be given and $\bff{U}^n$ be defined by \eqref{equ:backwardeuler}. Assume that the triangulation $\cal{T}_h$ is quasi-uniform. If $k=O(h^2)$, then for any initial data $\bff{U}^0\in \bb{V}_h$ and $n\in \{1,2,\ldots, \lfloor T/k\rfloor \}$,
\begin{equation}\label{equ:H2 sta}
	\norm{\bff{U}^n}{\bb{H}^2}
	\leq
	C \norm{\bff{U}^0}{\bb{H}^2},
\end{equation}
where $C$ depends on $T$, but is independent of $n$ and $k$.
\end{proposition}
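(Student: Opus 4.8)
The plan is to run a discrete energy argument directly on the scheme \eqref{equ:backwardeuler}, testing with $\bff{\chi}=\delta\bff{U}^n$, and to derive a discrete Gronwall inequality for a quadratic energy $E^n\sim\norm{\bff{U}^n}{\bb{H}^2}^2$; no comparison with the exact solution enters, so the bound is genuinely in terms of $\norm{\bff{U}^0}{\bb{H}^2}$. First I would take $\bff{\chi}=\delta\bff{U}^n$ and multiply by $2k$. Applying the identity \eqref{equ:ab ide} to the three terms of $\cal{A}_0(\bff{U}^n,\delta\bff{U}^n)$ produces the telescoping difference $E_0^n-E_0^{n-1}$, with $E_0^n=\alpha\norm{\bff{U}^n}{\bb{L}^2}^2+\beta_1\norm{\nabla\bff{U}^n}{\bb{L}^2}^2+\beta_2\norm{\Delta\bff{U}^n}{\bb{L}^2}^2$, together with the nonnegative difference-squared quantity $\alpha\norm{\bff{U}^n-\bff{U}^{n-1}}{\bb{L}^2}^2+\beta_1\norm{\nabla(\bff{U}^n-\bff{U}^{n-1})}{\bb{L}^2}^2+\beta_2\norm{\Delta(\bff{U}^n-\bff{U}^{n-1})}{\bb{L}^2}^2$, which I retain on the left. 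When $\beta_1<0$ I would, as in Lemma \ref{lem:A bou coe}, take $\alpha$ large and use \eqref{equ:equivnorm-nabL2} to recover $E_0^n\gtrsim\norm{\bff{U}^n}{\bb{H}^2}^2$ via \eqref{equ:equivnorm-h2}.

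Next I treat the cubic form $\cal{B}(\bff{U}^{n-1},\bff{U}^{n-1};\bff{U}^n,\delta\bff{U}^n)$. The key observation is that every one of its four pieces carries a nonnegative weight built from $\bff{U}^{n-1}$, so a weighted version of \eqref{equ:ab ide} applies: I extract telescoping quartic quantities (such as $\norm{\bff{U}^n}{\bb{L}^4}^4$ and $\norm{|\bff{U}^n||\nabla\bff{U}^n|}{\bb{L}^2}^2$) plus a nonnegative difference-squared remainder, at the cost of a commutator coming from freezing the weight at level $n-1$. Since $|\bff{U}^{n-1}|^2-|\bff{U}^n|^2=-k(\bff{U}^n+\bff{U}^{n-1})\cdot\delta\bff{U}^n$, every commutator contains the \emph{undifferentiated} factor $\delta\bff{U}^n$; after H\"older and Young the $\norm{\delta\bff{U}^n}{\bb{L}^2}^2$ part is absorbed on the left, and the residual factors have the form $\norm{\bff{U}^n}{\bb{L}^\infty}^2\norm{\nabla\bff{U}^n}{\bb{L}^4}^4$ or $\norm{\bff{U}^n}{\bb{L}^\infty}^2\norm{\bff{U}^n}{\bb{L}^4}^4$, which I bound by powers of $\norm{\bff{U}^n}{\bb{H}^2}$ through the Gagliardo--Nirenberg inequalities already used in Proposition \ref{pro:semidisc est2} and the embedding $\bb{H}^2\subset\bb{L}^\infty$. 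Thus $\cal{B}$ requires no inverse estimate.

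The genuine obstacle, and the sole source of the condition $k=O(h^2)$, is the precession form $\cal{C}(\bff{U}^{n-1};\bff{U}^n,\delta\bff{U}^n)$. Because $\inpro{\bff{U}^{n-1}\times\nabla\delta\bff{U}^n}{\nabla\delta\bff{U}^n}=0$, skew-symmetry of the cross product destroys the nonnegative difference-squared term that rescued the $\cal{B}$-pieces; using $\inpro{\bff{U}^{n-1}\times\nabla\bff{U}^n}{\nabla\bff{U}^n}=0$ I rewrite $\cal{C}=\beta_4\inpro{\bff{U}^{n-1}\times\nabla\delta\bff{U}^n}{\nabla\bff{U}^{n-1}}$, in which the \emph{differentiated} difference $\nabla\delta\bff{U}^n$ appears with no companion positive term. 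Here I invoke quasi-uniformity through the inverse estimate $\norm{\nabla\delta\bff{U}^n}{\bb{L}^2}\le Ch^{-1}\norm{\delta\bff{U}^n}{\bb{L}^2}$, so that $2k\,|\cal{C}|\lesssim k\,h^{-1}\norm{\bff{U}^{n-1}}{\bb{L}^\infty}\norm{\delta\bff{U}^n}{\bb{L}^2}\norm{\nabla\bff{U}^{n-1}}{\bb{L}^2}$; Young's inequality splits this into $\epsilon k\norm{\delta\bff{U}^n}{\bb{L}^2}^2$ (absorbed) plus $C_\epsilon\,(kh^{-2})\,\norm{\bff{U}^{n-1}}{\bb{L}^\infty}^2\norm{\nabla\bff{U}^{n-1}}{\bb{L}^2}^2$. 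The factor $kh^{-2}$ is $O(1)$ \emph{precisely} when $k=O(h^2)$, and the surviving term is then controlled by $\norm{\bff{U}^{n-1}}{\bb{H}^2}^4\sim(E^{n-1})^2$. The linear remainders $(\alpha+\beta_3)\inpro{\bff{U}^n}{\delta\bff{U}^n}$ and the convection term $\beta_6\inpro{(\bff{j}\cdot\nabla)\bff{U}^n}{\delta\bff{U}^n}$ are handled by plain H\"older and Young (using $\norm{\bff{j}}{\bb{L}^\infty}=1$), again absorbing an $\epsilon k\norm{\delta\bff{U}^n}{\bb{L}^2}^2$ contribution; note that $\delta\bff{U}^n$ is undifferentiated there, so no inverse estimate is needed.

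Finally I would collect everything into an inequality $E^n-E^{n-1}+ck\norm{\delta\bff{U}^n}{\bb{L}^2}^2\le Ck\,\Psi(E^{n-1},E^n)$, where $E^n$ augments $E_0^n$ by the nonnegative quartic pieces from $\cal{B}$ (so that still $E^n\gtrsim\norm{\bff{U}^n}{\bb{H}^2}^2$) and $\Psi$ is at worst superlinear in the energy; choosing $\epsilon$ small makes the absorbed terms and the $\epsilon\norm{\Delta(\bff{U}^n-\bff{U}^{n-1})}{\bb{L}^2}^2$ contributions dominated by the nonnegative left-hand side. Summing over $m=1,\dots,n$ and invoking the discrete Gronwall inequality (for $d=1,2$, where the Gagliardo--Nirenberg exponents keep $\Psi$ essentially linear, and the discrete Bihari--Gronwall inequality of Theorem \ref{the:bihari} when they force superlinear growth, as in the $d=3$ case of Proposition \ref{pro:semidisc est2}) gives $E^n\le C(T)E^0$, and \eqref{equ:equivnorm-h2} together with $E^0\lesssim\norm{\bff{U}^0}{\bb{H}^2}^2$ converts this into \eqref{equ:H2 sta}. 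I expect the bookkeeping of the $\cal{B}$-commutators and the precise balancing of the $kh^{-2}$ factor against the energy in the Gronwall step to be the most delicate parts of the argument.
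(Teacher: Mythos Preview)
Your approach and the paper's are completely different. The paper's proof is a three-line bootstrap from the $\bb{L}^2$ error estimate already established in Proposition~\ref{pro:the n L2}: write $\bff{U}^n=\bff{\theta}^n+\widetilde{\bff{u}}_h^n$, apply the inverse estimate $\norm{\bff{\theta}^n}{\bb{H}^2}\le Ch^{-2}\norm{\bff{\theta}^n}{\bb{L}^2}\le Ch^{-2}(h^{r+1}+k)$, and add $\norm{\widetilde{\bff{u}}_h^n}{\bb{H}^2}\lesssim 1$ from Lemma~\ref{lem:auxbound}. The condition $k=O(h^2)$ is exactly what makes $h^{-2}k$ bounded. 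No energy argument is run at all, and quasi-uniformity enters only through the inverse estimate on $\bff{\theta}^n$, not through any treatment of $\cal{B}$ or $\cal{C}$.

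Your direct energy argument testing with $\delta\bff{U}^n$ is essentially what the paper carries out in the \emph{next} result, Proposition~\ref{pro:Un bounded H2}, which needs neither quasi-uniformity nor $k=O(h^2)$ for $d\le 2$. There $\cal{C}$ is handled by integrating by parts, $\cal{C}(\bff{\eta};\bff{v},\bff{w})=\beta_4\inpro{\nabla\cdot(\bff{\eta}\times\nabla\bff{v})}{\bff{w}}$, giving $|\cal{C}(\bff{U}^{n-1};\bff{U}^n,\delta\bff{U}^n)|\lesssim\norm{\bff{U}^{n-1}}{\bb{H}^2}\norm{\bff{U}^n}{\bb{H}^2}\norm{\delta\bff{U}^n}{\bb{L}^2}$ with no inverse estimate; your claim that $\cal{C}$ is the sole source of $k=O(h^2)$ is therefore mistaken. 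More critically, your own treatment of $\cal{C}$ has a scaling defect: after Young's inequality the residual is $C_\epsilon(kh^{-2})\norm{\bff{U}^{n-1}}{\bb{L}^\infty}^2\norm{\nabla\bff{U}^{n-1}}{\bb{L}^2}^2$, and under $k=O(h^2)$ the prefactor $kh^{-2}$ is $O(1)$, \emph{not} $O(k)$. Summing over $m=1,\dots,n\sim T/k$ produces $\sum_m C(E^{m-1})^2$ without a factor of $k$, so the sum is $O(T/k)$ and discrete Gronwall cannot close. Done correctly (via integration by parts), the energy route yields the nonlinear-in-data bound $\norm{\bff{U}^n}{\bb{H}^2}^2\le C\norm{\bff{U}^0}{\bb{H}^2}^2\exp(C\norm{\bff{U}^0}{\bb{L}^2}^2)$ of Proposition~\ref{pro:Un bounded H2}, not the linear bound~\eqref{equ:H2 sta}.
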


\begin{proof}
The identity~$\bff{U}^n=\bff{\theta}^n + \widetilde{\bff{u}}_h^{n+1}$, \eqref{equ:theta n euler L2}, \eqref{equ:aux H2}, and the inverse estimates give
\begin{align*}
	\norm{\bff{U}^n}{\bb{H}^2} 
	\leq 
	\norm{\bff{\theta}^n}{\bb{H}^2} 
	+
	\norm{\widetilde{\bff{u}}_h^{n+1}}{\bb{H}^2}
	\leq
	Ch^{-2} \norm{\bff{\theta}^n}{\bb{L}^2}
	+ C \norm{\bff{U}^0}{\bb{H}^2}
	\lesssim
	(1+kh^{-2}) \norm{\bff{U}^0}{\bb{H}^2}
	\lesssim 
	\norm{\bff{U}^0}{\bb{H}^2}
\end{align*}
if we assume $k=O(h^2)$, as required.
\end{proof}

In fact, we show that without assuming quasi-uniformity of $\cal{T}_h$, the $\bb{H}^2$ norm of discrete solution $\bff{U}^n$ remains bounded, albeit with
some drawback which will be explained later in Remark~\ref{rem:dra}.

\begin{proposition}\label{pro:Un bounded H2}
	Let $T>0$ and~$k$ satisfies~\eqref{equ:k lam}. Let
	$\bff{U}^n$ be defined by
	\eqref{equ:backwardeuler} with initial data $\bff{U}^0 \in \bb{V}_h$.
	\begin{enumerate}
		\renewcommand{\labelenumi}{\theenumi}
		\renewcommand{\theenumi}{{\rm (\roman{enumi})}}
		\item 
		If $d\leq 2$, then 
		for
		any $n\in \{1,2,\ldots, \lfloor T/k \rfloor\}$,
		\begin{equation}\label{equ:Un bounded H2}
			\norm{\bff{U}^n}{\bb{H}^2}^2
			\leq
			C
			\norm{\bff{U}^0}{\bb{H}^2}^2
			\exp\left( C\norm{\bff{U}^0}{\bb{L}^2}^2 \right),
		\end{equation}
		where $C$ depends on 
		$T$ but is independent of $n$ and $k$.
		\item
		If $d= 3$, then
		\begin{equation}\label{equ:Un bounded H2 d3}
			\norm{\bff{U}^n}{\bb{H}^2}^2
			\leq
			C
			\left(C_0\norm{\bff{U}^0}{\bb{H}^2}^{-3} - t_M \right)^{-\frac{2}{3}},
		\end{equation}
		for $n\in \{1,2,\ldots,M\}$, where
		\begin{align}\label{equ:M max}
			M:= \max \big\{n:\;  n < \lfloor T/k \rfloor, \;
			nk\le C_0 \norm{\bff{U}^0}{\bb{H}^2}^{-3}, \; 
			k \norm{\bff{U}^m-\bff{U}^{m-1}}{\bb{H}^2}^3 
			\le C_0,\; \forall m\leq n \big\}.
		\end{align}
		Here~$C_0$ depends on~$C_0'$, the constant given by the Gagliardo--Nirenberg inequality;
		see~\eqref{equ:gal nir 2d}. The constant $C$ depends on $t_M:=Mk \le T$.
	\end{enumerate}
\end{proposition}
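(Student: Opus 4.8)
The plan is to test the fully-discrete equation with $\bff{\chi} = \delta\bff{U}^n$ (the discrete analogue of taking $\bff{\chi} = \partial_t\bff{\theta}$ in the semi-discrete proof of Proposition~\ref{pro:est theta}), so as to produce the telescoping quantity
$\beta_2\bigl(\norm{\Delta\bff{U}^n}{\bb{L}^2}^2 - \norm{\Delta\bff{U}^{n-1}}{\bb{L}^2}^2\bigr)$
plus the corresponding $\norm{\nabla\bff{U}^n}{\bb{L}^2}$ telescope and a quartic telescope coming from the $\beta_3$ and $\beta_5$ nonlinear terms via the identity \eqref{equ:ab ide} (used termwise as in Proposition~\ref{pro:euler stab}). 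Rather than working directly with $\bff{U}^n$, I would instead write $\bff{U}^n = \bff{\theta}^n + \widetilde{\bff{u}}_h^n$, use Proposition~\ref{pro:the n L2} to control $\norm{\bff{\theta}^n}{\bb{L}^2} \lesssim h^{r+1}+k$ and Lemma~\ref{lem:auxbound} to control $\norm{\widetilde{\bff{u}}_h^n}{\bb{H}^2} \lesssim \norm{\bff{u}_0}{\bb{H}^2}$; the upshot is that the $\bb{H}^2$ norm of $\bff{U}^n$ is governed by $\norm{\bff{\theta}^n}{\bb{H}^2}$ up to a harmless constant. Alternatively (and probably cleaner), I would test \eqref{equ:eq dtheta n} with $\bff{\chi} = \delta\bff{\theta}^n$ and derive a discrete energy inequality for $\norm{\Delta\bff{\theta}^n}{\bb{L}^2}^2$ mirroring \eqref{equ:dt the}, using Lemma~\ref{lem:A bil for} (specifically \eqref{equ:A Un-1 utilde n}) for the term $\cal{A}(\bff{U}^{n-1};\widetilde{\bff{u}}_h^n,\cdot) - \cal{A}(\bff{u}^n;\widetilde{\bff{u}}_h^n,\cdot)$, \eqref{equ:B bounded H2} and \eqref{equ:C bounded H2} for the $\cal{B}$ and $\cal{C}$ terms, and the already-established $\bb{L}^2$-stability of $\bff{U}^n$ (Proposition~\ref{pro:euler stab}) to bound $\norm{\bff{U}^{n-1}}{\bb{H}^2}$ inside the nonlinear coefficients by the \emph{lower-order} quantity $\norm{\Delta\bff{U}^{n-1}}{\bb{L}^2}$ via \eqref{equ:equivnorm-h2}.

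The key step is to close a discrete Gronwall-type argument. After summing the energy identity over $m \in \{1,\dots,n\}$ and absorbing the $\norm{\delta\bff{U}^n}{\bb{L}^2}^2$ and small $\epsilon$-terms, I expect to arrive at something of the form
\[
	E^n
	\lesssim
	E^0
	+
	k\sum_{m=1}^n \bigl(1 + \norm{\bff{U}^{m-1}}{\bb{H}^2}^{2}\bigr) E^{m-1}
	+
	(\text{data terms}),
\]
where $E^n := \norm{\Delta\bff{U}^n}{\bb{L}^2}^2 + \norm{\nabla\bff{U}^n}{\bb{L}^2}^2 + \norm{\bff{U}^n}{\bb{L}^4}^4$, exactly analogous to the continuous estimate that leads to \eqref{equ:dsu}--\eqref{equ:dsu-3}. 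For $d \le 2$, the Gagliardo--Nirenberg interpolation (as in Cases 1 and 2 of Proposition~\ref{pro:semidisc est2}) turns the dangerous $\norm{\bff{U}^{m-1}}{\bb{L}^\infty}^2\norm{\Delta\bff{U}^{m-1}}{\bb{L}^2}^2$ and $\norm{\bff{U}^{m-1}}{\bb{L}^\infty}^2\norm{\nabla\bff{U}^{m-1}}{\bb{L}^4}^4$ contributions into powers of $\norm{\Delta\bff{U}^{m-1}}{\bb{L}^2}$ that, after using $k\sum\norm{\Delta\bff{U}^m}{\bb{L}^2}^2 \lesssim \norm{\bff{U}^0}{\bb{L}^2}^2$ from Proposition~\ref{pro:euler stab}, give a summable weight; the discrete Gronwall inequality then yields \eqref{equ:Un bounded H2} with the stated exponential dependence on $\norm{\bff{U}^0}{\bb{L}^2}^2$. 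For $d = 3$ the interpolation only gives $\norm{\bff{U}^{m-1}}{\bb{H}^2}^5 \lesssim \norm{\Delta\bff{U}^{m-1}}{\bb{L}^2}^5$ on the right-hand side, which is super-linear with no a priori summable weight, so one cannot run a plain Gronwall argument; instead I would use the discrete Bihari--Gronwall inequality together with the bootstrap set $M$ defined in \eqref{equ:M max}, which keeps $t_M k$-sums and the jump terms $k\norm{\bff{U}^m - \bff{U}^{m-1}}{\bb{H}^2}^3$ controlled until the blow-up time $C_0\norm{\bff{U}^0}{\bb{H}^2}^{-3}$, producing \eqref{equ:Un bounded H2 d3}.

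The main obstacle is handling the cross term $\cal{A}(\bff{U}^{n-1};\widetilde{\bff{u}}_h^n,\delta\bff{\theta}^n) - \cal{A}(\bff{u}^n;\widetilde{\bff{u}}_h^n,\delta\bff{\theta}^n)$ when the test function is a difference quotient: Lemma~\ref{lem:A bil for} as stated has $\bff{\chi}$ appearing in $\bb{H}^2$ or $\bb{L}^2$ norms, and here $\bff{\chi} = \delta\bff{\theta}^n$, whose $\bb{L}^2$ norm we can only bound after summing in time, not pointwise in $n$. The resolution is to route the $\delta\bff{\theta}^n$-dependence through $\epsilon\norm{\delta\bff{\theta}^n}{\bb{L}^2}^2$ (absorbed by the good term on the left, exactly as $\epsilon\norm{\partial_t\bff{\theta}}{\bb{L}^2}^2$ is absorbed in \eqref{equ:dt the}) and to note that in \eqref{equ:A Un-1 utilde n} the factor multiplying $\norm{\bff{\chi}}{\bb{H}^2}^2$ is $\epsilon$, while the factor $(1+\norm{\bff{U}^{n-1}}{\bb{H}^2}^2)h^{2(r+1)}$ needs $k\sum_m\norm{\bff{U}^{m-1}}{\bb{H}^2}^2 \lesssim \norm{\bff{U}^0}{\bb{L}^2}^2$ (Proposition~\ref{pro:euler stab}) to become a genuine $O(h^{2(r+1)})$ data term after summation. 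A secondary subtlety, as flagged by the forthcoming Remark~\ref{rem:dra}, is that without quasi-uniformity one is forced to carry the nonlinear-weight argument through $\norm{\bff{U}^{n-1}}{\bb{H}^2}$ itself rather than an inverse-estimate shortcut, which is why the bound \eqref{equ:Un bounded H2} retains the exponential factor $\exp(C\norm{\bff{U}^0}{\bb{L}^2}^2)$ instead of being uniform.
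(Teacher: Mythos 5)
Your opening move --- test \eqref{equ:backwardeuler} with $\bff{\chi}=\delta\bff{U}^n$, telescope the $\nabla$- and $\Delta$-energies via \eqref{equ:ab ide}, convert the $\bb{L}^\infty$ and $\bb{W}^{1,4}$ factors by Gagliardo--Nirenberg, and close with discrete Gronwall using $k\sum_m\norm{\Delta\bff{U}^m}{\bb{L}^2}^2\lesssim\norm{\bff{U}^0}{\bb{L}^2}^2$ from Proposition~\ref{pro:euler stab} for $d\le2$ (discrete Bihari--Gronwall with the bootstrap set $M$ for $d=3$) --- is exactly the paper's proof. But two things in your write-up are off. First, there is no ``quartic telescope''. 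In the linearised scheme the nonlinearity appears as $\cal{B}(\bff{U}^{n-1},\bff{U}^{n-1};\bff{U}^n,\delta\bff{U}^n)$, with the coefficient frozen at level $n-1$ and the other arguments at level $n$; the identity \eqref{equ:ab ide} is quadratic and yields nothing like $\norm{\bff{U}^n}{\bb{L}^4}^4-\norm{\bff{U}^{n-1}}{\bb{L}^4}^4$ (that only works in the semi-discrete setting, where $\inpro{|\bff{u}_h|^2\bff{u}_h}{\partial_t\bff{u}_h}=\tfrac14\ddt\norm{\bff{u}_h}{\bb{L}^4}^4$). The paper simply moves the whole $\cal{B}$ term to the right-hand side and bounds it by \eqref{equ:B bounded W1,4}; your energy $E^n$ should not contain $\norm{\bff{U}^n}{\bb{L}^4}^4$.

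Second, your preferred route through $\bff{U}^n=\bff{\theta}^n+\widetilde{\bff{u}}_h^n$, Proposition~\ref{pro:the n L2} and Lemma~\ref{lem:A bil for} proves a weaker statement than the one asserted: the proposition holds for \emph{arbitrary} $\bff{U}^0\in\bb{V}_h$ with constants independent of the exact solution, whereas $\bff{\theta}^n$ and $\bff{\rho}^n$ require the existence and regularity \eqref{equ:ass 1} of $\bff{u}$ and the choice $\bff{U}^0=\widetilde{\bff{u}}_h(0)$, and the resulting constants would depend on $K$. It also flirts with circularity, since the $\bb{H}^2$ error estimate is what Proposition~\ref{pro:theta n euler Hr} later derives \emph{from} this stability result via $\alpha_N$; the paper avoids all of this by running the energy argument directly on $\bff{U}^n$. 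Finally, a technical slip in your ``main obstacle'' paragraph: with $\bff{\chi}=\delta\bff{\theta}^n$ you cannot invoke \eqref{equ:A Un-1 utilde n}, whose remainder contains $\epsilon\norm{\bff{\chi}}{\bb{H}^2}^2$, because the left-hand side only produces $\norm{\delta\bff{\theta}^n}{\bb{L}^2}^2$ as an absorbable good term; you would need the $\bb{L}^2$-tested version \eqref{equ:A Un-1 utilde n L2}.
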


\begin{proof}
	By taking $\bff{\chi}= \delta \bff{U}^n$ in \eqref{equ:backwardeuler}
	and using~\eqref{equ:bilinear}, we have (after rearranging the equation)
	\begin{align*} 
		\norm{\delta \bff{\bff{U}}^n}{\bb{L}^2}^2
		+
		\beta_1 \inpro{\nabla\bff{U}^n}{\nabla\delta\bff{U}^n}_{\bb{L}^2}
		+
		\beta_2 \inpro{\Delta\bff{U}^n}{\Delta\delta\bff{U}^n}_{\bb{L}^2}
		&=
		\beta_3 \inpro{\bff{U}^n}{\delta\bff{U}^n}_{\bb{L}^2}
		+
		\beta_6 \inpro{(\bff{j}\cdot \nabla)\bff{U}^n}{\delta \bff{U}^n}_{\bb{L}^2}
		\\
		&\quad
		-
		\cal{B}(\bff{U}^{n-1}, \bff{U}^{n-1}; \bff{U}^n, \delta\bff{U}^n).
	\end{align*}
	For simplicity of notations, we assume that all the constants in the above
	equation equal~$1$. (The case of negative~$\beta_1$ can be dealt with in the same
	manner as in the proof of Proposition~\ref{pro:euler stab}.)
	By using \eqref{equ:ab ide} for the second and third term on the left-hand side,
	H\"older's inequality for the first term on the right-hand side, and
	\eqref{equ:B bounded W1,4} for the last term on the right-hand side, we have
	(after using Young's inequality)
	\begin{align*}
		S&:=
		\norm{\delta \bff{\bff{U}}^n}{\bb{L}^2}^2
		+
		\frac{1}{2k} 
		\Big( 
		\norm{\nabla \bff{U}^n}{\bb{L}^2}^2 
		- \norm{\nabla \bff{U}^{n-1}}{\bb{L}^2}^2 
		+
		\norm{\nabla \bff{U}^n -\nabla \bff{U}^{n-1}}{\bb{L}^2}^2
		\Big)
		\\
		&\quad
		+
		\frac{1}{2k} 
		\Big( 
		\norm{\Delta \bff{U}^n}{\bb{L}^2}^2 
		- 
		\norm{\Delta \bff{U}^{n-1}}{\bb{L}^2}^2 
		+
		\norm{\Delta \bff{U}^n -\Delta \bff{U}^{n-1}}{\bb{L}^2}^2
		\Big)
		\\
		&\le
		\norm{\bff{U}^n}{\bb{H}^1} \norm{\delta \bff{U}^n}{\bb{L}^2}
		+
		\Big( \big(1+\norm{\bff{U}^{n-1}}{\bb{L}^\infty} \big) \norm{\bff{U}^{n-1}}{\bb{W}^{1,4}}
		\norm{\bff{U}^n}{\bb{W}^{1,4}} \Big)
		\norm{\delta \bff{U}^n}{\bb{L}^2}
		\\
		&\quad
		+
		\Big( \big(1+\norm{\bff{U}^{n-1}}{\bb{L}^\infty}^2 \big)
		\norm{\bff{U}^n}{\bb{H}^2} \Big)
		\norm{\delta \bff{U}^n}{\bb{L}^2}
		\\
		&\le
		\norm{\bff{U}^n}{\bb{H}^1}^2 
		+ 
		\epsilon \norm{\delta \bff{U}^n}{\bb{L}^2}^2
		+
		\big(1+\norm{\bff{U}^{n-1}}{\bb{L}^\infty}^2 \big)
		\norm{\bff{U}^{n-1}}{\bb{W}^{1,4}}^2
		\norm{\bff{U}^n}{\bb{W}^{1,4}}^2
		+ 
		\big(1+\norm{\bff{U}^{n-1}}{\bb{L}^\infty}^4 \big)
		\norm{\bff{U}^n}{\bb{H}^2}^2.
	\end{align*}
	Note that by the Gagliardo--Nirenberg inequalities
	\begin{align}\label{equ:gal nir 2d}
		\norm{\bff{v}}{\bb{L}^\infty}^2 
		\le 
		C_0'
		\norm{\bff{v}}{\bb{L}^2}^{2-\frac{d}{2}}
		\norm{\bff{v}}{\bb{H}^2}^{\frac{d}{2}}
		\quad\text{and}\quad
		\norm{\bff{v}}{\bb{W}^{1,4}}^2
		\le
		C_0'
		\norm{\bff{v}}{\bb{L}^2}^{1-\frac{d}{4}}
		\norm{\bff{v}}{\bb{H}^2}^{1+\frac{d}{4}},
		\quad\forall\bff{v}, \; d\le3.
	\end{align}
	Hence, it follows from the above inequalities and Proposition~\ref{pro:euler stab} that
	\begin{align} \label{equ:S d}
		S
		&\lesssim
		\norm{\bff{U}^n}{\bb{H}^1}^2
		+ 
		\epsilon \norm{\delta \bff{U}^n}{\bb{L}^2}^2
		+
		\big(1+ \norm{\bff{U}^{n-1}}{\bb{L}^2}^{2-\frac{d}{2}} \norm{\bff{U}^{n-1}}{\bb{H}^2}^{\frac{d}{2}} \big)
		\norm{\bff{U}^{n-1}}{\bb{L}^2}^{1-\frac{d}{4}}
		\norm{\bff{U}^{n-1}}{\bb{H}^2}^{1+\frac{d}{4}}
		\norm{\bff{U}^n}{\bb{L}^2}^{1-\frac{d}{4}}
		\norm{\bff{U}^n}{\bb{H}^2}^{1+\frac{d}{4}}
		\nonumber\\
		&\quad
		+
		\big(1+\norm{\bff{U}^{n-1}}{\bb{L}^2}^{4-d}
		\norm{\bff{U}^{n-1}}{\bb{H}^2}^d \big)
		\norm{\bff{U}^n}{\bb{H}^2}^2
		\nonumber\\
		&\lesssim
		\norm{\bff{U}^n}{\bb{H}^1}^2
		+ 
		\epsilon \norm{\delta \bff{U}^n}{\bb{L}^2}^2
		+
		\norm{\bff{U}^0}{\bb{L}^2}^{2-\frac{d}{2}}
		\norm{\bff{U}^{n-1}}{\bb{H}^2}^{1+\frac{d}{4}}
		\norm{\bff{U}^n}{\bb{H}^2}^{1+\frac{d}{4}}
		+
		\norm{\bff{U}^0}{\bb{L}^2}^{4-d}
		\norm{\bff{U}^{n-1}}{\bb{H}^2}^{1+\frac{3d}{4}}
		\norm{\bff{U}^n}{\bb{H}^2}^{1+\frac{d}{4}}
		\nonumber\\
		&\quad
		+
		\big(1+\norm{\bff{U}^0}{\bb{L}^2}^{4-d}
		\norm{\bff{U}^{n-1}}{\bb{H}^2}^d \big)
		\norm{\bff{U}^n}{\bb{H}^2}^2,
	\end{align}
	where the constant depends on~$C_0'$ given in~\eqref{equ:gal nir 2d}.
	For any $\epsilon>0$, applying Young's inequality we obtain
	\begin{align}\label{equ:S Un H2}
		S &\lesssim
		\begin{cases}
			\norm{\bff{U}^n}{\bb{H}^1}^2
			+
			\epsilon \norm{\delta \bff{U}^n}{\bb{L}^2}^2
			+
			\big(1+\norm{\bff{U}^0}{\bb{L}^2}^4\big) \norm{\bff{U}^{n-1}}{\bb{H}^2}^{2}
			+
			\big(1+ \norm{\bff{U}^0}{\bb{L}^2}^2 \big)
			\norm{\bff{U}^{n-1}}{\bb{H}^2}^{2} \norm{\bff{U}^n}{\bb{H}^2}^{2}
			\ &
			\text{if $d=1$},
			\\[1ex]
			\norm
			{\bff{U}^n}{\bb{H}^1}^2
			+
			\epsilon \norm{\delta \bff{U}^n}{\bb{L}^2}^2
			+
			\big(1+\norm{\bff{U}^0}{\bb{L}^2}^2\big) \norm{\bff{U}^{n-1}}{\bb{H}^2}^{4}
			+
			\big(1+\norm{\bff{U}^0}{\bb{L}^2}^2 \big)
			\norm{\bff{U}^{n-1}}{\bb{H}^2}^{2} \norm{\bff{U}^n}{\bb{H}^2}^{2}
			\ &
			\text{if $d=2$},
			\\[1ex]
			\norm{\bff{U}^n}{\bb{H}^1}^2
			+
			\epsilon \norm{\delta \bff{U}^n}{\bb{L}^2}^2
			+
			\big(1+\norm{\bff{U}^0}{\bb{L}^2}\big)
			\norm{\bff{U}^{n-1}}{\bb{H}^2}^{5}
			+
			\big(1+\norm{\bff{U}^0}{\bb{L}^2}\big) \norm{\bff{U}^n-\bff{U}^{n-1}}{\bb{H}^2}^{5}
			\ &
			\text{if $d=3$}.
		\end{cases}
	\end{align}
	We will show the derivation of~\eqref{equ:S Un H2} in detail only for $d=2$. The other cases are similar. If $d=2$, then from~\eqref{equ:S d} we have
	\begin{align*}
		S 
		&\lesssim
		\norm{\bff{U}^n}{\bb{H}^1}^2
		+
		\epsilon \norm{\delta \bff{U}^n}{\bb{L}^2}^2
		+
		\norm{\bff{U}^{n-1}}{\bb{H}^2}^{\frac32}
		\norm{\bff{U}^{n}}{\bb{H}^2}^{\frac32}
		+
		\norm{\bff{U}^{n-1}}{\bb{H}^2}^{\frac52}
		\norm{\bff{U}^{n}}{\bb{H}^2}^{\frac32}
		+
		\left(1+\norm{\bff{U}^{n-1}}{\bb{H}^2}^2 \right) \norm{\bff{U}^n}{\bb{H}^2}^2
		\\
		&\lesssim
		\norm{\bff{U}^n}{\bb{H}^1}^2
		+
		\epsilon \norm{\delta \bff{U}^n}{\bb{L}^2}^2
		+
		\left(1+ \norm{\bff{U}^{n-1}}{\bb{H}^2}^{2}
		\norm{\bff{U}^{n}}{\bb{H}^2}^{2}\right) 
		+
		\left( \norm{\bff{U}^{n-1}}{\bb{H}^2}^4 + \norm{\bff{U}^{n-1}}{\bb{H}^2}^{2}
		\norm{\bff{U}^{n}}{\bb{H}^2}^{2}\right),
	\end{align*}
	which implies~\eqref{equ:S d} for $d=2$.
	Now, for the case $d=1$ and $2$, summing over $m\in
	\{1,2\ldots,n\}$ and multiplying by $k$,
	we deduce that
	\begin{align*}
		\norm{\bff{U}^n}{\bb{H}^2}^2
		+
		k \sum_{m=1}^n \norm{\delta \bff{\bff{U}}^m}{\bb{L}^2}^2
		\lesssim
		\norm{\bff{U}^0}{\bb{H}^2}^2
		+ 
		k \sum_{m=1}^n \big(\norm{\bff{U}^m}{\bb{H}^2}^2
		+
		\norm{\bff{U}^{m-1}}{\bb{H}^2}^2 \big) \norm{\bff{U}^{m-1}}{\bb{H}^2}^2 .
	\end{align*}
	Therefore, the discrete Gronwall inequality gives
	\begin{align}\label{equ:nab Un Del Un H2}
		\norm{\bff{U}^n}{\bb{H}^2}^2
		&\lesssim
		\norm{\bff{U}^0}{\bb{H}^2}^2 \exp \left( Ck \sum_{m=0}^n \norm{\bff{U}^m}{\bb{H}^2}^2 \right)
		\lesssim
		\norm{\bff{U}^0}{\bb{H}^2}^2
		\exp\left( C\norm{\bff{U}^0}{\bb{L}^2}^2 \right),
	\end{align}
	for some positive constant $C$, where in the last step we used
	Proposition~\ref{pro:euler stab} again.
	
	For the case $d=3$, summing \eqref{equ:S Un H2} over $m\in
	\{1,2\ldots,n\}$ and noting the definition of $M$, we have for all $n\leq M$,
	\begin{align*}
		\norm{\bff{U}^n}{\bb{H}^2}^2
		\lesssim
		\norm{\bff{U}^0}{\bb{H}^2}^2
		+
		k \sum_{m=0}^{n-1} \norm{\bff{U}^m}{\bb{H}^2}^5.
	\end{align*}
	We now invoke the discrete Bihari--Gronwall inequality (Theorem \ref{the:disc bihari}) with $\varphi(x)=x^2$, $\psi(x)=x^5$ to obtain
	\begin{align}\label{equ:Un H2 disc bihari}
		\norm{\bff{U}^n}{\bb{H}^2} 
		\lesssim
		\left(\norm{\bff{U}^0}{\bb{H}^2}^{-3} - Mk \right)^{-\frac{1}{3}}
		\lesssim
		\left(C_0\norm{\bff{U}^0}{\bb{H}^2}^{-3} - t_M \right)^{-\frac{1}{3}}
	\end{align}
	for $n\in \{1,2,\ldots,M\}$. Note that the right-hand side is positive due to the
	definition of~$M$ in~\eqref{equ:M max}. This completes the proof of the proposition.
\end{proof}

\begin{remark}\label{rem:dra}
	For a given $T$, in case $d\leq 2$, the bound in~\eqref{equ:Un bounded
	H2} holds independently of the number of iterations $n$ or time-step
	size $k$. However, note that it is possibly much larger than that in~\eqref{equ:H2
	sta}.
	For $d=3$, without the assumption of quasi-uniformity, the bound in~\eqref{equ:Un bounded H2 d3}
	is only guaranteed to hold for sufficiently small initial data or only up to time $t_M\le T$. This means we may not have global stability.
\end{remark}

Based on Proposition~\ref{pro:Un stable H2} and Proposition~\ref{pro:Un bounded
	H2}, we now prove convergence rates
	for~$\norm{\bff{\theta}^n}{\bb{H}^\beta}$ for $\beta=1$ and $2$,
which are analogous to~\eqref{equ:theta L2}.
To this end, for a given $T>0$,
define
\begin{equation}\label{equ:N def}
	N =
	\begin{cases}
		\min\big\{
		\lfloor T/k \rfloor , M \big\}
		\quad &\text{if $\cal{T}_h$ is not quasi-uniform and $d=3$,}
		\\[1ex]
		\lfloor T/k \rfloor \quad &\text{otherwise,}
	\end{cases}
\end{equation}
where $M$ is defined in~\eqref{equ:M max}.
We also define
\begin{align}\label{equ:alpha N}
	\alpha_N := \max \big\{\norm{\bff{U}^n}{\bb{H}^2} : n=1,2,\ldots,N \big\}.
\end{align}
Note that there are explicit bounds on $\alpha_N$ for three different cases:
\begin{enumerate}
	\item \label{it:quasi} the triangulation $\cal{T}_h$ is quasi-uniform
		and $k=O(h^2)$ with $d=1,2,3$,
	\item \label{it:2d} the triangulation is not quasi-uniform and $d\leq 2$,
	\item \label{it:3d} the triangulation is not quasi-uniform and $d=3$.
\end{enumerate}
In cases~\eqref{it:quasi} and~\eqref{it:2d},
the constant $\alpha_N$ depends only on $\norm{\bff{U}^0}{\bb{H}^2}$ and $T$, but is independent of $N$
and $k$; see~\eqref{equ:H2 sta} and \eqref{equ:Un bounded H2}.
Meanwhile, in case~\eqref{it:3d}, the constant~$\alpha_N$ depends on $\norm{\bff{U}^0}{\bb{H}^2}$
and~$t_M=Mk$.

\begin{proposition}\label{pro:theta n euler Hr}
Assume that $\bff{u}$ satisfies~\eqref{equ:ass 1}.
Then the inequality
\begin{align}\label{equ:theta n euler Hr}
	\norm{\bff{\theta}^n}{\bb{H}^\beta}
	\leq 
	C(h^{r+1-\beta} +k), \quad \beta=1,2
\end{align}
holds for $n\in \{1,2,\ldots, N\}$, where $C$ depends on $\kappa, \mu, T, \alpha_N, \norm{\bff{u}}{L^\infty({\bb{H}^{r+1}})},
\norm{\partial_t \bff{u}}{L^\infty({\bb{H}^{r+1}})}, 
\norm{\partial_{t}^2 \bff{u}}{L^\infty({\bb{L}^2})}$. Here, $N$ and $\alpha_N$ are defined by \eqref{equ:N def} and \eqref{equ:alpha N}, respectively.
\end{proposition}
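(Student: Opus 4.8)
The plan is to test the error identity~\eqref{equ:eq dtheta n} with $\bff{\chi}=\delta\bff{\theta}^n$, in the spirit of the semi-discrete argument of Proposition~\ref{pro:est theta} (where $\partial_t\bff{\theta}$ played the role of test function). Expanding $\cal{A}(\bff{U}^{n-1};\bff{\theta}^n,\delta\bff{\theta}^n)$ via~\eqref{equ:bilinear} and applying the identity~\eqref{equ:ab ide} to each of the pieces $\alpha\inpro{\bff{\theta}^n}{\delta\bff{\theta}^n}_{\bb{L}^2}$, $\beta_1\inpro{\nabla\bff{\theta}^n}{\nabla\delta\bff{\theta}^n}_{\bb{L}^2}$ and $\beta_2\inpro{\Delta\bff{\theta}^n}{\Delta\delta\bff{\theta}^n}_{\bb{L}^2}$, one obtains (after multiplying through by $k$) a telescoping sum in $\norm{\bff{\theta}^n}{\bb{L}^2}^2$, $\norm{\nabla\bff{\theta}^n}{\bb{L}^2}^2$, $\norm{\Delta\bff{\theta}^n}{\bb{L}^2}^2$, plus nonnegative \emph{defect} terms which, by~\eqref{equ:equivnorm-h2}, control $\norm{\bff{\theta}^n-\bff{\theta}^{n-1}}{\bb{H}^2}^2$, plus $k\norm{\delta\bff{\theta}^n}{\bb{L}^2}^2$, all on the left; the terms $k\cal{B}(\bff{U}^{n-1},\bff{U}^{n-1};\bff{\theta}^n,\delta\bff{\theta}^n)$, $k\cal{C}(\bff{U}^{n-1};\bff{\theta}^n,\delta\bff{\theta}^n)$ and $k$ times the right-hand side of~\eqref{equ:eq dtheta n} are moved to the right. (If $\beta_1<0$ one first splits the gradient term against the Laplacian term, exactly as in the proof of Proposition~\ref{pro:euler stab}.)

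Next I would bound the right-hand side term by term, always peeling off a small multiple of $k\norm{\delta\bff{\theta}^n}{\bb{L}^2}^2$ via Young's inequality. The nonlinear forms are handled by~\eqref{equ:B bounded H2} and~\eqref{equ:C bounded H2} together with the a priori bound $\norm{\bff{U}^m}{\bb{H}^2}\le\alpha_N$ valid for $m\le n\le N$ (from Propositions~\ref{pro:Un stable H2} and~\ref{pro:Un bounded H2} and the definition~\eqref{equ:alpha N}), giving a contribution $\lesssim\alpha_N^4\,k\norm{\bff{\theta}^n}{\bb{H}^2}^2$ to be dealt with by Gronwall. The lower-order term, the convective term (kept in the non-integrated form $\beta_6\inpro{(\bff{j}\cdot\nabla)(\bff{\theta}^n+\bff{\rho}^n)}{\delta\bff{\theta}^n}_{\bb{L}^2}$), the consistency term $\delta\bff{\rho}^n$, and the truncation term $\delta\bff{u}^n-\partial_t\bff{u}^n$ are controlled by H\"older's and Young's inequalities together with~\eqref{equ:rho}, \eqref{equ:d rho n}, \eqref{equ:diff un dt un} and an interpolation inequality (trading $\norm{\nabla\bff{\theta}^n}{\bb{L}^2}$ for $\epsilon\norm{\bff{\theta}^n}{\bb{H}^2}+C_\epsilon\norm{\bff{\theta}^n}{\bb{L}^2}$), producing contributions of order $h^{2r}+k^2$ plus $\norm{\bff{\theta}^n}{\bb{L}^2}^2$ and absorbable pieces. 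The delicate term is $\cal{A}(\bff{U}^{n-1};\widetilde{\bff{u}}_h^n,\delta\bff{\theta}^n)-\cal{A}(\bff{u}^n;\widetilde{\bff{u}}_h^n,\delta\bff{\theta}^n)$: I would invoke Lemma~\ref{lem:A bil for} in the form~\eqref{equ:A Un-1 utilde n}. Its $\epsilon\norm{\bff{\chi}}{\bb{H}^2}^2=\epsilon\norm{\delta\bff{\theta}^n}{\bb{H}^2}^2$ contribution looks dangerous, since $\norm{\delta\bff{\theta}^n}{\bb{H}^2}$ is not controlled a priori; but after multiplying by $k$ it equals $(\epsilon/k)\norm{\bff{\theta}^n-\bff{\theta}^{n-1}}{\bb{H}^2}^2$, which is absorbed into the defect term on the left upon choosing $\epsilon$ a sufficiently small multiple of $k$. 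The remaining part of~\eqref{equ:A Un-1 utilde n} contributes $(1+\alpha_N^2)h^{2(r+1)}+k^2+\norm{\bff{\theta}^{n-1}}{\bb{L}^2}^2$ plus Gronwall-absorbable multiples of $\norm{\bff{\theta}^{n-1}}{\bb{H}^2}^2$.

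To conclude, I would multiply by $k$, sum over $m=1,\dots,n$, use $\bff{\theta}^0=\bff{U}^0-\widetilde{\bff{u}}_h^0=\bff{0}$, and replace the factors $\norm{\bff{\theta}^{m}}{\bb{L}^2}^2$ by $C(h^{r+1}+k)^2$ and $k\sum_m\norm{\bff{\theta}^m}{\bb{H}^2}^2$ by $C(h^{2(r+1)}+k^2)$ using Proposition~\ref{pro:the n L2} (the latter bound following from the coercivity term in that proof). This leaves an inequality $\norm{\bff{\theta}^n}{\bb{H}^2}^2\lesssim h^{2(r-1)}+k^2+\alpha_N^4\,k\sum_{m=1}^{n}\norm{\bff{\theta}^m}{\bb{H}^2}^2$, to which the discrete Gronwall inequality applies and yields~\eqref{equ:theta n euler Hr} for $\beta=2$; in fact, with the sharp form~\eqref{equ:A Un-1 utilde n} of the $\cal{A}$-difference estimate used as above, the dominant source on the right is the $h^{2r}$ coming from $\norm{\nabla\bff{\rho}^n}{\bb{L}^2}$ in the convective term, so one obtains the sharper $\norm{\bff{\theta}^n}{\bb{H}^2}\le C(h^r+k)$. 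The case $\beta=1$ then follows from the interpolation inequality~\eqref{equ:equivnorm-nabL2}, $\norm{\nabla\bff{\theta}^n}{\bb{L}^2}^2\le\norm{\bff{\theta}^n}{\bb{L}^2}\norm{\Delta\bff{\theta}^n}{\bb{L}^2}$, combined with the $\bb{L}^2$-bound of Proposition~\ref{pro:the n L2} and this sharper $\bb{H}^2$-bound. The main obstacle throughout is the $\cal{A}$-difference term: one must separate its coercive \emph{good} part from the $\widetilde{\bff{u}}_h^n$-dependent \emph{bad} part and, crucially, absorb the uncontrolled $\norm{\delta\bff{\theta}^n}{\bb{H}^2}$ factor into the discrete defect on the left-hand side — a mechanism unavailable in the semi-discrete analysis, and precisely what permits the optimal rates without a quasi-uniformity assumption.
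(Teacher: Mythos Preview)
Your overall strategy—testing~\eqref{equ:eq dtheta n} with $\bff{\chi}=\delta\bff{\theta}^n$, telescoping via~\eqref{equ:ab ide}, bounding the nonlinear forms~$\cal{B}$ and~$\cal{C}$ by~\eqref{equ:B bounded H2}--\eqref{equ:C bounded H2}, and closing with discrete Gronwall—is exactly the paper's. The gap is in your handling of the $\cal{A}$-difference term.

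You invoke~\eqref{equ:A Un-1 utilde n} and propose to absorb its $\epsilon\norm{\delta\bff{\theta}^n}{\bb{H}^2}^2$ contribution into the defect $\tfrac{1}{k}\norm{\bff{\theta}^n-\bff{\theta}^{n-1}}{\bb{H}^2}^2$ by taking $\epsilon$ a small multiple of~$k$. But the constant hidden in the~$\lesssim$ of~\eqref{equ:A Un-1 utilde n} is $O(1/\epsilon)$: it arises from Young's inequality in the proof of Lemma~\ref{lem:A bil for} (see the estimates of $T_1,\ldots,T_5$). With $\epsilon\sim k$ the coefficient on $(1+\alpha_N^2)h^{2(r+1)}+k^2+\norm{\bff{\theta}^{n-1}}{\bb{L}^2}^2$ becomes $O(1/k)$; multiplying by~$k$ leaves an $O(1)$ coefficient with \emph{no} residual factor of~$k$, and summing $n\sim T/k$ copies then yields $h^{2(r+1)}/k+k$ on the right. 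This produces only $\norm{\bff{\theta}^n}{\bb{H}^2}\le C\big(k^{1/2}+h^{r+1}k^{-1/2}\big)$, which does not give~\eqref{equ:theta n euler Hr}, and your claimed sharper rate $h^r$ collapses for the same reason.

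The paper instead uses the companion estimate~\eqref{equ:A Un-1 utilde n L2}, whose error term is $\epsilon\norm{\bff{\chi}}{\bb{L}^2}^2=\epsilon\norm{\delta\bff{\theta}^n}{\bb{L}^2}^2$. This is absorbed into the $\norm{\delta\bff{\theta}^n}{\bb{L}^2}^2$ already present on the left with $\epsilon$ \emph{fixed}, so no constant blows up. The price is that~\eqref{equ:A Un-1 utilde n L2} delivers only $h^{2(r-1)}$ rather than $h^{2(r+1)}$, but that is precisely the rate asserted for $\beta=2$; the case $\beta=1$ then follows by interpolation~\eqref{equ:equivnorm-nabL2} combined with Proposition~\ref{pro:the n L2}, as you say. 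The defect term is still used in the paper's argument, but only to absorb the bounded-coefficient contribution $(1+\alpha_N^4)\norm{\bff{\theta}^n-\bff{\theta}^{n-1}}{\bb{H}^2}^2$ arising from the splitting $\norm{\bff{\theta}^n}{\bb{H}^2}\le\norm{\bff{\theta}^n-\bff{\theta}^{n-1}}{\bb{H}^2}+\norm{\bff{\theta}^{n-1}}{\bb{H}^2}$ in the~$\cal{B},\cal{C}$ bounds—this requires only $k$ bounded, not $\epsilon\sim k$.
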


\begin{proof}
Note that the definition \eqref{equ:bilinear} of $\cal{A}$ gives
\begin{align*}
	\cal{A}(\bff{U}^{n-1}; \bff{\theta}^n, \delta \bff{\theta}^n)
	&=
	\frac{\alpha}{2k} \big( \norm{\bff{\theta}^n}{\bb{L}^2}^2 - \norm{\bff{\theta}^{n-1}}{\bb{L}^2}^2 \big)
	+
	\frac{\alpha}{2k} \norm{\bff{\theta}^n - \bff{\theta}^{n-1}}{\bb{L}^2}^2
	\\
	&\quad
	+
	\frac{\beta_1}{2k} \big( \norm{\nabla \bff{\theta}^n}{\bb{L}^2}^2 - \norm{\nabla \bff{\theta}^{n-1}}{\bb{L}^2}^2 \big)
	+
	\frac{\beta_1}{2k} \norm{\nabla \bff{\theta}^n - \nabla \bff{\theta}^{n-1}}{\bb{L}^2}^2
	\\
	&\quad
	+
	\frac{\beta_2}{2k} \big( \norm{\Delta \bff{\theta}^n}{\bb{L}^2}^2 - \norm{\Delta \bff{\theta}^{n-1}}{\bb{L}^2}^2 \big)
	+
	\frac{\beta_2}{2k} \norm{\Delta \bff{\theta}^n - \Delta \bff{\theta}^{n-1}}{\bb{L}^2}^2
	\\
	&\quad
	+ \cal{B}(\bff{U}^{n-1}, \bff{U}^{n-1}; \bff{\theta}^n, \delta \bff{\theta}^n)
	+ \cal{C}(\bff{U}^{n-1}; \bff{\theta}^n, \delta \bff{\theta}^n).
\end{align*}
Now, we take $\bff{\chi}= \delta \bff{\theta}^n$ in equations \eqref{equ:eq dtheta n}. Using the above, H\"{o}lder's inequality, \eqref{equ:A Un-1 utilde n L2}, \eqref{equ:B bounded H2} (and noting \eqref{equ:equivnorm-nabL2 young}), we have
\begin{align*}
	\nonumber
	S&:= 
	\norm{\delta \bff{\theta}^n}{\bb{L}^2}^2
	+
	\frac{1}{k} \big( \norm{\bff{\theta}^n}{\bb{L}^2}^2 - \norm{\bff{\theta}^{n-1}}{\bb{L}^2}^2 \big)
	+
	\frac{1}{k} \big( \norm{\Delta \bff{\theta}^n}{\bb{L}^2}^2 - \norm{\Delta \bff{\theta}^{n-1}}{\bb{L}^2}^2 \big)
	+
	\frac{1}{k} \norm{\bff{\theta}^n - \bff{\theta}^{n-1}}{\bb{H}^2}^2 
	\\
	\nonumber
	&\lesssim
	\big| \inpro{\bff{\theta}^n + \bff{\rho}^n}{\delta \bff{\theta}^n}_{\bb{L}^2} \big| 
	+
	\big| \inpro{(\bff{j}\cdot \nabla) (\bff{\theta}^n + \bff{\rho}^n)}{\delta \bff{\theta}^n}_{\bb{L}^2} \big| 
	+ 
	\big| \inpro{\delta \bff{\rho}^n}{\delta \bff{\theta}^n}_{\bb{L}^2} \big|
	+ 
	\big| \inpro{\delta \bff{u}^n- \partial_t \bff{u}^n}{\delta \bff{\theta}^n}_{\bb{L}^2} \big|
	\\
	\nonumber
	&\quad
	+ 
	\big| \cal{A}(\bff{U}^{n-1}; \widetilde{\bff{u}}_h^n, \delta \bff{\theta}^n) 
	- 
	\cal{A}(\bff{u}^n; \widetilde{\bff{u}}_h^n, \delta \bff{\theta}^n) \big|
	+
	\big| \cal{B}(\bff{U}^{n-1}, \bff{U}^{n-1}; \bff{\theta}^n, \delta \bff{\theta}^n) \big|
	+
	\big| \cal{C}(\bff{U}^{n-1}; \bff{\theta}^n, \delta \bff{\theta}^n) \big|
	\\
	\nonumber
	&\lesssim 
	\norm{\bff{\theta}^n + \bff{\rho}^n}{\bb{H}^1} \norm{\delta \bff{\theta}^n}{\bb{L}^2} 
	+ 
	\norm{\delta \bff{\rho}^n}{\bb{L}^2} \norm{\delta \bff{\theta}^n}{\bb{L}^2} 
	+ 
	\norm{\delta \bff{u}^n - \partial_t \bff{u}^n}{\bb{L}^2} \norm{\delta \bff{\theta}^n}{\bb{L}^2} 
	\\
	\nonumber
	&\quad
	+
	\big(1+ \norm{\bff{U}^{n-1}}{\bb{H}^2}^2 \big)
	\big(\norm{\bff{\theta}^{n-1}}{\bb{H}^2}^2 + h^{2(r-1)} + k^2 \big) 
	+ 
	\epsilon \norm{\delta \bff{\theta}^n}{\bb{L}^2}^2
	+
	\big(1+\norm{\bff{U}^{n-1}}{\bb{H}^2}^2 \big) \norm{\bff{\theta}^n}{\bb{H}^2} \norm{\delta \bff{\theta}^n}{\bb{L}^2}
	\\
	&\lesssim	
	h^{2r} +k^2+ \epsilon \norm{\delta \bff{\theta}^n}{\bb{L}^2}^2
	+
	(1+\alpha_N^2) \big( \norm{\bff{\theta}^{n-1}}{\bb{H}^2}^2 + h^{2(r-1)}+ k^2 \big)
	\\
	&\quad
	+ 
	(1+\alpha_N^4) \big(\norm{\bff{\theta}^n-\bff{\theta}^{n-1}}{\bb{H}^2}^2 + \norm{\bff{\theta}^{n-1}}{\bb{H}^2}^2 \big)
	+ 
	\epsilon \norm{\delta \bff{\theta}^n}{\bb{L}^2}^2
\end{align*}
for any $\epsilon>0$, where in the last step we used \eqref{equ:d rho n},
\eqref{equ:diff un dt un}, \eqref{equ:theta n L2}, and Young's inequality.
This implies
\begin{align*}
	S
	&\lesssim
	h^{2(r-1)} + k^2 	
	+
	\norm{\bff{\theta}^n-\bff{\theta}^{n-1}}{\bb{H}^2}^2 
	+ 
	\norm{\Delta \bff{\theta}^{n-1}}{\bb{L}^2}^2
	+ 
	\epsilon \norm{\delta \bff{\theta}^n}{\bb{L}^2}^2.
\end{align*}
Taking $\epsilon>0$ and $k$ satisfying~\eqref{equ:k lam}, then summing the above over
$m\in \{1,2,\ldots,n\}$ (where $n\leq N$), we obtain
\begin{align*}
	k \norm{\delta \bff{\theta}^n}{\bb{L}^2}^2
	+
	\norm{\Delta \bff{\theta}^n}{\bb{L}^2}^2
	\lesssim
	\norm{\bff{\theta}^0}{\bb{H}^2}^2
	+ 
	h^{2(r-1)}
	+
	k^2 
	+
	\sum_{m=1}^n k \norm{\Delta \bff{\theta}^{m-1}}{\bb{L}^2}^2.
\end{align*}
The discrete Gronwall inequality yields
\begin{equation*}
	\norm{\Delta \bff{\theta}^n}{\bb{L}^2}^2
	\leq 
	C(h^{2(r-1)}
	+
	k^2).
\end{equation*}
Inequality \eqref{equ:theta n euler Hr} then follows by interpolation.
%
\end{proof}

We can now prove the main theorem of this section.

\begin{theorem}\label{the:backeulerrate}
Let $\bff{u}$ and $\bff{U}^n$ be solutions of \eqref{equ:weakform} and
\eqref{equ:backwardeuler}, respectively. Assume that $\bff{u}$
satisfies~\eqref{equ:ass 1}. Let $N$ and $\alpha_N$ are defined by \eqref{equ:N def} and \eqref{equ:alpha N}, respectively.
Then for $t_n=nk$, $n=1,\ldots,N$,
\begin{align}\label{equ:Un utn Hr euler}
	\norm{\bff{U}^n- \bff{u}(t_n)}{\bb{H}^\beta}
	\leq 
	C(h^{r+1-\beta} +k), \quad \beta=0,1,2,
\end{align}
where the constant $C$ depends on $\kappa$, $\mu$, $T$,
	$\norm{\bff{u}}{L^\infty({\bb{H}^{r+1}})}$,
	$\norm{\partial_t \bff{u}}{L^\infty({\bb{H}^{r+1}})}$, and
	$\norm{\partial_{t}^2 \bff{u}}{L^\infty({\bb{L}^2})}$.
\end{theorem}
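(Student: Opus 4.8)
The plan is to reuse the splitting introduced in~\eqref{equ:theta n plus rho n disc}, namely $\bff{U}^n - \bff{u}^n = \bff{\theta}^n + \bff{\rho}^n$ with $\bff{\rho}^n = \widetilde{\bff{u}}_h^n - \bff{u}^n = \bff{\rho}(t_n)$ the error of the elliptic projection and $\bff{\theta}^n = \bff{U}^n - \widetilde{\bff{u}}_h^n$ the purely discrete part, and to bound the two contributions separately, combining them at the end by the triangle inequality. This mirrors the proof of Theorem~\ref{the:semidisc error} for the semi-discrete scheme; the only difference is that the bound for $\bff{\theta}^n$ now carries the additional $O(k)$ time-discretisation error, so that the final rate degrades from $h^{r+1-\beta}$ to $h^{r+1-\beta}+k$.

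First I would dispose of the projection error. Since $\bff{\rho}^n = \bff{\rho}(t_n)$, estimate~\eqref{equ:rho} of Proposition~\ref{pro:auxestimate} gives at once $\norm{\bff{\rho}^n}{\bb{L}^2} + h\norm{\nabla\bff{\rho}^n}{\bb{L}^2} + h^2\norm{\Delta\bff{\rho}^n}{\bb{L}^2} \lesssim h^{r+1}\norm{\bff{u}}{L^\infty(\bb{H}^{r+1})}$, hence $\norm{\bff{\rho}^n}{\bb{H}^\beta} \lesssim h^{r+1-\beta}$ for $\beta = 0,1,2$ (invoking~\eqref{equ:equivnorm-h2} together with~\eqref{equ:equivnorm-nabL2} for the intermediate case $\beta=1$). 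For the discrete error $\bff{\theta}^n$, the $\bb{L}^2$ bound $\norm{\bff{\theta}^n}{\bb{L}^2} \lesssim h^{r+1}+k$ is precisely Proposition~\ref{pro:the n L2}, while Proposition~\ref{pro:theta n euler Hr} supplies $\norm{\bff{\theta}^n}{\bb{H}^\beta} \lesssim h^{r+1-\beta}+k$ for $\beta=1,2$, valid for all $n\le N$ with $N$ and $\alpha_N$ as in~\eqref{equ:N def} and~\eqref{equ:alpha N}. Adding the two sets of estimates and using the triangle inequality yields~\eqref{equ:Un utn Hr euler}, with the stated dependence of the constant.

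There is essentially no obstacle left at this stage: all the genuine work — the well-posedness and $\bb{L}^2$-stability of the scheme (Propositions~\ref{pro:backeuler exist} and~\ref{pro:euler stab}), the $\bb{H}^2$-boundedness of $\bff{U}^n$ that controls $\alpha_N$ (Propositions~\ref{pro:Un stable H2} and~\ref{pro:Un bounded H2}), the consistency estimates~\eqref{equ:d rho n} and~\eqref{equ:diff un dt un}, and the bilinear-form perturbation bounds of Lemma~\ref{lem:A bil for} — has already been absorbed into Propositions~\ref{pro:the n L2} and~\ref{pro:theta n euler Hr}. The one point that must be stated carefully is the range of validity: the constant $C$ depends on $\alpha_N$, so the estimate is asserted only for $n\le N$, and $N$ equals $\lfloor T/k\rfloor$ exactly when $\cal{T}_h$ is quasi-uniform with $k=O(h^2)$ or when $d\le 2$; otherwise (the non-quasi-uniform case in dimension $d=3$) it holds only up to the time $t_M=Mk$, consistently with Remark~\ref{rem:dra}.
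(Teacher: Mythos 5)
Your proposal is correct and follows exactly the paper's own argument: decompose $\bff{U}^n-\bff{u}(t_n)=\bff{\theta}^n+\bff{\rho}^n$, bound $\bff{\rho}^n$ via \eqref{equ:rho}, bound $\bff{\theta}^n$ via Propositions~\ref{pro:the n L2} and~\ref{pro:theta n euler Hr}, and conclude by the triangle inequality. Your additional remarks on the range of validity $n\le N$ and the role of $\alpha_N$ are consistent with the paper's setup.
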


\begin{proof}
Note that $\bff{U}^n- \bff{u}(t_n)= \bff{\theta}^n + \bff{\rho}^n$.
Inequality
\eqref{equ:Un utn Hr euler} for $\beta=0$ follows by applying the triangle inequality,
\eqref{equ:theta n euler L2}, and \eqref{equ:rho}. The cases $\beta=1$ and $2$ follow from \eqref{equ:theta n euler Hr} and \eqref{equ:rho}. This
completes the proof of the theorem.
\end{proof}

\section{Time Discretisation by the Linearised BDF2 Method}\label{sec:bdf2}

To obtain higher order accuracy in time, we shall consider the BDF2 (second order backward differentiation formula) time discretisation scheme. First, for any discrete vector-valued function $\bff{v}^n$, define
\[
	\cal{D}\bff{v}^n := \delta \bff{v}^n + \frac{k}{2} \delta^2 \bff{v}^n
	=
	\delta \bff{v}^n + \frac{1}{2} (\delta \bff{v}^n- \delta \bff{v}^{n-1})
	=
	\frac{3\bff{v}^n - 4\bff{v}^{n-1} + \bff{v}^{n-2}}{2k}
	,
	\quad
	n=2, 3, \ldots,
\]
where $\delta^2 \bff{v}^n= \delta(\delta \bff{v}^n)$.
Under the assumption \eqref{equ:ass 2}, by using \eqref{equ:del un}, we have for all
$p\in [1,\infty]$ and $n=1,2,\ldots$,
\begin{align}\label{equ:Du Lp}
	\norm{\cal{D} \bff{u}^n}{\bb{L}^p}
	=
	\norm{\delta \bff{u}^n + \frac{1}{2} (\delta \bff{u}^n- \delta \bff{u}^{n-1})}{\bb{L}^p}
	\leq
	\frac{3}{2} \norm{\delta \bff{u}^n}{\bb{L}^p}
	+
	\frac{1}{2} \norm{\delta \bff{u}^{n-1}}{\bb{L}^p}
	\leq C.
\end{align}
We start with $\bff{U}^0= \widetilde{\bff{u}}_h(0)\in \bb{V}_h$. Since the BDF2
method we are going to introduce, see~$\eqref{equ:bdf2}$ below, is a two-step method, we need to define $\bff{U}^1$ by employing one time step of the Crank--Nicolson method to maintain a second-order accuracy in time (or by one time step of the semi-implicit Euler method in~\eqref{equ:backwardeuler} with very small step-size). More precisely,
\begin{equation}\label{equ:step one cn}
	\inpro{ \delta \bff{U}^1}{ \bff{\chi} }_{\bb{L}^2}
	+ 
	\cal{A}\left(\overline{\bff{U}^1}; \overline{\bff{U}^1}, \bff{\chi} \right)
	-
	(\alpha+\beta_3) \inpro{\overline{\bff{U}^1}}{\bff{\chi}}_{\bb{L}^2}
	-
	\beta_6 \inpro{(\bff{j}\cdot \nabla)\overline{\bff{U}^1}}{\bff{\chi}}_{\bb{L}^2}
	=
	0, \quad
	\bff{\chi} \in \bb{V}_h,
\end{equation}
where $\overline{\bff{U}^1}:= (\bff{U}^0 + \bff{U}^1)/2$. It is straightforward to verify (using a method similar to the one in the previous section) that
\begin{equation}\label{equ:bdf step 1}
	\norm{\bff{\theta}^1}{\bb{H}^\beta}
	\leq 
	C(h^{r+1-\beta}+k^2), \quad \beta=0,1,2.
\end{equation}

Next, for $n\geq 2$ be such that $t_n \in [0,T]$, we
compute~$\bff{U}^n\in\bb{V}_h$ from $\bff{U}^{n-1}$ and $\bff{U}^{n-2}$ by
\begin{align}\label{equ:bdf2}
	&\inpro{ \cal{D} \bff{U}^n}{ \bff{\chi} }_{\bb{L}^2} 
	+ 
	\cal{A}(\bff{U}^{n-1}; \bff{U}^n, \bff{\chi})
	-
	(\alpha+\beta_3) \inpro{\bff{U}^n}{\bff{\chi}}_{\bb{L}^2}
	-
	\beta_6 \inpro{(\bff{j}\cdot\nabla)\bff{U}^n}{\bff{\chi}}_{\bb{L}^2}
	= 0,
	\quad \bff{\chi}\in \bb{V}_h,
\end{align}
which is a linear scheme.
We now show that the scheme \eqref{equ:bdf2} is well-defined.

\begin{proposition}
For $n \geq 2$, given $\bff{U}^{n-1}$ and $\bff{U}^{n-2}\in \bb{V}_h$ and $k<3\lambda/2$ (where $\lambda$ was defined in \eqref{equ:k con}),
there exists a unique $\bff{U}^n \in \bb{V}_h$ that solves the fully
discrete scheme \eqref{equ:bdf2}.
\end{proposition}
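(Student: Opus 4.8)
The plan is to reduce the statement to the Lax--Milgram lemma, proceeding exactly as in the proof of Proposition~\ref{pro:backeuler exist}; the only change is a bookkeeping constant coming from the BDF2 stencil. Since $\cal{D}\bff{U}^n = (3\bff{U}^n - 4\bff{U}^{n-1}+\bff{U}^{n-2})/(2k)$, I would first multiply \eqref{equ:bdf2} by $2k/3$ and move the terms depending on the already-known data $\bff{U}^{n-1},\bff{U}^{n-2}$ to the right-hand side, which shows that \eqref{equ:bdf2} is equivalent to finding $\bff{U}^n\in\bb{V}_h$ with
\[
	\cal{S}(\bff{U}^{n-1}; \bff{U}^n, \bff{\chi})
	= \tfrac{1}{3}\inpro{4\bff{U}^{n-1}-\bff{U}^{n-2}}{\bff{\chi}}_{\bb{L}^2}
	\quad\text{for all } \bff{\chi}\in\bb{V}_h,
\]
where, for each $\bff{\phi}\in\bb{V}_h$, the bilinear form $\cal{S}(\bff{\phi};\cdot,\cdot):\bb{V}_h\times\bb{V}_h\to\bb{R}$ is defined by
\[
	\cal{S}(\bff{\phi};\bff{v},\bff{w})
	:= \Bigl(1-\tfrac{2k}{3}(\alpha+\beta_3)\Bigr)\inpro{\bff{v}}{\bff{w}}_{\bb{L}^2}
	- \tfrac{2k\beta_6}{3}\inpro{(\bff{j}\cdot\nabla)\bff{v}}{\bff{w}}_{\bb{L}^2}
	+ \tfrac{2k}{3}\cal{A}(\bff{\phi};\bff{v},\bff{w}).
\]
This is the exact analogue of the bilinear form in the proof of Proposition~\ref{pro:backeuler exist}, with $2k/3$ in the role of $k$.

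Next I would check that $\cal{S}(\bff{\phi};\cdot,\cdot)$ is bounded and $\bb{V}_h$-coercive. Boundedness is immediate from the boundedness of $\cal{A}$ (Lemma~\ref{lem:A bou coe}), the Cauchy--Schwarz inequality, and $\norm{(\bff{j}\cdot\nabla)\bff{v}}{\bb{L}^2}\le\norm{\bff{j}}{\bb{L}^\infty}\norm{\nabla\bff{v}}{\bb{L}^2}$. For coercivity, I would fix $\alpha>0$ large enough (any $\alpha>0$ if $\beta_1\ge0$, and $\alpha>\beta_1^2/\beta_2$ if $\beta_1<0$) so that Lemma~\ref{lem:A bou coe} applies, and use the sharper lower bound~\eqref{equ:A coe} for $\cal{A}(\bff{\phi};\bff{v},\bff{v})$. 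Estimating the convective term by Young's inequality (recall $\norm{\bff{j}}{\bb{L}^\infty}=1$) and absorbing it, the computation is verbatim that of Proposition~\ref{pro:backeuler exist}: with $\lambda$ as in~\eqref{equ:k con}, the hypothesis $k<3\lambda/2$ — equivalently $2k/3<\lambda$, and hence $2k/3<1/(\alpha+\beta_3+\beta_6)$ — guarantees $\cal{S}(\bff{\phi};\bff{v},\bff{v})\ge c\norm{\bff{v}}{\bb{H}^2}^2$ for some $c>0$ independent of $\bff{v}$. Since $\bff{\chi}\mapsto\tfrac13\inpro{4\bff{U}^{n-1}-\bff{U}^{n-2}}{\bff{\chi}}_{\bb{L}^2}$ is a bounded linear functional on $\bb{V}_h$, applying the Lax--Milgram lemma to $\cal{S}(\bff{U}^{n-1};\cdot,\cdot)$ yields a unique $\bff{U}^n\in\bb{V}_h$ solving \eqref{equ:bdf2}.

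I do not expect any genuine obstacle here: the proof is a direct adaptation of Proposition~\ref{pro:backeuler exist}. The only point requiring a moment's care is tracking the factor $2/3$ introduced by the BDF2 stencil through the coercivity estimate, so as to see that the admissible step-size threshold becomes $k<3\lambda/2$ rather than $k<\lambda$.
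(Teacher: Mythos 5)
Your proposal is correct and follows essentially the same route as the paper: the paper defines the bilinear form $\cal{F}(\bff{\phi};\bff{v},\bff{w}) := (3-2k(\alpha+\beta_3))\inpro{\bff{v}}{\bff{w}}_{\bb{L}^2} - 2k\beta_6\inpro{(\bff{j}\cdot\nabla)\bff{v}}{\bff{w}}_{\bb{L}^2} + 2k\,\cal{A}(\bff{\phi};\bff{v},\bff{w})$, which is exactly your $3\,\cal{S}$, and invokes the same Lax--Milgram argument from Proposition~\ref{pro:backeuler exist}. The only cosmetic difference is that you divide through by $3$ before checking coercivity; the step-size bookkeeping $2k/3<\lambda \Leftrightarrow k<3\lambda/2$ is exactly as you describe.
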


\begin{proof}
For each $\bff{\phi}\in \bb{V}_h$, define a bilinear form $\cal{F}(\bff{\phi}; \cdot,\cdot): \bb{V}_h \times \bb{V}_h \to \bb{R}$ by
\[
	\cal{F}(\bff{\phi}; \bff{v},\bff{w})
	:=
	(3-2k(\alpha+\beta_3)) \inpro{\bff{v}}{\bff{w}}_{\bb{L}^2}
	-
	2k\beta_6 \inpro{(\bff{j}\cdot\nabla)\bff{v}}{\bff{w}}_{\bb{L}^2}
	+
	2k \cal{A}(\bff{\phi}; \bff{v},\bff{w}).
\]
Equation \eqref{equ:bdf2} is then equivalent to
\[
	\cal{F}(\bff{U}^{n-1}; \bff{U}^n, \bff{\chi})
	=
	\inpro{4\bff{U}^{n-1}- \bff{U}^{n-2}}{\bff{\chi}}_{\bb{L}^2}.
\]
Therefore, the result follows by the same argument as in Proposition \ref{pro:backeuler exist}.
\end{proof}

Next, we show that the scheme \eqref{equ:bdf2} is unconditionally stable in $\bb{L}^2$ for $k$ satisfying \eqref{equ:k lam}. An important identity that will often be used is
\begin{equation}\label{equ:abc ide}
	2 \bff{a} \cdot (3\bff{a}-4\bff{b}+\bff{c})
	=
	|\bff{a}|^2- |\bff{b}|^2
	+
	|2\bff{a}-\bff{b}|^2 - |2\bff{b}-\bff{c}|^2
	+
	|\bff{a}-2\bff{b}+\bff{c}|^2,
	\quad \forall \bff{a},\bff{b},\bff{c} \in \bb{R}^3.
\end{equation}

\begin{proposition}\label{pro:bdf stab}
	Let $T>0$ be given and let $\bff{U}^n$ be defined by \eqref{equ:bdf2} with initial data $\bff{U}^0 \in \bb{V}_h$.
	Then for $k$ satisfying \eqref{equ:k lam} and $n\in \{1,2,\ldots, \lfloor T/k \rfloor\}$,
	\begin{align}\label{equ:bdf stab L2}
		\norm{\bff{U}^n}{\bb{L}^2}^2 
		+
		\sum_{m=1}^n \norm{2\bff{U}^m - \bff{U}^{m-1}}{\bb{L}^2}^2
		+
		k \sum_{m=1}^n \norm{\Delta \bff{U}^m}{\bb{L}^2}^2
		\lesssim 
		\norm{\bff{U}^0}{\bb{L}^2}^2,
	\end{align}
	where the constant depends on $T$, but is independent of $n$ and $k$.
\end{proposition}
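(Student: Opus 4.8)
The plan is to mirror the proof of Proposition~\ref{pro:euler stab}, replacing the one-step identity~\eqref{equ:ab ide} by its two-step analogue~\eqref{equ:abc ide}. Because the BDF2 recursion only starts at $n=2$, I first dispatch the bootstrap step $n=1$: testing~\eqref{equ:step one cn} with $\bff{\chi}=\overline{\bff{U}^1}$ and repeating the computation of Proposition~\ref{pro:euler stab} (the Crank--Nicolson step being an energy-conserving midpoint rule) yields, for $k$ as in~\eqref{equ:k lam}, a bound $\norm{\bff{U}^1}{\bb{L}^2}^2+\norm{2\bff{U}^1-\bff{U}^0}{\bb{L}^2}^2+k\norm{\Delta\bff{U}^1}{\bb{L}^2}^2\lesssim\norm{\bff{U}^0}{\bb{L}^2}^2$; this simultaneously establishes the base case of~\eqref{equ:bdf stab L2}.

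For $n\ge 2$ I take $\bff{\chi}=\bff{U}^n$ in~\eqref{equ:bdf2}, multiply by $4k$, and apply~\eqref{equ:abc ide} with $\bff{a}=\bff{U}^n$, $\bff{b}=\bff{U}^{n-1}$, $\bff{c}=\bff{U}^{n-2}$ to get $4k\inpro{\cal{D}\bff{U}^n}{\bff{U}^n}_{\bb{L}^2}=E^n-E^{n-1}+\norm{\bff{U}^n-2\bff{U}^{n-1}+\bff{U}^{n-2}}{\bb{L}^2}^2$, where $E^n:=\norm{\bff{U}^n}{\bb{L}^2}^2+\norm{2\bff{U}^n-\bff{U}^{n-1}}{\bb{L}^2}^2$ is the usual BDF2 ``G--energy''. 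For the bilinear form I use the expansion~\eqref{equ:Avv}: the $\alpha$ there cancels against the $-(\alpha+\beta_3)$ term, the nonnegative quartic contributions $\beta_3\norm{|\bff{U}^{n-1}||\bff{U}^n|}{\bb{L}^2}^2$, $\beta_5\norm{|\bff{U}^{n-1}||\nabla\bff{U}^n|}{\bb{L}^2}^2$, $2\beta_5\norm{\bff{U}^{n-1}\cdot\nabla\bff{U}^n}{\bb{L}^2}^2$ are kept on the left (or discarded), and what remains is $4k\big(\beta_1\norm{\nabla\bff{U}^n}{\bb{L}^2}^2+\beta_2\norm{\Delta\bff{U}^n}{\bb{L}^2}^2\big)$ on the left against the reaction term $4k\beta_3\norm{\bff{U}^n}{\bb{L}^2}^2$ and the convection term $4k\beta_6\inpro{(\bff{j}\cdot\nabla)\bff{U}^n}{\bff{U}^n}_{\bb{L}^2}$ on the right. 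These last two are handled by H\"older's (with $\norm{\bff{j}}{\bb{L}^\infty}=1$) and Young's inequalities exactly as in Proposition~\ref{pro:euler stab}: for $\beta_1\ge 0$ one absorbs the convection term into $\beta_1\norm{\nabla\bff{U}^n}{\bb{L}^2}^2$, while for $\beta_1<0$ one first invokes~\eqref{equ:equivnorm-nabL2} to trade $\norm{\nabla\bff{U}^n}{\bb{L}^2}^2$ against $\norm{\Delta\bff{U}^n}{\bb{L}^2}^2$ as in Lemma~\ref{lem:A bou coe}; the constant in~\eqref{equ:k con} is precisely what makes this absorption work under~\eqref{equ:k lam}. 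The outcome is, for each $m\ge 2$, an inequality of the form $E^m-E^{m-1}+\norm{\bff{U}^m-2\bff{U}^{m-1}+\bff{U}^{m-2}}{\bb{L}^2}^2+\beta_2 k\norm{\Delta\bff{U}^m}{\bb{L}^2}^2\lesssim k\,E^m$.

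Summing over $m=2,\dots,n$, adding the $n=1$ bound so that $E^1\lesssim\norm{\bff{U}^0}{\bb{L}^2}^2$, and observing that the telescoped differences return both $E^n$ and the non-telescoping quantities $\norm{\bff{U}^m-2\bff{U}^{m-1}+\bff{U}^{m-2}}{\bb{L}^2}^2$, I arrive at $E^n+\sum_{m}\norm{\bff{U}^m-2\bff{U}^{m-1}+\bff{U}^{m-2}}{\bb{L}^2}^2+\beta_2 k\sum_{m}\norm{\Delta\bff{U}^m}{\bb{L}^2}^2\lesssim\norm{\bff{U}^0}{\bb{L}^2}^2+k\sum_m E^m$. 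For $k$ small the last term is partly absorbed and the discrete Gronwall inequality gives $E^n\lesssim\norm{\bff{U}^0}{\bb{L}^2}^2$ uniformly for $n\le\lfloor T/k\rfloor$; since $E^n\ge\norm{\bff{U}^n}{\bb{L}^2}^2$ and $E^n\ge\norm{2\bff{U}^n-\bff{U}^{n-1}}{\bb{L}^2}^2$, this delivers~\eqref{equ:bdf stab L2}.

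The main obstacle, as always for multistep schemes, is the energy bookkeeping: one must identify the quadratic functional $E^n$ for which~\eqref{equ:abc ide} decomposes $\inpro{\cal{D}\bff{U}^n}{\bff{U}^n}_{\bb{L}^2}$ into a telescoping difference plus a manifestly nonnegative remainder, and one must line up the bootstrap step so that the $m=2$ contribution $E^1$ is already controlled from~\eqref{equ:step one cn}. The only genuinely equation-specific point—just as in the Euler case—is keeping track of the constant~\eqref{equ:k con} so that, when $\beta_1<0$, the fourth-order term $\beta_2\norm{\Delta\bff{U}^n}{\bb{L}^2}^2$ dominates both the indefinite $\beta_1$-term and the convection term; everything else is a routine repetition of the estimates already performed for the semi-implicit Euler scheme.
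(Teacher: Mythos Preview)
Your proposal is correct and follows essentially the same approach as the paper: the paper likewise treats the bootstrap step $n=1$ by testing~\eqref{equ:step one cn} with $\bff{\chi}=\overline{\bff{U}^1}$ and repeating the Euler argument, then for $n\ge 2$ takes $\bff{\chi}=\bff{U}^n$ in~\eqref{equ:bdf2}, applies the identity~\eqref{equ:abc ide}, and concludes by ``the same argument as in the proof of Proposition~\ref{pro:euler stab}''. Your write-up is in fact more explicit than the paper's (you name the $G$-energy $E^n$ and spell out the telescoping), but the ideas coincide; note only that what the argument naturally controls is $E^n$ together with $\sum_m\norm{\bff{U}^m-2\bff{U}^{m-1}+\bff{U}^{m-2}}{\bb{L}^2}^2$, so the middle sum in~\eqref{equ:bdf stab L2} should be read accordingly.
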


\begin{proof}
	First, taking $\bff{\chi}=\overline{\bff{U}^1}$ in \eqref{equ:step one cn} and applying the same argument as in Proposition \ref{pro:euler stab}, we have
	\begin{align*}
		&\norm{\bff{U}^1}{\bb{L}^2}^2 - \norm{\bff{U}^0}{\bb{L}^2}^2
		+
		2\beta_2 k \norm{\Delta \overline{\bff{U}^1}}{\bb{L}^2}^2
		+
		2\beta_3 k \norm{\overline{\bff{U}}^n}{\bb{L}^4}^4
		+
		2\beta_5 k \norm{\abs{\overline{\bff{U}^1}} \abs{\nabla \overline{\bff{U}^1}}}{\bb{L}^2}^2
		+
		4\beta_5 k \norm{\overline{\bff{U}^1} \cdot \nabla \overline{\bff{U}^1}}{\bb{L}^2}^2
		\\
		&\leq 
		\frac{2k}{\lambda} \norm{\overline{\bff{U}^1}}{\bb{L}^2}^2
		+
		\beta_2 k \norm{\Delta \overline{\bff{U}^1}}{\bb{L}^2}^2,
	\end{align*}
	where $\lambda$ was defined in \eqref{equ:k con}. Therefore, for $k$ satisfying \eqref{equ:k lam},
	\[
		\norm{\bff{U}^1}{\bb{L}^2}^2 \leq C \norm{\bff{U}^0}{\bb{L}^2}^2.
	\]
	Next, taking $\bff{\chi}= \bff{U}^n$ in \eqref{equ:bdf2} and using \eqref{equ:abc ide},
	we have (after multiplying by $k$)
	\begin{align*}
		&\norm{\bff{U}^n}{\bb{L}^2}^2 
		- 
		\norm{\bff{U}^{n-1}}{\bb{L}^2}^2
		+ 
		\norm{2\bff{U}^n - \bff{U}^{n-1}}{\bb{L}^2}^2 
		-
		\norm{2\bff{U}^{n-1} - \bff{U}^{n-2}}{\bb{L}^2}^2
		+
		\norm{\bff{U}^n- 2 \bff{U}^{n-1} + \bff{U}^{n-2}}{\bb{L}^2}^2
		\\
		&\;
		+ 
		2 \beta_2 k \norm{\Delta \bff{U}^n}{\bb{L}^2}^2 
		+
		2 \beta_3 k\norm{|\bff{U}^{n-1}| |\bff{U}^n|}{\bb{L}^2}^2
		+ 
		4\beta_5 k \norm{\bff{U}^{n-1} \cdot \nabla \bff{U}^n}{\bb{L}^2}^2  
		+
		2\beta_5 k \norm{|\bff{U}^{n-1}| |\nabla \bff{U}^n|}{\bb{L}^2}^2
		\\
		&=
		- 
		2\beta_1 k \norm{\nabla \bff{U}^n}{\bb{L}^2}^2 
		+ 
		2\beta_3 k \norm{\bff{U}^n}{\bb{L}^2}^2
		+
		2\beta_6 k \norm{\nabla \bff{U}^n}{\bb{L}^2} \norm{\bff{U}^n}{\bb{L}^2}.
	\end{align*}
Similar argument as in the proof of Proposition \ref{pro:euler stab} then yields the result.
\end{proof}

Next, we show that the method is also stable in the $\bb{H}^2$ norm (in the sense of Definition \ref{def:stable}). Analogously to \eqref{equ:d rho n} and \eqref{equ:diff un dt un}, we have by \eqref{equ:Du Lp} and \eqref{equ:d rho n},
\begin{align}\label{equ:d rho n bdf}
	\norm{\cal{D} \bff{\rho}^n}{\bb{L}^2} 
	\leq
	\frac{3}{2} \norm{\delta \bff{\rho}^n}{\bb{L}^2}
	+
	\frac{1}{2} \norm{\delta \bff{\rho}^{n-1}}{\bb{L}^2}
	\leq 
	C h^{r+1},
\end{align}
and that by Taylor's theorem
\begin{align}\label{equ:diff un dt un bdf}
	\norm{\cal{D} \bff{u}^n - \partial_t \bff{u}^n}{\bb{L}^2} 
	\leq
	Ck \int_{t_{n-2}}^{t_n} \norm{\partial_{t}^3 \bff{u}(t)}{\bb{L}^2}\,\dt
	\leq 
	Ck^2,
\end{align}
where $C$ depends on $\kappa,\mu, T$, and $K$, with $K$ as defined in~\eqref{equ:ass 2}. 

By using Lemma~\ref{lem:A bil for}, we have the following result, which is analogous to Proposition \ref{pro:the n L2}.

\begin{proposition}\label{pro:the n bdf L2}
	Assume that $\bff{u}$
	satisfies~\eqref{equ:ass 2}. Then for $h,k>0$ with $k$ satisfying \eqref{equ:k lam}, and for any $n\in \{1,2,\ldots, \lfloor T/k \rfloor\}$,
	\begin{equation}\label{equ:theta n bdf L2}
		\norm{\bff{\theta}^n}{\bb{L}^2} \leq C(h^{r+1} +k^2).
	\end{equation}
\end{proposition}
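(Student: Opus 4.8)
The plan is to follow the argument used for the linearised Euler scheme in Proposition~\ref{pro:the n L2}, replacing the first-order difference $\delta$ by the BDF2 operator $\cal{D}$ and the identity \eqref{equ:ab ide} by the BDF2 identity \eqref{equ:abc ide}; the extra order in $k$ will come from the improved consistency estimate \eqref{equ:diff un dt un bdf}.

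First I would derive the error equation for $\bff{\theta}^n = \bff{U}^n - \widetilde{\bff{u}}_h^n$. Subtracting \eqref{equ:bdf2} from \eqref{equ:weakform} at $t=t_n$, using $\bff{u}^n = \widetilde{\bff{u}}_h^n - \bff{\rho}^n$, the elliptic-projection identity \eqref{equ:eq elliptic Un}, and rewriting the convective term via \eqref{equ:j nab u} and the hypotheses on $\bff{j}$, one obtains, exactly as in \eqref{equ:eq dtheta n}, for all $\bff{\chi}\in\bb{V}_h$,
\begin{align*}
	\inpro{\cal{D}\bff{\theta}^n}{\bff{\chi}}_{\bb{L}^2} + \cal{A}(\bff{U}^{n-1};\bff{\theta}^n,\bff{\chi})
	&= (\alpha+\beta_3)\inpro{\bff{U}^n-\bff{u}^n}{\bff{\chi}}_{\bb{L}^2}
	+ \beta_6\inpro{(\bff{U}^n-\bff{u}^n)\otimes\bff{j}}{\nabla\bff{\chi}}_{\bb{L}^2} \\
	&\quad - \inpro{\cal{D}\bff{\rho}^n}{\bff{\chi}}_{\bb{L}^2}
	- \inpro{\cal{D}\bff{u}^n-\partial_t\bff{u}^n}{\bff{\chi}}_{\bb{L}^2} \\
	&\quad - \big(\cal{A}(\bff{U}^{n-1};\widetilde{\bff{u}}_h^n,\bff{\chi}) - \cal{A}(\bff{u}^n;\widetilde{\bff{u}}_h^n,\bff{\chi})\big).
\end{align*}
Testing with $\bff{\chi}=\bff{\theta}^n$, the term $\inpro{\cal{D}\bff{\theta}^n}{\bff{\theta}^n}_{\bb{L}^2}$ is rewritten by \eqref{equ:abc ide} as $\tfrac{1}{4k}$ times a telescoping expression in $\norm{\bff{\theta}^m}_{\bb{L}^2}^2$ and $\norm{2\bff{\theta}^m-\bff{\theta}^{m-1}}_{\bb{L}^2}^2$ plus the nonnegative quantity $\tfrac{1}{4k}\norm{\bff{\theta}^n-2\bff{\theta}^{n-1}+\bff{\theta}^{n-2}}_{\bb{L}^2}^2$, while $\cal{A}(\bff{U}^{n-1};\bff{\theta}^n,\bff{\theta}^n)\ge\mu\norm{\bff{\theta}^n}_{\bb{H}^2}^2$ by Lemma~\ref{lem:A bou coe}.

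On the right-hand side: the first two terms are controlled by Hölder's inequality, $\bb{H}^1\subset\bb{L}^4$, and the $\bff{\rho}$-bound of Proposition~\ref{pro:auxestimate} (writing $\bff{U}^n-\bff{u}^n=\bff{\theta}^n+\bff{\rho}^n$); the $\cal{D}\bff{\rho}^n$ term by \eqref{equ:d rho n bdf}; the $\cal{D}\bff{u}^n-\partial_t\bff{u}^n$ term by \eqref{equ:diff un dt un bdf}, which yields $\lesssim k^2\norm{\bff{\theta}^n}_{\bb{L}^2}$ (this is where the second order in $k$ enters); and the frozen-coefficient difference by \eqref{equ:A Un-1 utilde n}. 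After Young's inequality together with \eqref{equ:equivnorm-nabL2 young} (with the free parameter $\epsilon$ small enough to absorb $\epsilon\norm{\bff{\theta}^n}_{\bb{H}^2}^2$ into the coercivity term), multiplying by $4k$, and summing over $m=2,\dots,n$: the telescoping collapses; the factor $1+\norm{\bff{U}^{m-1}}_{\bb{H}^2}^2$ only multiplies $h^{2(r+1)}$ and is handled since $k\sum_m(\norm{\bff{U}^m}_{\bb{L}^2}^2+\norm{\Delta\bff{U}^m}_{\bb{L}^2}^2)$ is bounded by Proposition~\ref{pro:bdf stab}; the starting contribution $\norm{\bff{\theta}^1}_{\bb{L}^2}^2+\norm{2\bff{\theta}^1-\bff{\theta}^0}_{\bb{L}^2}^2$ is $\lesssim h^{2(r+1)}+k^4$ by \eqref{equ:bdf step 1}; and the $\epsilon\norm{\bff{\theta}^{m-1}}_{\bb{H}^2}^2$ terms are absorbed into the accumulated coercivity. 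The discrete Gronwall inequality then gives $\norm{\bff{\theta}^n}_{\bb{L}^2}^2\lesssim h^{2(r+1)}+k^4$, which is \eqref{equ:theta n bdf L2}.

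The delicate step is the frozen-coefficient error $\cal{A}(\bff{U}^{n-1};\widetilde{\bff{u}}_h^n,\bff{\chi})-\cal{A}(\bff{u}^n;\widetilde{\bff{u}}_h^n,\bff{\chi})$. Because the nonlinear coefficient is lagged by one step rather than extrapolated, $\bff{U}^{n-1}-\bff{u}^n$ carries the genuine $O(k)$ increment $\bff{u}^{n-1}-\bff{u}^n=-k\,\delta\bff{u}^n$, and a crude use of \eqref{equ:A Un-1 utilde n} only yields $O(k)$ overall, which would destroy the second-order rate. To recover $O(k^2)$ one has to split this increment off from the $\bff{\theta}^{n-1}+\bff{\rho}^{n-1}$ part (the latter being treated as in the Euler case), exploit that $\bff{u}^{n-1}-\bff{u}^n$ telescopes under summation in $n$, and estimate the resulting boundary and difference terms — via a discrete summation-by-parts argument and the extra regularity \eqref{equ:ass 2} of $\bff{u}$ together with the stability bounds of Proposition~\ref{pro:bdf stab} — so that the net contribution is only of order $k^2$, consistent with \eqref{equ:diff un dt un bdf}. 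All remaining estimates are routine repetitions of the Euler analysis with $\delta$ replaced by $\cal{D}$.
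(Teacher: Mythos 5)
The skeleton of your argument --- the error equation for $\bff{\theta}^n$ with $\delta$ replaced by $\cal{D}$, testing with $\bff{\chi}=\bff{\theta}^n$, the three-term identity \eqref{equ:abc ide}, coercivity of $\cal{A}(\bff{U}^{n-1};\cdot,\cdot)$, the bounds \eqref{equ:d rho n bdf} and \eqref{equ:diff un dt un bdf}, the starting value \eqref{equ:bdf step 1}, Proposition~\ref{pro:bdf stab} to control the factor $1+\norm{\bff{U}^{m-1}}{\bb{H}^2}^2$, and the discrete Gronwall inequality --- is exactly the paper's proof, and all of those steps are carried out correctly.

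The gap is the frozen-coefficient term, which you correctly isolate as the delicate step but then only sketch. As you observe, \eqref{equ:A Un-1 utilde n} carries a bare $+\,k^2$ on its right-hand side, originating from the $O(k)$ increment $\bff{u}^{n-1}-\bff{u}^n=-k\,\delta\bff{u}^n$ inside $\bff{U}^{n-1}-\bff{u}^n$; after multiplying by $k$ and summing over the $O(T/k)$ time steps this produces an $O(k^2)$ (not $O(k^4)$) contribution to $\norm{\bff{\theta}^n}{\bb{L}^2}^2$, i.e.\ only first order in time. Your proposed repair by telescoping and discrete summation by parts is not carried out, and as described it does not close the gap: the Abel-summation boundary term is still of size $k\norm{\bff{\theta}^n}{\bb{L}^2}$, which Young's inequality converts back into an $O(k^2)$ contribution to the squared error, while the interior sum involves the increments $\bff{\theta}^m-\bff{\theta}^{m-1}$, for which no usable a priori bound is available at this stage. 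So what your argument as written actually delivers is $\norm{\bff{\theta}^n}{\bb{L}^2}\lesssim h^{r+1}+k$. You should be aware that the paper's own proof does not resolve this point either: it records the per-step estimate with a $Ck^2$ term and then asserts $Ck^4$ after summation with no intervening argument, so your instinct that this is the one place requiring a genuinely new ingredient --- most naturally, replacing the lagged coefficient $\bff{U}^{n-1}$ in \eqref{equ:bdf2} and throughout the analysis by the extrapolation $2\bff{U}^{n-1}-\bff{U}^{n-2}$, for which $2\bff{u}^{n-1}-\bff{u}^{n-2}-\bff{u}^n=O(k^2)$ --- is correct.
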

\begin{proof}
	Similar calculations as in \eqref{equ:eq dtheta n} (replacing $\delta$ there by $\cal{D}$) shows that for all
	$\bff{\chi}\in \bb{V}_h$,
	\begin{align}\label{equ:eq dtheta n bdf}
		\nonumber
		\inpro{\cal{D} \bff{\theta}^n}{\bff{\chi}}_{\bb{L}^2} 
		+ 
		\cal{A}(\bff{U}^{n-1}; \bff{\theta}^n, \bff{\chi}) 
		&= 
		(\alpha+\beta_3) \inpro{\bff{U}^n-\bff{u}^n}{\bff{\chi}} 
		-
		\beta_6 \inpro{(\bff{U}^n-\bff{u}^n)\otimes \bff{j}}{\nabla \bff{\chi}}_{\bb{L}^2}
		- 
		\inpro{\cal{D} \bff{\rho}^n}{\bff{\chi}}_{\bb{L}^2}
		\\
		&\quad 
		- 
		\inpro{\cal{D} \bff{u}^n- \partial_t \bff{u}^n}{\bff{\chi}}_{\bb{L}^2}
		- 
		\Big( \cal{A}(\bff{U}^{n-1}; \widetilde{\bff{u}}_h^n, \bff{\chi}) 
		- 
		\cal{A}(\bff{u}^n; \widetilde{\bff{u}}_h^n, \bff{\chi}) \Big).
	\end{align}
	Now, take $\bff{\chi}= \bff{\theta}^n$. Using the identity \eqref{equ:abc ide} and applying similar argument as in the proof of Proposition \ref{pro:the n L2} yield
	\begin{align*}
		&\frac{1}{2k} \left( \norm{\bff{\theta}^n}{\bb{L}^2}^2 - \norm{\bff{\theta}^{n-1}}{\bb{L}^2}^2 \right)
		+
		\frac{1}{2k} \left( \norm{2\bff{\theta}^n - \bff{\theta}^{n-1}}{\bb{L}^2}^2 -
		\norm{2\bff{\theta}^{n-1} - \bff{\theta}^{n-2}}{\bb{L}^2}^2 \right)
		\\
		&\quad 
		+ 
		\frac{1}{2k} \norm{\bff{\theta}^n - 2\bff{\theta}^{n-1} 
		+ \bff{\theta}^{n-2}}{\bb{L}^2}^2
		+
		\mu \norm{ \bff{\theta}^n}{\bb{H}^2}^2  
		\\
		&\leq 
		\big| (\alpha+\beta_3) \inpro{\bff{\theta}^n + \bff{\rho}^n}{\bff{\theta}^n}_{\bb{L}^2} \big| 
		+
		\big| \beta_6 \inpro{(\bff{\theta}^n + \bff{\rho}^n)\otimes \bff{j}}{\nabla\bff{\theta}^n}_{\bb{L}^2} \big| 
		+ 
		\big| \inpro{\cal{D} \bff{\rho}^n}{\bff{\theta}^n}_{\bb{L}^2} \big|
		\\
		&\quad
		+ 
		\big| \inpro{\cal{D} \bff{u}^n- \partial_t \bff{u}^n}{\bff{\theta}^n}_{\bb{L}^2} \big|
		+ 
		\big| \cal{A}(\bff{U}^{n-1}; \widetilde{\bff{u}}_h^n, \bff{\theta}^n) 
		- 
		\cal{A}(\bff{u}^n; \widetilde{\bff{u}}_h^n, \bff{\theta}^n) \big|
		\\
		&\leq
		C\big(1+ \norm{\bff{U}^{n-1}}{\bb{H}^2}^2 \big) h^{2(r+1)}
		+
		Ck^2
		+
		C\norm{2\bff{\theta}^n-\bff{\theta}^{n-1}}{\bb{L}^2}^2
		+
		C\norm{\bff{\theta}^{n-1}}{\bb{L}^2}^2
		+
		C\epsilon \norm{\bff{\theta}^{n-1}}{\bb{H}^2}^2
		+
		C\epsilon \norm{\bff{\theta}^{n}}{\bb{H}^2}^2
	\end{align*}
	for any $\epsilon>0$, where in the last step we used \eqref{equ:d rho n bdf} and \eqref{equ:diff un dt un bdf}. Choosing $\epsilon=\mu/2C$, then multiplying by $k$ and summing the inequalities over $m\in \{2,3,\ldots,n\}$, we obtain
	\begin{align*}
		\norm{\bff{\theta}^n}{\bb{L}^2}^2
		\leq
		\norm{\bff{\theta}^1}{\bb{L}^2}^2
		+
		\norm{2\bff{\theta}^1 - \bff{\theta}^0}{\bb{L}^2}^2
		+ 
		Ch^{2(r+1)} \left( 1+ k \sum_{m=2}^n \norm{\bff{U}^{m-1}}{\bb{H}^2}^2 \right)
		+
		C k^4 
		+
		\sum_{m=2}^n C k \norm{\bff{\theta}^{m-1}}{\bb{L}^2}^2.
	\end{align*}
	Since $\bff{U}^0= \widetilde{\bff{u}}_h(0)$ and \eqref{equ:bdf step 1} hold, by the discrete Gronwall inequality (noting the stability of $\bff{U}^n$ in Proposition~\ref{pro:bdf stab}) we have
	\begin{align}\label{equ:theta n hk bdf L2}
		\norm{\bff{\theta}^n}{\bb{L}^2}^2 
		\leq 
		C (h^{2(r+1)}  + k^4 ),
	\end{align}
	where $C$ depends on $\kappa,\mu, T$, and $K$ (with $K$ as defined in~\eqref{equ:ass 2}). Inequality~\eqref{equ:theta n euler L2} then follows.
\end{proof}

Next, we discuss the stability of the semi-implicit BDF2 scheme in the $\bb{H}^2$ norm, first in the case where the triangulation is quasi-uniform.

\begin{proposition}\label{pro:Un stable H2 bdf}
	Let $T>0$ be given and $\bff{U}^n$ be defined by \eqref{equ:bdf2}. Assume that the triangulation $\cal{T}_h$ is quasi-uniform. If $k=O(h)$, then for any initial data $\bff{U}^0\in \bb{V}_h$ and $n\in \{1,2,\ldots, \lfloor T/k\rfloor \}$,
	\begin{equation}\label{equ:H2 sta bdf}
		\norm{\bff{U}^n}{\bb{H}^2}
		\leq
		C \norm{\bff{U}^0}{\bb{H}^2},
	\end{equation}
	where $C$ depends on $T$, but is independent of $n$ and $k$.
\end{proposition}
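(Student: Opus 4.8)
The plan is to mirror, almost verbatim, the proof of Proposition~\ref{pro:Un stable H2} for the semi-implicit Euler scheme, the only modification being that the input $\bb{L}^2$-estimate for $\bff{\theta}^n$ is the sharper BDF2 bound~\eqref{equ:theta n bdf L2} of Proposition~\ref{pro:the n bdf L2}, namely $\norm{\bff{\theta}^n}{\bb{L}^2}\le C(h^{r+1}+k^2)$, in place of~\eqref{equ:theta n euler L2}.

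First I would write $\bff{U}^n=\bff{\theta}^n+\widetilde{\bff{u}}_h^n$ as in~\eqref{equ:theta n plus rho n disc} and apply the triangle inequality. The elliptic-projection term $\norm{\widetilde{\bff{u}}_h^n}{\bb{H}^2}$ is controlled by $\norm{\bff{u}_0}{\bb{H}^2}$ (hence by $\norm{\bff{U}^0}{\bb{H}^2}$, since $\bff{U}^0=\widetilde{\bff{u}}_h(0)$) through the stability estimate~\eqref{equ:aux H2} of Lemma~\ref{lem:auxbound}. For the discrete term I would invoke the inverse estimate $\norm{\bff{\chi}}{\bb{H}^2}\lesssim h^{-2}\norm{\bff{\chi}}{\bb{L}^2}$, valid for $\bff{\chi}\in\bb{V}_h$ on a quasi-uniform mesh (see~\cite[Theorem~4.5.11]{BreSco08}), which together with~\eqref{equ:theta n bdf L2} gives $\norm{\bff{\theta}^n}{\bb{H}^2}\lesssim h^{-2}(h^{r+1}+k^2)=C(h^{r-1}+k^2h^{-2})$. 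Under the assumption $k=O(h)$ the term $k^2h^{-2}$ is bounded, and since $r\ge 3$ we have $h^{r-1}\le C$, so $\norm{\bff{\theta}^n}{\bb{H}^2}\le C$ uniformly in $n$ and $k$; combining the two contributions yields~\eqref{equ:H2 sta bdf}.

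I do not expect any genuine obstacle: the argument is essentially bookkeeping. The only point requiring care is the balance between the $h^{-2}$ loss in the inverse estimate and the order of accuracy of $\norm{\bff{\theta}^n}{\bb{L}^2}$ in $k$; it is precisely the second-order-in-time bound $O(k^2)$ furnished by Proposition~\ref{pro:the n bdf L2} that permits the weaker step-size restriction $k=O(h)$ here, as opposed to $k=O(h^2)$ in the Euler case of Proposition~\ref{pro:Un stable H2}. One should also recall that $k$ is implicitly taken to satisfy~\eqref{equ:k lam}, so that Proposition~\ref{pro:the n bdf L2} is applicable.
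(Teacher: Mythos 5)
Your proposal is correct and follows essentially the same route as the paper: decompose $\bff{U}^n=\bff{\theta}^n+\widetilde{\bff{u}}_h^n$, bound the projection term via \eqref{equ:aux H2}, and control $\norm{\bff{\theta}^n}{\bb{H}^2}$ by the inverse estimate combined with the second-order $\bb{L}^2$ bound \eqref{equ:theta n bdf L2}, so that $k=O(h)$ suffices. Your remark about the $O(k^2)$ accuracy being exactly what relaxes the Euler restriction $k=O(h^2)$ to $k=O(h)$ is precisely the point of this proposition.
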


\begin{proof}
	The identity~$\bff{U}^n=\bff{\theta}^n + \widetilde{\bff{u}}_h^{n+1}$, \eqref{equ:theta n bdf L2}, \eqref{equ:aux H2}, and the inverse estimates give
	\begin{align*}
		\norm{\bff{U}^n}{\bb{H}^2} 
		\leq 
		\norm{\bff{\theta}^n}{\bb{H}^2} 
		+
		\norm{\widetilde{\bff{u}}_h^{n+1}}{\bb{H}^2}
		\leq
		Ch^{-2} \norm{\bff{\theta}^n}{\bb{L}^2}
		+ C \norm{\bff{U}^0}{\bb{H}^2}
		\lesssim
		(1+k^2 h^{-2}) \norm{\bff{U}^0}{\bb{H}^2}
		\lesssim 
		\norm{\bff{U}^0}{\bb{H}^2}
	\end{align*}
	if we assume $k=O(h)$, as required.
\end{proof}

In the following, we show that without assuming quasi-uniformity of $\cal{T}_h$, the energy
and the $\bb{H}^2$ norm of the discrete solution $\bff{U}^n$ remain bounded, analogously to Proposition \ref{pro:Un bounded H2}. To do this, we need the identity
\begin{align}\label{equ:id abc ab}
	4\cal{D}\bff{v}^n \cdot \delta \bff{v}^n
	=
	4 \abs{\delta \bff{v}^n}^2
	+
	\left( \abs{\delta \bff{v}^n}^2
	-
	\abs{\delta \bff{v}^{n-1}}^2 \right)
	+
	\abs{\delta \bff{v}^n - \delta \bff{v}^{n-1}}^2.
\end{align}

\begin{proposition}\label{pro:Un bounded H2 bdf}
	Let $T>0$ be given and $\bff{U}^n$ be defined by
	\eqref{equ:bdf2} with initial data $\bff{U}^0 \in \bb{V}_h$.
	\begin{enumerate}
		\renewcommand{\labelenumi}{\theenumi}
		\renewcommand{\theenumi}{{\rm (\roman{enumi})}}
		\item 
		If $d\leq 2$, then for sufficiently small $k$ 
		and $n\in \{1,2,\ldots, \lfloor T/k \rfloor\}$,
		\begin{equation}\label{equ:Un bounded H2 bdf}
			\norm{\bff{U}^n}{\bb{H}^2}^2
			\leq
			B
			\exp\left( C\norm{\bff{U}^0}{\bb{L}^2}^2 \right),
		\end{equation}
		where $C$ depends on 
		$T$, while $B$ depends on~$\norm{\bff{U}^0}{\bb{H}^2}$ and~$\norm{\bff{U}^1}{\bb{H}^2}$, but both $C$ and $B$ are independent of $n$ and $k$.
		\item
		If $d= 3$, then
		\begin{equation}\label{equ:Un bounded H2 d3 bdf}
			\norm{\bff{U}^n}{\bb{H}^2}^2
			\leq
			C
			\left(C_0\norm{\bff{U}^0}{\bb{H}^2}^{-3} - t_M \right)^{-\frac{2}{3}},
		\end{equation}
		for $n\in \{1,2,\ldots,M\}$, where
		\begin{align}\label{equ:M max bdf}
			M:= \max \big\{n:\; n < \lfloor T/k \rfloor, \;
			nk\le C_0 \norm{\bff{U}^0}{\bb{H}^2}^{-3}, \; 
			k \norm{\bff{U}^m-\bff{U}^{m-1}}{\bb{H}^2}^3 
			\le C_0,\; \forall m\leq n \big\}.
		\end{align}
		Here~$C_0$ depends on~$C_0'$, the constant given by the Gagliardo--Nirenberg inequality;
		see~\eqref{equ:gal nir 2d}. The constant $C$ depends on $t_M:=Mk \le T$.
	\end{enumerate}
\end{proposition}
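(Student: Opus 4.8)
The plan is to adapt the proof of Proposition~\ref{pro:Un bounded H2} to the two-step scheme~\eqref{equ:bdf2}, the only substantive change being the treatment of the discrete time derivative. First I would dispose of the starting value: using the first step~\eqref{equ:step one cn} (or, equivalently for this purpose, a semi-implicit Euler step) together with~\eqref{equ:bdf step 1}, the stability of the elliptic projection (Lemma~\ref{lem:auxbound}), and a short energy estimate on~\eqref{equ:step one cn} of the type carried out in Propositions~\ref{pro:euler stab} and~\ref{pro:Un bounded H2}, one controls $\norm{\bff{U}^1}{\bb{H}^2}$ and the first-step quantity $k\norm{\delta\bff{U}^1}{\bb{L}^2}^2$ in terms of $\norm{\bff{U}^0}{\bb{H}^2}$ (and $\norm{\bff{U}^1}{\bb{H}^2}$); these are absorbed into the constant $B$ in~\eqref{equ:Un bounded H2 bdf}.

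Next I would test~\eqref{equ:bdf2} with $\bff{\chi}=\delta\bff{U}^n$ (not with $\cal{D}\bff{U}^n$). Applying the identity~\eqref{equ:id abc ab} to $\inpro{\cal{D}\bff{U}^n}{\delta\bff{U}^n}_{\bb{L}^2}$ and the identity~\eqref{equ:ab ide} to $\inpro{\nabla\bff{U}^n}{\nabla\delta\bff{U}^n}_{\bb{L}^2}$ and $\inpro{\Delta\bff{U}^n}{\Delta\delta\bff{U}^n}_{\bb{L}^2}$ produces, on the left, the dissipative term $\norm{\delta\bff{U}^n}{\bb{L}^2}^2$, a BDF2 telescoping increment $\tfrac14\big(\norm{\delta\bff{U}^n}{\bb{L}^2}^2-\norm{\delta\bff{U}^{n-1}}{\bb{L}^2}^2\big)$, the positive squared differences $\norm{\nabla(\bff{U}^n-\bff{U}^{n-1})}{\bb{L}^2}^2$ and $\norm{\Delta(\bff{U}^n-\bff{U}^{n-1})}{\bb{L}^2}^2$, and the telescoping increments of the gradient and Laplacian $\bb{L}^2$-norms. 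The cubic term $\cal{B}(\bff{U}^{n-1},\bff{U}^{n-1};\bff{U}^n,\delta\bff{U}^n)$ and the precession term $\cal{C}(\bff{U}^{n-1};\bff{U}^n,\delta\bff{U}^n)$ are estimated exactly as in Proposition~\ref{pro:Un bounded H2}: by~\eqref{equ:B bounded W1,4} and~\eqref{equ:C bounded W1,4}, then the Gagliardo--Nirenberg inequalities~\eqref{equ:gal nir 2d}, the $\bb{L}^2$-stability bound of Proposition~\ref{pro:bdf stab}, and Young's inequality, yielding the exact $d$-by-$d$ analogue of~\eqref{equ:S Un H2} with the extra increment $\tfrac14\big(\norm{\delta\bff{U}^n}{\bb{L}^2}^2-\norm{\delta\bff{U}^{n-1}}{\bb{L}^2}^2\big)$ on the left-hand side.

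For $d\le 2$ I would then multiply by $k$, sum over $m=2,\dots,n$ (the three telescoping terms collapse, leaving only the first-step quantities from the first paragraph on the right), and apply the discrete Gronwall inequality as in~\eqref{equ:nab Un Del Un H2}, using Proposition~\ref{pro:bdf stab} once more to bound $k\sum_m\norm{\bff{U}^m}{\bb{H}^2}^2$ by $C\norm{\bff{U}^0}{\bb{L}^2}^2$; this gives~\eqref{equ:Un bounded H2 bdf}. For $d=3$, after summing I would invoke the stopping-time definition~\eqref{equ:M max bdf} to turn the $\norm{\bff{U}^m-\bff{U}^{m-1}}{\bb{H}^2}^5$ contributions (coming from the non-symmetric Young split in the $d=3$ case) into $C_0\norm{\bff{U}^m-\bff{U}^{m-1}}{\bb{H}^2}^2$, which are absorbed by the left-hand dissipative terms $\norm{\nabla(\bff{U}^m-\bff{U}^{m-1})}{\bb{L}^2}^2$, $\norm{\Delta(\bff{U}^m-\bff{U}^{m-1})}{\bb{L}^2}^2$ and $k^2\norm{\delta\bff{U}^m}{\bb{L}^2}^2$. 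One is left with $\norm{\bff{U}^n}{\bb{H}^2}^2\lesssim\norm{\bff{U}^0}{\bb{H}^2}^2+k\sum_m\norm{\bff{U}^m}{\bb{H}^2}^5$ for $n\le M$, and the discrete Bihari--Gronwall inequality (Theorem~\ref{the:disc bihari}) with $\varphi(x)=x^2$, $\psi(x)=x^5$ gives~\eqref{equ:Un bounded H2 d3 bdf}, the right-hand side being positive by the definition of $M$ in~\eqref{equ:M max bdf}.

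The main obstacle I anticipate is bookkeeping rather than conceptual: one must deploy~\eqref{equ:id abc ab} carefully so that the extra increment $\tfrac14\big(\norm{\delta\bff{U}^n}{\bb{L}^2}^2-\norm{\delta\bff{U}^{n-1}}{\bb{L}^2}^2\big)$ — which, unlike the seminorm increments, carries no $1/k$ weight and therefore contributes $\tfrac{k}{4}\norm{\delta\bff{U}^1}{\bb{L}^2}^2$ after summation — is genuinely controlled, which is precisely why the first step must be analysed up front (the Crank--Nicolson/Euler step does not come with a ready-made $\bb{H}^2$-type estimate). Once the starting values are pinned down and the BDF2 telescoping is handled, the remaining estimates are term-by-term identical to those in the proof of Proposition~\ref{pro:Un bounded H2}.
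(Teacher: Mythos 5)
Your proposal follows the paper's proof essentially verbatim: the paper likewise first derives the bound $k\norm{\delta\bff{U}^1}{\bb{L}^2}^2\le B$ by testing~\eqref{equ:step one cn} with $\delta\bff{U}^1$, then tests~\eqref{equ:bdf2} with $\delta\bff{U}^n$, uses~\eqref{equ:id abc ab} for the BDF2 term and the same $\cal{B}$, $\cal{C}$, Gagliardo--Nirenberg and Young estimates as in Proposition~\ref{pro:Un bounded H2}, sums and applies discrete Gronwall (resp.\ discrete Bihari--Gronwall for $d=3$). Your identification of the leftover first-step quantity $\tfrac{k}{4}\norm{\delta\bff{U}^1}{\bb{L}^2}^2$ as the reason the starting step must be analysed separately is exactly the role of~\eqref{equ:k dU1} in the paper.
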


\begin{proof}
	First, we aim to show that
	\begin{equation}\label{equ:k dU1}
	k \norm{\delta \bff{U}^1}{\bb{L}^2}^2 \leq B,
	\end{equation}
	for a constant $B$ depending on $\norm{\bff{U}^0}{\bb{H}^2}$ and $\norm{\bff{U}^1}{\bb{H}^2}$.
	To this end, we take $\bff{\chi}=\delta \bff{U}^1$ in \eqref{equ:step one cn}, then apply \eqref{equ:B bounded H2} and Young's inequality to obtain
	\begin{align*}
		&\norm{\delta \bff{U}^1}{\bb{L}^2}^2 
		+
		\frac{\beta_1}{2k} \big( \norm{\nabla \bff{U}^1}{\bb{L}^2}^2 - \norm{\nabla \bff{U}^{0}}{\bb{L}^2}^2 \big)
		+
		\frac{\beta_2}{2k} \big( \norm{\Delta \bff{U}^1}{\bb{L}^2}^2 - \norm{\Delta \bff{U}^{0}}{\bb{L}^2}^2 \big)
		\\
		&\leq
		\beta_3
		\abs{ \inpro{\overline{\bff{U}^1}}{\delta \bff{U}^1}_{\bb{L}^2} }
		+
		\beta_6
		\abs{ \inpro{(\bff{j}\cdot\nabla) \overline{\bff{U}^1}}{\delta \bff{U}^1}_{\bb{L}^2}}
		+
		\abs{ \cal{B}(\overline{\bff{U}^1}, \overline{\bff{U}^1}; \overline{\bff{U}^1}, \delta \bff{U}^1) }
		\\
		&\lesssim
		\norm{\overline{\bff{U}^1}}{\bb{H}^1}^2 
		+ 
		\epsilon \norm{\delta \bff{U}^1}{\bb{L}^2}^2
		+
		\left(1+\norm{\overline{\bff{U}^1}}{\bb{H}^2} \right) \norm{\overline{\bff{U}^1}}{\bb{H}^2}^2
		\norm{\delta \bff{U}^1}{\bb{L}^2}
		\\
		&\lesssim
		1+ \norm{\overline{\bff{U}^1}}{\bb{H}^2}^6
		+
		\epsilon \norm{\delta \bff{U}^1}{\bb{L}^2}^2
	\end{align*}
	for any $\epsilon>0$. Choosing $\epsilon>0$ sufficiently small, then rearranging and multiplying by $k$ give
	\[
	k \norm{\delta \bff{U}^1}{\bb{L}^2}^2 
	+
	\norm{\bff{U}^1}{\bb{H}^2}^2
	\lesssim
	\norm{\bff{U}^0}{\bb{H}^2}^2
	+
	k \left(1+ \norm{\overline{\bff{U}^1}}{\bb{H}^2}^6 \right)
	\leq
	B,
	\]
	where $B$ is a constant depending on $\norm{\bff{U}^0}{\bb{H}^2}$ and $\norm{\bff{U}^1}{\bb{H}^2}$, thus showing \eqref{equ:k dU1}.

	Next, by taking $\bff{\chi}= \delta \bff{U}^n$ in \eqref{equ:bdf2}
	and using~\eqref{equ:bilinear}, we have
	\begin{align*} 
		&\inpro{\cal{D}\bff{U}^n}{\delta \bff{U}^n}_{\bb{L}^2}
		+
		\beta_1 \inpro{\nabla\bff{U}^n}{\nabla\delta\bff{U}^n}_{\bb{L}^2}
		+
		\beta_2 \inpro{\Delta\bff{U}^n}{\Delta\delta\bff{U}^n}_{\bb{L}^2}
		\\
		&=
		\beta_3 \inpro{\bff{U}^n}{\delta\bff{U}^n}_{\bb{L}^2}
		+
		\beta_6 \inpro{(\bff{j}\cdot\nabla)\bff{U}^n}{\delta \bff{U}^n}_{\bb{L}^2}
		-
		\cal{B}(\bff{U}^{n-1}, \bff{U}^{n-1}; \bff{U}^n, \delta\bff{U}^n)
		-
		\cal{C}(\bff{U}^{n-1};\bff{U}^n, \delta \bff{U}^n).
	\end{align*}
	For simplicity of notations, we assume that all the constants in the above
	equation equal~$1$. By using \eqref{equ:id abc ab} and applying similar argument as in the proof of Proposition \ref{pro:Un bounded H2}, we have
	\begin{align*}
		&\norm{\delta \bff{\bff{U}}^n}{\bb{L}^2}^2
		+
		\frac{1}{4} \left( \norm{\delta \bff{U}^n}{\bb{L}^2}^2
		-
		\norm{\delta \bff{U}^{n-1}}{\bb{L}^2}^2 \right)
		+
		\frac{1}{4} \norm{\delta \bff{U}^n-\delta \bff{U}^{n-1}}{\bb{L}^2}^2
		\\
		&\quad 
		+
		\frac{1}{2k} 
		\left( 
		\norm{\nabla \bff{U}^n}{\bb{L}^2}^2 
		- \norm{\nabla \bff{U}^{n-1}}{\bb{L}^2}^2 
		+
		\norm{\nabla \bff{U}^n -\nabla \bff{U}^{n-1}}{\bb{L}^2}^2
		\right)
		\\
		&\quad
		+
		\frac{1}{2k} 
		\left( 
		\norm{\Delta \bff{U}^n}{\bb{L}^2}^2 
		- 
		\norm{\Delta \bff{U}^{n-1}}{\bb{L}^2}^2 
		+
		\norm{\Delta \bff{U}^n -\Delta \bff{U}^{n-1}}{\bb{L}^2}^2
		\right)
		\\
		&\le
		\norm{\bff{U}^n}{\bb{H}^1}^2 
		+ 
		\epsilon \norm{\delta \bff{U}^n}{\bb{L}^2}^2
		+
		\big(1+\norm{\bff{U}^{n-1}}{\bb{L}^\infty}^2 \big)
		\norm{\bff{U}^{n-1}}{\bb{W}^{1,4}}^2
		\norm{\bff{U}^n}{\bb{W}^{1,4}}^2
		+ 
		\big(1+\norm{\bff{U}^{n-1}}{\bb{L}^\infty}^4 \big)
		\norm{\bff{U}^n}{\bb{H}^2}^2.
	\end{align*}
	For the case $d=1$ and $2$, summing over $m\in
	\{2,3,\ldots,n\}$ and multiplying by $k$ (noting \eqref{equ:S Un H2} and \eqref{equ:k dU1}), we deduce
	\begin{align*}
		\norm{\bff{U}^n}{\bb{H}^2}^2
		+
		k \sum_{m=2}^n \norm{\delta \bff{\bff{U}}^m}{\bb{L}^2}^2
		\lesssim
		B
		+ 
		k \sum_{m=2}^n \big(\norm{\bff{U}^m}{\bb{H}^2}^2
		+
		\norm{\bff{U}^{m-1}}{\bb{H}^2}^2 \big) \norm{\bff{U}^{m-1}}{\bb{H}^2}^2 .
	\end{align*}
	Therefore, by the discrete Gronwall inequality and Proposition~\ref{pro:bdf stab}, we have
	\begin{align}\label{equ:nab Un Del Un H2 bdf}
		\norm{\bff{U}^n}{\bb{H}^2}^2
		&\lesssim
		B \exp \left( Ck \sum_{m=1}^n \norm{\bff{U}^m}{\bb{H}^2}^2 \right)
		\lesssim
		B
		\exp\left( C\norm{\bff{U}^0}{\bb{L}^2}^2 \right),
	\end{align}
	for some positive constants $B$ and $C$, thus proving \eqref{equ:Un bounded H2 bdf}.
	The case $d=3$ and the rest of the proof are similar to the proof of \eqref{equ:Un bounded H2 d3}, and so the details are omitted.
\end{proof}

We can also prove convergence rates for $\norm{\bff{\theta}^n}{\bb{H}^\beta}$ for $\beta=1$ and $2$, which are analogous to Proposition \ref{pro:theta n euler Hr}. See also the discussion preceding it.

\begin{proposition}\label{pro:theta n bdf Hr}
	Assume that $\bff{u}$ satisfies~\eqref{equ:ass 2}.
	Then the inequality
	\begin{align}\label{equ:theta n bdf Hr}
		\norm{\bff{\theta}^n}{\bb{H}^\beta}
		\leq 
		C(h^{r+1-\beta} +k^2), \quad \beta=1,2
	\end{align}
	holds for $n\in \{1,2,\ldots, N\}$. Here~$N$ and~$\alpha_N$ are defined by~\eqref{equ:N def} and~\eqref{equ:alpha N}, respectively.
\end{proposition}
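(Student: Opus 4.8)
The plan is to follow the proof of Proposition~\ref{pro:theta n euler Hr} almost verbatim, replacing the backward difference~$\delta$ by the BDF2 operator~$\cal{D}$ and using the algebraic identity~\eqref{equ:id abc ab} in place of~\eqref{equ:ab ide} for the discrete time-derivative term. First I would test the error identity~\eqref{equ:eq dtheta n bdf} with $\bff{\chi}=\delta\bff{\theta}^n$. Expanding $\cal{A}(\bff{U}^{n-1};\bff{\theta}^n,\delta\bff{\theta}^n)$ through~\eqref{equ:bilinear}, the $\cal{A}_0$-part yields, via~\eqref{equ:ab ide}, the telescoping differences $\tfrac{\alpha}{2k}(\norm{\bff{\theta}^n}{\bb{L}^2}^2-\norm{\bff{\theta}^{n-1}}{\bb{L}^2}^2)$, $\tfrac{\beta_1}{2k}(\norm{\nabla\bff{\theta}^n}{\bb{L}^2}^2-\norm{\nabla\bff{\theta}^{n-1}}{\bb{L}^2}^2)$, $\tfrac{\beta_2}{2k}(\norm{\Delta\bff{\theta}^n}{\bb{L}^2}^2-\norm{\Delta\bff{\theta}^{n-1}}{\bb{L}^2}^2)$ plus a nonnegative remainder involving $\norm{\bff{\theta}^n-\bff{\theta}^{n-1}}{\bb{H}^2}^2$, while the $\cal{B}$- and $\cal{C}$-parts are controlled by~\eqref{equ:B bounded H2} and~\eqref{equ:C bounded H2} by $\lesssim(1+\norm{\bff{U}^{n-1}}{\bb{H}^2}^2)\norm{\bff{\theta}^n}{\bb{H}^2}\norm{\delta\bff{\theta}^n}{\bb{L}^2}$ and then split with Young's inequality. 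The term $\inpro{\cal{D}\bff{\theta}^n}{\delta\bff{\theta}^n}_{\bb{L}^2}$ is handled by~\eqref{equ:id abc ab}, contributing $\norm{\delta\bff{\theta}^n}{\bb{L}^2}^2+\tfrac14(\norm{\delta\bff{\theta}^n}{\bb{L}^2}^2-\norm{\delta\bff{\theta}^{n-1}}{\bb{L}^2}^2)$ plus a nonnegative square. When $\beta_1<0$ the indefinite gradient term is absorbed exactly as in Proposition~\ref{pro:euler stab}, using~\eqref{equ:equivnorm-nabL2 young} and noting that $\tfrac1k\norm{\bff{\theta}^n-\bff{\theta}^{n-1}}{\bb{L}^2}^2=k\norm{\delta\bff{\theta}^n}{\bb{L}^2}^2$ carries a small factor~$k$.

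For the right-hand side of~\eqref{equ:eq dtheta n bdf}, I would bound $\inpro{\bff{\theta}^n+\bff{\rho}^n}{\delta\bff{\theta}^n}_{\bb{L}^2}$ and $\inpro{(\bff{j}\cdot\nabla)(\bff{\theta}^n+\bff{\rho}^n)}{\delta\bff{\theta}^n}_{\bb{L}^2}$ by H\"older's inequality and~\eqref{equ:rho}, $\inpro{\cal{D}\bff{\rho}^n}{\delta\bff{\theta}^n}_{\bb{L}^2}$ by~\eqref{equ:d rho n bdf}, $\inpro{\cal{D}\bff{u}^n-\partial_t\bff{u}^n}{\delta\bff{\theta}^n}_{\bb{L}^2}$ by~\eqref{equ:diff un dt un bdf}, and the nonlinear consistency term $\cal{A}(\bff{U}^{n-1};\widetilde{\bff{u}}_h^n,\delta\bff{\theta}^n)-\cal{A}(\bff{u}^n;\widetilde{\bff{u}}_h^n,\delta\bff{\theta}^n)$ by~\eqref{equ:A Un-1 utilde n L2} with $\bff{\chi}=\delta\bff{\theta}^n$ and $\epsilon$ small; here $\norm{\bff{U}^{n-1}}{\bb{H}^2}\le\alpha_N$ (finite by Propositions~\ref{pro:Un stable H2 bdf} and~\ref{pro:Un bounded H2 bdf}, according to which of the three cases underlying~\eqref{equ:N def}--\eqref{equ:alpha N} is assumed), and $\norm{\bff{\theta}^{n-1}}{\bb{L}^2}^2\lesssim h^{2(r+1)}+k^4$ by~\eqref{equ:theta n hk bdf L2}. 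After absorbing every $\norm{\delta\bff{\theta}^n}{\bb{L}^2}^2$ contribution into the left-hand side, the stray term $\norm{\bff{\theta}^n}{\bb{H}^2}^2$ is rewritten as $\lesssim\norm{\Delta\bff{\theta}^{n-1}}{\bb{L}^2}^2+\norm{\Delta(\bff{\theta}^n-\bff{\theta}^{n-1})}{\bb{L}^2}^2+\norm{\bff{\theta}^n}{\bb{L}^2}^2$, where the middle piece is absorbed into the telescoping remainder (since $\tfrac1k\ge1$ for small~$k$).

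Finally I would multiply by~$k$, sum over $m=2,\ldots,n$ (recalling $\bff{\theta}^0=\bff{0}$ and using~\eqref{equ:bdf step 1} to control the start-up contributions $\norm{\Delta\bff{\theta}^1}{\bb{L}^2}^2$ and $k\norm{\delta\bff{\theta}^1}{\bb{L}^2}^2$), and apply the discrete Gronwall inequality to obtain $\norm{\Delta\bff{\theta}^n}{\bb{L}^2}^2\le C(h^{2(r-1)}+k^4)$ for $n\le N$. Interpolating between this and~\eqref{equ:theta n hk bdf L2} via~\eqref{equ:equivnorm-nabL2} then yields the $\bb{H}^1$-bound, which together establishes~\eqref{equ:theta n bdf Hr}. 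The main obstacle is the multistep bookkeeping: one must track the signs of the quadratic remainders generated by~\eqref{equ:ab ide} and~\eqref{equ:id abc ab} so that the genuinely telescoping quantities in $\norm{\bff{\theta}^n}{\bb{L}^2}^2$, $\norm{\Delta\bff{\theta}^n}{\bb{L}^2}^2$, and $\norm{\delta\bff{\theta}^n}{\bb{L}^2}^2$ survive with favourable signs, while the second-difference term $\norm{\bff{\theta}^n-2\bff{\theta}^{n-1}+\bff{\theta}^{n-2}}{\bb{L}^2}^2$ and the $\norm{\Delta(\bff{\theta}^n-\bff{\theta}^{n-1})}{\bb{L}^2}^2$ remainders are kept on the left or absorbed, and one must correctly feed in the Crank--Nicolson start-up estimate~\eqref{equ:bdf step 1}, whose second-order temporal accuracy is precisely what upgrades the conclusion from $O(k)$ to $O(k^2)$.
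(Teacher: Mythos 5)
Your proposal is correct and is essentially the paper's own (omitted) argument: the paper proves this proposition by declaring it "similar to Proposition~\ref{pro:theta n euler Hr}, noting~\eqref{equ:id abc ab}", and you carry out precisely that programme — testing~\eqref{equ:eq dtheta n bdf} with $\delta\bff{\theta}^n$, telescoping $\cal{A}_0$ via~\eqref{equ:ab ide} and the discrete time derivative via~\eqref{equ:id abc ab}, invoking~\eqref{equ:A Un-1 utilde n L2}, \eqref{equ:d rho n bdf}, \eqref{equ:diff un dt un bdf}, \eqref{equ:theta n hk bdf L2} and the start-up bound~\eqref{equ:bdf step 1}, then summing and applying discrete Gronwall with the $\alpha_N$ stability bounds. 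The bookkeeping of the multistep remainders and the role of the Crank--Nicolson initialisation are exactly as the paper intends.
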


\begin{proof}
The proof is similar to that of Proposition \ref{pro:theta n euler Hr} (noting \eqref{equ:id abc ab}), thus we omit the details.
\end{proof}

Finally, we have the main theorem of this section.

\begin{theorem}\label{the:bdf rate}
	Let $\bff{u}$ and $\bff{U}^n$ be solutions of \eqref{equ:weakform} and
	\eqref{equ:bdf2}, respectively. Assume that $\bff{u}$
	satisfies~\eqref{equ:ass 2}. Then for $t_n=nk$, $n=1,\ldots,N$,
	\begin{align}\label{equ:Un utn Hr bdf}
		\norm{\bff{U}^n- \bff{u}(t_n)}{\bb{H}^\beta}
		\leq 
		C(h^{r+1-\beta} +k^2), \quad \beta=0,1,2,
	\end{align}
	where the constant $C$ depends on $\kappa$, $\mu$, $T$, $\norm{\bff{u}}{L^\infty({\bb{H}^4})}$,
	$\norm{\partial_t \bff{u}}{L^\infty({\bb{H}^4})}$, 
	$\norm{\partial_{t}^2 \bff{u}}{L^\infty({\bb{H}^1})}$, and
	$\norm{\partial_{t}^3 \bff{u}}{L^\infty({\bb{L}^2})}$.
\end{theorem}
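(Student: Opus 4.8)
The plan is to follow exactly the assembly used in the proof of Theorem~\ref{the:backeulerrate}, since all the substantive work has already been carried out in the preceding propositions of this section. First I would recall the splitting $\bff{U}^n - \bff{u}(t_n) = \bff{\theta}^n + \bff{\rho}^n$ from~\eqref{equ:theta n plus rho n disc}, where $\bff{\rho}^n = \widetilde{\bff{u}}_h^n - \bff{u}^n$ is the error of the elliptic projection at time $t_n$ and $\bff{\theta}^n = \bff{U}^n - \widetilde{\bff{u}}_h^n$ is the purely discrete part. The triangle inequality then reduces the claim to bounding these two pieces separately in $\bb{H}^\beta$ for $\beta = 0, 1, 2$.

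For the projection error $\bff{\rho}^n = \bff{\rho}(t_n)$, I would invoke Proposition~\ref{pro:auxestimate}: since assumption~\eqref{equ:ass 2} implies~\eqref{equ:ass 1}, estimate~\eqref{equ:rho} gives $\norm{\bff{\rho}^n}{\bb{L}^2} + h\norm{\nabla\bff{\rho}^n}{\bb{L}^2} + h^2 \norm{\Delta\bff{\rho}^n}{\bb{L}^2} \lesssim h^{r+1}$, hence $\norm{\bff{\rho}^n}{\bb{H}^\beta} \lesssim h^{r+1-\beta}$ for $\beta = 0, 1, 2$. For the discrete part $\bff{\theta}^n$, the $\bb{L}^2$ bound $\norm{\bff{\theta}^n}{\bb{L}^2} \lesssim h^{r+1} + k^2$ is Proposition~\ref{pro:the n bdf L2}, while the $\bb{H}^1$ and $\bb{H}^2$ bounds $\norm{\bff{\theta}^n}{\bb{H}^\beta} \lesssim h^{r+1-\beta} + k^2$ come from Proposition~\ref{pro:theta n bdf Hr}; both require $k$ to satisfy~\eqref{equ:k lam}, and Proposition~\ref{pro:theta n bdf Hr} is valid for $n \le N$ with $N$ as in~\eqref{equ:N def}, which is precisely the range stated in the theorem. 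Adding the two contributions yields~\eqref{equ:Un utn Hr bdf}, with $C$ depending on $\kappa$, $\mu$, $T$, and the norms of $\bff{u}$, $\partial_t\bff{u}$, $\partial_t^2\bff{u}$, $\partial_t^3\bff{u}$ entering~\eqref{equ:ass 2} (the third time derivative enters through~\eqref{equ:diff un dt un bdf}).

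Strictly speaking there is nothing left to do in this theorem beyond the triangle inequality, so the main obstacle is not located here but in its prerequisites: the $\bb{H}^2$-stability of $\bff{U}^n$ without quasi-uniformity (Proposition~\ref{pro:Un bounded H2 bdf}), needed to make $\alpha_N$ a controlled quantity, and the energy-type argument based on the BDF2 identities~\eqref{equ:abc ide} and~\eqref{equ:id abc ab} that underlies Propositions~\ref{pro:the n bdf L2} and~\ref{pro:theta n bdf Hr}. The only mild subtlety worth spelling out is the bookkeeping of the admissible range $n \le N$ and of the step-size restriction, since for $d = 3$ without quasi-uniformity the estimate is only guaranteed on $\{1, \dots, M\}$; I would state this explicitly so that the dependence on $\alpha_N$ and $t_M$ matches Remark~\ref{rem:dra}.
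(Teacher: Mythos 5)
Your proposal is correct and follows exactly the paper's own argument: the splitting $\bff{U}^n-\bff{u}(t_n)=\bff{\theta}^n+\bff{\rho}^n$, the bound \eqref{equ:rho} for $\bff{\rho}^n$, Propositions \ref{pro:the n bdf L2} and \ref{pro:theta n bdf Hr} for $\bff{\theta}^n$, and the triangle inequality. Your additional remarks on the admissible range $n\le N$ and the step-size restriction are consistent with the paper's statement and with Remark \ref{rem:dra}.
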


\begin{proof}
	The inequality follows from Proposition \ref{pro:the n bdf L2}, Proposition \ref{pro:theta n bdf Hr}, inequality~\eqref{equ:rho}, and the triangle inequality (see the proof of Theorem \ref{the:backeulerrate} for details).
\end{proof}

\section{Numerical Experiments}\label{sec:numeric exp}

We perform numerical simulations for the semi-implicit Euler scheme using the open-source package \textsc{FreeFEM} \cite{Hec12} to verify its order of convergence experimentally. These numerical experiments were done using the Hsieh--Clough--Tocher (HCT) element in the domain $\Omega \subset \bb{R}^2$ and $t\in [0,T]$. 
Since the exact solution of the equation is not known, we use extrapolation to empirically verify the order of convergence. Let $\bff{u}_h$ be the finite element solution with spatial step size $h$ and a very small fixed time step size. For $s=0,1,2,$ and each $h>0$, define the extrapolated spatial order of convergence in $L^\infty(\bb{H}^s)$-norm:
\begin{align*}
	\text{rate}_{\bb{H}^s}
	:=
	\log_2 \left[ \frac{\norm{\bff{e}_{2h}}{L^\infty(\bb{H}^s)}}
	{\norm{\bff{e}_h}{L^\infty(\bb{H}^s)}} \right],
	\text{ where } 
	\norm{\bff{e}_h}{L^\infty(\bb{H}^s)}:= \norm{\bff{u}_h - \bff{u}_{h/2}}{L^\infty(\bb{H}^s)}.
\end{align*}
The order of convergence in the $L^\infty(\bb{L}^2)$-norm, $L^\infty(\bb{H}^1)$ and $L^\infty(\bb{H}^2)$-norms are found to be $4, 3$, and $2$, respectively, confirming the theoretical results in Theorem~\ref{the:semidisc error} and Theorem~\ref{the:backeulerrate}.

\subsection{Experiment 1 (Landau--Lifshitz--Baryakhtar)}
Let $\bff{u}:\Omega\to \bb{R}^3$ be vector-valued. The coefficients in \eqref{equ:llbar} are taken to be $\beta_1=0.1, \beta_2=0.2, \beta_3=0.2, \beta_4=0.1$, $\beta_5=0.2, \beta_6=0$. The initial data $\bff{u}_0$ is given by
\begin{align}\label{equ:initial data}
	\bff{u}_0(x,y)
	=
	\big( 20 \sin^2(\pi x) \sin^2(\pi y), \,
	30 \sin^2(2\pi x) \sin^2(\pi y), \,
	40 \sin^2(\pi x) \sin^2(2\pi y) \big),
\end{align}
where $(x,y)\in [0,1]\times[0,1]$. The time step size $k= 1\times 10^{-9}$.

\begin{figure}[!hbt]
\begin{center}
	\begin{tikzpicture}
		\begin{axis}[
			title=Plot of $\|\bff{e}_h\|$ against $1/h$,
			height=0.45\textwidth,
			width=0.45\textwidth,
			xlabel= $1/h$,
			ylabel= $\|\bff{e}_h\|$,
			xmode=log,
			ymode=log,
			legend pos=outer north east,
			legend cell align=left,
			]
			\addplot+[mark=*,red] coordinates {(2,0.5539)(4,0.2563)(8,0.1751)(16,0.02277)(32,0.0008286)(64,0.00009154)(128,0.000006583)};
			\addplot+[mark=x,blue] coordinates {(2,6.226)(4,6.338)(8,5.886)(16,1.197)(32,0.09312)(64,0.01485)(128,0.001693)};
			\addplot+[mark=square,green] coordinates {(2,87.90)(4,218.9)(8,312.4)(16,107.6)(32,19.92)(64,6.931)(128,1.572)};
			\addplot+[dashed,no marks,red,domain=20:128]{450/x^4};
			\addplot+[dashed,no marks,blue,domain=20:128]{1400/x^3};
			\addplot+[dashed,no marks,green,domain=20:128]{9800/x^2};
			\legend{$L^\infty(\bb{L}^2)$-norm of $\bff{e}_h$,$L^\infty(\bb{H}^1)$-norm of $\bff{e}_h$,$L^\infty(\bb{H}^2)$-norm of $\bff{e}_h$,order 4 reference line,order 3 reference line,order 2 reference line}
		\end{axis}
	\end{tikzpicture}
\end{center}
	\caption{Order of convergence for experiment 1.}
	\label{fig:order}
\end{figure}

The plot of $\norm{\bff{e}_h}{}$ against $1/h$ is shown in Figure \ref{fig:order}.
Furthermore, snapshots of the magnetic spin field $\bff{u}$ at some indicated times (with initial data \eqref{equ:initial data}) are shown in Figure \ref{fig:snapshots field}.

\begin{figure}[!hbt]
	\centering
	\begin{subfigure}[b]{0.3\textwidth}
		\label{fig:time 0}
		\centering
		\includegraphics[width=\textwidth]{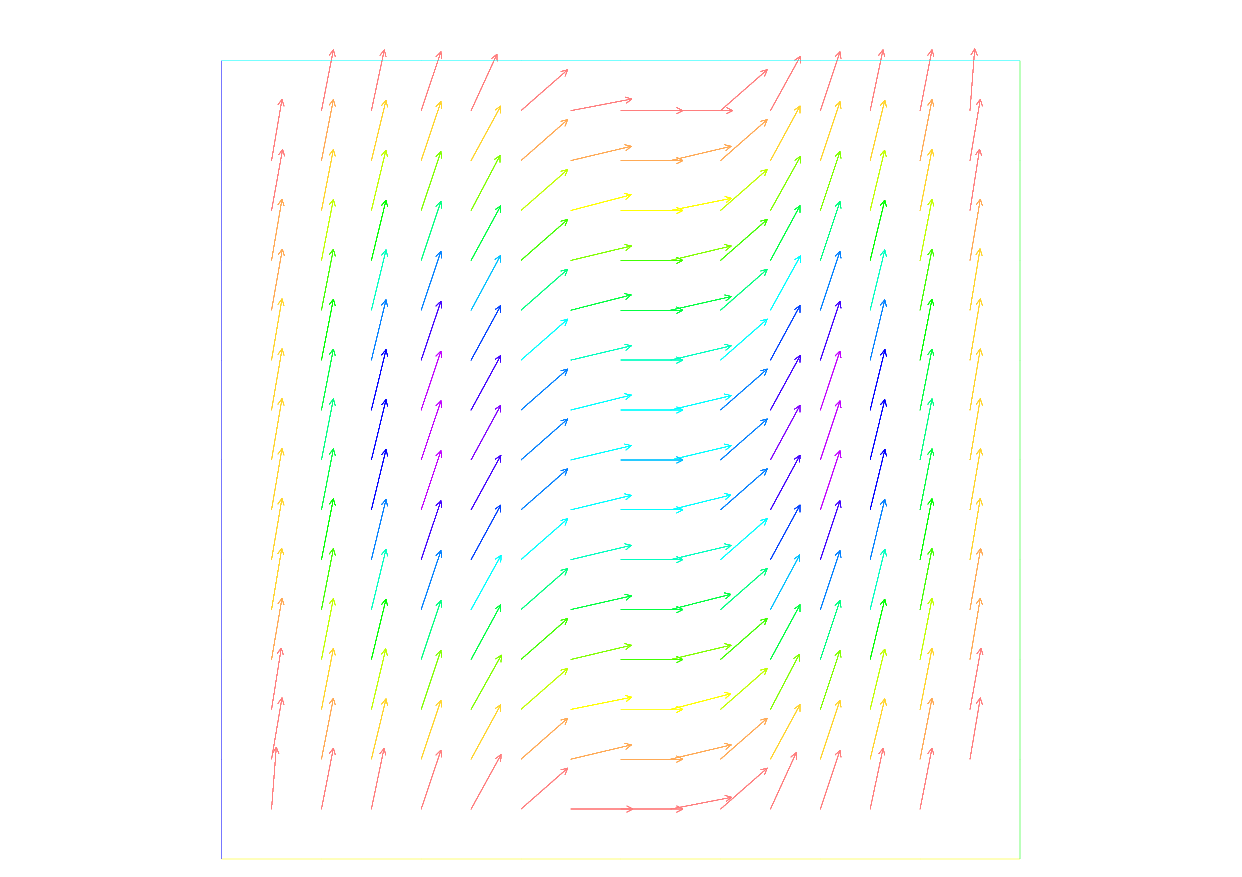}
		\caption{$t=0$}
	\end{subfigure}
	\begin{subfigure}[b]{0.3\textwidth}
		\label{fig:time 1e-04}
		\centering
		\includegraphics[width=\textwidth]{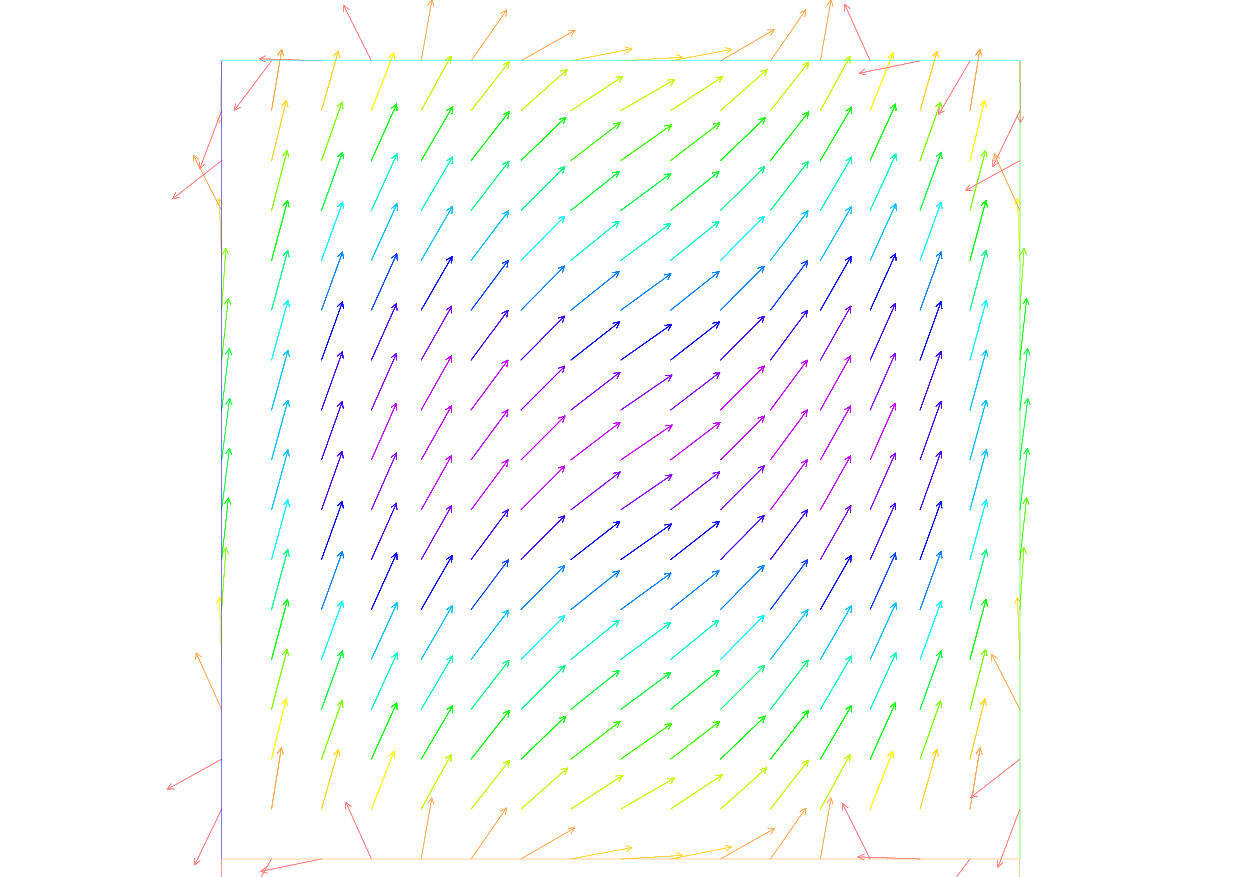}
		\caption{$t=1\times 10^{-4}$}
	\end{subfigure}
	\begin{subfigure}[b]{0.3\textwidth}
		\label{fig:time 2e-04}
		\centering
		\includegraphics[width=\textwidth]{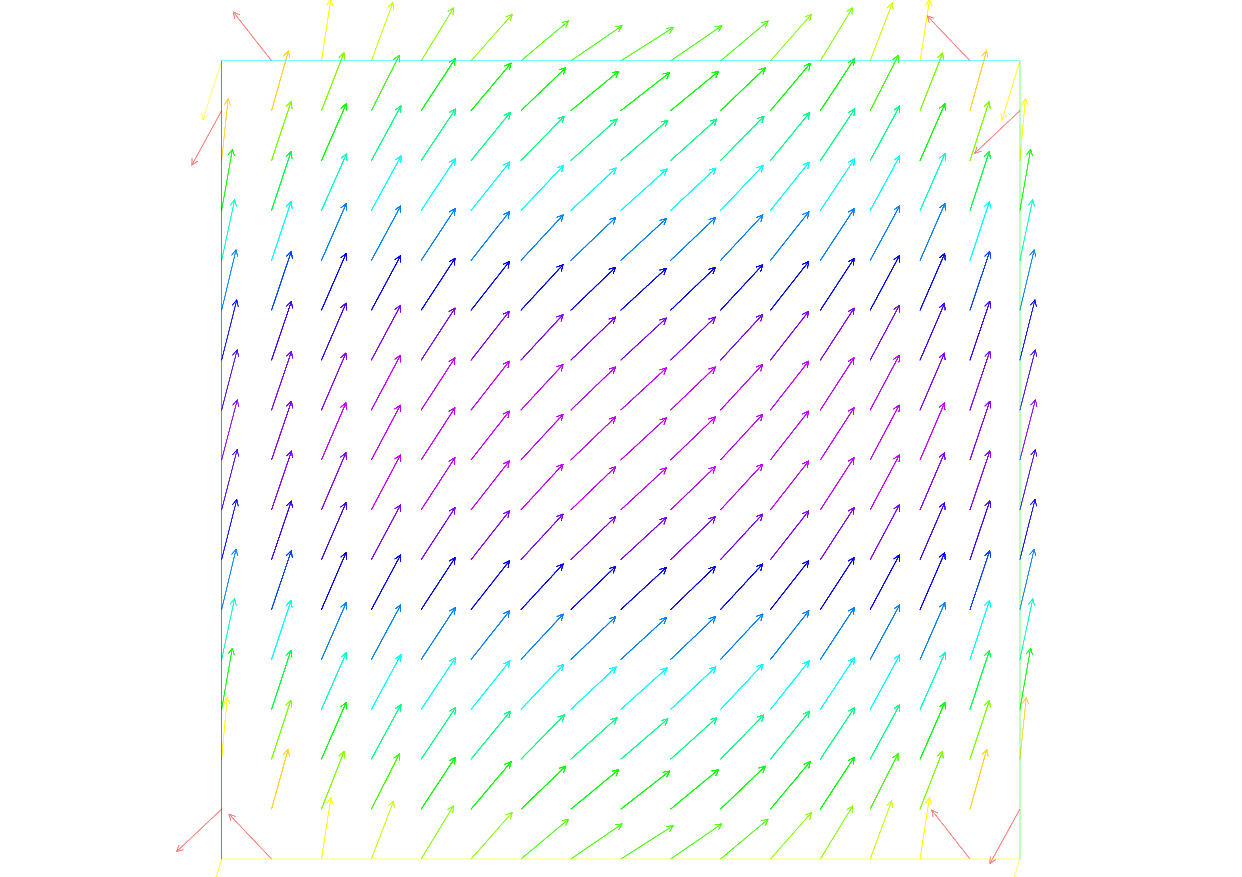}
		\caption{$t=2\times 10^{-4}$}
	\end{subfigure}
	\begin{subfigure}[b]{0.3\textwidth}
		\label{fig:time 3e-04}
		\centering
		\includegraphics[width=\textwidth]{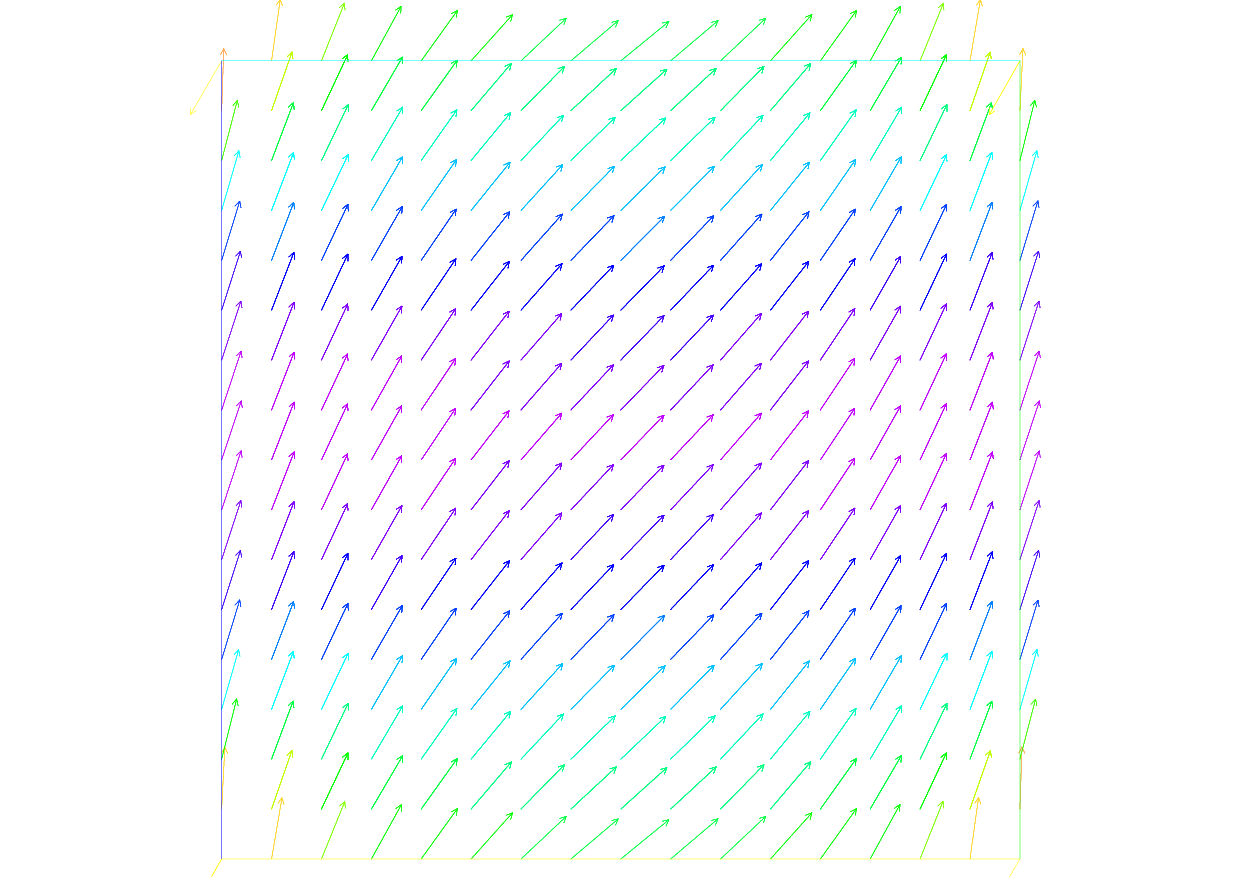}
		\caption{$t=3\times 10^{-4}$}
	\end{subfigure}
	\begin{subfigure}[b]{0.3\textwidth}
		\label{fig:time 5e-04}
		\centering
		\includegraphics[width=\textwidth]{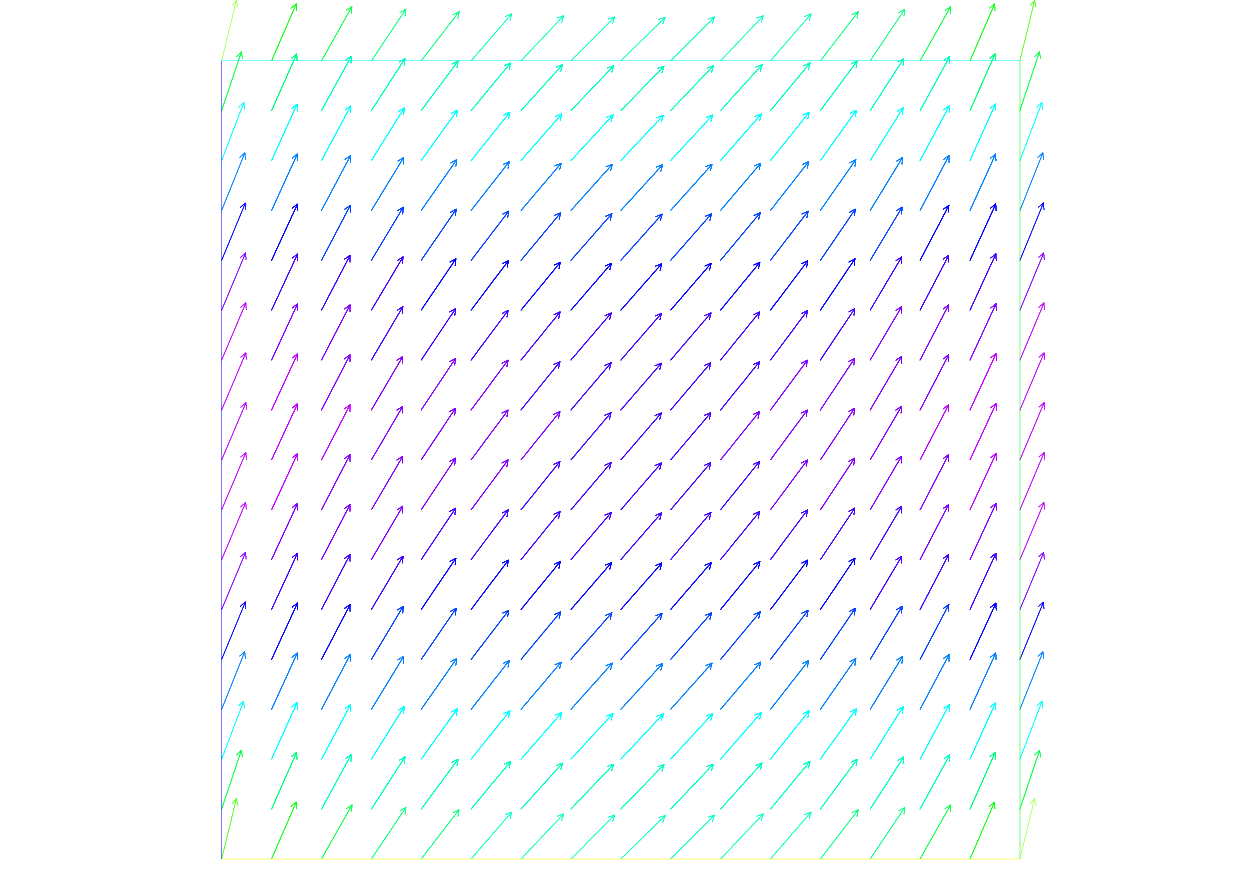}
		\caption{$t=5\times 10^{-4}$}
	\end{subfigure}
	\begin{subfigure}[b]{0.3\textwidth}
		\label{fig:time 1e-03}
		\centering
		\includegraphics[width=\textwidth]{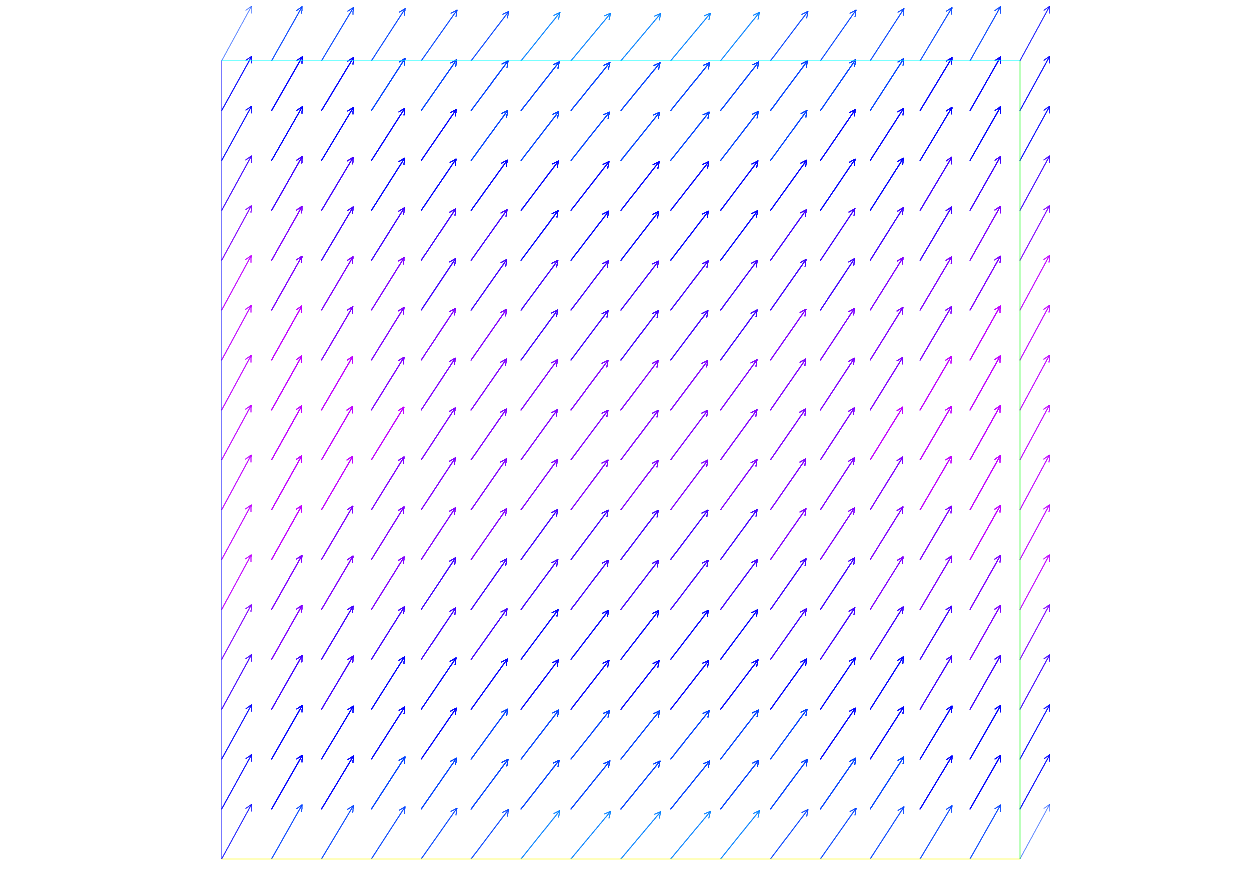}
		\caption{$t=1\times 10^{-3}$}
	\end{subfigure}
	\caption{Snapshots of the magnetic spin field $\bff{u}$ (projected onto $\bb{R}^2$) at given times in experiment 1. The colours indicate relative magnitude of the vectors.}
	\label{fig:snapshots field}
\end{figure}

\subsection{Experiment 2 (Landau--Lifshitz--Baryakhtar)}
Let $\bff{u}:\Omega\to \bb{R}^3$ be vector-valued. The coefficients in \eqref{equ:llbar} are taken to be $\beta_1=0.5, \beta_2=1.0, \beta_3=0.2, \beta_4=2.0$, $\beta_5=0.2$, $\beta_6=0.01$. The initial data $\bff{u}_0$ is given by
\begin{align}\label{equ:initial data 2}
	\bff{u}_0(x,y)
	=
	\big( xy, \, 2xy, \, 40 \sin (2\pi x) \big),
\end{align}
where $(x,y)\in [0,2]\times[0,2]$. The time step size $k= 1\times 10^{-9}$.

\begin{figure}[!hbt]
	\begin{center}
		\begin{tikzpicture}
			\begin{axis}[
				title=Plot of $\|\bff{e}_h\|$ against $1/h$,
				height=0.45\textwidth,
				width=0.45\textwidth,
				xlabel= $1/h$,
				ylabel= $\|\bff{e}_h\|$,
				xmode=log,
				ymode=log,
				legend pos=outer north east,
				legend cell align=left,
				]
				\addplot+[mark=*,red] coordinates {(2,0.296)(4,1.48)(8,0.39)(16,0.088)(32,0.01)(64,0.00074)(128,0.000046)};
				\addplot+[mark=x,blue] coordinates {(2,2.35)(4,10.1)(8,7.1)(16,2.9)(32,0.51)(64,0.11)(128,0.013)};
				\addplot+[mark=square,green] coordinates {(2,24.6)(4,74.7)(8,191.7)(16,133.1)(32,47)(64,14)(128,3.2)};
				\addplot+[dashed,no marks,red,domain=20:128]{3000/x^4};
				\addplot+[dashed,no marks,blue,domain=20:128]{6500/x^3};
				\addplot+[dashed,no marks,green,domain=20:128]{15800/x^2};
				\legend{$L^\infty(\bb{L}^2)$-norm of $\bff{e}_h$,$L^\infty(\bb{H}^1)$-norm of $\bff{e}_h$,$L^\infty(\bb{H}^2)$-norm of $\bff{e}_h$,order 4 reference line,order 3 reference line,order 2 reference line}
			\end{axis}
		\end{tikzpicture}
	\end{center}
	\caption{Order of convergence for experiment 2.}
	\label{fig:order exp2}
\end{figure}

The plot of $\norm{\bff{e}_h}{}$ against $1/h$ is shown in Figure \ref{fig:order exp2}.
Furthermore, snapshots of the magnetic spin field $\bff{u}$ at some indicated times (with initial data \eqref{equ:initial data 2}) are shown in Figure \ref{fig:snapshots field exp2}.

\begin{figure}[!hbt]
	\centering
	\begin{subfigure}[b]{0.3\textwidth}
		\label{fig:time2 0}
		\centering
		\includegraphics[width=\textwidth]{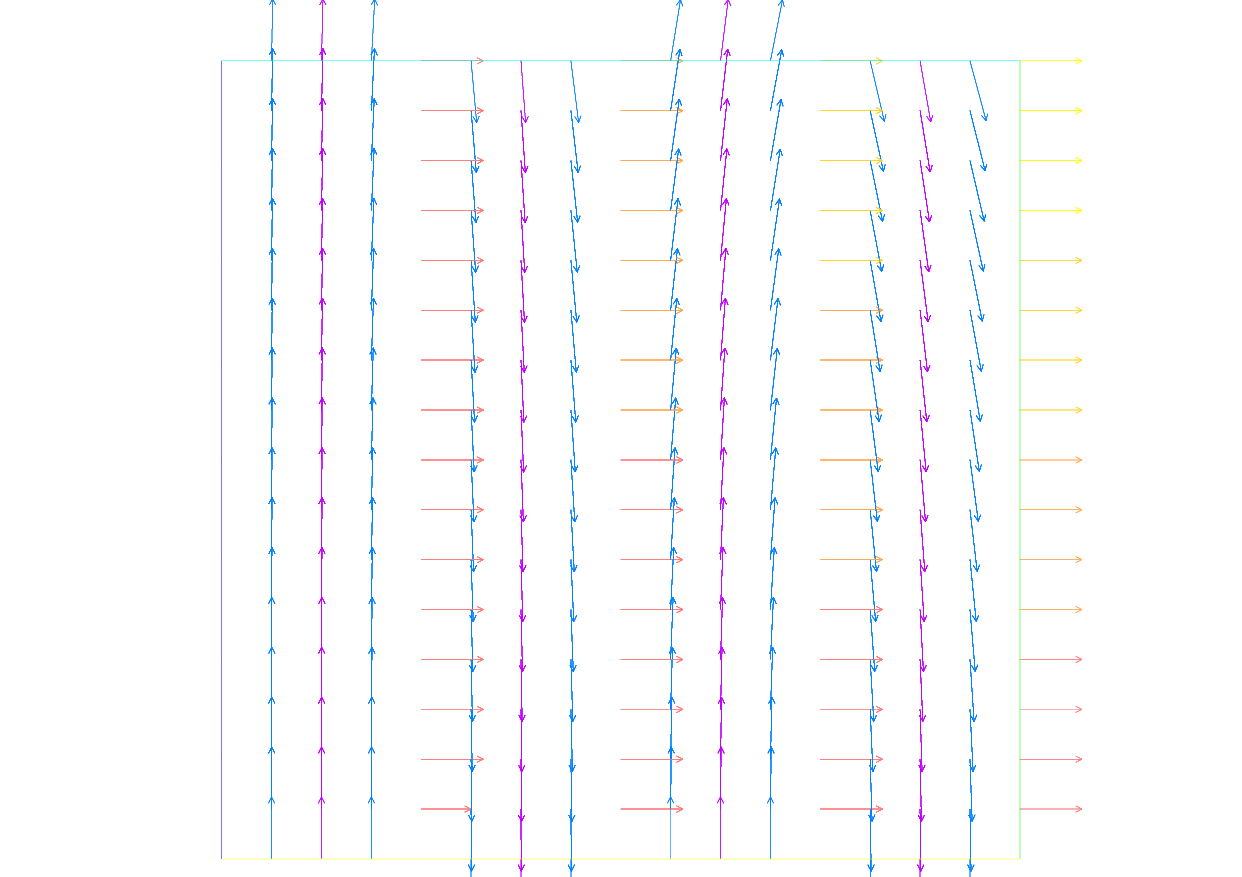}
		\caption{$t=0$}
	\end{subfigure}
	\begin{subfigure}[b]{0.3\textwidth}
		\label{fig:time2 1}
		\centering
		\includegraphics[width=\textwidth]{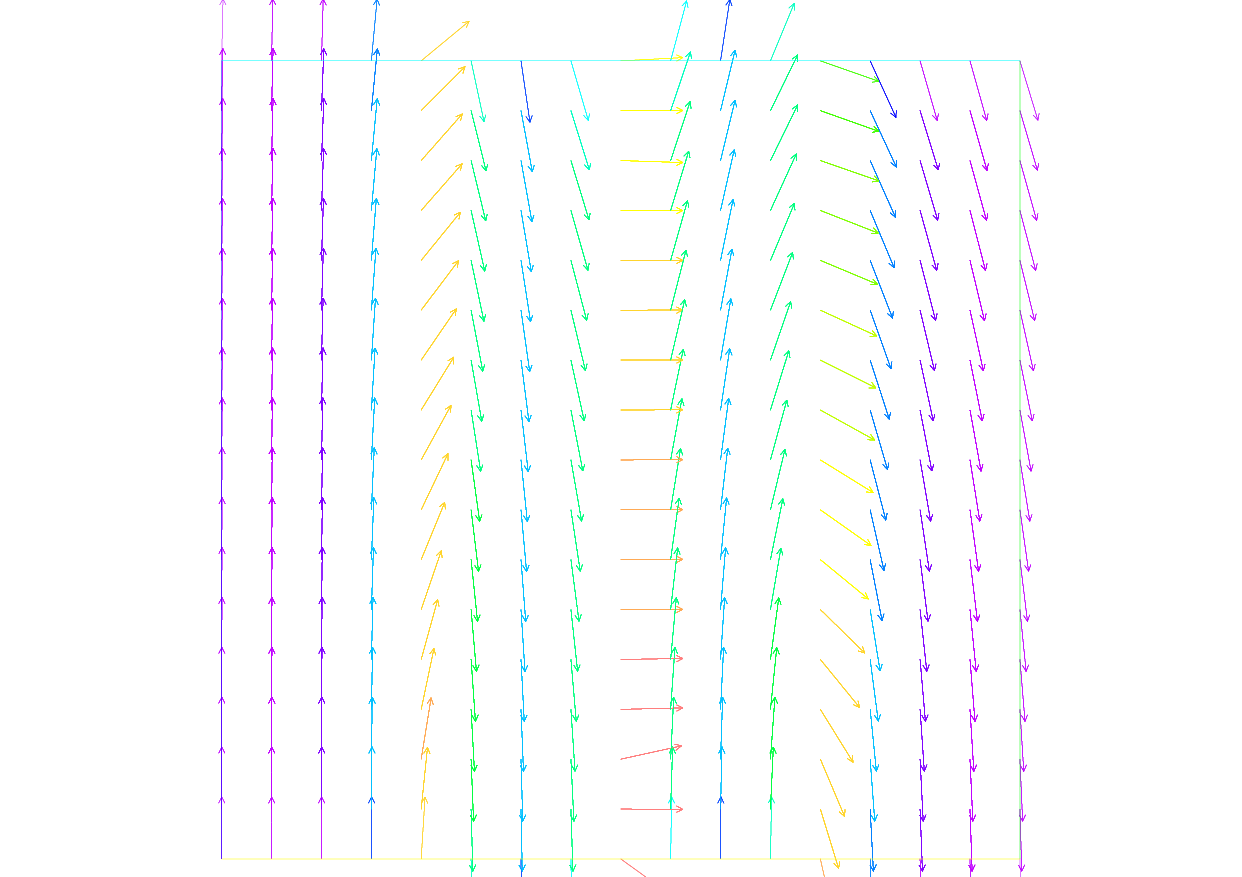}
		\caption{$t=2\times 10^{-4}$}
	\end{subfigure}
	\begin{subfigure}[b]{0.3\textwidth}
		\label{fig:time2 2}
		\centering
		\includegraphics[width=\textwidth]{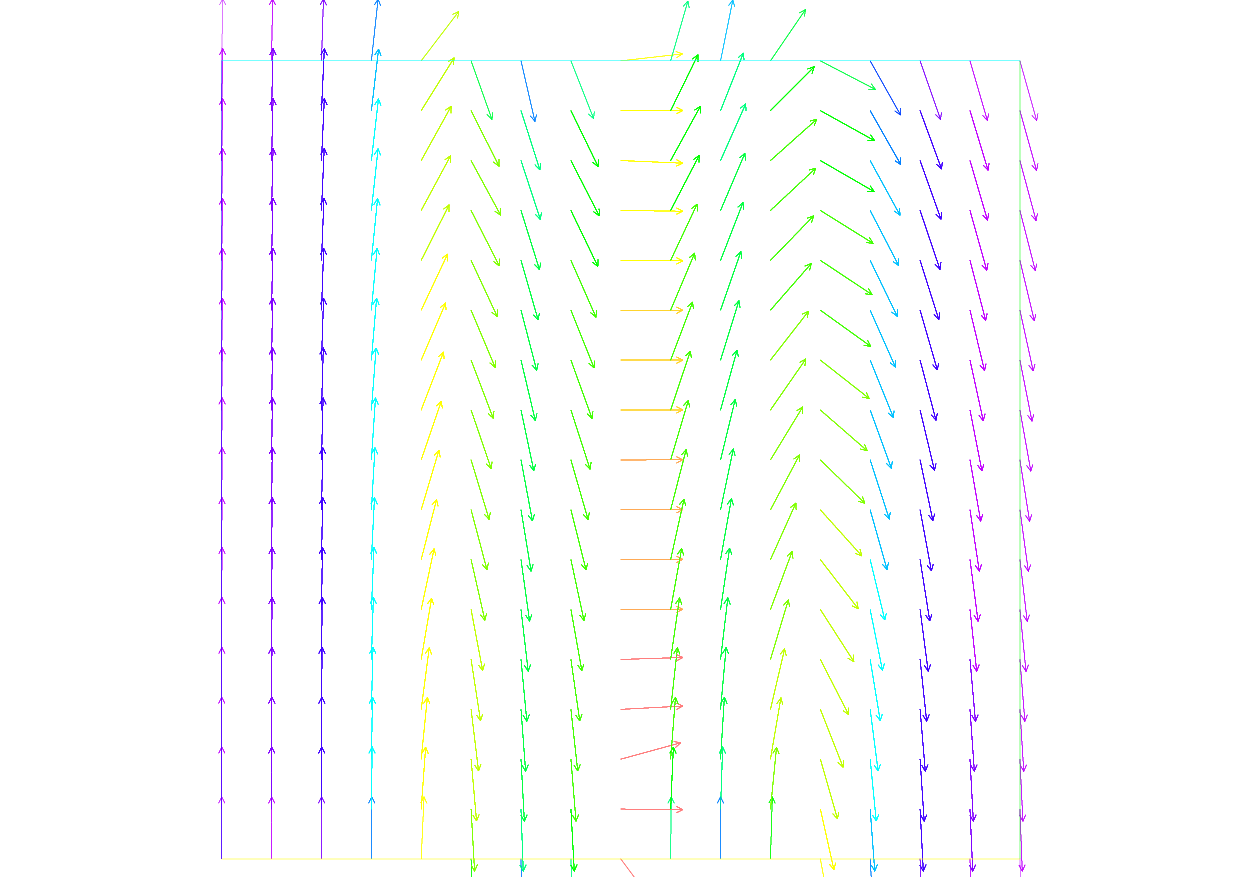}
		\caption{$t=4\times 10^{-4}$}
	\end{subfigure}
	\begin{subfigure}[b]{0.3\textwidth}
		\label{fig:time2 3}
		\centering
		\includegraphics[width=\textwidth]{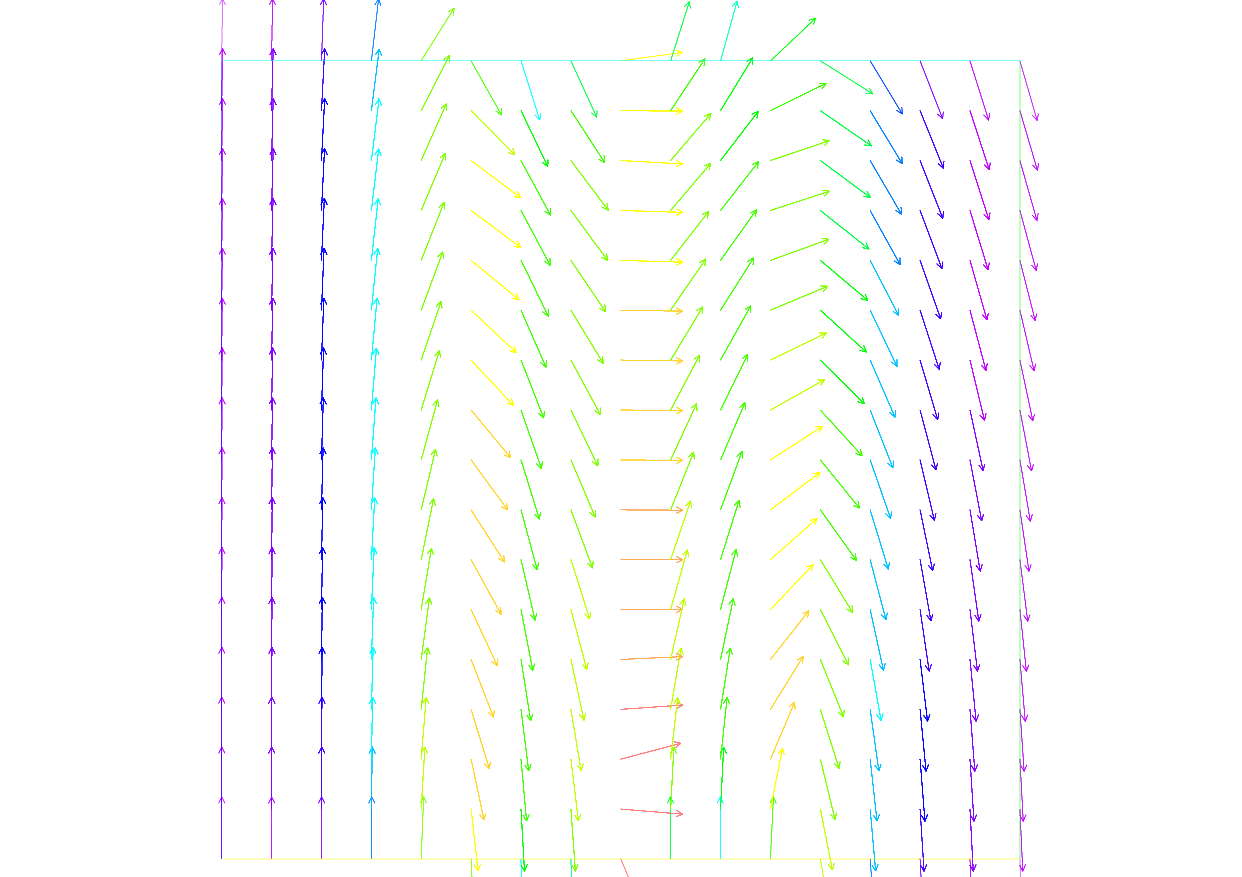}
		\caption{$t=6\times 10^{-4}$}
	\end{subfigure}
	\begin{subfigure}[b]{0.3\textwidth}
		\label{fig:time2 4}
		\centering
		\includegraphics[width=\textwidth]{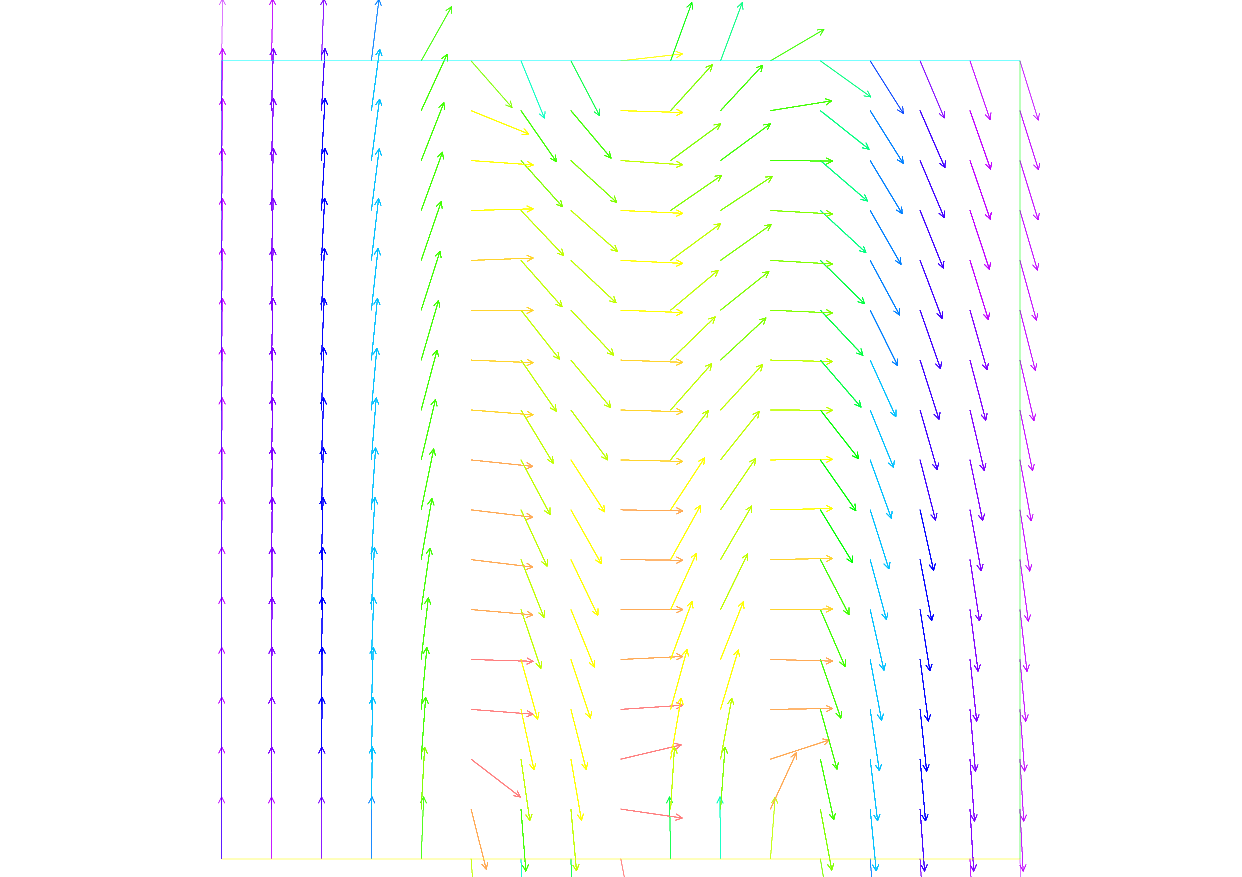}
		\caption{$t=8\times 10^{-4}$}
	\end{subfigure}
	\begin{subfigure}[b]{0.3\textwidth}
		\label{fig:time2 5}
		\centering
		\includegraphics[width=\textwidth]{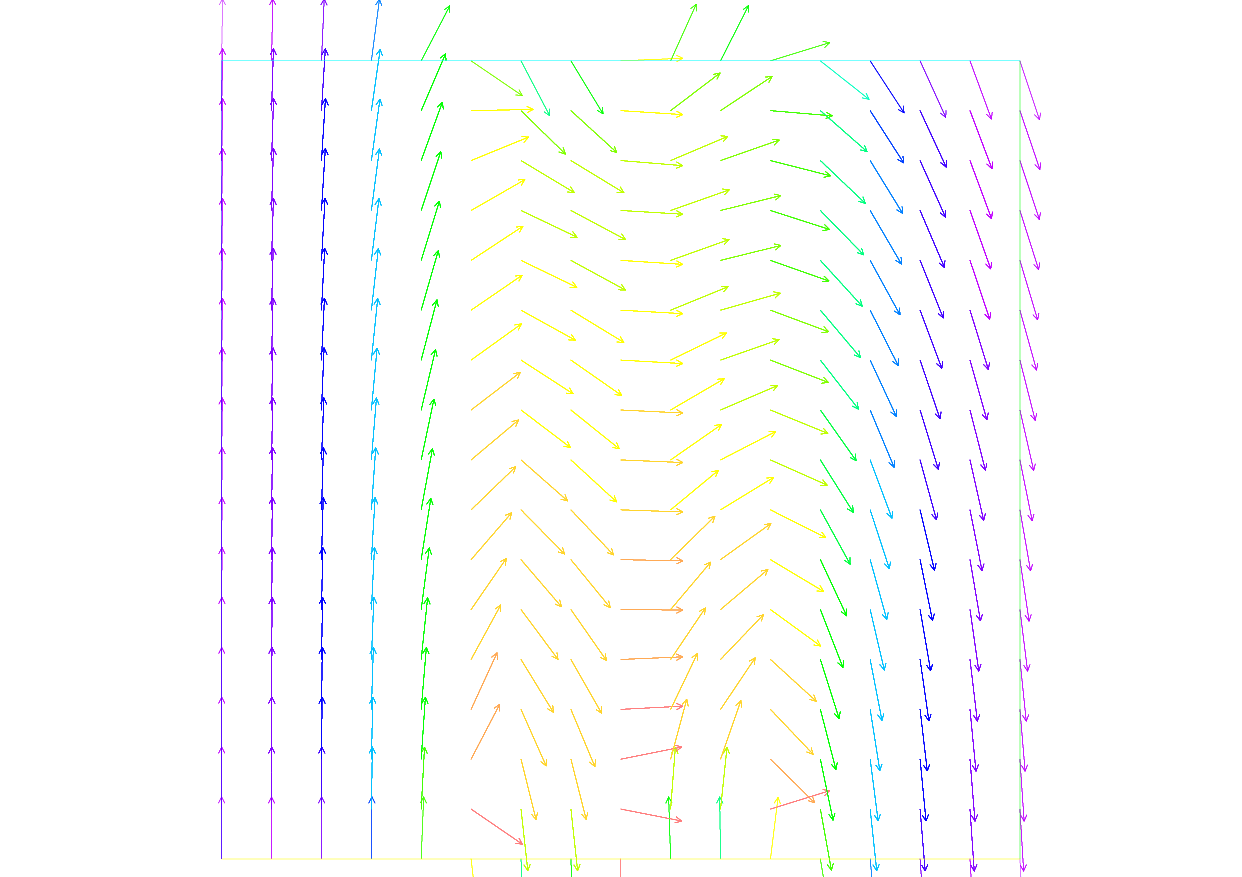}
		\caption{$t=1\times 10^{-3}$}
	\end{subfigure}
	\caption{Snapshots of the magnetic spin field $\bff{u}$ (projected onto $\bb{R}^2$) at given times in experiment 2. The colours indicate relative magnitude of the vectors.}
	\label{fig:snapshots field exp2}
\end{figure}

\subsection{Experiment 3 (Cahn--Hilliard with source term)}
Let $u:\Omega \to \bb{R}$ be scalar-valued. The coefficients in \eqref{equ:llbar} are taken to be $\beta_1=0.1, \beta_2=0.1, \beta_3=4.0, \beta_4=0.0$, $\beta_5=1.0$, and $\beta_6=0.02$. The initial data $u_0$ is given by
\begin{align}\label{equ:initial data 3}
	u_0(x,y)
	=
	2 \sin (2\pi x) \sin(2\pi y),
\end{align}
where $(x,y)\in [0,2]\times[0,2]$. The time step size $k= 1\times 10^{-9}$.

\begin{figure}[!hbt]
	\begin{center}
		\begin{tikzpicture}
			\begin{axis}[
				title=Plot of $\|\bff{e}_h\|$ against $1/h$,
				height=0.45\textwidth,
				width=0.45\textwidth,
				xlabel= $1/h$,
				ylabel= $\|\bff{e}_h\|$,
				xmode=log,
				ymode=log,
				legend pos=outer north east,
				legend cell align=left,
				]
				\addplot+[mark=*,red] coordinates {(4,948)(8,237)(16,25.7)(32,1.04)(64,0.041)(128,0.002)};
				\addplot+[mark=x,blue] coordinates {(4,5070)(8,2585)(16,591)(32,19.8)(64,0.65)(128,0.04)};
				\addplot+[mark=square,green] coordinates {(4,115000)(8,114500)(16,46300)(32,3500)(64,260)(128,30.8)};
				\addplot+[dashed,no marks,red,domain=30:128]{175000/x^4};
				\addplot+[dashed,no marks,blue,domain=30:128]{69000/x^3};
				\addplot+[dashed,no marks,green,domain=30:128]{300000/x^2};
				\legend{$L^\infty(\bb{L}^2)$-norm of $\bff{e}_h$,$L^\infty(\bb{H}^1)$-norm of $\bff{e}_h$,$L^\infty(\bb{H}^2)$-norm of $\bff{e}_h$,order 4 reference line,order 3 reference line,order 2 reference line}
			\end{axis}
		\end{tikzpicture}
	\end{center}
	\caption{Order of convergence for experiment 3.}
	\label{fig:order exp3}
\end{figure}

The plot of $\norm{\bff{e}_h}{}$ against $1/h$ is shown in Figure \ref{fig:order exp3}.
Furthermore, snapshots of the values of $\bff{u}$ at some indicated times (with initial data \eqref{equ:initial data 3}) are shown in Figure \ref{fig:snapshots field exp3}.

\begin{figure}[!hbt]
	\centering
	\begin{subfigure}[b]{0.49\textwidth}
		\label{fig:time3 0}
		\centering
		\includegraphics[width=\textwidth]{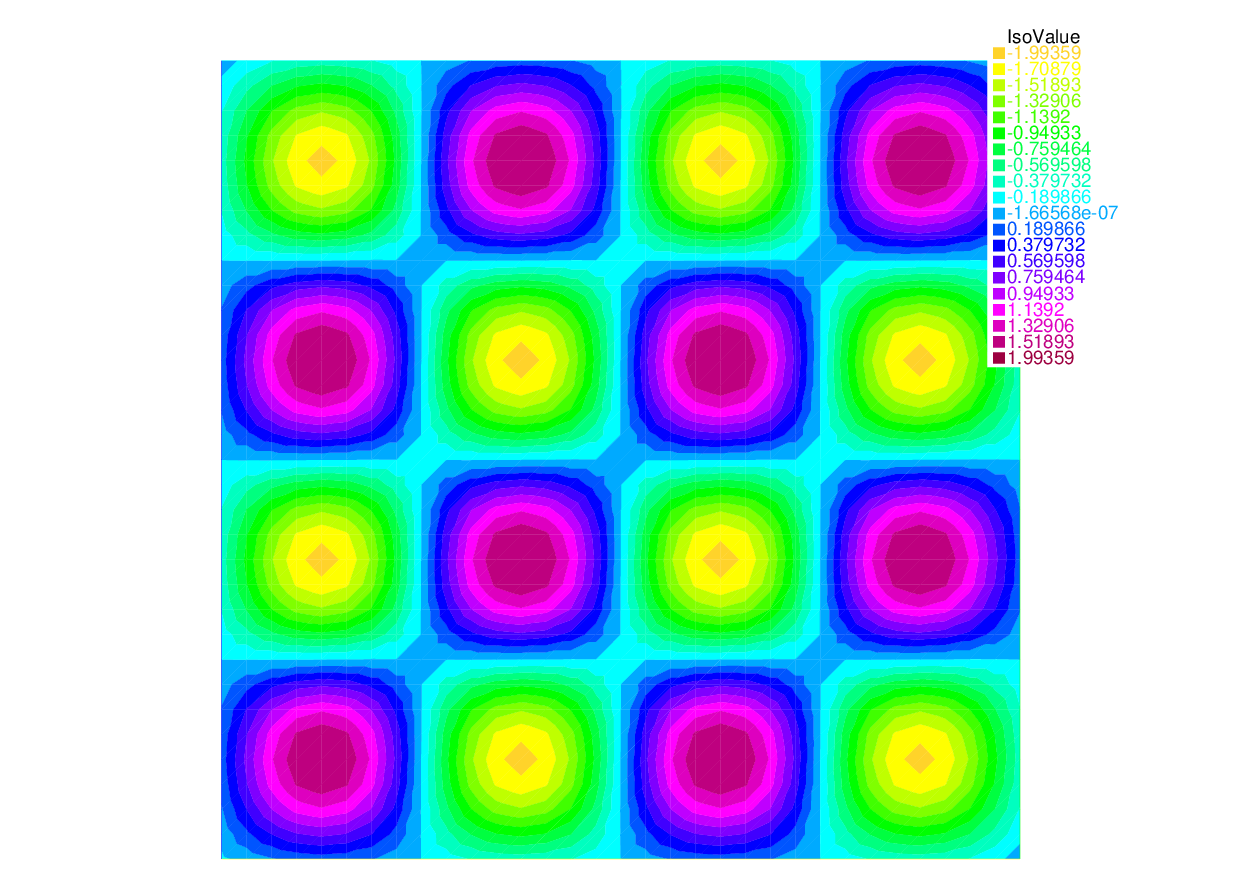}
		\caption{$t=1\times 10^{-3}$}
	\end{subfigure}
	\begin{subfigure}[b]{0.49\textwidth}
		\label{fig:time3 1}
		\centering
		\includegraphics[width=\textwidth]{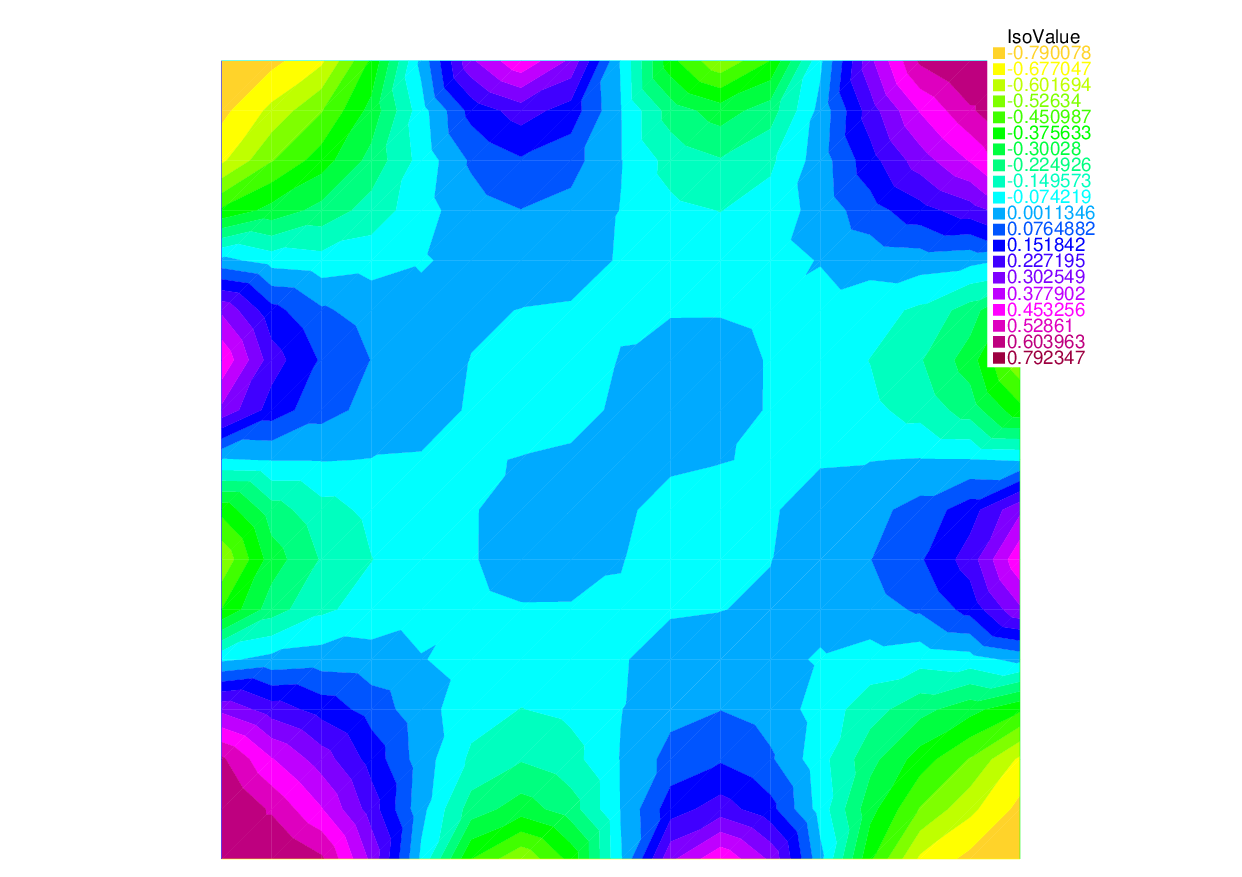}
		\caption{$t=2\times 10^{-2}$}
	\end{subfigure}
	\begin{subfigure}[b]{0.49\textwidth}
		\label{fig:time3 2}
		\centering
		\includegraphics[width=\textwidth]{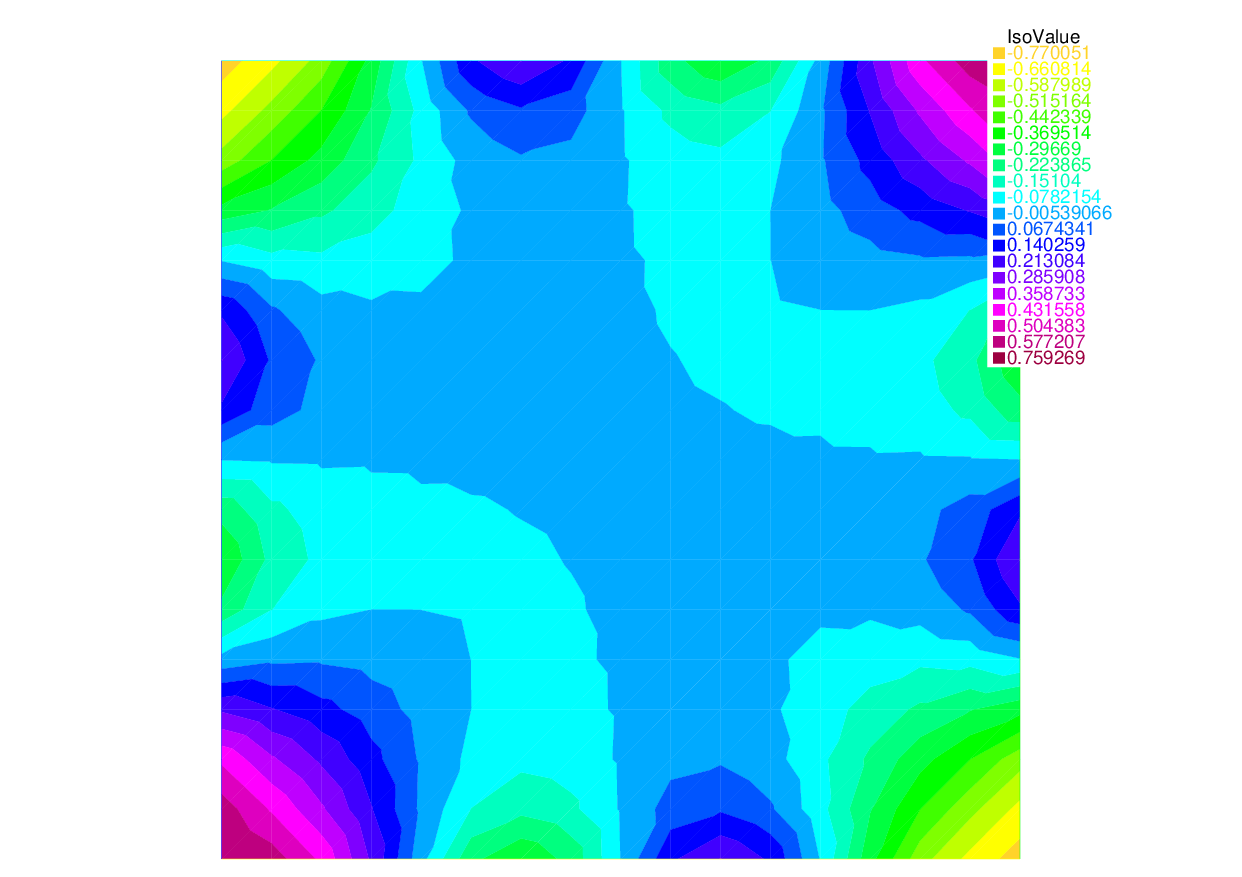}
		\caption{$t=4\times 10^{-2}$}
	\end{subfigure}
	\begin{subfigure}[b]{0.49\textwidth}
		\label{fig:time3 3}
		\centering
		\includegraphics[width=\textwidth]{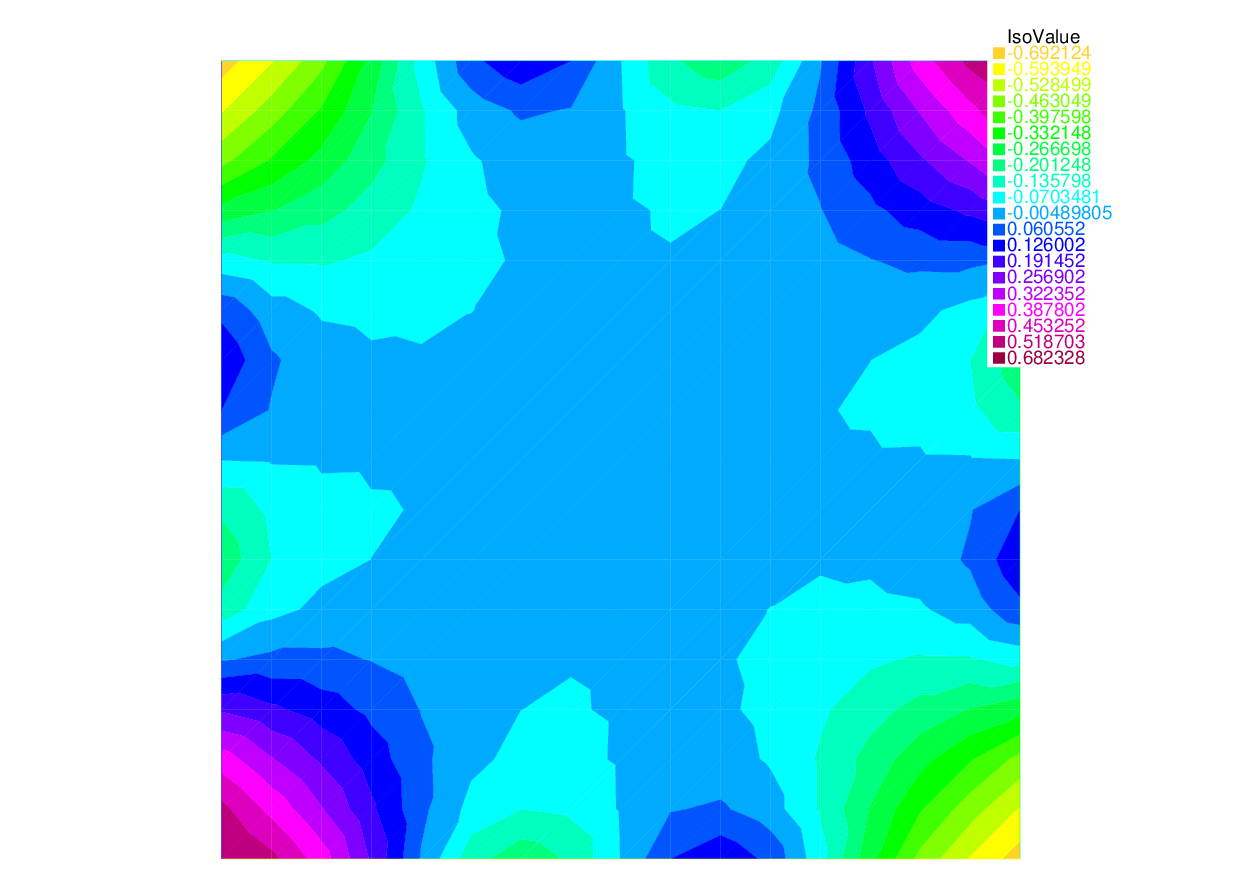}
		\caption{$t=6\times 10^{-2}$}
	\end{subfigure}
	\begin{subfigure}[b]{0.49\textwidth}
		\label{fig:time3 4}
		\centering
		\includegraphics[width=\textwidth]{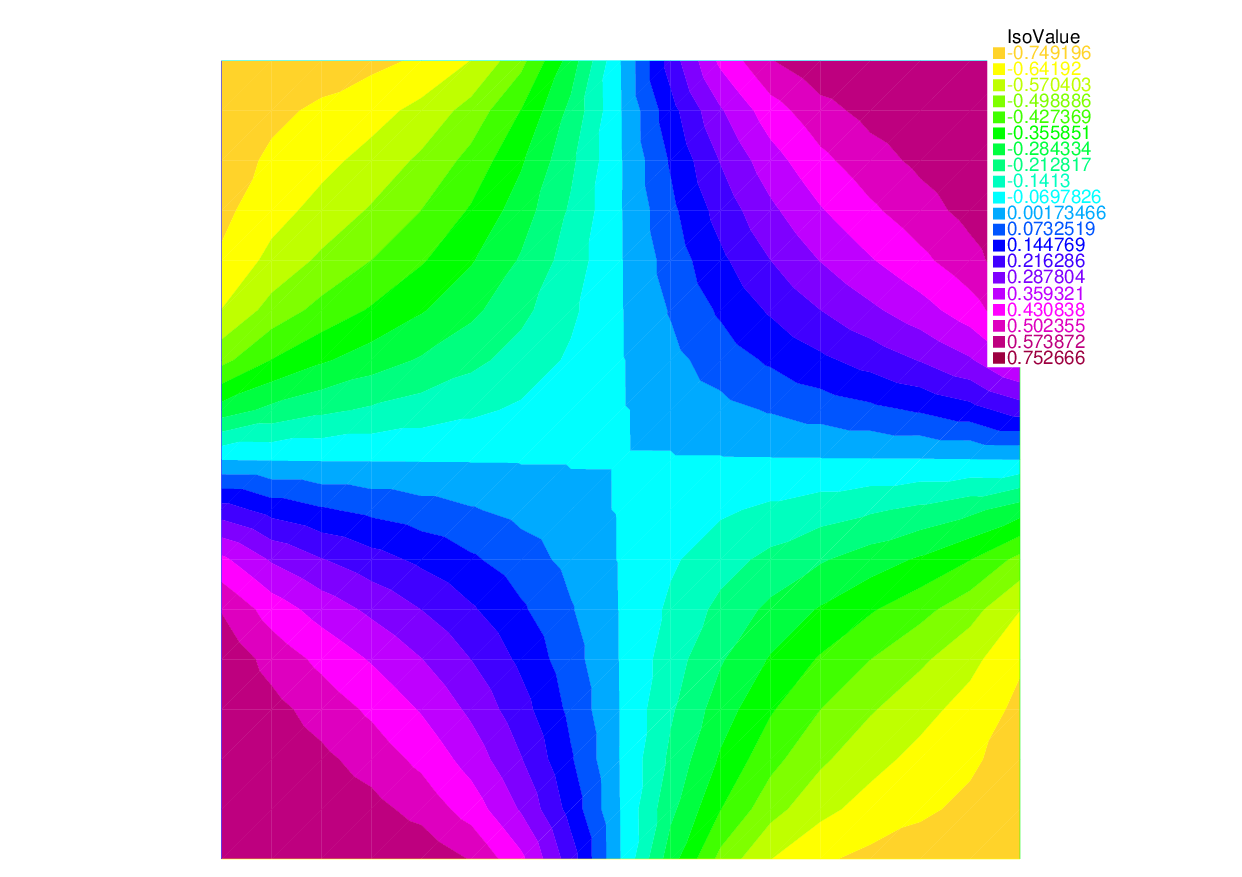}
		\caption{$t=1$}
	\end{subfigure}
	\begin{subfigure}[b]{0.49\textwidth}
		\label{fig:time3 5}
		\centering
		\includegraphics[width=\textwidth]{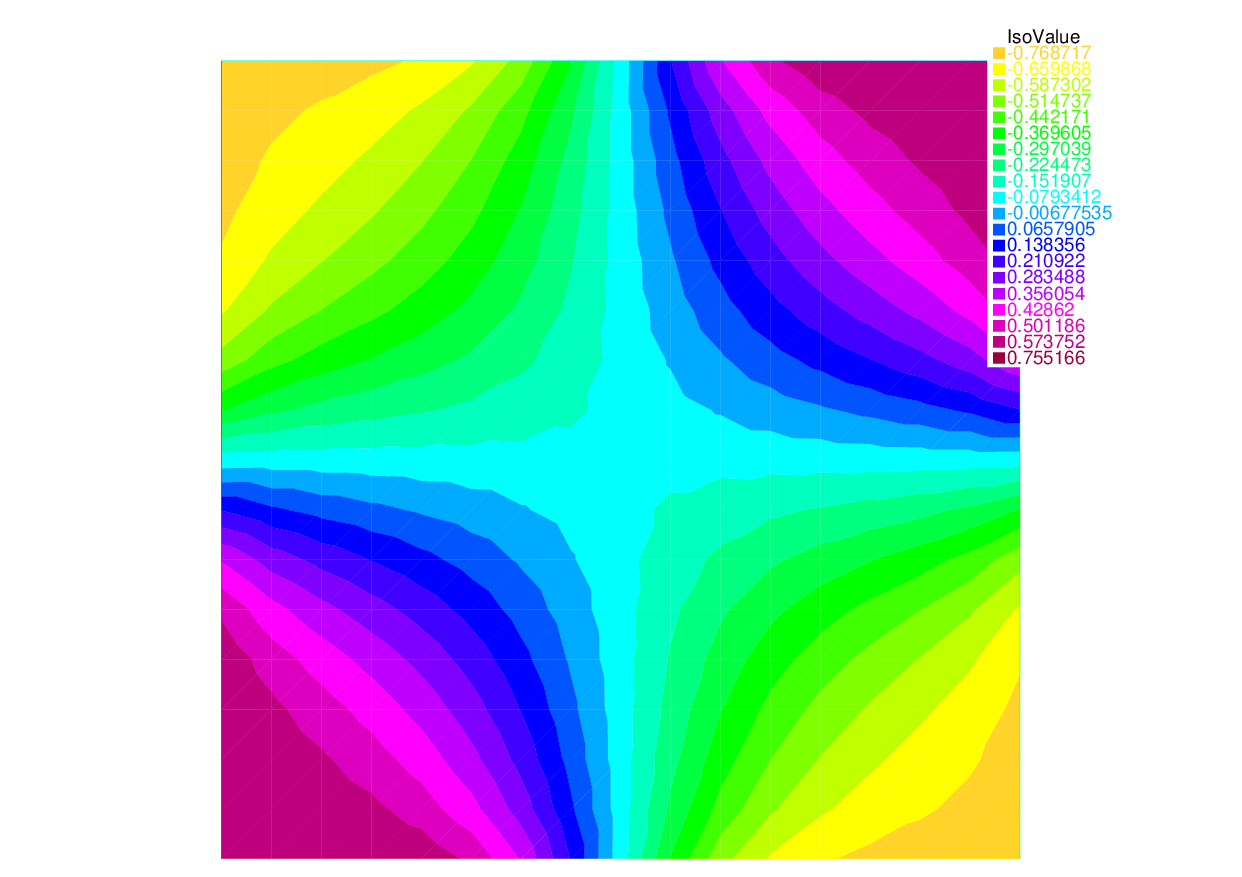}
		\caption{$t=2$}
	\end{subfigure}
	\caption{Snapshots of the function $u$ at given times in experiment 3.}
	\label{fig:snapshots field exp3}
\end{figure}

\section{A Regularity Lemma} \label{sec:lem aux H4}

The following lemma was used in the proof of error estimate for the semi-discrete scheme in Section \ref{sec:semidiscrete}.

\begin{lemma}\label{lem:aux H4}
	Let $\Omega$ be a domain such that the elliptic regularity properties~\eqref{equ:equivnorm-h2}, \eqref{equ:equivnorm-h3}, \eqref{equ:equivnorm-h4} hold, and let $T>0$. Assume that the exact solution $\bff{u}$ of \eqref{equ:llbar}
	belongs to $L^\infty(0,T; \bb{H}_{\bff{n}}^2)$. For any
	$\bff{\varphi} \in \bb{L}^2$ and for each $t\in [0,T]$, there exists
	$\bff{\psi}(t) \in \bb{H}^4 \cap \bb{H}_{\bff{n}}^2$ such that
\begin{align}
	\label{equ:aux H4 part 2}
	\cal{A}(\bff{u}(t); \bff{\xi}, \bff{\psi}(t)) 
	= 
	\inpro{\bff{\varphi}}{\bff{\xi}}_{\bb{L}^2},
	\quad\forall\bff{\xi} \in \bb{H}^2. 
\end{align}
	Moreover,
\begin{align}
	\label{equ:aux H4 est part 2}
	\norm{\bff{\psi}(t)}{\bb{H}^4} 
	&\lesssim 
	\norm{\bff{\varphi}}{\bb{L}^2},
\end{align}
where the constant depends on $T$ and $\norm{\bff{u}}{L^\infty(\bb{H}^2)}$ (but is independent of $t$).
\end{lemma}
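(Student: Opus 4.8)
The plan is to combine the Lax--Milgram theorem with an elliptic regularity bootstrap, in three stages: weak solvability in $\bb{H}_{\bff{n}}^2$ with the $\bb{H}^2$ bound, an upgrade of the solution to $\bb{H}^4$, and finally the extension of the identity to the full test space $\bb{H}^2$.

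First I would produce the weak solution. Since $\bff{u}(t)\in\bb{H}_{\bff{n}}^2$ for every $t$, Lemma~\ref{lem:A bou coe} gives that $\cal{A}(\bff{u}(t);\cdot,\cdot)$ is bounded on $\bb{H}_{\bff{n}}^2\times\bb{H}_{\bff{n}}^2$ with a constant controlled by $\norm{\bff{u}}{L^\infty(\bb{H}^2)}$, and coercive with a constant $\mu$ independent of $t$ (this uses that $\alpha$ lies in the admissible range of that lemma: any $\alpha>0$ if $\beta_1\ge0$, and $\alpha>\beta_1^2/\beta_2$ if $\beta_1<0$, as is assumed in the weak formulation~\eqref{equ:weakform}). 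As $\bff{\xi}\mapsto\inpro{\bff{\varphi}}{\bff{\xi}}_{\bb{L}^2}$ is bounded on $\bb{H}_{\bff{n}}^2$, Lax--Milgram yields a unique $\bff{\psi}(t)\in\bb{H}_{\bff{n}}^2$ with $\cal{A}(\bff{u}(t);\bff{\xi},\bff{\psi}(t))=\inpro{\bff{\varphi}}{\bff{\xi}}_{\bb{L}^2}$ for all $\bff{\xi}\in\bb{H}_{\bff{n}}^2$; testing with $\bff{\xi}=\bff{\psi}(t)$ and using coercivity gives $\norm{\bff{\psi}(t)}{\bb{H}^2}\lesssim\norm{\bff{\varphi}}{\bb{L}^2}$, with a $t$-independent constant depending only on the coefficients, $\Omega$, and $\norm{\bff{u}}{L^\infty(\bb{H}^2)}$.

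Next I would bootstrap to $\bb{H}^4$. Using the symmetry of $\cal{B}$ in its last two arguments and the antisymmetry $\cal{C}(\bff{u};\bff{\xi},\bff{\psi})=-\cal{C}(\bff{u};\bff{\psi},\bff{\xi})$, I rewrite the weak equation by moving every term except $\beta_2\inpro{\Delta\bff{\xi}}{\Delta\bff{\psi}(t)}_{\bb{L}^2}$ to the right and integrating by parts so that all derivatives land on $\bff{\psi}(t)$ or $\bff{u}(t)$; each arising boundary integral carries a factor $\partial\bff{\psi}(t)/\partial\bff{n}$ and hence vanishes because $\bff{\psi}(t)\in\bb{H}_{\bff{n}}^2$. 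By the embeddings $\bb{H}^2\subset\bb{L}^\infty\cap\bb{W}^{1,6}$ and the product estimates of Lemma~\ref{lem:equivnorm} (e.g.~\eqref{equ:prod 3 vec dot vec mat} and~\eqref{equ:prod sobolev mat dot}), the quantities $|\bff{u}|^2\nabla\bff{\psi}$, $\bff{u}(\bff{u}\cdot\nabla\bff{\psi})$ and $\bff{u}\times\nabla\bff{\psi}$ belong to $\bb{H}^1$, so their divergences lie in $\bb{L}^2$; consequently there is $\bff{g}(t)\in\bb{L}^2$ with $\beta_2\inpro{\Delta\bff{\xi}}{\Delta\bff{\psi}(t)}_{\bb{L}^2}=\inpro{\bff{g}(t)}{\bff{\xi}}_{\bb{L}^2}$ for all $\bff{\xi}\in\bb{H}_{\bff{n}}^2$ and $\norm{\bff{g}(t)}{\bb{L}^2}\lesssim(1+\norm{\bff{u}(t)}{\bb{H}^2}^2)\norm{\bff{\psi}(t)}{\bb{H}^2}\lesssim(1+\norm{\bff{u}}{L^\infty(\bb{H}^2)}^2)\norm{\bff{\varphi}}{\bb{L}^2}$. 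Taking $\bff{\xi}$ constant shows $\int_\Omega\bff{g}(t)=0$, and $\int_\Omega\Delta\bff{\psi}(t)=0$ since $\partial\bff{\psi}(t)/\partial\bff{n}=0$. Writing $\bff{w}:=\Delta\bff{\psi}(t)$, a duality argument using self-adjointness of the Neumann Laplacian (whose $\bb{L}^2\to\bb{H}^2$ regularity shift holds on $\Omega$ by~\eqref{equ:equivnorm-h2}) identifies $\bff{w}$ with the mean-zero solution of a Neumann--Laplace problem with right-hand side proportional to $\bff{g}(t)$, so $\bff{w}\in\bb{H}^2$ with $\norm{\bff{w}}{\bb{H}^2}\lesssim\norm{\bff{g}(t)}{\bb{L}^2}$; then $\Delta\bff{\psi}(t)=\bff{w}\in\bb{H}^2$ with homogeneous Neumann data yields $\bff{\psi}(t)\in\bb{H}^4\cap\bb{H}_{\bff{n}}^2=\bb{H}_{\bff{n}}^4$, and~\eqref{equ:equivnorm-h4} gives $\norm{\bff{\psi}(t)}{\bb{H}^4}\lesssim\norm{\bff{\psi}(t)}{\bb{L}^2}+\norm{\Delta\bff{\psi}(t)}{\bb{L}^2}+\norm{\Delta^2\bff{\psi}(t)}{\bb{L}^2}\lesssim\norm{\bff{\varphi}}{\bb{L}^2}$, which is~\eqref{equ:aux H4 est part 2}.

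Finally, to obtain~\eqref{equ:aux H4 part 2} for every $\bff{\xi}\in\bb{H}^2$ (not merely $\bb{H}_{\bff{n}}^2$), I would note that since $\bff{\psi}(t)\in\bb{H}_{\bff{n}}^4$ one may integrate by parts in $\cal{A}(\bff{u}(t);\bff{\xi},\bff{\psi}(t))$ to place all derivatives on $\bff{\psi}(t)$; every boundary term then contains $\partial\bff{\psi}(t)/\partial\bff{n}$ or $\partial(\Delta\bff{\psi}(t))/\partial\bff{n}$ and hence vanishes, so $\cal{A}(\bff{u}(t);\bff{\xi},\bff{\psi}(t))=\inpro{\bff{F}(t)}{\bff{\xi}}_{\bb{L}^2}$ for a fixed $\bff{F}(t)\in\bb{L}^2$; the identity already established on the subspace $\bb{H}_{\bff{n}}^2$, which is dense in $\bb{L}^2$, forces $\bff{F}(t)=\bff{\varphi}$, whence the identity holds for all $\bff{\xi}\in\bb{H}^2$. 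I expect the regularity step to be the main obstacle: one must check carefully that the $\bff{u}$-dependent lower-order terms are genuinely controllable in $\bb{L}^2$ through the product estimates and that the correct homogeneous natural boundary conditions emerge, so that the problem reduces cleanly to two Neumann--Laplace regularity shifts on $\Omega$.
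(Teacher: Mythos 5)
Your route is genuinely different from the paper's. The paper proves this lemma by a Faedo--Galerkin argument: it expands in eigenfunctions of the Neumann Laplacian, solves the problem on each finite-dimensional span $\bb{S}_n$, and obtains the $\bb{H}^4$ bound by testing with $\Delta^2\bff{\psi}_n$ --- a test function that is admissible only because $\Delta^2$ maps $\bb{S}_n$ into itself --- before passing to the limit via Banach--Alaoglu and \eqref{equ:equivnorm-h4}. You instead apply Lax--Milgram directly on $\bb{H}_{\bff{n}}^2$ and then bootstrap through two Neumann--Laplace regularity shifts, using a transposition argument to identify $\Delta\bff{\psi}$ with the solution of a Neumann problem. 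Your first two stages are sound and arguably more transparent: the identification of $\bff{w}=\Delta\bff{\psi}$ as the mean-zero Neumann solution also hands you $\partial(\Delta\bff{\psi})/\partial\bff{n}=0$ for free, which is exactly what \eqref{equ:equivnorm-h4} requires. (Both your bootstrap and the paper's eigenfunction regularity rely on honest $\bb{L}^2\to\bb{H}^2$ and $\bb{H}^2\to\bb{H}^4$ regularity for the Neumann Laplacian, slightly more than the literal a priori inequalities \eqref{equ:equivnorm-h2}--\eqref{equ:equivnorm-h4}; this is implicit in the paper's smoothness assumptions on $\Omega$.)

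Your third stage, however, contains an error. When you integrate $\beta_2\inpro{\Delta\bff{\xi}}{\Delta\bff{\psi}}_{\bb{L}^2}$ by parts twice to put all derivatives on $\bff{\psi}$, the surviving boundary terms are
$\beta_2\int_{\partial\Omega}\frac{\partial\bff{\xi}}{\partial\bff{n}}\cdot\Delta\bff{\psi}\,\mathrm{d}S-\beta_2\int_{\partial\Omega}\bff{\xi}\cdot\frac{\partial(\Delta\bff{\psi})}{\partial\bff{n}}\,\mathrm{d}S$; the second vanishes because $\bff{\psi}\in\bb{H}_{\bff{n}}^4$, but the first involves the normal derivative of the \emph{test function}, not of $\bff{\psi}$, and does not vanish for a general $\bff{\xi}\in\bb{H}^2$ unless $\Delta\bff{\psi}=\bff{0}$ on $\partial\Omega$, which you have no reason to expect. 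Moreover $\bb{H}_{\bff{n}}^2$ is a proper closed subspace of $\bb{H}^2$, so no density argument can rescue the extension. What you actually establish is \eqref{equ:aux H4 part 2} for all $\bff{\xi}\in\bb{H}_{\bff{n}}^2$. This is not fatal in context: the lemma is only ever invoked with $\bff{\xi}=\bff{\rho}(t)$ or $\partial_t\bff{\rho}(t)$, both of which lie in $\bb{H}_{\bff{n}}^2$, and the paper's own Galerkin limit likewise only yields the identity on the $\bb{H}^2$-closure of the eigenfunction span, which is again $\bb{H}_{\bff{n}}^2$. So you should either restrict the test space in your conclusion to $\bb{H}_{\bff{n}}^2$ or explicitly note that the boundary contribution $\int_{\partial\Omega}\frac{\partial\bff{\xi}}{\partial\bff{n}}\cdot\Delta\bff{\psi}\,\mathrm{d}S$ obstructs the extension to all of $\bb{H}^2$; do not assert that every boundary term carries a normal derivative of $\bff{\psi}$.
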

\begin{proof}
We use the Faedo--Galerkin method.
Let $\{\bff{e}_i\}_{i=1}^\infty$ denote an orthonormal basis of $\bb{L}^2$
consisting of eigenfunctions for $-\Delta$ such that
\begin{align*}
	-\Delta \bff{e}_i =\lambda_i \bff{e}_i \ \text{ in $\Omega$}
	\quad\text{and}\quad
	\frac{\partial \bff{e}_i}{\partial \bff{n}}= \bff{0} \ \text{ on } \partial \Omega,
\end{align*}
where $\lambda_i>0$ are the eigenvalues of $-\Delta$, associated with
$\bff{e}_i$. By elliptic regularity, for $j=2,3$,
\begin{align*}
	\Delta^{j} \bff{e}_i 
	= \lambda_i^{j} \bff{e}_i \ \text{ in $\Omega$}
	\quad\text{and}\quad
	\frac{\partial(\Delta^{j-1} \bff{e}_i)}{\partial \bff{n}} 
	=\bff{0} \ \text{ on } \partial \Omega.
\end{align*}
Let $\bb{S}_n:= \text{span}\{\bff{e}_1,\ldots,\bff{e}_n\}$ and $\Pi_n: \bb{L}^2\to
\bb{S}_n$ be the orthogonal projection defined by
\begin{align*}
	\inpro{ \Pi_n \bff{v}}{\bff{\phi}}_{\bb{L}^2}
	= \inpro{ \bff{v}}{\bff{\phi}
	}_{\bb{L}^2} \quad \text{for all } 
	\bff{\phi}\in \bb{S}_n \text{ and all }
	\bff{v}\in\bb{L}^2.
\end{align*}
Note that $\Pi_n$ is self-adjoint and satisfies
\begin{equation}\label{equ:Pin}
	\norm{\Pi_n \bff{v}}{\bb{L}^2} \leq \norm{\bff{v}}{\bb{L}^2} \quad \text{for all }
	\bff{v}\in \bb{L}^2.
\end{equation}

Given $\bff{\varphi}\in \bb{L}^2$, for each $t\in[0,T]$,
the Faedo--Galerkin method seeks to approximate the solution to \eqref{equ:aux
H4 part 2} by $\bff{\psi}_n(t) \in \bb{S}_n$ satisfying the equation
\begin{align}\label{equ:faedo aux H4}
	\cal{A}(\bff{u}(t); \bff{\psi}_n(t), \bff{\xi}) = \inpro{\bff{\varphi}}{\bff{\xi}}_{\bb{L}^2}
	\; \text{ for all } 
	\bff{\xi} \in \bb{S}_n.
\end{align}
The existence of such $\bff{\psi}_n(t)$ follows by a standard argument and the
Cauchy--Lipschitz theorem. We will need the following bound on
$\bff{\psi}_n(t)$, the proof of which is deferred till the end of this proof:
\begin{align}\label{equ:del sq psi}
\norm{\bff{\psi}_n(t)}{\bb{H}^4} 
\lesssim 
\norm{\bff{\varphi}}{\bb{L}^2}.
\end{align}
It follows from \eqref{equ:del sq psi} and \eqref{equ:H2 psi}, and the Banach--Alaoglu theorem that for each $t\in [0,T]$, there is a subsequence, which is still denoted by $\{\bff{\psi}_n(t)\}$, such that
\begin{align*}
	\bff{\psi}_n(t) &\rightharpoonup \bff{\psi}(t) \quad \text{weakly in } \bb{H}^4.
\end{align*}
Since $\bb{H}^4$ is compactly embedded into $\bb{H}^3$, a further subsequence then satisfies
\begin{align*}
	\bff{\psi}_n(t) \to \bff{\psi}(t) \quad \text{strongly in } \bb{H}^{3}.
\end{align*}
Note that Sobolev embedding gives $\bb{H}^{3}(\Omega)\subset
\bff{C}^1(\overline{\Omega})$, and thus in particular
$\partial\bff{\psi}/\partial \bff{n}=\bff{0}$ on $\partial \Omega$. Standard
argument then shows $\bff{\psi}(t)$ satisfies the equation \eqref{equ:aux H4
part 2}, while \eqref{equ:aux H4 est part 2} follows from \eqref{equ:del sq
psi}. 

We now prove~\eqref{equ:del sq psi}.
	Firstly, by taking $\bff{\xi}= \bff{\psi}_n$ in \eqref{equ:faedo aux
	H4}, and using \eqref{equ:coercive}, we have
\begin{align*}
	\mu \norm{\bff{\psi}_n}{\bb{H}^2}^2 
	\leq
	\cal{A}(\bff{u}; \bff{\psi}_n, \bff{\psi}_n)
	=
	\inpro{\bff{\varphi}}{\bff{\psi}_n}_{\bb{L}^2}
	\leq
	\norm{\bff{\varphi}}{\bb{L}^2}
	\norm{\bff{\psi}_n}{\bb{L}^2},
\end{align*}
which implies
\begin{equation}\label{equ:H2 psi}
	\norm{\bff{\psi}_n}{\bb{H}^2} 
	\lesssim
	\norm{\bff{\varphi}}{\bb{L}^2}.
\end{equation}
Next, by taking $\bff{\xi}= \Delta^2 \bff{\psi}_n$ in \eqref{equ:faedo aux
H4} and integrating by parts the terms involving $\alpha, \beta_1,\beta_2$, we have (after rearranging the terms)
\begin{align}\label{equ:H4 psi}	
	&\alpha \norm{\Delta \bff{\psi}_n}{\bb{L}^2}^2
	+ \beta_2 \norm{\Delta^2 \bff{\psi}_n}{\bb{L}^2}^2
	\nonumber \\
	&=
	\beta_1 \inpro{\Delta \bff{\psi}_n}{\Delta^2 \bff{\psi}_n}_{\bb{L}^2}
	-
	\cal{B}(\bff{u},\bff{u}; \bff{\psi}_n, \Delta^2 \bff{\psi}_n)
	-
	\cal{C}(\bff{u};\bff{\psi}_n, \Delta^2 \bff{\psi}_n)
	+
	\inpro{\bff{\varphi}}{\Delta^2 \bff{\psi}_n}_{\bb{L}^2}
	\nonumber
	\\
	\nonumber
	&\le
	|\beta_1| 
	\norm{\Delta\bff{\psi}_n}{\bb{L}^2}
	\norm{\Delta^2\bff{\psi}_n}{\bb{L}^2}
	+
	\big| \cal{B}(\bff{u},\bff{u}; \bff{\psi}_n, \Delta^2 \bff{\psi}_n) \big|
	+
	\big| \cal{C}(\bff{u};\bff{\psi}_n, \Delta^2 \bff{\psi}_n) \big|
	+
	\norm{\bff{\varphi}}{\bb{L}^2}
	\norm{\Delta^2\bff{\psi}_n}{\bb{L}^2}
	\\
	&\lesssim
	\norm{\Delta\bff{\psi}_n}{\bb{L}^2}
	\norm{\Delta^2\bff{\psi}_n}{\bb{L}^2}
	+
	\left(1+\norm{\bff{u}}{\bb{H}^2}^2\right)
	\norm{\bff{\psi_n}}{\bb{H}^2}
	\norm{\Delta^2 \bff{\psi}_n}{\bb{L}^2}
	+
	\norm{\bff{\varphi}}{\bb{L}^2}
	\norm{\Delta^2\bff{\psi}_n}{\bb{L}^2}.
\end{align}
We deduce from~\eqref{equ:H4 psi} that
\begin{align*}
	\norm{\Delta^2 \bff{\psi}_n(t)}{\bb{L}^2}^2 
	&\lesssim
	\norm{\bff{\psi}_n(t)}{\bb{H}^2} 
	\norm{\Delta^2 \bff{\psi}_n(t)}{\bb{L}^2}
	+
	\norm{\bff{\varphi}}{\bb{L}^2}
	\norm{\Delta^2 \bff{\psi}_n(t)}{\bb{L}^2},
\end{align*}
which implies
\[
	\norm{\Delta^2 \bff{\psi}_n(t)}{\bb{L}^2}
	\lesssim
	\norm{\bff{\varphi}}{\bb{L}^2}.
\]
With assumption \eqref{equ:equivnorm-h4}, this and \eqref{equ:H2 psi}
imply the required estimate, completing the proof of the lemma.
\end{proof}

\section*{Acknowledgements}
The first author is supported by the Australian Government Research Training
Program (RTP) Scholarship awarded at the University of New South Wales, Sydney. Both
authors are partially supported by the Australian Research Council under grant
number DP200101866 and DP220101811.

\section{Appendix}

We collect here a few results which are used in this paper.

\begin{theorem}[Bihari--Gronwall's inequality \cite{Bih56, BoyFab13}]\label{the:bihari}
Let $f$ be a non-decreasing continuous function which is non-negative on $[0, \infty)$ such that $\int_1^\infty 1/f(x)\,\dx < \infty$. Let $F$ be the anti-derivative of $-1/f$ which vanishes at $\infty$. Let $y$ be a non-negative continuous function and let $g$ be a locally integrable non-negative function $[0,\infty)$. Suppose that there exists a $y_0>0$ such that for all $t \geq 0$,
\begin{align*}
	y(t) \leq y_0 + \int_0^t g(s) \,\ds + \int_0^t f(y(s)) \,\ds.
\end{align*}
Then for any $T < T^*$,
\begin{align*}
	\sup_{t\leq T} y(t) \leq F^{-1} \left(F \left(y_0 + \int_0^T g(s) \,\ds \right) - T \right),
\end{align*}
where $T^*$ is the unique solution of the equation
\begin{align*}
	T^* = F \left(y_0 + \int_0^{T^*} g(s) \,\ds \right).
\end{align*}
\end{theorem}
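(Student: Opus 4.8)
The plan is to reduce the stated inequality, which carries the extra forcing term $\int_0^t g$, to the classical forcing-free Bihari argument by pushing all of the $g$-mass to the front. Fix $T < T^*$ and set $G(t) := y_0 + \int_0^t g(s)\,\ds$, which is non-decreasing because $g \ge 0$. Recall that $F$, the anti-derivative of $-1/f$ vanishing at infinity, is given explicitly by $F(x) = \int_x^\infty \ds/f(s)$, so $F$ is $C^1$ with $F'(x) = -1/f(x) < 0$ wherever $f > 0$; hence $F$ is strictly decreasing and invertible there. First I would note that, directly from the hypothesis and the monotonicity of $G$, for every $t \in [0,T]$,
\[
y(t) \le G(t) + \int_0^t f(y(s))\,\ds \le G(T) + \int_0^t f(y(s))\,\ds =: \widetilde z(t).
\]
Thus $y \le \widetilde z$ on $[0,T]$ with no comparison-principle circularity (the bound $G(t)\le G(T)$ is used directly), and since $y$ is continuous the majorant $\widetilde z$ is $C^1$ on $[0,T]$ with $\widetilde z(0) = G(T) = y_0 + \int_0^T g$.

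Next I would run the Bihari differentiation on $\widetilde z$. Because $f$ is non-decreasing and $y(t) \le \widetilde z(t)$, we have $f(y(t)) \le f(\widetilde z(t))$, whence
\[
\ddt F(\widetilde z(t)) = -\frac{\widetilde z'(t)}{f(\widetilde z(t))} = -\frac{f(y(t))}{f(\widetilde z(t))} \ge -1.
\]
Integrating over $[0,t]$ and using $\widetilde z(0) = G(T)$ gives $F(\widetilde z(t)) \ge F(G(T)) - t \ge F(G(T)) - T$. Since $F$ is strictly decreasing, its inverse $F^{-1}$ is also decreasing, so applying $F^{-1}$ reverses the inequality and yields $\widetilde z(t) \le F^{-1}\big(F(G(T)) - T\big)$ for all $t \le T$. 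Combining with $y \le \widetilde z$ and taking the supremum over $t \le T$ produces exactly the claimed bound, upon recalling $G(T) = y_0 + \int_0^T g$.

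It remains to justify that the expression $F^{-1}\big(F(G(T)) - T\big)$ is meaningful, and this is where the constraint $T < T^*$ enters. I would introduce $\phi(T) := F(G(T)) - T$; since $F$ is decreasing and $G$ is non-decreasing, $F(G(T))$ is non-increasing in $T$, so $\phi$ is strictly decreasing, with $\phi(0) = F(y_0) > 0$ and $\phi(T^*) = 0$ by the defining equation $T^* = F(G(T^*))$. Hence $T < T^*$ is equivalent to $\phi(T) > 0$, i.e. $F(G(T)) - T$ lies in the range of $F$ on $[y_0,\infty)$, namely the interval $\big(0, F(y_0)\big]$, so that $F^{-1}$ is defined there and returns a value $\ge y_0$.

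The only genuinely delicate point, and the main obstacle, is the potential degeneracy of $1/f$: the chain-rule computation for $F(\widetilde z)$ is legitimate only if $f(\widetilde z(t)) > 0$ along the whole orbit. I would make this explicit by observing that $\widetilde z(t) \ge G(T) \ge y_0 > 0$, while $f$, being non-decreasing with $\int_1^\infty \ds/f < \infty$, must be strictly positive on $[y_0,\infty)$; otherwise the tail integral defining $F$ would diverge. This positivity both legitimises the definition of $F$ on $[y_0,\infty)$ and guarantees that $F'$ and the displayed differential inequality hold throughout, completing the argument.
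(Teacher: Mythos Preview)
Your proof is correct. Note, however, that the paper does not actually prove this theorem: it is stated in the Appendix with citations to Bihari~\cite{Bih56} and Boyer--Fabrie~\cite{BoyFab13}, and no argument is given. So there is no paper proof to compare against.

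That said, your argument is the standard one and is cleanly executed. The key trick---freezing the forcing term at its terminal value $G(T)$ so as to reduce to the autonomous Bihari lemma on the majorant $\widetilde z$---is exactly how one normally handles the extra $\int_0^t g$. Your care about where $f>0$ (so that $F$ and the chain rule make sense) and about why $T<T^*$ guarantees $F(G(T))-T$ lies in the domain of $F^{-1}$ is appropriate and fills in points that are often glossed over in textbook presentations.
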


\begin{theorem}[Discrete form of Bihari--Gronwall's inequality \cite{Qin16}]\label{the:disc bihari}
	Let $\varphi$ be a strictly increasing non-negative function on $\bb{R}_+$ with $\varphi(+\infty)=+\infty$ and $\psi$ be non-decreasing non-negative function on $\bb{R}_+$. Let $c\geq 0$ be a real constant, and let $v_n,f_n$ be non-negative sequences. If the inequality
	\[
	\varphi(v_n)\leq c+ \sum_{s=0}^{n-1} f_s\, \psi(v_s)
	\]
	holds, then for all $n\leq M$,
	\[
	v_n \leq \varphi^{-1} \left[ G^{-1} \left(G(c) + \sum_{s=0}^{n-1} f_s \right) \right],
	\]
	where $G^{-1}$ and $\varphi^{-1}$ are the inverse functions of $G$ and $\varphi$, respectively, and
	\[
	G(z):= \int_{z_0}^z \frac{\ds}{\psi(\varphi^{-1}(s))}, \quad z\geq z_0>0.
	\]
	Here, $M$ is chosen so that if $n\leq M$, then
	\[
	G(c)+ \sum_{s=0}^{n-1} f_s \in \text{dom}(G^{-1}).
	\]
\end{theorem}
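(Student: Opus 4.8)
The plan is to run the standard discrete comparison argument, mimicking the continuous Bihari--Gronwall proof with the roles of the integral and the sum interchanged. First I would introduce the non-decreasing majorant
\[
	w_n := c + \sum_{s=0}^{n-1} f_s\, \psi(v_s), \qquad n\geq 0,
\]
so that $w_0 = c$ and $w_{n+1}\geq w_n$ because each increment $w_{n+1}-w_n = f_n\,\psi(v_n)$ is non-negative. Since $\varphi(v_n)\leq w_n$ by hypothesis and $\varphi$ is strictly increasing with $\varphi(+\infty)=+\infty$ (hence invertible with increasing inverse defined on the whole range), this immediately gives the pointwise bound $v_n \leq \varphi^{-1}(w_n)$, which is the quantity I ultimately want to estimate. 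The monotonicity of $\psi$ then upgrades the increment identity to
\[
	w_{n+1}-w_n = f_n\,\psi(v_n) \leq f_n\,\psi\big(\varphi^{-1}(w_n)\big).
\]

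Next I would pass to the function $G$, whose derivative is $G'(z) = 1/\psi(\varphi^{-1}(z))$; note that $\psi\circ\varphi^{-1}$ is non-decreasing (a non-decreasing function composed with an increasing one), so $G$ is increasing and $1/\psi(\varphi^{-1}(\cdot))$ is non-increasing. The key step is the telescoping estimate
\[
	G(w_{n+1}) - G(w_n)
	= \int_{w_n}^{w_{n+1}} \frac{\ds}{\psi(\varphi^{-1}(s))}
	\leq \frac{w_{n+1}-w_n}{\psi(\varphi^{-1}(w_n))}
	\leq f_n,
\]
where the first inequality uses that the integrand is bounded above by its value at the left endpoint $w_n$ on $[w_n,w_{n+1}]$, and the second uses the increment bound from the previous paragraph. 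Summing over $s=0,\ldots,n-1$ and recalling $w_0 = c$ yields $G(w_n) \leq G(c) + \sum_{s=0}^{n-1} f_s$. Applying the increasing inverse $G^{-1}$ (legitimate precisely when the right-hand side lies in $\mathrm{dom}(G^{-1})$, i.e.\ for $n\leq M$) and then $\varphi^{-1}$ gives
\[
	v_n \leq \varphi^{-1}(w_n)
	\leq \varphi^{-1}\!\Big[ G^{-1}\Big(G(c) + \sum_{s=0}^{n-1} f_s\Big)\Big],
\]
which is the claimed bound.

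The hard part will be the degenerate cases where the manipulations with $G$ are not literally justified, namely when $c=0$ (so that $G(c)$, with the positive lower limit $z_0>0$, is not given by the stated formula) or when $\psi(\varphi^{-1}(w_n))=0$ for some $n$ (so the division above is illegal). I would handle the first by the usual perturbation trick: replace $c$ by $c+\epsilon$ for $\epsilon>0$, carry out the argument verbatim with $c+\epsilon\geq z_0$, and let $\epsilon\downarrow 0$ using the continuity and monotonicity of $\varphi^{-1}\circ G^{-1}$ in its argument. The second degeneracy is in fact harmless: if $\psi(\varphi^{-1}(w_n))=0$, then the increment bound forces $w_{n+1}=w_n$, so $G(w_{n+1})-G(w_n)=0\leq f_n$ holds trivially and the telescoping step survives without any division. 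The restriction to $n\leq M$ is exactly the condition guaranteeing that $G(c)+\sum_{s=0}^{n-1} f_s$ stays within the range of $G$, so that $G^{-1}$ can be applied at every index along the way.
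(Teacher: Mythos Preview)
The paper does not give its own proof of this theorem: it is stated in the Appendix as a known result, with a citation to \cite{Qin16}, and used as a black box in Propositions~\ref{pro:Un bounded H2} and~\ref{pro:Un bounded H2 bdf}. So there is no ``paper's proof'' to compare against.

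Your argument is the standard one and is correct. The majorant $w_n$, the increment bound $w_{n+1}-w_n\le f_n\,\psi(\varphi^{-1}(w_n))$, and the telescoping via $G(w_{n+1})-G(w_n)\le f_n$ using that $1/\psi(\varphi^{-1}(\cdot))$ is non-increasing on $[w_n,w_{n+1}]$ are exactly the discrete analogue of the continuous Bihari proof. Your treatment of the degenerate cases (perturbing $c$ by $\epsilon$ when $c=0$; noting that $\psi(\varphi^{-1}(w_n))=0$ forces $w_{n+1}=w_n$) is also the right fix. One small point worth making explicit: you need $\varphi^{-1}(w_n)$ to be defined, i.e.\ $w_n$ to lie in the range of $\varphi$, which follows from $\varphi(v_n)\le w_n$ together with $\varphi(+\infty)=+\infty$ and $\varphi$ non-negative increasing. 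Otherwise the proof is complete.
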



\newcommand{\noopsort}[1]{}\def\cprime{$'$}
\def\soft#1{\leavevmode\setbox0=\hbox{h}\dimen7=\ht0\advance \dimen7
	by-1ex\relax\if t#1\relax\rlap{\raise.6\dimen7
		\hbox{\kern.3ex\char'47}}#1\relax\else\if T#1\relax
	\rlap{\raise.5\dimen7\hbox{\kern1.3ex\char'47}}#1\relax \else\if
	d#1\relax\rlap{\raise.5\dimen7\hbox{\kern.9ex \char'47}}#1\relax\else\if
	D#1\relax\rlap{\raise.5\dimen7 \hbox{\kern1.4ex\char'47}}#1\relax\else\if
	l#1\relax \rlap{\raise.5\dimen7\hbox{\kern.4ex\char'47}}#1\relax \else\if
	L#1\relax\rlap{\raise.5\dimen7\hbox{\kern.7ex
			\char'47}}#1\relax\else\message{accent \string\soft \space #1 not
		defined!}#1\relax\fi\fi\fi\fi\fi\fi}

\end{document}